\newtheorem{theorem}{Theorem}[section]
\newtheorem{lemma}[theorem]{Lemma}
\newtheorem{proposition}[theorem]{Proposition}
\newtheorem{corollary}[theorem]{Corollary}
\theoremstyle{definition}
\newtheorem{example}[theorem]{Example}
\newtheorem{remark}[theorem]{Remark}
\newtheorem{question}[theorem]{Question}
\newtheorem*{theoremllsr}{Theorem A}
\numberwithin{equation}{section}
\newcommand{\A}{\mathrm{Re\,}}
\newcommand{\B}{\mathrm{Im\,}}
\newcommand{\veps}{\varepsilon}
\newcommand{\CC}{\mathbb C}
\newcommand{\RR}{\mathbb R}
\newcommand{\DD}{\mathbb D}
\newcommand{\TT}{\mathbb T}
\newcommand{\ZZ}{\mathbb Z}
\newcommand{\NN}{\mathbb N}
\newcommand{\cha}{\mathrm{char\,}}
\newcommand{\alg}{\mathrm{alg\,}}
\newcommand{\ran}{\mathrm{Ran\,}}
\newcommand{\GD}{\mathcal{GD}_{>0}}
\newcommand{\GH}{\mathcal{G}_{>0}}
\newcommand{\GG}{\mathcal{GH}_{>0}}
\newcommand{\hinfplus}{\mathcal H^\infty_+}
\begin{document}
\setcounter{page}{1}

\title[K\"{o}nigs maps and commutants of composition operators]
{K\"{o}nigs maps and commutants of composition operators on the Hardy-Hilbert space of Dirichlet series}
\date{\today}

\author{Fr\'ed\'eric Bayart}
\address{Laboratoire de Math\'ematiques Blaise Pascal UMR 6620 CNRS, Universit\'e Clermont Auvergne, Campus universitaire C\'ezeaux, 3 place Vasarely, 63178 Aubi\`ere Cedex, France.}
\email{frederic.bayart@uca.fr}

\author{Xingxing Yao}
\address{School of Mathematics and Physics, Wuhan Institute of Technology, Wuhan 430205, China.}\email{xxyao.math@wit.edu.cn}

%\dedicatory{This paper is dedicated to Professor ABCD}
%\thanks{}

\subjclass[2010]{Primary 47B33, Secondary 30B50, 46E15.}

\keywords{K\"{o}nigs map, composition operator, Dirichlet series, commutant, spectrum, cyclicity}

\begin{abstract}
Let $\varphi$ be a holomorphic map which is a symbol of a bounded composition operator $C_\varphi$ acting on the Hardy-Hilbert space of Dirichlet series.
We find a K\"{o}nigs map for $\varphi$.
We then deduce several applications on $C_\varphi$ (e.g. on its spectrum, on its dynamical properties).
In particular, we study for a large class of symbols $\varphi$ if the associated composition operator has a minimal commutant.
\end{abstract}
\maketitle

\tableofcontents

\section{Introduction}

\subsection{Composition operators on the Hardy-Hilbert space of Dirichlet series}
From the pioneering work of Gordon and Hedenmalm \cite{gh}, the study of composition
operators on the Hardy-Hilbert space of Dirichlet series has been the subject of many investigations.
Let us introduce some terminology. For $\theta\in\mathbb R$, $\mathbb{C}_{\theta}$ will denote the half-plane $\{s\in\mathbb{C}: \A s>\theta\}$.
We denote by $\mathcal D$ the set of holomorphic functions $f$ defined on some open set $U\subset\mathbb C$
where $U$ contains a half-plane $\mathbb C_\theta$ and such that $f$ is equal to the sum of a convergent Dirichlet series $\sum_{n\geq 1}a_n n^{-s}$
in some (possibly remote) half-plane $\mathbb C_\nu$.

Hedenmalm-Lindqvist-Seip \cite{hls} introduced and started the study of the Hilbert space of Dirichlet series with square summable coefficients:
\begin{equation*}
\mathcal{H}^{2}=\bigg\{f(s)=\sum_{n=1}^{\infty}a_{n}n^{-s}:\|f\|=\bigg(\sum_{n=1}^{\infty}|a_{n}|^{2}\bigg)^{1/2}<\infty\bigg\}.
\end{equation*}
By the Cauchy-Schwarz inequality, the functions in $\mathcal{H}^{2}$ are all
analytic on the half-plane $\mathbb{C}_{1/2}$.
The main results in \cite{gh} (see also \cite{qs}) show that
an analytic map $\varphi:\mathbb{C}_{1/2}\rightarrow\mathbb{C}_{1/2}$ induces
a bounded composition operator $C_{\varphi}:f\mapsto f\circ\varphi$ on $\mathcal{H}^{2}$ if and only if it is a member of the Gordon-Hedenmalm class $\mathcal{G}$ defined as follows. \medskip

\noindent \textbf{Definition.}
The \textbf{Gordon-Hedenmalm class} $\mathcal{G}$ consists of the maps $\varphi:\mathbb{C}_{1/2}\rightarrow\mathbb{C}_{1/2}$ of the form
$$\varphi(s)=c_{0}s+\psi(s),$$
where $c_{0}$ is a non-negative integer (called the \textbf{characteristic} of $\varphi$, i.e., $\cha(\varphi)=c_{0}$),
$\psi\in\mathcal D$ converges uniformly in $\CC_\veps$ for every $\veps>0$ and
\begin{itemize}
 \item[(a)] $\psi(\CC_0)\subset\CC_{1/2}$ if $c_0=0$;
 \item[(b)] $\psi\equiv i\tau$ for some $\tau\in\mathbb R$ or $\psi(\CC_0)\subset \CC_0$ if $c_0\geq 1$.
\end{itemize}
We shall denote by $\GH$ the class of symbols $\varphi\in\mathcal G$ with positive characteristic.
\medskip

From then on, several authors have studied the properties of composition operators acting on $\mathcal{H}^{2}$
or on similar spaces of Dirichlet series, see for instance \cite{bai,baib,b1,b2,b2024,bb,bwy,bp,bp20,fi,qs}.
In this paper, we shall contribute to this circle of ideas, for composition operators whose symbol has positive characteristic.
This will be done through the look for a K\"{o}nigs map for $\varphi$ and through the consequences that we will get from it.

\subsection{K\"{o}nigs maps, Schr\"{o}der's equation and consequences}

Let us come back for a while to the unit disc $\mathbb D$ and let us consider $\varphi:\DD\to\DD$ holomorphic
and its associated composition operator $C_\varphi:f\mapsto f\circ\varphi$ which is bounded on $H(\DD)$
as well as on $H^2(\DD)$. The eigenfunction equation for $C_\varphi$ is called Schr\"{o}der's equation:
$$f\circ\varphi=\lambda f,\ \lambda\in\CC,\ f\in H(\DD).$$
If $\varphi$ fixes the origin, satisfies $\varphi'(0)\neq 0$ and is not an elliptic automorphism, this equation is linked to the existence of a K\"{o}nigs map for $\varphi,$
namely a holomorphic map $u:\DD\to\CC$ such that
$$u\circ\varphi=\varphi'(0) u.$$
If $\varphi$ has Denjoy-Wolff attractive fixed point on the unit circle $\TT,$
with $\varphi'(\xi)\in (0,1]$, a K\"{o}nigs map of $\varphi$ is now a map $u:\DD\to \CC$ such that
$$
\begin{array}{ll}
 u\circ\varphi=\varphi'(\xi)u&\textrm{if }\varphi'(\xi)\in(0,1)\\
 u\circ\varphi=u+a&\textrm{if }\varphi'(\xi)=1\textrm{ for some }a\in\CC.
\end{array}
$$
The existence of these maps has been discussed in details in \cite{BP79,Pom79}.

It turns out that solving Schr\"{o}der's equation or finding a K\"{o}nigs map have important consequences for the study of composition operators,
e.g.\ for compactness or cyclicity questions (cf.\ \cite{bs}),
for computing the spectrum on various spaces of holomorphic functions (cf.\ \cite{ABCC,cm})
and for studying the commutant of a composition operator (cf.\ \cite{llsr18}).

Our first aim, in this paper, will be to make these tools at our disposal to study composition operators
on $\mathcal H^2$. Since a symbol $\varphi\in\GH$ is a self-map of $\CC_0$, we can use the results of \cite{BP79,Pom79}
for the existence of $u$, but since we are working in the context of Dirichlet series, we need slightly more.
We do expect that $u$ itself has the shape of a symbol
of a composition operator on $\mathcal H^2$.

Let us introduce the following class of symbols.
We say that a holomorphic function $\varphi:\CC_0\to\CC$
belongs to $\GG$
if it can be written $\varphi(s)=c_0s+\psi(s)$
where $c_0$ is a positive integer and $\psi$ is a Dirichlet series such that, for all $\veps>0,$
$\psi$ converges uniformly on $\CC_\veps.$

\smallskip

Our first main theorem now reads:
\begin{theorem}\label{thm:lfm}
 Let $\varphi(s)=c_0s+\sum_{k=1}^{+\infty}c_k k^{-s}=c_0 s+\psi(s)\in\GH$.
 \begin{itemize}
  \item[(i)] If $c_0=1,$ there exists a unique, up to an additive constant, $u\in\GG$ with $\textrm{char}(u)=1$ such that $u\circ\varphi=u+c_1$.
  Moreover, if there exists $\delta>0$ such that $\varphi(\CC_0)\subset\CC_\delta,$ we may choose $u\in\GH$.
  \item[(ii)] If $c_0>1,$ there exists a unique $u\in\GH$ with $\textrm{char}(u)=1$ such that $u\circ\varphi=c_0 u$.
 \end{itemize}
 In both cases, if $\varphi$ is injective, then $u$ is injective.
\end{theorem}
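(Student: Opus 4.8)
The plan is to produce $u$ as the classical K\"onigs/Abel limit, written in a form that keeps its Dirichlet-series nature visible. Write $\varphi^{[n]}=\varphi\circ\cdots\circ\varphi$ ($n$ times) and $\psi_0(s)=\psi(s)-c_1=\sum_{k\ge 2}c_kk^{-s}$. Since $C_\varphi$ is bounded on $\mathcal H^2$, so is $C_\varphi^{\,n}=C_{\varphi^{[n]}}$, hence by Gordon--Hedenmalm $\varphi^{[n]}\in\mathcal G$, say $\varphi^{[n]}(s)=c_0^{\,n}s+\psi_n(s)$ with $\psi_n$ a Dirichlet series converging uniformly on every $\CC_\veps$; in particular $\varphi^{[n]}$ is a self-map of $\CC_0$. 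I would set
\[
 u_N:=c_0^{-N}\varphi^{[N]}=s+c_0^{-N}\psi_N \ \ (c_0>1),\qquad u_N:=\varphi^{[N]}-Nc_1=s+(\psi_N-Nc_1)\ \ (c_0=1),
\]
each of characteristic $1$, and pass to the limit $N\to\infty$. The telescoping identities $u_{N+1}-u_N=c_0^{-(N+1)}\,\psi\circ\varphi^{[N]}$ and $u_{N+1}-u_N=\psi_0\circ\varphi^{[N]}$ (in the two cases) reduce everything to a size estimate for $\psi\circ\varphi^{[N]}$ on $\CC_\veps$, together with the two standard facts that $\mathcal G$ is stable under composition and that post-composing a uniformly convergent Dirichlet series by a member of $\GH$ again yields one.

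When $c_0>1$ the estimate is immediate: $\psi=c_1+\psi_0$ with $\psi_0$ bounded on each $\CC_\veps$, so $\sum_N(u_{N+1}-u_N)$ converges uniformly on every $\CC_\veps$ and the limit is $u=s+g$ with $g$ a uniformly convergent Dirichlet series, i.e.\ $u\in\GG$ with $\cha(u)=1$; since moreover $\A(\psi\circ\varphi^{[N]})\ge 0$ on $\CC_0$ one gets $\A g\ge 0$ there (or $g$ a purely imaginary constant if $\psi\equiv i\tau$), so $u\in\GH$. When $c_0=1$ and $\psi\equiv i\tau$ one takes $u(s)=s$; otherwise I claim $\A c_1>0$. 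Indeed $\A\psi=\A c_1+\A\psi_0>0$ on $\CC_0$ and $\A\psi_0\to 0$ as $\A s\to+\infty$ force $\A c_1\ge 0$, and $\A c_1=0$ would make $\A\psi_0$ a nonnegative harmonic function on $\CC_0$ with vanishing mean on vertical lines, hence $\equiv 0$, i.e.\ the trivial case again. Granting $\A c_1>0$: $\A\varphi^{[N]}(s)$ is increasing (as $\A\psi>0$), its limit cannot be finite (Denjoy--Wolff escape for symbols in $\GH$), so $\A\varphi^{[N]}(s)\to+\infty$, whence $\A\varphi^{[N+1]}(s)-\A\varphi^{[N]}(s)=\A\psi(\varphi^{[N]}(s))\to\A c_1$ and $\A\varphi^{[N]}(s)\sim N\,\A c_1$. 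Combined with $|\psi_0(w)|\le C\,2^{-\A w}$ on $\CC_1$ (Bohr: the abscissa of absolute convergence of $\psi$ is $\le\frac12$), this makes $u_{N+1}-u_N$ decay geometrically on each $\CC_\veps$, so again $u=\lim_N u_N=s+g\in\GG$ with $\cha(u)=1$.

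The functional equations follow by letting $N\to\infty$ in $\varphi^{[N]}\circ\varphi=\varphi^{[N+1]}$. For uniqueness, if $u_1,u_2$ are solutions of characteristic $1$, then $w=u_1-u_2$ has characteristic $0$ and satisfies $w\circ\varphi=c_0w$, resp.\ $w\circ\varphi=w$; iterating gives $w(\varphi^{[N]}(s))=c_0^{\,N}w(s)$, resp.\ $=w(s)$, and since $w$ is bounded on each $\CC_\veps$ while $\varphi^{[N]}(\CC_\veps)\subset\CC_\veps$, the left side stays bounded, forcing $w\equiv 0$ when $c_0>1$, and (letting $N\to\infty$, using $\A\varphi^{[N]}(s)\to+\infty$) $w$ constant when $c_0=1$ --- exactly the additive indeterminacy. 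For the refinement in (i), if $\varphi(\CC_0)\subset\CC_\delta$ then $\varphi^{[N]}(s)\in\CC_\delta$ for $N\ge1$, their real parts grow linearly at a rate uniform in $s\in\CC_0$, and this yields a uniform lower bound for $\A g$ on $\CC_0$; subtracting a suitable real constant then makes $u$ send $\CC_0$ into $\CC_0$, so $u\in\GH$. Finally, if $\varphi$ is injective so is each $\varphi^{[N]}$, hence each $u_N$; since $u=\lim_N u_N$ locally uniformly on $\CC_0$, Hurwitz's theorem makes $u$ injective or constant, and it is non-constant because $\cha(u)=1$.

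The genuine obstacle I anticipate is the parabolic case $c_0=1$: proving $\A c_1>0$ when $\psi\not\equiv i\tau$ (which needs care with $\psi_0$ near the line $\A s=0$, where it need not even be bounded), and then that $\A\varphi^{[N]}(s)\to+\infty$, linearly and --- for the last part of (i) --- uniformly in $s$. Once these are secured the telescoping series converges geometrically and the remaining points are bookkeeping; the inputs I take for granted are the stability of $\mathcal G$ under composition, the preservation of ``uniformly convergent Dirichlet series'' under post-composition with a member of $\GH$, and Bohr's inequality between the abscissas of absolute and uniform convergence.
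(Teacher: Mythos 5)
Your construction of $u$ by iteration is exactly the approach the paper takes. For $c_0>1$ you use the same $u_N=c_0^{-N}\varphi^{[N]}$, and for $c_0=1$ you use $u_N=\varphi^{[N]}-Nc_1$ while the paper uses $u_N=\varphi^{[N]}-\varphi^{[N]}(1)$; the paper's Remark~\ref{rem:iteration} notes explicitly that your choice is an equally valid alternative, so this is cosmetic. The decay estimate $|\psi_0(w)|\le C2^{-\A w}$ from Lemma~\ref{lem:dirinfini}, the use of Lemma~\ref{lem:iteration} for the growth of $\A\varphi^{[N]}$, the passage to the limit for the functional equation, Hurwitz for injectivity, and the treatment of the $\varphi(\CC_0)\subset\CC_\delta$ refinement are all the same as in the paper.

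The one place you genuinely diverge is the uniqueness argument. The paper works at the level of Dirichlet coefficients: writing $u(s)=s+d_1+\sum_{n\ge 2}d_n n^{-s}$ and substituting into the functional equation via Lemma~\ref{lem:compogd}, one gets a triangular recursion that fixes each $d_n$ (given $d_1$). You instead form $w=u_1-u_2$, observe it has characteristic $0$ and satisfies $w\circ\varphi=c_0 w$ (resp.\ $w\circ\varphi=w$), and iterate: since $w\in\hinfplus$ is bounded on each $\CC_\veps$ and $\varphi^{[N]}(\CC_\veps)\subset\CC_\veps$, the relation $w\circ\varphi^{[N]}=c_0^N w$ forces $w\equiv 0$ when $c_0>1$; when $c_0=1$, the escape $\A\varphi^{[N]}\to+\infty$ together with the fact that members of $\hinfplus$ have a limit at $+\infty$ forces $w$ to be constant. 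This boundedness-and-escape argument is cleaner and avoids the coefficient bookkeeping; both are correct. Your almost-periodicity/positivity argument for $\A c_1>0$ when $\psi\not\equiv i\tau$ is also sound, though the paper simply invokes the known fact (Theorem~4.2 of \cite{gh}) without re-proving it. In short: correct proof, same backbone, with a nicer self-contained uniqueness step.
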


It should be said that a result related to assertion (i) of this theorem was recently obtained in \cite{cgl} in the context
of semigroups of operators on Hilbert spaces of Dirichlet series.

\smallskip

We then deduce several interesting applications. Indeed Theorem \ref{thm:lfm} is helpful to compute
the spectrum of composition and weighted composition operators (see e.g.  Theorem \ref{sfe-thm}) as well as to give
non trivial examples of cyclic composition operators (see Theorem \ref{cor:cyclic} and Example \ref{ex:cyclic}).
We will also use it to investigate thoroughly the commutant of a composition operator.

\subsection{Commutant of composition operators}

Denote by $\mathcal{B}(H)$ the algebra of all bounded linear operators on a Hilbert space $H$.
Given any $A\in \mathcal{B}(H)$, recall that the commutant of $A$ is defined as the family of all operators that commute with $A$,
that is,
\begin{equation*}
  \{A\}':=\{T\in\mathcal{B}(H):TA=AT\}.
\end{equation*}
Clearly, $\{A\}'$ is a closed subalgebra of $\mathcal{B}(H)$ in the weak operator topology $\sigma$.
Let $\alg (A)$ stand for the unital algebra generated by the operator $A$, that is,
\begin{equation*}
  \alg (A):=\{p(A):p\ \mbox{is a polynomial}\}.
\end{equation*}
It is easy to see that $\overline{\alg (A)}^{\sigma}$ is a commutative algebra such that $\overline{\alg (A)}^{\sigma}\subset \{A\}'$.
Then we say that an operator $A$ has a minimal commutant provided that
\begin{equation*}
  \overline{\alg (A)}^{\sigma}=\{A\}'.
\end{equation*}

Recently, M. Lacruz, F. Leon-Saavedra, S. Petrovic and L. Rodriguez-Piazza
\cite{llsr18}
investigated the minimal commutant property for a composition operator $C_{\varphi}$ on the Hardy space $H^{2}(\mathbb{D})$.
They got a complete answer provided $\varphi$ is a linear fractional map.
Precisely, if $\varphi$ is a linear fractional self-map of the unit disk $\mathbb{D}$, then on $H^{2}(\mathbb{D})$, $C_{\varphi}$ has a minimal commutant
(i.e., $\overline{\alg (C_{\varphi})}^{\sigma}=\{C_{\varphi}\}'$)
exactly when $\varphi$ is a non-periodic elliptic automorphism,
or a parabolic non-automorphism, or a loxodromic map.

We shall investigate the same property for composition operators on $\mathcal H^2$.
Since the results on the unit disc essentially deal with linear fractional maps,
we do not expect to get a complete characterization of the symbols $\varphi\in\GH$ such that $C_\varphi$ has a minimal commutant.
However, since we have no analogues to linear fractional symbols, we are looking for non trivial examples.
It turns out that we shall get very different results provided $c_0=1$ or $c_0>1$.
In the latter case, the following very general result says that in many cases,
$C_\varphi$ fails to have a minimal commutant.

\begin{theorem}\label{thm:c02}
 Let $\varphi\in\GH$ with $\textrm{char}(\varphi)\geq 2$. Assume that there exists $\veps>0$ such
 that $\varphi(\CC_0)\subset\CC_{\veps}$.
 Then $C_\varphi$ fails to have a minimal commutant.
\end{theorem}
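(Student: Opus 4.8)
The strategy is to use the Königs map $u \in \GH$ with $\mathrm{char}(u)=1$ provided by Theorem~\ref{thm:lfm}(ii), which satisfies $u \circ \varphi = c_0 u$ with $c_0 = \mathrm{char}(\varphi) \geq 2$. The idea is that $u$ intertwines $C_\varphi$ with a simpler model, and on that model one can exhibit an operator commuting with $C_\varphi$ that does not lie in $\overline{\alg(C_\varphi)}^\sigma$. First I would observe that the hypothesis $\varphi(\CC_0)\subset\CC_\veps$ forces $u(\CC_0)\subset\CC_\delta$ for some $\delta>0$ (this is the content of the ``moreover'' part of Theorem~\ref{thm:lfm}, or can be extracted from its construction), so $C_u$ is itself a bounded composition operator on $\mathcal H^2$ — indeed $u\in\GH$. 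The relation $u\circ\varphi = c_0 u$ can be rewritten: if $D_{c_0}\colon s\mapsto c_0 s$ denotes the dilation symbol (which belongs to $\GH$ and induces the bounded operator $C_{D_{c_0}}$ of multiplying the $n$-th coefficient by a rescaling), then $C_\varphi C_u = C_u C_{D_{c_0}}$.

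Next I would exploit that $C_{D_{c_0}}$ has a large commutant on $\mathcal H^2$: since $D_{c_0}$ acts on monomials by $C_{D_{c_0}}(n^{-s}) = n^{-c_0 s} = (n^{c_0})^{-s}$, the operator $C_{D_{c_0}}$ is ``diagonal-like'' with respect to the multiplicative structure, and in particular it commutes with a rich family of operators — for instance multiplication-type operators or the composition operators $C_{D_m}$ for other integers $m$, or operators built from the completely multiplicative structure of the Bohr lift. Concretely, I expect to produce an operator $S$ commuting with $C_{D_{c_0}}$ which, when transported via $C_u$ (using that $C_u$ is injective with dense range when $\varphi$, hence $u$, is injective — or more carefully, using a one-sided intertwining), yields an operator $T = $ (something like) $C_u S C_u^{-1}$ or a two-sided variant, lying in $\{C_\varphi\}'$. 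The candidate $T$ should be chosen so that it is manifestly \emph{not} a $\sigma$-limit of polynomials in $C_\varphi$: the cleanest way is a spectral/point-spectrum obstruction. Since $\mathrm{char}(\varphi)=c_0\geq 2$, the operator $C_\varphi$ has a unique fixed point of $\varphi$ in $\CC_0$ (an interior Denjoy--Wolff-type point coming from $u$ vanishing there), and the eigenfunctions of $C_\varphi$ are $u^k$ with eigenvalues $c_0^{-k}$... wait, more precisely $C_\varphi(u^k) = (u\circ\varphi)^k = c_0^k u^k$, so these are eigenvectors with eigenvalues $c_0^k \to \infty$; the relevant spectral data is that $\{C_\varphi\}'$ contains operators distinguishing eigenspaces in a way polynomials in $C_\varphi$ cannot, e.g.\ a projection-like operator onto a proper invariant subspace that is not spectral for $C_\varphi$.

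The main obstacle, and where the real work lies, is twofold. First, $C_u$ need not be invertible, so transporting an operator from the $C_{D_{c_0}}$-model back to $C_\varphi$ requires care: one must either show $C_u$ has dense range and is injective (injectivity of $u$ gives injectivity of $C_u$; dense range needs the cyclicity-type results, possibly Theorem~\ref{cor:cyclic}), or work with the intertwining $C_\varphi C_u = C_u C_{D_{c_0}}$ directly to define $T$ on $\overline{\ran C_u}$ and extend by $0$ — but then one must check $T$ is bounded and genuinely commutes with $C_\varphi$ on all of $\mathcal H^2$. Second, proving $T \notin \overline{\alg(C_\varphi)}^\sigma$ requires a genuine separation argument: the natural route is to find a vector $x$ and a functional $\ell$ such that $\ell(p(C_\varphi)x)$ takes a constrained set of values for all polynomials $p$ (because the spectrum of $C_\varphi$, or the restriction of $C_\varphi$ to the relevant cyclic subspace, is such that $p(C_\varphi)$ is determined by limited data), while $\ell(Tx)$ escapes that set. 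I would look to mimic the disc argument of Lacruz--Leon-Saavedra--Petrovic--Rodriguez-Piazza \cite{llsr18} for the ``$\varphi'(\xi)<1$ at an interior fixed point'' case, where the obstruction to minimality is precisely the existence of such intertwining operators coming from the power structure of the Königs coordinate — here the extra integer parameter $c_0\geq 2$ (as opposed to a generic multiplier in $(0,1)$ on the disc) should actually make the commutant even larger, since $C_{D_{c_0}}$ commutes with the whole family $\{C_{D_m} : m\geq 1\}$ and with point-evaluation-type diagonal operators, giving many operators outside the closed algebra generated by a single $C_\varphi$.
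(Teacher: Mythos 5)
You have the right high-level idea — use the Königs map $u$ with $u\circ\varphi = c_0 u$ and the fact that $C_{D_{c_0}}$ commutes with all the dilations $C_{D_m}$ — and this is indeed the engine behind the paper's proof. But as written, two of the steps you flag as obstacles are genuine gaps that you have not resolved, and the paper's resolution of the first one is essentially the whole point.

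First, the transport step. You propose $T = C_u S C_u^{-1}$ with $S = C_{D_m}$, and you correctly worry that $C_u$ is not invertible. The observation that dissolves this problem, and which your sketch misses, is that one should never leave the level of \emph{symbols}: if $S = C_{D_m}$, then formally $C_u S C_u^{-1} = C_{u^{-1}\circ D_m\circ u}$, so the candidate operator is simply $C_{\tilde\varphi}$ with $\tilde\varphi(s) = u^{-1}(m\, u(s))$. The paper checks directly (using Lemmas~\ref{lem:reverseinclusion}, \ref{lem:inverse}, \ref{lem:compogd} and the hypothesis $\varphi(\CC_0)\subset\CC_\veps$, which forces $u(\CC_0)$ to sit in a half-plane deep enough that $m\,u(\CC_0)$ lands back in a region where $u^{-1}$ is defined and one-to-one) that $\tilde\varphi$ is a well-defined member of $\GH$ with characteristic $m$. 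Then $\varphi\circ\tilde\varphi = \tilde\varphi\circ\varphi$ follows from the Schröder relation and univalence of $u$ near infinity, so $C_{\tilde\varphi}\in\{C_\varphi\}'$ with no density, boundedness, or extension issues ever arising. Your plan of defining $T$ on $\overline{\ran C_u}$ and extending by $0$ would still require a boundedness argument and would not obviously produce an operator in $\{C_\varphi\}'$; the symbol-level construction bypasses all of that.

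Second, the separation argument. Your proposed spectral obstruction is built on a wrong picture of the dynamics when $c_0\geq 2$: there is no interior fixed point of $\varphi$ in $\CC_0$ (Theorem~\ref{sfe-thm}(i) gives $\sigma_p(\mathcal C_\varphi)=\{1\}$), the Königs map $u$ does not vanish in $\CC_0$, and the functions $u^k$ are not Dirichlet series and do not lie in $\mathcal H^2$ — as you yourself noticed, eigenvalues $c_0^k\to\infty$ for a bounded operator cannot correspond to genuine eigenvectors. The paper's separation step (Lemma~\ref{lem:notinthealgebra}) is of a completely different and more elementary nature: it is a finite matrix-coefficient computation against the vectors $2^{-s}, 2^{-ms}$. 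Choosing $m$ to be an integer that is not a power of $c_0$, the fact that $2^{-\varphi^{[j]}(s)}$ has no Fourier coefficient below index $2^{c_0^j}$ while $2^{-\tilde\varphi(s)}$ has a nonzero coefficient at index $2^{m}$ gives, after a short induction on the coefficients of the approximating polynomials $P_\alpha$, a contradiction with $P_\alpha(C_\varphi)\to C_{\tilde\varphi}$ in the weak operator topology. No spectral projection or Denjoy--Wolff geometry is needed; the argument is purely about the combinatorics of characteristics.

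So: the core idea (Königs map plus incompatible characteristics) is correct, but you have not supplied the symbol-level construction that makes the intertwining rigorous, and your separation argument is both vague and grounded in a mistaken picture of the dynamics; the paper's Lemma~\ref{lem:notinthealgebra} replaces it with a direct coefficient computation.
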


The case of symbols with characteristic equal to $1$ is completely different.
We shall exhibit a large class of symbols $\varphi\in\GH$ with characteristic $1$ such that
$C_\varphi$ has a minimal commutant (see Theorem \ref{thm:characteristic1} and Example \ref{ex:characteristic1}).

It turns out that our methods of proof have also consequences for composition operators on $\mathbb D$
and we shall give new examples of composition operators on $H^2(\DD)$ with, or without, the minimal
commutant property.

\subsection{Organization of the paper}
\begin{itemize}
    \item In Section \ref{sec:schroeder}, we first introduce the Fr\'echet space of uniformly convergent Dirichlet series $\hinfplus$
    which has been recently studied in \cite{Bon18} and which will play the role of $H(\DD)$
    for Dirichlet series. We also introduce a class of symbols $\GD$ which is more general that $\GG$
    and we give some properties of symbols in $\GD$ which will be useful throughout the paper.
    We finally prove Theorem \ref{thm:lfm} about the existence of a K\"{o}nigs map for symbols in $\GH$.
    \item In Section \ref{sec:spectrum}, following the recent works \cite{ABC,ABCC}, we use the existence of a K\"{o}nigs map to fully characterize the spectrum of (weighted) composition operators
    acting on $\mathcal \hinfplus$ when the symbol belongs to $\GH$ and is not an automorphism.
    This also yields new informations on the spectrum of composition operators on $\mathcal H^2.$
    \item In Section \ref{sec:cyclic}, we show how the existence of a K\"{o}nigs map
    and its properties can be helpful to construct cyclic composition operators on $\mathcal H^2.$ In particuler, we provide nontrivial examples.
    \item Section \ref{sec:affine} is devoted to the study of the minimal commutant property of composition operators with linear symbols.  In this case, we get a complete description.
    \item  In Section \ref{sec:c02}, we study the minimal commutant property for composition operators with characteristic greater than or equal to $2$. In particular, we prove Theorem \ref{thm:c02} and we also handle the case of affine symbols.
    \item In Section \ref{sec:c01}, we are concerned with the minimal commutant property of composition operators with characteristic $1$. We give a sufficient condition for such operators to have the minimal commutant property and we exhibit non trivial examples.
    \item In Section \ref{sec:disc}, we go to the unit disc, and show how our methods shed new light to the minimal commutant property for composition operators acting on $H^2(\mathbb D)$. In particular, we handle symbols satisfying $\varphi(0)=\varphi'(0)=0$
    and give a variant of K\"{o}nigs maps for them.
    This allows us to prove that the composition operator induced by this class of symbols do not have the minimal commutant property.
    We also exhibit example of composition operators with the minimal commutant property which are not induced by linear fractional maps.

    \item In the last section, we discuss the double commutant property for composition operators with affine symbols. This leads us to solve a general problem: when does the direct sum of weighted shifts possess the double commutant property?
\end{itemize}

%%%%%%%%%%%%%%%%%%%%%%%%%%%%%%%%%%%%%%%%%%%%%%%%%%
%%%% Schroeder %%%%%%%%%%%%%%%%%%%%%%%%%%%%%%%%%%%
%%%%%%%%%%%%%%%%%%%%%%%%%%%%%%%%%%%%%%%%%%%%%%%%%%

\section{K\"{o}nigs maps}\label{sec:schroeder}

\subsection{Two classes of functions}

For our purposes it is convenient to introduce two classes of functions. We first need a class of symbols more general than $\GG.$
Most of our general results will be true for this class of symbols. We say that a holomorphic function belongs to $\GD$ if it is defined on some domain $U$
containing a half-plane $\CC_\sigma$ and if, on $\CC_\sigma$, it can be written $\varphi(s)=c_0s +\psi(s)$ where $c_0$ is a positive integer
and $\psi$ is a Dirichlet series which converges in $\CC_\sigma$. In particular, $\GH$ is the set of functions $\varphi\in\GD$ such that
$\varphi$ is defined on $\CC_0$ and $\varphi(\CC_0)\subset\CC_0$.

We will also need a Fr\'echet algebra of Dirichlet series which will play the role of $H(\DD)$ in our context. Such a Fr\'echet space has been introduced
and studied in \cite{Bon18}. We denote by $\hinfplus$ the space of all analytic functions which are bounded on $\CC_\veps$
for all $\veps>0$ and that can be represented as a convergent Dirichlet series in $\CC_0$. It is endowed with the metrizable locally convex
topology defined by the seminorms
$$P_\veps(f)=\sup_{s\in\CC_\veps}|f(s)|,\ f\in\mathcal \hinfplus.$$
By a fundamental theorem of Bohr, $\hinfplus$ coincides with the space of all Dirichlet series
which are uniformly convergent in each $\CC_\veps,$ $\veps>0.$ It contains $\mathcal H^\infty,$ the set of functions in $\mathcal D$ which define a bounded function on $\CC_0$.

Several analogues of $H(\mathbb D)$ could be considered.
For instance, one could study composition operators on $\mathcal D_a(\CC_0)$, the set of Dirichlet series $\sum_{n\geq 1}a_n n^{-s}$ such that $\sum_{n\geq 1}|a_n|n^{-\sigma}$ is convergent for all $\sigma>0$.
We have chosen to work with $\hinfplus$ for two reasons:
it is an algebra (this is not the case of $\mathcal D_a(\CC_0)$) and the topology of uniform convergence on right half-planes seems the right analogue of the topology of convergence on compacta of $\CC_0.$

\subsection{Useful lemmas}

We shall need, throughout the paper, a couple of lemmas regarding maps in $\GD$. Some of them will be easily deduced from standard results
on Dirichlet series. We first recall the following result on the behaviour of a Dirichlet series near infinity (see \cite[Lemma 3.1]{gh}).

\begin{lemma}\label{lem:dirinfini}
 Let $f(s)=\sum_{n=m}^{+\infty}a_n n^{-s}$ be a Dirichlet series convergent in some half-plane
 such that $a_m\neq 0$. Then $m^s f(s)\to a_m$ uniformly as $\A (s)\to+\infty$.
\end{lemma}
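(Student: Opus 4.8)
The plan is to isolate the leading term and to show that the remaining tail decays uniformly. Writing
\begin{equation*}
m^s f(s)=a_m+\sum_{n>m}a_n\Big(\frac{m}{n}\Big)^s,
\end{equation*}
it suffices to prove that $g(s):=\sum_{n>m}a_n (m/n)^s$ tends to $0$ uniformly as $\A(s)\to+\infty$. Since $f$ converges at some point $s_0$, the classical theory of Dirichlet series shows that $f$ converges absolutely on the half-plane $\CC_{\A s_0+1}$; fix $\sigma_1>\A s_0+1$ and set $C:=\sum_{n\geq m}|a_n|n^{-\sigma_1}<+\infty$.

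For $s$ with $\sigma:=\A s\geq\sigma_1$ we estimate crudely
\begin{equation*}
|g(s)|\leq\sum_{n>m}|a_n|\,m^\sigma n^{-\sigma}=m^\sigma\sum_{n>m}|a_n|n^{-\sigma_1}\,n^{\sigma_1-\sigma}.
\end{equation*}
Since $\sigma_1-\sigma\leq 0$ and $n\geq m+1$ in the sum, we have $n^{\sigma_1-\sigma}\leq(m+1)^{\sigma_1-\sigma}$, whence
\begin{equation*}
|g(s)|\leq m^\sigma(m+1)^{\sigma_1-\sigma}\sum_{n>m}|a_n|n^{-\sigma_1}\leq C(m+1)^{\sigma_1}\Big(\frac{m}{m+1}\Big)^{\sigma}.
\end{equation*}
The right-hand side depends only on $\sigma=\A s$ and tends to $0$ as $\sigma\to+\infty$, because $0<m/(m+1)<1$. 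This gives the announced uniform statement $m^s f(s)\to a_m$ as $\A(s)\to+\infty$.

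There is essentially no serious obstacle here. The only points requiring a little care are: (i) the estimate must be uniform with respect to $\B s$, which is automatic since every bound above is expressed purely in terms of $\A s$; and (ii) the passage from mere convergence of the Dirichlet series at one point to absolute convergence on a slightly smaller half-plane, which is the classical fact that the abscissa of absolute convergence of a Dirichlet series exceeds its abscissa of convergence by at most $1$. Alternatively, one can bypass (ii) entirely and argue by Abel summation, using only that the partial sums $\sum_{m\leq n\leq N}a_n n^{-s_0}$ are bounded.
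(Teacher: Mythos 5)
Your proof is correct. Note that the paper does not include a proof of this lemma at all; it simply cites Gordon–Hedenmalm, Lemma 3.1. Your argument (peel off the leading term, pass to a half-plane of absolute convergence, and bound the tail by $C(m+1)^{\sigma_1}(m/(m+1))^{\A s}$, which depends only on $\A s$ and tends to $0$) is the standard one and matches the proof in that reference in all essentials, including the observation that the bound is independent of $\B s$. Your suggested Abel-summation alternative would also work and would avoid invoking the $+1$ gap between the abscissae of convergence and absolute convergence, but the route you actually wrote out is already complete and slightly cleaner.
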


Hence, if $\psi\in\mathcal D,$ the real part of $\psi(s)$ is bounded below as $\A(s)$ tends to $+\infty$.
As a consequence, we easily get the following mapping properties of the maps belonging to $\GD$.
\begin{lemma}\label{lem:gdinfini}
 Let $\varphi\in \GD$. For all $\sigma_0>0,$ there exists $\sigma_1>0$ such that $\varphi(\CC_{\sigma_1})\subset\CC_{\sigma_0}$.
\end{lemma}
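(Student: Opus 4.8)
The plan is to combine the decomposition $\varphi(s)=c_0 s+\psi(s)$ guaranteed by membership in $\GD$ with the asymptotic control on $\psi$ coming from Lemma \ref{lem:dirinfini}. First I would write $\psi(s)=\sum_{n\geq m}a_n n^{-s}$ for the first nonzero term (if $\psi\equiv 0$ the statement is trivial with $\sigma_1=\sigma_0/c_0$). By Lemma \ref{lem:dirinfini}, $m^s\psi(s)\to a_m$ uniformly as $\A(s)\to+\infty$; in particular there is some $\sigma_2>0$ so that for all $s$ with $\A(s)\geq \sigma_2$ one has $|m^s\psi(s)-a_m|\leq |a_m|+1$ (or simply bounded by some constant $M$), hence $|\psi(s)|\leq M\, m^{-\A(s)}\leq M\, 2^{-\sigma_2}=:K$ on $\CC_{\sigma_2}$ — more crudely, $\A\psi(s)$ is bounded below by $-K$ on $\CC_{\sigma_2}$, which is all that is really needed.

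Next I would take the real part in the decomposition: for $s\in\CC_\sigma$ with $\sigma\geq\sigma_2$,
\[
\A\varphi(s)=c_0\,\A s+\A\psi(s)\geq c_0\sigma-K.
\]
Now choose $\sigma_1:=\max\bigl(\sigma_2,\ (\sigma_0+K)/c_0\bigr)$. For any $s\in\CC_{\sigma_1}$ we have $\A s>\sigma_1\geq (\sigma_0+K)/c_0$, so $\A\varphi(s)>c_0\cdot\frac{\sigma_0+K}{c_0}-K=\sigma_0$, i.e.\ $\varphi(s)\in\CC_{\sigma_0}$. Since $\sigma_1\geq\sigma_2$ we also stay in the region where the estimate on $\psi$ is valid, and $\varphi$ is defined there because $\CC_{\sigma_1}\subset\CC_\sigma$, the half-plane on which the Dirichlet series representation of $\psi$ converges (shrinking $\sigma_1$ further if necessary to ensure $\sigma_1\geq\sigma$). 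This proves $\varphi(\CC_{\sigma_1})\subset\CC_{\sigma_0}$.

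There is no real obstacle here; the only point requiring a little care is making sure the chosen $\sigma_1$ simultaneously lies inside the half-plane of convergence of $\psi$ (so that the decomposition and Lemma \ref{lem:dirinfini} apply) and is large enough to absorb the lower bound $-K$ on $\A\psi$. Both are handled by taking $\sigma_1$ to be the maximum of the relevant thresholds. The positivity of the integer $c_0$ is what makes the term $c_0\sigma$ dominate as $\sigma\to+\infty$; this is exactly why the statement would fail for $c_0=0$ without the extra hypothesis $\psi(\CC_0)\subset\CC_{1/2}$, but that case is excluded from $\GD$.
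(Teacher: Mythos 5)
Your proof is correct and follows exactly the route the paper indicates: Lemma \ref{lem:dirinfini} gives a lower bound on $\A\psi$ on a remote half-plane, and the term $c_0\A s$ with $c_0\geq 1$ then dominates. One small slip: near the end you write ``shrinking $\sigma_1$ further if necessary to ensure $\sigma_1\geq\sigma$'' where you mean \emph{enlarging} $\sigma_1$, but this does not affect the argument.
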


We have a kind of converse of this result.

\begin{lemma}\label{lem:reverseinclusion}
Let $\varphi:U\to\mathbb C$ belonging to $\GD$. For all $\sigma_1\in\mathbb R$, there exists $\sigma_0>0$  such that $\CC_{\sigma_0}\subset \varphi(U\cap \CC_{\sigma_1})$.
\end{lemma}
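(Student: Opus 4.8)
The plan is to turn the equation $\varphi(s)=w$ into a fixed-point problem and solve it by a contraction mapping argument on a far right half-plane. Write $\varphi(s)=c_0s+\psi(s)$ on some $\CC_\sigma\subset U$ with $c_0\geq 1$ a positive integer and $\psi$ a Dirichlet series convergent in $\CC_\sigma$; then for $s\in\CC_\sigma$ one has $\varphi(s)=w$ if and only if $s=T_w(s):=\big(w-\psi(s)\big)/c_0$. The point is that $\psi$ and its derivative are well controlled near $+\infty$. Applying Lemma \ref{lem:dirinfini} to $\psi$ minus its constant term $a_1$ (the case $\psi\equiv a_1$ being trivial) shows that $\psi$ is bounded, say $|\psi(s)|\leq M$, for $\A(s)$ large; and applying it to the Dirichlet series $\psi'(s)=-\sum_{n\geq2}a_n(\log n)n^{-s}$ shows that $\psi'(s)\to 0$ uniformly as $\A(s)\to+\infty$, so $|\psi'(s)|\leq c_0/2$ for $\A(s)\geq\sigma_3$ and $|\psi(s)|\leq M$ for $\A(s)\geq\sigma_2$.

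Next I would fix $\sigma_*>\max(\sigma,\sigma_1,\sigma_2,\sigma_3)$, so that the closed half-plane $K:=\{s:\A(s)\geq\sigma_*\}$ is contained in $U\cap\CC_{\sigma_1}$ and the two bounds above hold on $K$, and set $\sigma_0:=c_0\sigma_*+M$. Given any $w$ with $\A(w)>\sigma_0$, the map $T_w$ (which is holomorphic on $\CC_\sigma\supset K$) sends $K$ into $K$, since
$$\A\big(T_w(s)\big)=\frac{\A(w)-\A(\psi(s))}{c_0}\geq\frac{\A(w)-M}{c_0}\geq\sigma_*\qquad(s\in K),$$
and it is $\tfrac12$-Lipschitz on $K$: indeed $K$ is convex and $|T_w'|=|\psi'|/c_0\leq\tfrac12$ there, so integrating $T_w'$ along segments gives $|T_w(s)-T_w(s')|\leq\tfrac12|s-s'|$. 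As $K$ is a complete metric space, the Banach fixed-point theorem produces $s_w\in K$ with $T_w(s_w)=s_w$, that is $c_0s_w+\psi(s_w)=w$, i.e. $\varphi(s_w)=w$. Since $K\subset U\cap\CC_{\sigma_1}$, this shows $w\in\varphi(U\cap\CC_{\sigma_1})$, hence $\CC_{\sigma_0}\subset\varphi(U\cap\CC_{\sigma_1})$.

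I do not expect a genuine obstacle: this is essentially a quantitative inverse function theorem, and the real content is the decay of $\psi'$ on right half-planes provided by Lemma \ref{lem:dirinfini}. The points that need a little care are the degenerate case where $\psi$ is constant (handled directly, since then $\varphi(s)=c_0s+a_1$ already maps $\CC_{\sigma_1}$ onto a half-plane), the choice to work with the \emph{closed} half-plane $K$ so that it is complete while $T_w$ still maps it into itself, and the use of convexity to pass from the bound on $|T_w'|$ to the Lipschitz estimate. Once the constants $M,\sigma_2,\sigma_3$ are in hand, taking $\sigma_0=c_0\sigma_*+M$ makes the self-map condition work out.
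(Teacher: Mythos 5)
Your proof is correct, but it follows a genuinely different route from the paper's. Where the paper applies Rouch\'e's theorem, comparing $\varphi(s)-w$ with its linear part $c_0s+c_1-w$ on a large rectangle $[\sigma_1,A]\times[-\tau,\tau]$ (needing only the bound $|\psi|\leq M$ on a right half-plane), you reformulate $\varphi(s)=w$ as the fixed-point equation $s=T_w(s):=(w-\psi(s))/c_0$ and solve it by the Banach contraction principle on a closed half-plane. Your argument needs one extra ingredient — that $\psi'$ tends to $0$ near $+\infty$ — but that follows immediately from Lemma \ref{lem:dirinfini} applied to the term-by-term differentiated series, so it costs nothing. In exchange, the contraction argument is cleaner in its geometry: you work on a convex, complete set (the closed half-plane), so the $\tfrac12$-Lipschitz bound is a one-line consequence of $|T_w'|\leq\tfrac12$, whereas the Rouch\'e argument requires checking the inequality $|g|>|f-g|$ on all four sides of the rectangle (the paper only spells it out on the left side and asserts the rest). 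Both are quantitative inverse-function-theorem arguments producing essentially the same $\sigma_0$ of the form $c_0\sigma_*+\text{const}$; yours additionally gives uniqueness of the preimage in the half-plane for free. No gap.
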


\begin{proof}
Enlarging $\sigma_1$ if necessary we can always assume that $\CC_{\sigma_1}\subset U$ and that on $\CC_{\sigma_1}$, one can write $\varphi(s)=c_0s+\psi(s)=c_0 s+c_1+\psi_0(s)$ with $|\psi(s)|\leq M$ for some $M>0$ independent of $s\in\CC_{\sigma_1}$.
 Let us set $\sigma_0=c_0\sigma_1+\A(c_1)+M$ and let us consider $w\in\CC_{\sigma_0}$. We define
\begin{align*}
f(s)&=c_0s+c_1+\psi_{0}(s)-w\\
g(s)&=c_0s+c_1-w.
\end{align*}
Let $K=[\sigma_1,A]\times [-\tau,\tau]$ for some large $A$ and $\tau$. Provided $\tau$ and $A$ are large enough, $g$ vanishes in $K$.
On the other hand, for $s\in\partial K,$
$$|f(s)-g(s)|\leq M$$
whereas, for $s=\sigma_1+it,$
\begin{align*}
|g(s)|&\geq \A(w-c_0s-c_1)\\
&>\sigma_0-c_0\sigma_1-\A(c_1)=M.
\end{align*}
We then adjust $A$ and $\tau$ large enough so that this last inequality remains true on the other sides of $K$. The result then follows from an application of Rouch\'{e}'s theorem.
\end{proof}

Another consequence of Lemma \ref{lem:dirinfini} is the injectivity near infinity of maps in $\GD$.
\begin{lemma}\label{lem:injectivity}
Let $\varphi\in\GD$. There exists $\sigma_1\in\mathbb R$ such that  $\varphi_{|\CC_{\sigma_1}}$ is one-to-one.
\end{lemma}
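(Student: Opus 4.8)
The plan is to show that, far to the right, $\varphi$ is a small holomorphic perturbation of the injective affine map $s\mapsto c_0 s$, so that a standard univalence argument on the convex half-plane $\CC_{\sigma_1}$ applies.

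First I would fix a half-plane $\CC_\sigma\subset U$ on which $\varphi(s)=c_0 s+\psi(s)$ with $c_0\ge 1$ an integer and $\psi(s)=\sum_{n\ge 1}c_n n^{-s}$ convergent. If $\psi$ is constant, then $\varphi$ is affine with $\varphi'\equiv c_0\neq 0$, hence injective on all of $\CC$, and any $\sigma_1\ge\sigma$ works. Otherwise $\psi'(s)=-\sum_{n\ge 2}c_n(\log n)n^{-s}$ is a nonzero Dirichlet series whose first nonzero term has index $m\ge 2$; it converges in some half-plane (e.g.\ the half-plane of absolute convergence of $\psi$), so Lemma \ref{lem:dirinfini} applies to it and gives $m^s\psi'(s)\to b_m$ uniformly as $\A(s)\to+\infty$, where $b_m\neq 0$ is the corresponding coefficient. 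Since $|m^{-s}|=m^{-\A(s)}\to 0$, this yields $\psi'(s)\to 0$ uniformly as $\A(s)\to+\infty$.

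I would then pick $\sigma_1\ge\sigma$ large enough that $|\psi'(s)|<c_0$ for all $s\in\CC_{\sigma_1}$. For $s_1\neq s_2$ in the convex set $\CC_{\sigma_1}$, integrating $\varphi'=c_0+\psi'$ along the segment from $s_2$ to $s_1$ gives
\[
\varphi(s_1)-\varphi(s_2)=c_0(s_1-s_2)+\int_{[s_2,s_1]}\psi'(s)\,ds,
\]
whence $|\varphi(s_1)-\varphi(s_2)|\ge c_0|s_1-s_2|-|s_1-s_2|\,\sup_{\CC_{\sigma_1}}|\psi'|>0$. Hence $\varphi_{|\CC_{\sigma_1}}$ is one-to-one. (Equivalently, $\A(\varphi'(s))\ge c_0-|\psi'(s)|>0$ on $\CC_{\sigma_1}$, and one may invoke the Noshiro--Warschawski univalence criterion on convex domains.)

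The only point requiring a little care is the uniform decay of $\psi'$ near infinity, and that is exactly what Lemma \ref{lem:dirinfini}, applied to the derivative series, provides; everything else is the routine perturbation-of-an-affine-map estimate, so I do not foresee a genuine obstacle here.
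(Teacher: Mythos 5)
Your proof is correct and follows essentially the same route as the paper: choose $\sigma_1$ large enough that $|\psi'|$ is small on $\CC_{\sigma_1}$ (the paper takes $|\psi'|\le 1/2$; you take $|\psi'|<c_0$), then use the convexity of the half-plane and the mean-value/integral estimate to conclude injectivity. The only difference is that you spell out, via Lemma~\ref{lem:dirinfini} applied to $\psi'$, why $\psi'\to 0$ uniformly as $\A(s)\to+\infty$, a step the paper leaves implicit.
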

\begin{proof}
We again write $\varphi(s)=c_0s+\psi(s)$ and consider $\sigma_1\in\mathbb R$ large enough so that 
$|\psi'(s)|\leq 1/2$ on $\CC_{\sigma_1}$. Then if $s_0,s_1\in \CC_{\sigma_1},$ the mean value theorem applied to $\psi$ implies that
$$|\varphi(s_0)-\varphi(s_1)|\geq \frac 12|s_0-s_1|$$
so that $\varphi$ is one-to-one on $\CC_{\sigma_1}$.
\end{proof}

We will need that the class $\GD$ is stable under uniform convergence on half-planes.
Again, this will follow from the corresponding result on Dirichlet series.

\begin{lemma}\label{lem:uniformdir}
 Let $(D_n)$ be a sequence of $\mathcal D$ and assume that $(D_n)$ converges uniformly to $D$ on some $\CC_\sigma$, $\sigma>0$.
 Then $D\in\mathcal D$.
\end{lemma}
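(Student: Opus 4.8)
The plan is to reduce the stability of $\mathcal D$ under uniform convergence on a half-plane to the corresponding classical fact that the pointwise limit of Dirichlet series, when it converges uniformly on a half-plane, is again (the sum of) a Dirichlet series, together with an argument that extracts the common abscissa from the seminorm estimates. First I would fix $\sigma_0>\sigma$, so that the convergence $D_n\to D$ is uniform on $\overline{\CC_{\sigma_0}}$; by Lemma~\ref{lem:dirinfini} applied to each $D_n$ we know each $D_n$ is bounded on $\CC_{\sigma_0}$, hence $D$ is bounded there, and $D$ is holomorphic on $\CC_\sigma$ as a locally uniform limit of holomorphic functions. Write $D_n(s)=\sum_{k\geq 1} a_k^{(n)}k^{-s}$. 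The coefficients are recovered by the usual Bohr-type formula $a_k^{(n)}=\lim_{T\to\infty}\frac{1}{2T}\int_{-T}^{T} D_n(\sigma_0+it)\,k^{\sigma_0+it}\,dt$ (more precisely via $\lim_{\A(s)\to\infty} m^s(D_n(s)-\sum_{k<m}a_k^{(n)}k^{-s})=a_m^{(n)}$ from Lemma~\ref{lem:dirinfini}, peeling off coefficients one at a time). Since $D_n\to D$ uniformly on $\overline{\CC_{\sigma_0}}$, each coefficient converges: $a_k^{(n)}\to a_k$ for some $a_k\in\CC$.

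Next I would show that the Dirichlet series $\sum_{k\geq 1}a_k k^{-s}$ converges on a half-plane and has sum $D$. The cleanest route is to invoke the classical result (e.g.\ from the theory of general/ordinary Dirichlet series, as already used implicitly through Lemma~\ref{lem:dirinfini}) that if a sequence of Dirichlet series converges uniformly on some $\CC_{\sigma_0}$ then the limit is the sum of a Dirichlet series convergent on $\CC_{\sigma_0}$ whose coefficients are the limits of the coefficients. Concretely: for each fixed $m$, applying Lemma~\ref{lem:dirinfini} to $D-\sum_{k<m}a_kk^{-s}$ once we know the first $m-1$ coefficients — this requires knowing that $D$ \emph{is} a Dirichlet series, so instead I would argue directly that the formal series $\sum a_k k^{-s}$ has sum $D$ on $\CC_{\sigma_0}$ by a normal-families / Vitali argument: the partial sums of $\sum a_k^{(n)} k^{-s}$ for fixed large $N$ converge (as $n\to\infty$) to the partial sums of $\sum a_k k^{-s}$, and using uniform convergence of $D_n$ together with uniform control of the tails $\sum_{k>N} a_k^{(n)} k^{-s}$ on $\CC_{\sigma_0}$ (which follows from uniform convergence of each $D_n$ on $\CC_{\sigma_0}$, made quantitative and uniform in $n$ by a Montel-type compactness argument), one passes to the limit to get $D(s)=\sum_{k\geq 1} a_k k^{-s}$ uniformly on $\CC_{\sigma_0}$. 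Hence $D$ is a convergent Dirichlet series on $\CC_{\sigma_0}$, and since it extends holomorphically to all of $\CC_\sigma$ (as the locally uniform limit), we conclude $D\in\mathcal D$.

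The main obstacle I anticipate is the uniformity of the tail estimates: establishing that $\sup_n \sup_{s\in\CC_{\sigma_0}} \bigl|\sum_{k>N} a_k^{(n)} k^{-s}\bigr|\to 0$ as $N\to\infty$, which is what licenses the interchange of the limits $n\to\infty$ and $N\to\infty$. Individually each $D_n$ has convergent tails on $\CC_{\sigma_0}$, but a priori the rate could degenerate as $n\to\infty$. This is where one genuinely uses that the \emph{convergence} $D_n\to D$ is uniform (not just that each $D_n$ is in $\mathcal D$): the family $\{D_n\}\cup\{D\}$ is then bounded in the Fréchet topology of uniform convergence on half-planes, and one can transfer this into equicontinuity of the coefficient-extraction functionals, or equivalently apply a Cauchy-type estimate on a slightly smaller half-plane $\CC_{\sigma_0'}$ with $\sigma<\sigma_0'<\sigma_0$ to bound $|a_k^{(n)}|$ uniformly in $n$ by $C\,k^{\sigma_0'}$ for a constant $C$ independent of $n$ and $k$ — but this only gives convergence on $\CC_{\sigma_0'+1}$, not the needed uniformity of tails on $\CC_{\sigma_0}$ itself. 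The honest fix is to shrink once more: prove $D$ is a convergent Dirichlet series on some $\CC_{\sigma''}$ with $\sigma''$ possibly larger than $\sigma_0$, which is entirely sufficient for membership in $\mathcal D$ since $\mathcal D$ only requires convergence in \emph{some} (possibly remote) half-plane while $D$ is already known holomorphic on all of $\CC_\sigma$. With that relaxation the uniformity issue dissolves and the proof goes through.
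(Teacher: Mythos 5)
The paper's proof is much shorter and routes through an external result: it picks $N$ so that $\sup_{\CC_\sigma}|D_n-D_p|\leq 1$ for all $n,p\geq N$; since $D_N\in\mathcal D$, it is bounded on $\overline{\CC_{\sigma'+2}}$ where $\sigma'$ is the abscissa of convergence of $D_N$, so all $D_n$ with $n\geq N$ are uniformly bounded on that fixed half-plane; after translating to $\CC_0$ one has a uniformly Cauchy sequence in $\mathcal H^\infty\subset H^\infty(\CC_0)$, and the closedness of $\mathcal H^\infty$ in $H^\infty(\CC_0)$ (cited from \cite{b1} or \cite{DGMS}) gives $D\in\mathcal D$.

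Your proposal attempts to re-derive that closedness by explicit coefficient extraction, and you correctly isolate the hard step (uniform tail control), but your remedy leaves a real gap. To get $|a_k^{(n)}|\leq Ck^{\sigma_0'}$ uniformly in $n$ you need the Bohr mean-value formula for the $k$-th coefficient to be valid on a fixed vertical line for every $n$ simultaneously, and that requires the Dirichlet series representing $D_n$ to converge uniformly there. Membership of $D_n$ in $\mathcal D$ only guarantees convergence in some remote, $n$-dependent half-plane $\CC_{\nu_n}$, so the ``Cauchy-type estimate'' you invoke is simply not available at $\sigma_0'$ without further input. (A related slip appears earlier: Lemma~\ref{lem:dirinfini} gives boundedness of $D_n$ only for $\Re s$ large depending on $n$, not on a fixed $\CC_{\sigma_0}$; the paper extracts a common half-plane of boundedness via the Cauchy condition and one reference series $D_N$.) The missing ingredient is precisely Bohr's theorem --- the abscissa of uniform convergence equals the abscissa of boundedness --- which, once a common bounded half-plane is in hand, makes the coefficient formula available uniformly in $n$. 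With that supplied your argument would close, but it then amounts to reproving the closedness of $\mathcal H^\infty$ that the paper cites directly.
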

\begin{proof}
 There exists $N\geq 1$ such that, for all $n,p\geq N,$ $\sup_{s\in\CC_\sigma}|D_n(s)-D_p(s)|\leq 1.$
 Now, $D_N$ can be represented by a Dirichlet series in some half-plane $\CC_{\sigma'}$, $\sigma'\geq \sigma$.
 In particular, it is bounded in $\overline{\CC_{\sigma'+2}}$ since the abscissa of absolute convergence of $D_N$ is not
 greater than $\sigma'+1$. Hence, for all $n\geq N,$ $D_n$ is bounded in $\overline{\CC_{\sigma'+2}}$. Since
 $\mathcal H^\infty$ is a closed subspace of $H^\infty(\CC_0)$ (see  \cite[Lemma 18]{b1} or \cite[Proof of Theorem 1.17]{DGMS}),
 we conclude that $D$ belongs to $\mathcal D$.
\end{proof}

\begin{lemma}\label{lem:uniformgd}
 Let $(\varphi_n)$ be a sequence of $\GD$ such that $(\varphi_n)$ converges uniformly to $\varphi$ on some $\CC_\sigma,$ $\sigma>0.$
 Then $\varphi\in\GD$.
\end{lemma}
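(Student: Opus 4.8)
The plan is to deduce the statement from Lemma~\ref{lem:uniformdir}, once the characteristics have been pinned down. Write $\varphi_n(s)=c_{0,n}s+\psi_n(s)$ on a half-plane $\CC_{\sigma_n}$, with $c_{0,n}$ a positive integer and $\psi_n$ a Dirichlet series converging there; since each $\varphi_n$ is also holomorphic on $\CC_\sigma$ (this is implicit in the hypothesis that $\varphi_n\to\varphi$ uniformly on $\CC_\sigma$), the same is true of each $\psi_n=\varphi_n-c_{0,n}s$, and $\varphi$, being a uniform limit of holomorphic functions on the open set $\CC_\sigma$, is holomorphic on $\CC_\sigma$ as well.

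The only delicate point is to show that $(c_{0,n})$ is eventually constant. First I would pick $N$ with $\sup_{\CC_\sigma}|\varphi_n-\varphi_m|<1$ for all $n,m\geq N$. For such $n,m$, one has $\varphi_n-\varphi_m=(c_{0,n}-c_{0,m})s+(\psi_n-\psi_m)$, where $\psi_n-\psi_m$ is again a Dirichlet series convergent in a half-plane. Evaluating along the real axis at $s=t$ and letting $t\to+\infty$, Lemma~\ref{lem:dirinfini} forces $\psi_n(t)-\psi_m(t)$ to remain bounded, while $\varphi_n(t)-\varphi_m(t)$ is bounded by $1$; hence $(c_{0,n}-c_{0,m})t$ stays bounded as $t\to+\infty$, which is possible only if $c_{0,n}=c_{0,m}$. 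So $c_{0,n}=c_0$ for all $n\geq N$, with $c_0$ a positive integer.

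With the characteristic fixed, set $\psi_n:=\varphi_n-c_0s$ for $n\geq N$ and $\psi:=\varphi-c_0s$. Each $\psi_n$ agrees with a convergent Dirichlet series on $\CC_{\sigma_n}$, hence $\psi_n\in\mathcal D$, and $\sup_{\CC_\sigma}|\psi_n-\psi|=\sup_{\CC_\sigma}|\varphi_n-\varphi|\to 0$. Since $\sigma>0$, Lemma~\ref{lem:uniformdir} yields $\psi\in\mathcal D$, i.e.\ $\psi$ equals a convergent Dirichlet series in some half-plane $\CC_\nu$. Then, on $\CC_{\sigma'}$ with $\sigma':=\max(\sigma,\nu)$, we have $\varphi(s)=c_0s+\psi(s)$ with $c_0$ a positive integer and $\psi$ a Dirichlet series convergent there, and $\varphi$ is holomorphic on $\CC_\sigma\supseteq\CC_{\sigma'}$; this is exactly what is required for $\varphi\in\GD$.

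The main obstacle is the middle step: a priori the linear parts $c_{0,n}s$ could drift while being absorbed by the Dirichlet parts $\psi_n$, and ruling this out is precisely where the asymptotic behaviour of Dirichlet series near $+\infty$ (Lemma~\ref{lem:dirinfini}) is needed. Everything after that reduces to a routine application of Lemma~\ref{lem:uniformdir}.
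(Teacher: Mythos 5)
Your proof is correct and follows essentially the same route as the paper: observe that the characteristics $c_{0,n}$ are eventually constant (the paper notes that differing characteristics force $\sup_{\CC_\sigma}|\varphi_n-\varphi_m|=+\infty$, which is exactly what your growth argument via Lemma~\ref{lem:dirinfini} makes precise), and then apply Lemma~\ref{lem:uniformdir} to $(\varphi_n-c_0s)$. The only difference is that you spell out the boundedness argument for the Dirichlet parts, which the paper leaves implicit.
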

\begin{proof}
 We observe that if $\varphi_n$ and $\varphi_m$ do not have the same characteristic, then $\sup_{s\in\CC_\sigma}|\varphi_n(s)-\varphi_m(s)|=+\infty$.
 Therefore, for $n$ large enough, all the maps $\varphi_n$ have the same characteristic $c_0$.
 The lemma follows now from Lemma \ref{lem:uniformdir} applied to the sequence $(\varphi_n-c_0s)$.
\end{proof}

We have a similar result for sequences in $\GG.$

\begin{lemma}\label{lem:uniformgh}
Let $(\varphi_n)$ be a sequence in $\GG$ such that $(\varphi_n)$ converges uniformly to $\varphi$ on every $\CC_\veps,$ $\veps>0$.
Then $\varphi\in\GG.$
\end{lemma}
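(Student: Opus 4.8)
The plan is to reduce the statement about $\GG$ to the already-established Lemma \ref{lem:uniformgd} about $\GD$, and then to recover the extra features that distinguish $\GG$ from $\GD$, namely that the limit is defined on all of $\CC_0$ with a Dirichlet part $\psi$ which converges \emph{uniformly} on every $\CC_\veps$ (not just on some single $\CC_\sigma$). First I would write $\varphi_n(s)=c_0^{(n)}s+\psi_n(s)$ with each $\psi_n$ converging uniformly on every $\CC_\veps$. Since uniform convergence of $(\varphi_n)$ on $\CC_1$, say, forces all but finitely many of the $\varphi_n$ to share the same characteristic (two symbols with distinct characteristics differ by an unbounded function on any half-plane, exactly as in the proof of Lemma \ref{lem:uniformgd}), I may discard the exceptional terms and assume $c_0^{(n)}=c_0$ for all $n$, with $c_0$ a positive integer. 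Then $\psi_n=\varphi_n-c_0 s$ converges uniformly to $\psi:=\varphi-c_0 s$ on every $\CC_\veps$; in particular $\GG\subset\GD$, so Lemma \ref{lem:uniformgd} already gives $\varphi\in\GD$, and what remains is to upgrade the conclusion.

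Next I would fix $\veps>0$ and show that $\psi$, which we now know is a Dirichlet series, converges uniformly on $\CC_\veps$. Here the key tool is a Montel/normal-families type argument combined with the fact (recorded in Lemma \ref{lem:uniformdir} and its proof, via the closedness of $\mathcal H^\infty$ in $H^\infty(\CC_0)$) that a uniform limit of uniformly convergent Dirichlet series on a half-plane is again uniformly convergent there. Concretely: apply the hypothesis on $\CC_{\veps/2}$ to get that $(\psi_n)$ is uniformly Cauchy on $\CC_{\veps/2}$; each $\psi_n$ lies in $\mathcal H^\infty(\CC_{\veps/2})$ (a translate of $\mathcal H^\infty$) since it converges uniformly there; since $\mathcal H^\infty$ is closed in $H^\infty(\CC_0)$, the uniform limit $\psi$ restricted to $\CC_{\veps/2}$ is again an element of $\mathcal H^\infty(\CC_{\veps/2})$, i.e. its Dirichlet series converges uniformly on $\CC_{\veps/2}$, hence a fortiori uniformly on $\CC_\veps$. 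As $\veps>0$ was arbitrary, $\psi$ converges uniformly on every $\CC_\veps$, so $\varphi=c_0 s+\psi\in\GG$.

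The one point that requires a little care — and which I would single out as the main obstacle — is making sure that the limit symbol $\varphi$ is genuinely defined (as a holomorphic self-map, or at least a holomorphic map) on \emph{all} of $\CC_0$, rather than merely on the half-plane $\CC_\sigma$ where convergence was assumed, and that its Dirichlet representation extends correspondingly. This is handled by the preceding argument: once we know $\psi_n\to\psi$ uniformly on each $\CC_\veps$ and each $\psi_n\in\mathcal H^\infty(\CC_\veps)$, the closedness of $\mathcal H^\infty$ gives that $\psi$ itself is in $\hinfplus$, in particular holomorphic and bounded on each $\CC_\veps$ and represented by a Dirichlet series convergent in all of $\CC_0$; adding back $c_0 s$ produces a holomorphic function on $\CC_0$ of the required form. (If one also wants $\varphi(\CC_0)\subset\CC_0$, i.e. that $\varphi$ actually belongs to the Gordon–Hedenmalm-type class with its mapping constraint, this is inherited from the $\varphi_n$ by taking limits, since the condition $\A\varphi_n(s)>0$ — or the alternative $\psi_n\equiv i\tau_n$ — passes to the limit on each compact subset, using that the limit of the $\psi_n$ cannot be a nonreal constant unless all but finitely many $\psi_n$ are, by the uniform convergence; but as stated the lemma only asks for membership in $\GG$, so this last remark is optional.)
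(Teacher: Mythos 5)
Your proof is correct and follows essentially the same route as the paper's. The paper's version is terser — it writes $\varphi_n(s)=c_0 s+\psi_n(s)$ with $\psi_n\in\hinfplus$ (using the characteristic-stabilization observation from Lemma \ref{lem:uniformgd} implicitly, via ``as above''), invokes the fact that $\hinfplus$ is closed under uniform convergence on every half-plane $\CC_\veps$ to conclude $\psi\in\hinfplus$, and is done. You unpack precisely that closedness fact, reducing it to the closedness of $\mathcal H^\infty$ in $H^\infty(\CC_0)$ (applied on translated half-planes $\CC_{\veps/2}$) as in the proof of Lemma \ref{lem:uniformdir}; this is the same mechanism, just spelled out. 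Your intermediate detour through Lemma \ref{lem:uniformgd} is unnecessary (the paper goes directly to $\psi\in\hinfplus$), and your closing remark about the mapping constraint $\varphi(\CC_0)\subset\CC_0$ is rightly flagged as optional, since $\GG$ only requires $\varphi:\CC_0\to\CC$ of the stated form, not $\varphi\in\GH$.
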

\begin{proof}
As above, we may write $\varphi_n(s)=c_0 s+\psi_n(s)$ with $\psi_n\in\hinfplus$. Since $\hinfplus$
is closed under uniform convergence over every half-plane $\CC_\veps,$ $\veps>0,$
$(\psi_n)$ converges uniformly on each $\CC_\veps$, $\veps>0$, to $\psi\in\hinfplus$.
Therefore $\varphi(s)=c_0 s+\psi(s)\in\GG.$
\end{proof}

The maps of the class $\GD$ are designed so that $f\circ\varphi\in \mathcal D$ for all $f\in\mathcal D$ and all $\varphi\in\GD$.
We can also compose two maps of $\GD$ as the following lemma indicates.

\begin{lemma}\label{lem:compogd}
Let $u\in \mathcal{D}\cup\GD$ and $\varphi\in\GD$ and let us write them
\begin{align*}
 u(s)&=d_0 s+d_1+\sum_{n\geq 2}d_n n^{-s}\\
 \varphi(s)&=c_0 s +c_1+\sum_{n\geq 2}c_n n^{-s}.
\end{align*}
Then $u\circ\varphi$ is in $\mathcal{D}\cup\GD$. Moreover, one can write
$$u\circ\varphi=d_0c_0 s +d_0c_1+d_1+\sum_{n\geq 2}a_n n^{-s}$$
where, for $n\geq 2,$
$$a_n=d_0 c_n+\sum_{k^{c_0}|n,\ k \geq 1}\alpha(k,n)d_k$$
where $\alpha(k,n)$ depends only on $\varphi$ and $\alpha(n,n^{c_0})=n^{-c_1}$.
\end{lemma}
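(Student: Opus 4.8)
The plan is to reduce everything to a formal manipulation of Dirichlet series and then justify convergence using the earlier lemmas. First I would record the reduction: since $u\in\mathcal D\cup\GD$ means $u(s)=d_0 s + D_u(s)$ with $d_0\in\{0,1,2,\dots\}$ and $D_u$ a Dirichlet series convergent in some half-plane, and similarly $\varphi(s)=c_0 s + \psi(s)$ with $c_0\geq 1$, we have $u\circ\varphi(s) = d_0 c_0 s + d_0\psi(s) + D_u(\varphi(s))$. The term $d_0 c_0 s$ carries the (possibly zero) linear part; $d_0\psi(s)$ is a Dirichlet series convergent in a half-plane; so the whole question is the nature of $D_u\circ\varphi$. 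Here I would invoke the standard fact (implicit in the design of $\GD$, and which follows from Lemma~\ref{lem:gdinfini} together with the fact that uniformly convergent Dirichlet series compose: if $g\in\mathcal D$ and $\varphi\in\GD$ then $g\circ\varphi\in\mathcal D$) that $D_u\circ\varphi\in\mathcal D$. Combining, $u\circ\varphi\in\mathcal D\cup\GD$ — it lies in $\GD$ exactly when $d_0 c_0\geq 1$, i.e. when $u$ had nonzero characteristic, and in $\mathcal D$ otherwise. That settles the first assertion.

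For the coefficient formula, I would work formally first and worry about convergence afterward (the convergence already being guaranteed by the previous paragraph). Write $\varphi(s) = c_0 s + c_1 + R(s)$ where $R(s) = \sum_{n\geq 2} c_n n^{-s}$. For the monomial $k^{-s}$ with $k\geq 2$ we compute
\begin{align*}
 k^{-\varphi(s)} &= k^{-c_0 s - c_1 - R(s)} = k^{-c_1}\,(k^{c_0})^{-s}\, e^{-R(s)\log k}.
\end{align*}
Expanding $e^{-R(s)\log k} = \sum_{j\geq 0} \frac{(-\log k)^j}{j!} R(s)^j$ and noting that $R(s)^j$ is a Dirichlet series supported on integers that are products of $j$ factors each $\geq 2$, hence supported on integers $\geq 2^j$ and in particular on integers $>1$, we see that $k^{-\varphi(s)}$ is $k^{-c_1}$ times a Dirichlet series whose leading term is $(k^{c_0})^{-s}$ (coming from $j=0$) and whose remaining terms are supported on integers that are $k^{c_0}$ times an integer $>1$, hence on proper multiples of $k^{c_0}$. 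Writing $k^{-\varphi(s)} = \sum_{n} \beta(k,n) n^{-s}$, this shows $\beta(k,n)=0$ unless $k^{c_0}\mid n$ (so $k\mid n^{1/c_0}$ in the relevant sense), $\beta(k,k^{c_0}) = k^{-c_1}$, and $\beta(k,n)$ depends only on $c_1, c_2, c_3,\dots$, i.e. only on $\varphi$. Set $\alpha(k,n):=\beta(k,n)$; the stated normalization $\alpha(n,n^{c_0}) = n^{-c_1}$ is exactly $\beta(n,n^{c_0})=n^{-c_1}$.

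Now I would assemble the full series. Since $u(s) = d_0 s + d_1 + \sum_{k\geq 2} d_k k^{-s}$,
\begin{align*}
 u(\varphi(s)) &= d_0\varphi(s) + d_1 + \sum_{k\geq 2} d_k\, k^{-\varphi(s)}\\
 &= d_0 c_0 s + d_0 c_1 + d_0 R(s) + d_1 + \sum_{k\geq 2} d_k \sum_{n\geq 2}\alpha(k,n) n^{-s}.
\end{align*}
The constant term is $d_0 c_1 + d_1$ and the linear term is $d_0 c_0 s$, as claimed. For $n\geq 2$ the coefficient of $n^{-s}$ is the $d_0 R(s)$ contribution, namely $d_0 c_n$, plus $\sum_{k\geq 2}\alpha(k,n) d_k$ summed over those $k$ with $k^{c_0}\mid n$; since $\alpha(1,n)$ can be taken to be $0$ for $n\geq 2$ (the $k=1$ term of $u$ is the constant $d_1$, already accounted for), we may write this uniformly as $a_n = d_0 c_n + \sum_{k^{c_0}\mid n,\ k\geq 1}\alpha(k,n) d_k$. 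The rearrangement of the double sum into a single Dirichlet series is legitimate in the half-plane of absolute convergence, and the resulting series is the one whose existence was established in the first paragraph, so the identity of coefficients is valid.

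The main obstacle is purely bookkeeping: one must be careful that the expansion $e^{-R(s)\log k}$ and the subsequent rearrangement genuinely produce an absolutely convergent Dirichlet series in a common half-plane, and that the indexing "$k^{c_0}\mid n$" correctly captures the support of $\beta(k,\cdot)$ — in particular that no cancellation of the leading $(k^{c_0})^{-s}$ term occurs and that contributions from different $k$ land on the coefficient of the same $n^{-s}$ only in the stated pattern. All of this is controlled by Lemma~\ref{lem:dirinfini} and Lemma~\ref{lem:gdinfini}, which guarantee we may pass to a half-plane far enough to the right where $R(s)$ is small and everything converges absolutely; there are no essential difficulties beyond that.
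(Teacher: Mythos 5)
Your proof follows essentially the same route as the paper, which in turn follows Gordon--Hedenmalm: expand $k^{-\varphi(s)}=k^{-c_1}(k^{c_0})^{-s}\,k^{-R(s)}$, identify the support $k^{c_0}\mid n$ of the coefficients, set $\alpha(k,n)$ to be the resulting coefficient, and sum over $k$. The only presentational difference is that the paper writes $k^{-R(s)}$ as the infinite product over $l\geq 2$ of $\sum_{j\geq 0}\frac{(-c_l\log k)^j l^{-js}}{j!}$, whereas you expand $\exp(-R(s)\log k)=\sum_{j\geq 0}\frac{(-\log k)^j}{j!}R(s)^j$ directly; these are, of course, identical.

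Two small points worth tightening. First, the opening paragraph invokes as a ``standard fact'' that $D_u\circ\varphi\in\mathcal D$ when $D_u\in\mathcal D$ and $\varphi\in\GD$ — but this is precisely the first assertion of the lemma (the case $d_0=0$), so citing it there is circular. Since your later paragraphs re-derive it, no actual gap results, but it should be phrased as an outline rather than an appeal to authority. Second, the closing paragraph defers the absolute-convergence justification to Lemmas~\ref{lem:dirinfini} and~\ref{lem:gdinfini}, but neither directly provides what is needed. The relevant estimate, which the paper writes out explicitly, is that the absolute version of your expansion sums (for real $\sigma=\A(s)$) to
$$\sum_{k\geq 2}|d_k|\,k^{-c_0\sigma}k^{-\A(c_1)}\exp\left(\log(k)\sum_{l\geq 2}|c_l|l^{-\sigma}\right)=\sum_{k\geq 2}|d_k|\,k^{-c_0\sigma-\A(c_1)+C(\sigma)},$$
with $C(\sigma)=\sum_{l\geq 2}|c_l|l^{-\sigma}$. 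Since $C(\sigma)\to 0$ as $\sigma\to+\infty$ (the Dirichlet series $\sum_l c_l l^{-s}$ has a half-plane of absolute convergence), and since $\sum_k|d_k|k^{-\sigma'}<\infty$ for $\sigma'$ large, the double sum converges absolutely in a remote half-plane. This is the step you flag as ``no essential difficulties,'' and indeed it is short, but it is the whole convergence content of the lemma and should be stated rather than gestured at.
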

\begin{proof}
 We follow the proof of \cite[p.318]{gh}. By Lemma \ref{lem:gdinfini}, the map $u\circ\varphi$ is well defined on some half-plane $\CC_\sigma$.
 We then write, for $\A(s)$ large enough,
 \begin{align*}
  u\circ\varphi(s)&=d_0\left(c_0 s+c_1+\sum_{n\geq 2}c_nn^{-s}\right)+d_1\\
  &\quad\quad+\sum_{k\geq 2}d_k k^{-c_0 s }k^{-c_1}\prod_{l=2}^{+\infty}\sum_{j=0}^{+\infty}\frac{(-c_l \log k)^j l^{-js}}{j!}.
 \end{align*}
The lemma will be proved if we can prove the absolute convergence of the Dirichlet series
obtained by expanding the previous equality.  This follows from the convergence of
\begin{align*}
 \sum_{k\geq 2}|d_k| k^{-c_0 \A(s) }k^{-\A(c_1)}\prod_{l=2}^{+\infty}\sum_{j=0}^{+\infty}\frac{|c_l| (\log k)^j l^{-j\A(s)}}{j!}\\
 =\sum_{k\geq 2}|d_k| k^{-c_0\A(s)}k^{-\A(c_1)}\exp\left(\log(k)\sum_{l=2}^{+\infty}|c_l|l^{-\A(s)}\right)
\end{align*}
which is due to the absolute convergence of all the involved Dirichlet series in some remote half-plane (which in particular implies
that $\sum_{l\geq 2}|c_l| l^{-\A(s)}$ is bounded provided $\A(s)$ is large enough).
\end{proof}

As a corollary, we get the following result.

\begin{lemma}\label{lem:exponent}
 Let $\varphi\in\GD$, $\varphi(s)=c_0 s+c_1+\psi_{0}(s)$. Then $2^{-\varphi(s)}=\sum_{k\geq 1}b_k k^{-s}$
 with $b_k=0$ provided $2^{c_0}$ does not divide $k$. Moreover, $b_{2^{c_0}}=2^{-c_1}\neq 0.$
\end{lemma}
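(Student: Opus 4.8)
The plan is to recognize that $2^{-\varphi(s)}$ is a composition of the form $u\circ\varphi$ treated in Lemma~\ref{lem:compogd}, and then simply read off the coefficients.

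First I would set $u(s)=2^{-s}$. In the notation of Lemma~\ref{lem:compogd}, $u\in\mathcal D$ with $d_0=d_1=0$, $d_2=1$ and $d_n=0$ for $n\geq 3$. Since $\varphi\in\GD$, Lemma~\ref{lem:compogd} applies and shows that $u\circ\varphi=2^{-\varphi(s)}$ belongs to $\mathcal D\cup\GD$; because $d_0c_0=0$, the leading coefficient vanishes, so in fact $2^{-\varphi(s)}\in\mathcal D$ and
$$2^{-\varphi(s)}=d_0c_0s+d_0c_1+d_1+\sum_{n\geq2}a_nn^{-s}=\sum_{n\geq2}a_nn^{-s}.$$
Writing $2^{-\varphi(s)}=\sum_{k\geq1}b_kk^{-s}$, we already obtain $b_1=0$, which is consistent with the statement since $2^{c_0}\nmid1$ when $c_0\geq1$.

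Next I would invoke the explicit formula for $a_n$ from Lemma~\ref{lem:compogd}: for $n\geq2$,
$$a_n=d_0c_n+\sum_{k^{c_0}\mid n,\ k\geq1}\alpha(k,n)\,d_k.$$
As $d_0=0$ and $d_k=0$ for every $k\neq2$, the sum reduces to the single term $k=2$, and this term is present only when $2^{c_0}\mid n$. Hence $b_n=a_n=\alpha(2,n)$ if $2^{c_0}\mid n$ and $b_n=a_n=0$ otherwise, which is the first assertion. For the second, take $n=2^{c_0}$: then $b_{2^{c_0}}=\alpha(2,2^{c_0})\,d_2=\alpha(2,2^{c_0})$, and the identity $\alpha(n,n^{c_0})=n^{-c_1}$ from Lemma~\ref{lem:compogd} (applied with the running index equal to $2$) yields $b_{2^{c_0}}=2^{-c_1}\neq0$.

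There is essentially no real obstacle here; the statement is a direct corollary of Lemma~\ref{lem:compogd}. The only points deserving a little care are verifying that $u(s)=2^{-s}$ genuinely fits the hypotheses of that lemma (it is an ordinary Dirichlet series in $\mathcal D$, convergent on all of $\CC_0$), and bookkeeping the divisibility condition $k^{c_0}\mid n$ carefully so as to identify exactly which $b_n$ vanish and to pin down the value of $b_{2^{c_0}}$.
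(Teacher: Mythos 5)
Your proof is correct and is exactly what the paper intends: Lemma~\ref{lem:exponent} is stated as an immediate corollary of Lemma~\ref{lem:compogd}, and specializing to $u(s)=2^{-s}$ (so $d_0=d_1=0$, $d_2=1$, $d_k=0$ otherwise) and reading off the coefficient formula, including the identification $b_{2^{c_0}}=\alpha(2,2^{c_0})=2^{-c_1}$, is the intended argument. No gaps.
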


If we take into account the range and the domain of definitions of the maps, then we also get the following lemma
(which is also Proposition 2.13 in \cite{cgl}).

\begin{lemma}\label{lem:composition}
 Let $\varphi_1,\varphi_2\in\GH$. Then $\varphi_1\circ\varphi_2\in\GH.$
\end{lemma}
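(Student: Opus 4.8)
The plan is to deduce the composition law from the Gordon--Hedenmalm characterisation of $\mathcal G$ recalled in the introduction, together with the formal composition formula of Lemma~\ref{lem:compogd}. Write $\varphi_i(s)=c_0^{(i)}s+\psi_i(s)$ for $i=1,2$, where $c_0^{(i)}$ is a positive integer and $\psi_i$ obeys condition (b) of the definition of $\mathcal G$. Since $\psi_i$ converges uniformly on every $\CC_\veps$, its Dirichlet series converges on $\CC_0$, so $\varphi_i$ is holomorphic on $\CC_0$; moreover, in either case of (b) one has $\A\psi_i(s)\ge 0$ on $\CC_0$, whence $\A\varphi_i(s)\ge c_0^{(i)}\A s\ge \A s$ there. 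In particular $\varphi_i\in\GD$ (with $\sigma=0$), and $\varphi_i(\CC_{1/2})\subset\CC_{1/2}$, so $\varphi_1\circ\varphi_2$ is a well-defined analytic self-map of $\CC_{1/2}$.

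First I would show $\varphi_1\circ\varphi_2\in\mathcal G$. Since $\varphi_1,\varphi_2\in\mathcal G$, the operators $C_{\varphi_1}$ and $C_{\varphi_2}$ are bounded on $\mathcal H^2$ by \cite{gh}, and for every $f\in\mathcal H^2$,
$$
C_{\varphi_2}C_{\varphi_1}f=(f\circ\varphi_1)\circ\varphi_2=f\circ(\varphi_1\circ\varphi_2)=C_{\varphi_1\circ\varphi_2}f,
$$
so $C_{\varphi_1\circ\varphi_2}=C_{\varphi_2}C_{\varphi_1}$ is bounded on $\mathcal H^2$. As $\varphi_1\circ\varphi_2$ is an analytic self-map of $\CC_{1/2}$, the Gordon--Hedenmalm theorem then gives $\varphi_1\circ\varphi_2\in\mathcal G$; write it $\varphi_1\circ\varphi_2(s)=c_0s+\psi(s)$ with $c_0=\cha(\varphi_1\circ\varphi_2)\in\NN$ and $\psi\in\mathcal D$.

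It remains to identify $c_0$. Applying Lemma~\ref{lem:compogd} with $u=\varphi_1\in\GD$ and $\varphi=\varphi_2\in\GD$, the map $\varphi_1\circ\varphi_2$ has linear coefficient $c_0^{(1)}c_0^{(2)}$, and $\widetilde\psi:=\varphi_1\circ\varphi_2-c_0^{(1)}c_0^{(2)}s$ belongs to $\mathcal D$. Hence $(c_0-c_0^{(1)}c_0^{(2)})s=\widetilde\psi-\psi\in\mathcal D$; since by Lemma~\ref{lem:dirinfini} every element of $\mathcal D$ is bounded as $\A s\to+\infty$, this forces $c_0=c_0^{(1)}c_0^{(2)}\ge 1$. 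Therefore $\varphi_1\circ\varphi_2\in\mathcal G_{>0}=\GH$, which is the claim; injectivity is not asserted here, so nothing further is needed.

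The argument is essentially formal, and the only delicate point is ruling out $c_0=0$ when reading off the characteristic — this is exactly why Lemma~\ref{lem:compogd} (or, equivalently, the estimate $\A(\varphi_1\circ\varphi_2(s))\ge c_0^{(1)}c_0^{(2)}\A s\to+\infty$, which shows $\varphi_1\circ\varphi_2\notin\mathcal D$) is invoked. If one prefers not to use \cite{gh} at all, one can argue directly: Lemma~\ref{lem:compogd} already gives $\varphi_1\circ\varphi_2\in\GD$ with positive characteristic, the self-map property was shown above, and the remaining uniform-convergence requirement of $\mathcal G$ follows from a boundedness bootstrap — since each $\psi_i\in\hinfplus$ and $\varphi_2(\CC_\veps)\subset\CC_{c_0^{(2)}\veps}$, the functions $\psi_2$ and $\psi_1\circ\varphi_2$ are bounded on every $\CC_\veps$, so $\varphi_1\circ\varphi_2-c_0^{(1)}c_0^{(2)}s\in\hinfplus$ and in particular converges uniformly on each $\CC_\veps$. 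Either route yields $\varphi_1\circ\varphi_2\in\GH$.
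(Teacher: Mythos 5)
Your argument is correct, but it takes a heavier route than the paper intends. The paper does not spell out a proof of this lemma: it simply remarks that it follows ``if we take into account the range and the domain of definitions of the maps'' (and cites Proposition 2.13 of \cite{cgl}), the point being that Lemma~\ref{lem:compogd} already shows $\varphi_1\circ\varphi_2\in\GD$ with characteristic $c_0^{(1)}c_0^{(2)}\geq 1$, and since $\varphi_1,\varphi_2$ are self-maps of $\CC_0$ so is the composite; the characterization of $\GH$ stated just after the definition of $\GD$ --- ``$\GH$ is the set of functions $\varphi\in\GD$ such that $\varphi$ is defined on $\CC_0$ and $\varphi(\CC_0)\subset\CC_0$'' --- then finishes it in one line. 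Your first argument instead detours through the Gordon--Hedenmalm boundedness theorem (the factorization $C_{\varphi_1\circ\varphi_2}=C_{\varphi_2}C_{\varphi_1}$ plus the reverse implication of \cite{gh}); this is valid and self-contained, but it invokes the full analytic machinery of \cite{gh} where an elementary domain/range check suffices. Your second sketch, in the last paragraph, is much closer to the paper's intent. One small omission there: to conclude $\varphi_1\circ\varphi_2\in\mathcal G$ directly from the definition you must also verify condition (b), i.e.\ that $\psi := c_0^{(1)}\psi_2 + \psi_1\circ\varphi_2$ either is a pure imaginary constant or maps $\CC_0$ into $\CC_0$; this does hold (each summand has nonnegative real part on $\CC_0$, and the open mapping theorem upgrades ``$\geq 0$'' to ``$>0$'' unless $\psi$ is constant), but you state it only for the individual $\psi_i$ and not for the composite. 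If instead one quotes the $\GD$-characterization of $\GH$, this step is absorbed into it and need not be redone.
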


We are now interested in the reciprocal of a function $u\in \GD$.

\begin{lemma}\label{lem:inverse}
 Let $u:\Omega\to \CC$ be injective which belongs to $\GD$ with characteristic $1$, $u(s)=s+\psi(s).$ Then $u^{-1}:u(\Omega)\to \Omega$
 belongs to $\GD$.
\end{lemma}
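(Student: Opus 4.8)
The plan is to build $u^{-1}$ explicitly on a remote half-plane by a contraction argument, then to identify it with the global inverse using the injectivity of $u$. Write $u(s)=s+c_1+\psi_0(s)$, where $\psi_0(s)=\sum_{n\ge 2}d_n n^{-s}$ converges in some $\CC_\sigma$; if $\psi_0\equiv 0$ then $u$ is affine and the claim is trivial, so assume $\psi_0\not\equiv 0$. By Lemma \ref{lem:dirinfini}, $\psi_0(s)\to 0$ uniformly as $\A(s)\to+\infty$, and a Cauchy estimate on unit discs shows $\psi_0'(s)\to 0$ as well. Fix $\rho$ large enough that $\CC_{\rho-|c_1|-1}\subset\Omega$, that $\sup_{\CC_{\rho-|c_1|-1}}|\psi_0|\le 1$, and that $\sup_{\CC_{\rho-|c_1|-1}}|\psi_0'|\le \tfrac12$; all three are possible by the above and by Lemma \ref{lem:gdinfini}.

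Next I would set up the fixed point. Let $B$ be the complete metric space of holomorphic functions $g$ on $\CC_\rho$ with $\sup_{\CC_\rho}|g|\le 1$, with the supremum distance, and define $T:B\to B$ by $T(g)(w)=-\psi_0\big(w-c_1+g(w)\big)$. For $g\in B$ and $w\in\CC_\rho$ we have $\A(w-c_1+g(w))\ge \rho-|c_1|-1$, so $T(g)$ is well defined, holomorphic, with $\|T(g)\|_\infty\le 1$; estimating $\psi_0$ along the segment joining $w-c_1+g_1(w)$ and $w-c_1+g_2(w)$ gives $\|T(g_1)-T(g_2)\|_\infty\le\tfrac12\|g_1-g_2\|_\infty$. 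Banach's theorem yields a unique $\phi_0\in B$ with $\phi_0=T(\phi_0)$, and $\phi_0=\lim_n T^n(0)$ uniformly on $\CC_\rho$. Putting $v(w)=w-c_1+\phi_0(w)$, the equation $\phi_0=T(\phi_0)$ rearranges to $u(v(w))=w$ on $\CC_\rho$; since $v(\CC_\rho)\subset\CC_{\rho-|c_1|-1}\subset\Omega$ and $u$ is injective on $\Omega$, and since $u(\Omega)\supset\CC_{\sigma_0}$ for some $\sigma_0$ by Lemma \ref{lem:reverseinclusion}, it follows that $u^{-1}$ coincides with $v$ on $\CC_{\max(\rho,\sigma_0)}$.

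It then remains to see that $\phi_0\in\mathcal D$, for then $u^{-1}(w)=w+\big(-c_1+\phi_0(w)\big)$ on a half-plane with $-c_1+\phi_0\in\mathcal D$, which is exactly $u^{-1}\in\GD$. I would prove by induction that each iterate $T^n(0)$ lies in $\mathcal D$ with vanishing constant term: this is clear for $n=0$ and for $n=1$ (where $T(0)(w)=-\sum_{n\ge2}d_n n^{c_1}n^{-w}$), and for the inductive step the map $s\mapsto s-c_1+T^n(0)(s)$ is in $\GD$ with characteristic $1$, so by Lemma \ref{lem:compogd} the composition $\psi_0\circ\big(s\mapsto s-c_1+T^n(0)(s)\big)=-T^{n+1}(0)$ lies in $\mathcal D\cup\GD$; since $\psi_0$ has characteristic $0$ the composition has characteristic $0$, hence lies in $\mathcal D$, and the formula of Lemma \ref{lem:compogd} shows its constant term is $d_0c_1+d_1=0$. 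Thus $(T^n(0))$ is a sequence of $\mathcal D$ converging uniformly on $\CC_\rho$, and Lemma \ref{lem:uniformdir} gives $\phi_0\in\mathcal D$.

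The main obstacle is precisely this last point: producing a genuine Dirichlet series for the inverse, not merely a holomorphic function. It is overcome by combining the contraction — which realizes $\phi_0$ as a uniform limit of the iterates $T^n(0)$ — with the stability of $\mathcal D$ under composition with $\GD$-maps (Lemma \ref{lem:compogd}) and under uniform convergence on half-planes (Lemma \ref{lem:uniformdir}). The only other care needed is in the choice of $\rho$, so that the relevant half-planes lie inside the domain of $u$ and $\psi_0,\psi_0'$ are small enough there for the contraction estimates; this is harmless, since those quantities are automatically controlled on sufficiently remote half-planes by Lemmas \ref{lem:dirinfini} and \ref{lem:gdinfini}.
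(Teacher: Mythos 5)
Your proposal is correct and is essentially the paper's argument: a Picard-type iteration rewriting $u(v)=w$ as $v=w-\psi(v)$, a contraction estimate on a remote half-plane coming from Lemma \ref{lem:dirinfini}, and then passage to the limit using Lemmas \ref{lem:compogd} and \ref{lem:uniformdir} (the paper invokes Lemma \ref{lem:uniformgd}, but that is the same closure-under-uniform-limits fact packaged at the level of $\GD$ rather than $\mathcal D$). The only differences are cosmetic: the paper iterates $v_{n+1}(w)=w-\psi(v_n(w))$ starting from $v_0(w)=w$, while you iterate on the perturbation $\phi=v-(w-c_1)$ with initial point $0$ and phrase the convergence via the Banach fixed point theorem; both yield the same fixed point and the same conclusion.
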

\begin{proof}
 Let $\sigma_0>0$ be such that $| \psi|\leq 1$ and $|\psi'|\leq 1/2$ on $\overline{\CC_{\sigma_0}}$.
 Let $w\in\CC$ be such that $\A(w)\geq \sigma_0+1$ and define, for $s\in\overline{\CC_{\sigma_0}},$
 $$\phi_w(s)=s-\big(u(s)-w\big)=w-\psi(s).$$
 Then $\phi_w(\overline{\CC_{\sigma_0}})\subset \overline{\CC_{\sigma_0}}$ and $|\phi_w'|\leq 1/2$ on $\CC_{\sigma_0}$. It follows
 that the sequence $(v_n(w))_{n\geq 0}$ defined by $v_0(w)=w$ and, for all $n\geq 0,$
 $$v_{n+1}(w)=v_n(w)-\big(u(v_n(w))-w\big)=w-\psi(v_n(w))$$
 converges to some $v(w)\in\overline{\CC_{\sigma_0}}$ which satisfies
 $$\left\{
 \begin{array}{rcl}
  u\circ v(w)&=&w\\
  |v_{n+1}(w)-v(w)|&\leq&\left(\frac 12\right)^n |w-v(w)|\textrm{ for all }n\geq 1.
 \end{array}\right.$$
Therefore, $v=u^{-1}_{|\overline{\CC_{\sigma_0+1}}}.$ Moreover, letting $s=v(w),$ $\A(s)\geq\sigma_0$
so that
$$|w-v(w)|=|u(s)-s|\leq 1.$$
Hence, $(v_n)$ converges uniformly to $v$ on $\overline{\CC_{\sigma_0+1}}$. Now, the recurrence formula, Lemma \ref{lem:compogd} and Lemma \ref{lem:uniformgd}
ensure that $v\in\GD$.
\end{proof}

We finally need some results on the iterates of a function in $\mathcal G$.
For all $n\in\mathbb{N}$ we denote the $n$-th iterate of $\varphi$ by $\varphi^{[n]},$ that is,
 \begin{equation*}
   \varphi^{[n]}:=\varphi\circ\cdots\circ\varphi.
 \end{equation*}

\begin{lemma}\label{lem:iteration}
 Let $\varphi\in\GH,$ $\varphi(s)=c_0s+\psi(s)=c_0s+c_1+\psi_0(s)$.
\begin{itemize}
 \item[(i)] If $c_0\geq 2,$ then for all $\sigma>0,$ for all $n\geq 1,$ $\varphi^{[n]}(\CC_\sigma)\subset \CC_{c_0^n \sigma}.$
 \item[(ii)] If $c_0\geq 1$ and $\psi_0\neq 0,$ then for all $\sigma>0,$ for all $\alpha>0,$ there exists $N\in\mathbb N$ such that,
 for all $n\geq N,$ $\varphi^{[n]}(\CC_\sigma)\subset \CC_{\sigma+n(\A(c_1)-\alpha)}.$
\end{itemize}
\end{lemma}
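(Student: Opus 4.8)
The plan is to prove the two assertions separately; assertion (i) is elementary, and in (ii) the case $c_0\ge 2$ reduces to (i), so that all the real work sits in the case $c_0=1$.

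\textit{Assertion (i).} By the definition of the Gordon--Hedenmalm class, $\varphi\in\GH$ with $c_0\ge1$ forces either $\psi\equiv i\tau$ or $\psi(\CC_0)\subset\CC_0$; in both cases $\A\psi\ge0$ on $\CC_0$, so $\A(\varphi(s))=c_0\A(s)+\A\psi(s)\ge c_0\A(s)$ there, i.e.\ $\varphi(\CC_\sigma)\subset\CC_{c_0\sigma}$, and one concludes by an immediate induction.

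\textit{Assertion (ii).} Note first that $\A(c_1)\ge0$ in every case: by Lemma~\ref{lem:dirinfini}, $\psi(s)\to c_1$ as $\A(s)\to+\infty$, while $\A\psi\ge0$ on $\CC_0$. If $c_0\ge2$, then by (i) $\A(\varphi^{[n]}(s))>c_0^n\sigma$ on $\CC_\sigma$, and $\sigma(c_0^n-1)\ge n\A(c_1)\ge n(\A(c_1)-\alpha)$ once $n$ is large (exponential beats linear), which is the claim. So assume $c_0=1$; since $\psi_0\ne0$, $\psi$ is non-constant, hence $\psi(\CC_0)\subset\CC_0$ and $\A\psi>0$ on $\CC_0$. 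We may also assume $0<\alpha<\A(c_1)$, for otherwise $\sigma+n(\A(c_1)-\alpha)\le\sigma<\A(s)\le\A(\varphi^{[n]}(s))$ for all $s\in\CC_\sigma$ and $n\ge1$ (the last inequality because $\A(\varphi(s))=\A(s)+\A\psi(s)>\A(s)$), so that there is nothing to prove. Iterating $\varphi(s)=s+c_1+\psi_0(s)$ gives
\[
 \A\big(\varphi^{[n]}(s)\big)=\A(s)+n\A(c_1)+\sum_{j=0}^{n-1}\A\psi_0\big(\varphi^{[j]}(s)\big),
\]
so everything hinges on controlling the error sum, for which I would prove the following \emph{Escape Lemma}: for every $\sigma'>0$ there is $N_0\in\NN$ with $\varphi^{[n]}(\CC_\sigma)\subset\CC_{\sigma'}$ for all $n\ge N_0$. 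Granting it, use Lemma~\ref{lem:dirinfini} (applied to $\psi_0$) to pick $\sigma'>\sigma$ with $|\psi_0|\le\alpha/2$ on $\CC_{\sigma'}$, and let $N_0$ be the corresponding integer; then for $s\in\CC_\sigma$ and $n>N_0$, since $\varphi^{[j]}(s)\in\CC_{\sigma'}$ for $j\ge N_0$,
\[
 \A\big(\varphi^{[n]}(s)\big)=\A\big(\varphi^{[N_0]}(s)\big)+\sum_{j=N_0}^{n-1}\Big(\A(c_1)+\A\psi_0\big(\varphi^{[j]}(s)\big)\Big)>\sigma'+(n-N_0)\Big(\A(c_1)-\tfrac{\alpha}{2}\Big),
\]
whose right-hand side exceeds $\sigma+n(\A(c_1)-\alpha)$ for $n$ large (their difference $(\sigma'-\sigma)-N_0(\A(c_1)-\alpha/2)+n\alpha/2$ tends to $+\infty$), which produces the required $N$.

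For the Escape Lemma the key point is that $\big(\A(\varphi^{[j]}(s))\big)_j$ is strictly increasing for each $s$ (since $\A\psi>0$), so it suffices to bound, uniformly over $s\in\CC_\sigma$, the escape time $P(s):=\min\{j:\A(\varphi^{[j]}(s))>\sigma'\}$. If $P$ were unbounded, pick $s_k$ with $P(s_k)\to\infty$; for $j<P(s_k)$ the point $\varphi^{[j]}(s_k)$ lies in the strip $\{\sigma<\A(\cdot)\le\sigma'\}$, and telescoping shows $\sum_{j<P(s_k)}\A\psi(\varphi^{[j]}(s_k))\le\sigma'+P_\sigma(\psi)-\sigma$, a $k$-independent bound on a sum of $P(s_k)$ positive terms. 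Hence there are $w_k=\varphi^{[j_k]}(s_k)=x_k+iy_k$ in that strip with $\A\psi(w_k)\to0$, and after extraction $x_k\to x_\infty\in[\sigma,\sigma']$. If $(y_k)$ is bounded along a subsequence, then $w_k\to w_\infty\in\CC_0$ with $\A\psi(w_\infty)=0$, contradicting $\A\psi>0$. Otherwise $|y_k|\to\infty$, and the vertical translates $\psi_k:=\psi(\cdot+iy_k)$, being bounded on each $\CC_\veps$ by $P_\veps(\psi)$, form a normal family on $\CC_0$; along a subsequence $\psi_k\to\psi_\infty$ locally uniformly, with $\A\psi_\infty\ge0$ on $\CC_0$. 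The $n$-th Dirichlet coefficient of $\psi_k$ is $c_nn^{-iy_k}$, of modulus $|c_n|$, so (extracting further, and using that locally uniform limits of Dirichlet series have the limiting coefficients) $\psi_\infty$ has $n$-th coefficient of modulus $|c_n|$, hence $\psi_\infty$ is non-constant; then $\psi_\infty(\CC_0)$ is open and contained in $\overline{\CC_0}$, so contained in $\CC_0$, i.e.\ $\A\psi_\infty>0$ on $\CC_0$. But $\psi(w_k)=\psi_k(x_k)\to\psi_\infty(x_\infty)$ (because $\psi'$ is bounded on $\CC_{\sigma/2}$, so $|\psi_k(x_k)-\psi_k(x_\infty)|\to0$), forcing $\A\psi_\infty(x_\infty)=\lim\A\psi(w_k)=0$ with $x_\infty\in\CC_0$ --- a contradiction.

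The real obstacle is precisely the Escape Lemma: the orbits clearly drift to the right, but turning this into a bound \emph{uniform} over the non-compact half-plane $\CC_\sigma$ requires understanding the behaviour of $\psi$ as $|\B(s)|\to+\infty$, which is where the normal-families argument --- together with the non-degeneracy of the vertical limit functions of $\psi$, coming from the fact that their Dirichlet coefficients have the same moduli as those of $\psi$ --- becomes essential.
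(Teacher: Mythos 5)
Your proof is correct, but it takes a different and more self-contained route than the paper's in the crucial case $c_0=1$. The paper simply cites \cite[Theorem 4.2]{gh}, which yields at once the uniform lower bound $\psi(\overline{\CC_\sigma})\subset\overline{\CC_{\nu(\sigma)}}$ for some $\nu(\sigma)>0$; from this, each iteration advances the real part by at least $\nu(\sigma)$, so the orbits drift to the right uniformly, and the final estimate follows exactly as in your last paragraph (pick $\sigma_0$ with $|\psi_0|\leq\alpha/2$ on $\CC_{\sigma_0}$, fix $N_1$ with $\varphi^{[N_1]}(\CC_\sigma)\subset\CC_{\sigma_0}$, and conclude). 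Your Escape Lemma is precisely the replacement for this citation: instead of importing the uniform lower bound on $\A\psi$, you prove the uniform drift of orbits directly, via a normal-families argument about vertical limits of $\psi$ --- the telescoping bound on $\sum_{j<P(s_k)}\A\psi(\varphi^{[j]}(s_k))$ forcing $\A\psi(w_k)\to0$ at points $w_k$ of a fixed vertical strip, and the observation that a vertical limit of $\psi$ has Dirichlet coefficients of the same moduli (hence is non-constant) and therefore maps $\CC_0$ into $\CC_0$, which rules out $\A\psi(w_k)\to0$. This is essentially a re-derivation of the relevant content of \cite[Theorem 4.2]{gh}. The trade-off: the paper's proof is two lines plus a citation, while yours is longer but fully self-contained and makes the ``drift vs.\ non-compactness of $\CC_\sigma$'' mechanism transparent. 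One tiny point: your remark ``Note first that $\A(c_1)\geq0$'' is correct but slightly misleading --- under the hypothesis $\psi_0\neq0$ one has in fact $\A(c_1)>0$ (as you implicitly use later), since then $\psi$ is non-constant, so $\psi(\CC_0)\subset\CC_0$, and $c_1=\lim_{\A(s)\to\infty}\psi(s)$ lies in $\overline{\CC_0}$ with $c_1\ne i\tau$ (else $\psi\equiv i\tau$, forcing $\psi_0=0$ would require a short extra argument, but $\A(c_1)>0$ anyway follows from the uniform lower bound once established).
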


\begin{proof}
When $c_0\geq 2,$ the result follows from an easy induction since $\A\psi\geq 0$ on $\CC_0.$ Let us assume that $c_0=1.$
Since $\psi$ is not constant, it follows from \cite[Theorem 4.2]{gh} that, for all $\sigma>0,$
 there exists $\nu(\sigma)>0$ such that $\psi(\overline{\CC_\sigma})\subset \overline{\CC_{\nu(\sigma)}}$. Therefore, by induction, for all $n\geq 1,$
  $$\varphi^{[n]}(\CC_\sigma)\subset \overline{\CC_{\sigma+n\nu(\sigma)}}.$$
Now, let $\alpha>0$ and observe that there exists $\sigma_0>0$ such that, for all $s\in\CC_{\sigma_0},$ $\A\psi_0(s)\geq -\alpha/2$
so that $\A \varphi(s)\geq \A (s)+\left(\A(c_1)-\frac\alpha 2\right).$ Fix $N_1\in\mathbb N$ such that $\varphi^{[N_1]}(\CC_\sigma)\subset\CC_{\sigma_0}.$ 
Then for all $n\geq N_1,$ 
$$\varphi^{[n]}(\CC_\sigma)\subset \CC_{\sigma_0+(n-N_1)(\A(c_1)-\alpha/2)}.$$
The result now follows easily.
\end{proof}

\subsection{Iteration and K\"{o}nigs map}

Let us proceed with the proof of our first main theorem.

\begin{proof}[Proof of Theorem \ref{thm:lfm}]
 (i) If $\varphi(s)=s+c_1,$ there is nothing to do. Therefore assume that $\varphi(s)=s+c_1+\psi_0(s)$ with $\psi_0\neq 0$ and observe that in that case $\A(c_1)>0$.
 We also know by Lemma \ref{lem:iteration} that, for all $\sigma>0,$ there exists $N\geq 1$ such that, for all $n\geq N,$
 \begin{equation}\label{eq:lfm}
  \varphi^{[n]}(\overline{\CC_\sigma})\subset \overline{\CC_{\sigma+n\A (c_1)/2}}.
 \end{equation}
Let $s\in\CC_0$. Then one easily shows by induction that
$$\varphi^{[n]}(s)=s+nc_1+\sum_{j=0}^{n-1}\psi_0(\varphi^{[j]}(s)).$$
Let us denote by $u_n$ the map defined on $\CC_0$ by
\begin{align*}
 u_n(s)&=\varphi^{[n]}(s)-\varphi^{[n]}(1)\\
 &=s-1+\sum_{j=0}^{n-1}\big(\psi_0(\varphi^{[j]}(s))-\psi_0(\varphi^{[j]}(1))\big).
\end{align*}
By Lemma \ref{lem:dirinfini}, there exist some half-plane $\CC_{\sigma_0}$ and $C>0$ such that, for all $w\in\CC_{\sigma_0}$, $|\psi_0(w)|\leq C2^{-\A(w)}$. Taking
\eqref{eq:lfm} into account, we obtain the normal convergence of the series $\sum_{j\geq 0}\big(\psi_0(\varphi^{[j]}(s))-\psi_0(\varphi^{[j]}(1))\big)$
on each half-plane $\CC_\veps,$ $\veps>0$. Hence by Lemma \ref{lem:uniformgh}, $(u_n)$ converges uniformly on each $\CC_{\veps}$, $\veps>0,$
to some holomorphic map $u:\CC_0\to\mathbb C$
with $u(s)=s-1+D(s)$ where $D\in\hinfplus$. Moreover, from $u_n\circ\varphi(s)=u_{n+1}(s)+\varphi^{[n+1]}(1)-\varphi^{[n]}(1)$,
we get by letting $n$ to $+\infty$ that $u\circ\varphi=u+c_1,$ since
$$\lim_{n\to+\infty}\left[\varphi^{[n+1]}(1)-\varphi^{[n]}(1)\right]=c_1+\lim_{n\to+\infty}\psi_0(\varphi^{[n]}(1))=c_1.$$

It remains to prove the uniqueness of $u$ up to an additive constant.
Assume that $u\in\GG$ has characteristic $1$ and satisfies $u\circ \varphi=u+c_1$.
Applying Lemma \ref{lem:compogd}, and writing $u(s)=s+d_1+\sum_{n\geq 2}d_n n^{-s},$
we get for all $n\geq 2,$
$$d_n=c_n+n^{-c_1}d_n+\sum_{\substack {k|n\\k<n}}\alpha(k,n)d_k$$
which immediately implies by induction that the sequence $(d_n)_{n\geq 2}$ is uniquely defined provided $d_1$ has been fixed.

Finally, if $\varphi$ is univalent,
each $u_n$ is univalent and Hurwitz's theorem implies that $u$ is either univalent or constant.
Since $u\in\GG$, $u$ is not constant.

If we assume that $\varphi(\CC_0)\subset \CC_\delta$ for some $\delta>0$, or even that $\varphi^{[p]}(\CC_0)\subset\CC_\delta$ for some $p\geq 1$ and some $\delta>0,$ then we get the normal convergence of  $\sum_{j\geq 0}\big(\psi_0(\varphi^{[j]}(s))-\psi_0(\varphi^{[j]}(1))\big)$
on the whole $\CC_0$. Hence, $\A(u_n)$ is uniformly bounded below on $\CC_0$, say by $a$, and one has just to replace $u$ by $u-a$
to get a map in $\GH$.

(ii) Observe that, for all $s\in\CC_0,$
$$\varphi^{[n]}(s)=c_0^n s +\sum_{j=0}^{n-1}c_0^{n-1-j}\psi(\varphi^{[j]}(s)).$$
Let us set $u_n(s)=\varphi^{[n]}(s)/c_0^n$. Then,
$$u_n(s)=s+\sum_{j=0}^{n-1}\frac{\psi(\varphi^{[j]}(s))}{c_0^{j+1}}.$$

Since $\psi$ is bounded on some half-plane $\CC_\theta$ and since by Lemma \ref{lem:iteration}
for all $\sigma>0$ there exists $J\in\mathbb N$ such that $\A(\varphi^{[j]}(s))>\theta$ for all $j\geq J$ and all $s\in\CC_\theta$,
the series $\sum_{j\geq 0}\psi(\varphi^{[j]}(s))/c_0^{j+1}$, hence the sequence $(u_n)$, converge uniformly on each half-plane $\CC_\sigma$, $\sigma>0.$
Denote by $u\in\mathcal G_{>0}$ the limit of $(u_n).$
The equality $u_n\circ\varphi=c_0 u_{n+1}$ leads to $u\circ\varphi=c_0 u.$

The uniqueness follows the same lines as above. Indeed, if $u\in\GH$ has characteristic $1$ and satisfies $u\circ\varphi=c_0 u,$
then writing $u(s)=s+d_1+\sum_{n\geq 2}d_n n^{-s},$
one must have
$$c_1+d_1=c_0d_1$$
and for all $n\geq 2,$
$$c_0d_n=c_n+\sum_{k^{c_0}|n}\alpha(k,n)d_k$$
which also implies that the sequence $(d_n)_{n\geq 1}$ is uniquely defined.
\end{proof}

In the sequel, we will always call any solution of $u\circ\varphi=u+c_1$ (if $c_0=1$) or $u\circ\varphi=c_0 u$
(if $c_0>1$), with $u\in\GG,$
the {\bf K\"{o}nigs map} of $\varphi$ (even if it is defined up to an additive constant
in the first case). Moreover, if $\A(u)$ is bounded below, we will always assume that $u\in\GH$.

\begin{remark}\label{rem:iteration}
 In the proof of Theorem \ref{thm:lfm}, we could alternatively defined $u_n$ by $u_n(s)=\varphi^{[n]}(s)-nc_1.$
\end{remark}

\begin{remark}
For several properties of the induced composition operator,
it will be important to know that the K\"onigs maps of a symbol $\varphi\in\GH$ with characteristic $1$ belongs to $\GH.$ We will show later (see Proposition \ref{prop:automaticcompactness}) that, provided $\varphi$ is univalent, this is equivalent to ask that $\varphi^{[p]}(\CC_0)\subset\CC_\delta$ for some $\delta>0$
and some $p\geq 1$.
\end{remark}

\begin{example}
Let $\varphi(s)=s+1-2^{-s}$. Then its K\"onigs map does not belong to $\GH$. Indeed, let $N\in\NN$. Since $\varphi^{[N]}(0)=0,$ there exists $\sigma_0\in(0,1)$ such that 
$\varphi^{[N]}(\sigma_0)\leq 1.$ Since $\varphi(\sigma)\leq \sigma+1$ for all $\sigma\in\RR,$ we get, for all $n\geq N$, $\varphi^{[n]}(\sigma_0)\leq (n-N)+1$ so that $u_n(\sigma_0)=\varphi^{[n]}(\sigma_0)-n\leq -N+1$ (we choose the normalization of Remark \ref{rem:iteration}). Taking the limit, $u(\sigma_0)\leq -N+1$ which shows that $\A (u)$ is not bounded below.
\end{example}

\section{Schr\"{o}der's equation and spectrum of composition operators} \label{sec:spectrum}

\subsection{Schr\"{o}der's equation and point spectrum of composition operators}

We can now state our theorem about the solutions of Schr\"{o}der's equation.
For $\varphi\in\mathcal G,$ denote by $\mathcal C_\varphi$ the induced composition operator on $\hinfplus$
(so that it is not confused with $C_\varphi$ acting on $\mathcal H^2$).

\begin{theorem}\label{sfe-thm}
 Let $\varphi(s)=c_0s +\sum_{k=1}^{+\infty}c_k k^{-s}\in\GH$ and let $u$ be its K\"{o}nigs map.
 \begin{itemize}
  \item[(i)] If $c_0>1,$ then $\sigma_p(\mathcal C_\varphi)=\{1\}$.
    Moreover, for any $f\in\hinfplus$, $f\circ\varphi=f$ if and only if $f$ is constant.
  \item[(ii)] If $c_0=1$ and $\varphi(s)\neq s+i\tau$ for some $\tau\in\mathbb R,$ then $\sigma_p(\mathcal C_\varphi)= \{m^{-c_1}:\ m\in\mathbb N\}$.
  Moreover, for each $m\in\NN$, $\ker(\mathcal C_\varphi-m^{-c_1})$ has dimension $1$ and is equal to $\textrm{span}(m^{-u})$.
  \item[(iii)] If $\varphi(s)=s+i\tau,$ $\tau\in\mathbb R^*,$ then $\sigma_p(\mathcal C_\varphi)=\{m^{-i\tau}:\ m\in\mathbb N\}.$
  Moreover
  \begin{itemize}
    \item[(a)] if $\tau=2k\pi/\log(m_0/n_0)$ with $k\in\ZZ^*$ and $m_0,n_0\in \mathbb N,$ $m_0\neq n_0,$ then for all $m\in\NN,$ $\ker(\mathcal C_\varphi-m^{-i\tau})$
    is spanned by $$\left\{n^{-s}:\ n\in\mathbb N\textrm{ and }n=m\left(\frac{n_0}{m_0}\right)^{\frac \ell k},\ \ell\in\mathbb Z\right\}.$$
     \item[(b)] if $\tau$ cannot be written $2k\pi/\log(m_0/n_0)$ with $k\in\ZZ^*,$ $m_0,n_0\geq 1$, $m_0\neq n_0$, then for all $m\in\NN,$ $\ker(\mathcal C_\varphi-m^{-i\tau})$
     has dimension $1$ and is equal to $\textrm{span}(m^{-s})$.
  \end{itemize}
 \end{itemize}
\end{theorem}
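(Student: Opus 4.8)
The three assertions will be reached by rather different routes: (i) and (ii) share a common mechanism, based on tracking the \emph{first} nonzero Dirichlet coefficient of a function under post-composition by $\varphi$, together with the K\"{o}nigs map of Theorem~\ref{thm:lfm} to produce the eigenvectors when $c_0=1$; (iii) is a direct computation exploiting that the composition by $s\mapsto s+i\tau$ acts diagonally on the monomials $n^{-s}$. I expect the only genuinely delicate points to be the passage from the base $2$ to an arbitrary integer in Lemma~\ref{lem:exponent}, needed to identify the leading coefficient of $g\circ\varphi$, and, in part (iii), the translation of the arithmetic condition on $\tau$ into the precise shape of the eigenspace; the rest is bookkeeping.

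The key lemma is the following. Write $\varphi(s)=c_0s+c_1+\psi_0(s)\in\GH$, and let $g\in\hinfplus$ be nonzero with first nonzero Dirichlet coefficient $a_m$ at index $m$. Then $g\circ\varphi\in\hinfplus$ (since $\mathcal C_\varphi$ maps $\hinfplus$ into itself), and its first nonzero coefficient sits at index $m^{c_0}$, with value $a_mm^{-c_1}\neq 0$. Indeed $n^{-\varphi(s)}=n^{-c_0s}\,n^{-c_1}\,e^{-\psi_0(s)\log n}$, where $e^{-\psi_0(s)\log n}$ is a Dirichlet series with constant term $1$; hence, exactly as in the proof of Lemma~\ref{lem:exponent} with $2$ replaced by $n$, the nonzero coefficients of $n^{-\varphi}$ are carried by multiples of $n^{c_0}$, the one at $n^{c_0}$ being $n^{-c_1}$. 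Since $g\circ\varphi=\sum_{n\ge m}a_nn^{-\varphi(s)}$ is a bona fide Dirichlet series by Lemma~\ref{lem:compogd}, and a summand with $n>m$ only feeds indices $\ge n^{c_0}>m^{c_0}$, the claim follows.

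Now let $f\in\hinfplus\setminus\{0\}$ with $f\circ\varphi=\lambda f$. First $\lambda\neq 0$: otherwise $f$ vanishes on the half-plane $\CC_{\sigma_0}\subset\varphi(\CC_0)$ furnished by Lemma~\ref{lem:reverseinclusion}, so $f\equiv 0$. Let $m$ be the index of the first nonzero coefficient of $f$; comparing first coefficients in $f\circ\varphi=\lambda f$, the lemma gives $m^{c_0}=m$ and $\lambda=m^{-c_1}$. If $c_0\ge 2$ this forces $m=1$ and $\lambda=1$, and then $f-a_1$ still solves the equation (constants are fixed by $\mathcal C_\varphi$), so were it nonzero its first index $m'\ge 2$ would satisfy the impossible $m'^{c_0}=m'$; hence $f$ is constant, which together with the constants being eigenvectors for $1$ proves (i). If $c_0=1$, then $\lambda=m^{-c_1}$, so $\sigma_p(\mathcal C_\varphi)\subseteq\{m^{-c_1}:m\in\NN\}$; conversely the K\"{o}nigs map $u\in\GG$ of $\varphi$, with $\mathrm{char}(u)=1$ and $u(s)=s+d_1+\sum_{n\ge 2}d_nn^{-s}$, satisfies $m^{-u}\circ\varphi=m^{-(u+c_1)}=m^{-c_1}\,m^{-u}$, and $m^{-u}\in\hinfplus$ because $u\in\GG$ makes $\A(u)$ bounded below on each $\CC_\veps$; hence the inclusion is an equality. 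Since $\varphi(s)\neq s+i\tau$ entails $\A(c_1)>0$ (as observed in the proof of Theorem~\ref{thm:lfm}(i)), the map $t\mapsto t^{-c_1}$ is injective on $\NN$, so the eigenvalues $m^{-c_1}$ are pairwise distinct; and if $f\circ\varphi=m^{-c_1}f$ then the first index of $f$ must be $m$, and subtracting the scalar multiple of $m^{-u}$ that cancels that first coefficient yields a solution whose first index, if it were nonzero, would exceed $m$ — impossible. Hence $\ker(\mathcal C_\varphi-m^{-c_1})=\mathrm{span}(m^{-u})$ has dimension $1$, proving (ii).

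For (iii), with $\varphi(s)=s+i\tau$ and $f=\sum_{n\ge 1}a_nn^{-s}$ one has $\mathcal C_\varphi f=\sum_{n\ge 1}a_n\,n^{-i\tau}\,n^{-s}$, so $f\circ\varphi=\lambda f$ is equivalent to $a_n(n^{-i\tau}-\lambda)=0$ for every $n$. Thus $\lambda$ is an eigenvalue exactly when $\lambda=n^{-i\tau}$ for some $n\in\NN$, whence $\sigma_p(\mathcal C_\varphi)=\{m^{-i\tau}:m\in\NN\}$; and for $\lambda=m^{-i\tau}$ every $f$ in the kernel has its Dirichlet series supported on $E_m:=\{n\in\NN:n^{-i\tau}=m^{-i\tau}\}$, so $\ker(\mathcal C_\varphi-m^{-i\tau})$ is the closed linear span of $\{n^{-s}:n\in E_m\}$ (the partial sums of $f$'s own series converging to $f$ in $\hinfplus$). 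It remains to describe $E_m=\{n\in\NN:\tau\log(n/m)\in2\pi\ZZ\}$. In case (b), if $E_m$ contained some $n\neq m$ we would get $\tau=2k\pi/\log(n/m)$ with $k\in\ZZ^*$, against the hypothesis; hence $E_m=\{m\}$ and the kernel is $\mathrm{span}(m^{-s})$. In case (a), writing $\tau=2k\pi/\log(m_0/n_0)$, the condition $\tau\log(n/m)\in2\pi\ZZ$ reads $\log(n/m)\in\frac{\log(m_0/n_0)}{k}\ZZ$, i.e. $n=m(n_0/m_0)^{\ell/k}$ for some $\ell\in\ZZ$, which is precisely the asserted spanning set.
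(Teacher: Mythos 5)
Your proof is correct, and it largely follows the paper's own path (leading--coefficient analysis via Lemma~\ref{lem:compogd}, the K\"onigs map furnishing the eigenvectors $m^{-u}$), but it streamlines parts (i) and (ii) in a way worth noting. The paper first runs a dynamical argument --- iterating $f\circ\varphi^{[n]}=\lambda^n f$ and letting $\A\varphi^{[n]}(s)\to+\infty$ --- to split into the case ``$\lambda=1$ and $f$ constant'' versus ``$f(+\infty)=0$ and $|\lambda|<1$'', and only then compares leading Dirichlet coefficients; it also proves the multiplicity-one claim by writing out the full coefficient recursion $(m^{-c_1}-n^{-c_1})a_n=\sum_{k\mid n,\ k<n}\alpha(k,n)a_k$. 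You dispense with the iteration step entirely: the single observation that the first nonzero coefficient of $g\circ\varphi$ sits at index $m^{c_0}$ with value $a_mm^{-c_1}$ forces $m^{c_0}=m$ and $\lambda=m^{-c_1}$ at once (with $\lambda\neq 0$ secured via Lemma~\ref{lem:reverseinclusion}); for $c_0\geq 2$ you then conclude $f$ is constant by subtracting $a_1$ and reapplying the same lemma, and for $c_0=1$ you get multiplicity one by subtracting the appropriate multiple of $m^{-u}$ and invoking the injectivity of $n\mapsto n^{-c_1}$ on $\NN$ (which rests on $\A(c_1)>0$). This purely coefficient-theoretic route is a mild but genuine simplification: it unifies the two cases under one mechanism and replaces an explicit recursion by a one-line leading-term comparison. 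Part (iii) is handled exactly as in the paper; your added parenthetical that the partial sums of a function in $\hinfplus$ converge to it (Bohr's theorem), so the kernel really is the \emph{closed} span of the indicated monomials, is a point the paper leaves implicit.
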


\begin{proof}
 (i) and (ii) In these cases, we have already observed during the proof of Theorem \ref{thm:lfm} that $\A \varphi^{[n]}(s)\to+\infty$ as $n\to+\infty$
 for any $s\in\CC_0$. If we assume that $f\circ\varphi=\lambda f$ for some $\lambda\in\mathbb C$ and some $f\in\hinfplus$, $f\neq 0$,
 we get that for all $s\in\CC_0,$ $f\circ\varphi^{[n]}(s)=\lambda^n f(s)$ and by letting $n$ to $+\infty,$
 we obtain that $(\lambda^n f(s))$ has to converge to $f(+\infty)$. If $f(+\infty)\neq 0,$ this implies that $\lambda=1$
 and $f$ is constant. If $f(+\infty)=0$, this implies $|\lambda|<1$.
 Moreover, writing $f(s)=a_mm^{-s}+\sum_{n>m}a_n n^{-s}$ with $a_m\neq 0,$ we get that $c_0=1$ otherwise the first term
 in $f\circ\varphi$ is a multiple of $m^{-c_0s}$, and when $c_0=1$, we must have $a_m m^{-c_1}=\lambda a_m.$

 Therefore, $\sigma_p(\mathcal C_\varphi)=\{1\}$ provided $c_0>1$ and $\sigma_p(\mathcal C_\varphi)\subset \{m^{-c_1}:\ m\in\mathbb N\}$
 provided $c_0=1$. To prove the converse inclusion, we claim that for all $m\geq 1,$
 $m^{-u}\in\hinfplus$. Indeed, $m^{-u}$ belongs to $\mathcal D$ and has an analytic extension to $\CC_0$. Moreover, we may write $u(s)=s+\psi(s)$ where $\psi\in\hinfplus$ so that, for all $\veps>0,$ there exists $M\in\RR$ such that $\A(u)\geq M$ on $\CC_\veps.$
 Hence, $m^{-u}(\CC_\veps)$ is bounded and the claim follows from Bohr's theorem. 
Finally, $\mathcal C_\varphi(m^{-u})=m^{-c_1}m^{-u}$.

 To prove that each $m^{-c_1}$ has multiplicity $1$, let $f(s)=\sum_{n\geq 2}a_n n^{-s}$ be such that $f\circ\varphi = m^{-c_1}f.$
 Then using Lemma \ref{lem:compogd}, we find
 $$a_1=m^{-c_1}a_1$$
 and for all $n\geq 2,$
 $$m^{-c_1}a_n =c_n+n^{-c_1}a_n+\sum_{\substack{k|n \\ k<n}} \alpha(k,n)a_k
 \iff (m^{-c_1}-n^{-c_1})a_n=c_n+\sum_{\substack{k|n \\ k<n}} \alpha(k,n)a_k.$$
 This defines uniquely the sequence $(a_n)_{n\geq 1}$ provided a value has been given to $a_m$.

(iii) Write $f(s)=\sum_{n\geq 1}a_n n^{-s}$ so that $f\circ\varphi(s)=\sum_{n\geq 1}a_n n^{-i\tau}n^{-s}$.
 Therefore, $\sigma_p(\mathcal C_\varphi)=\{m^{-i\tau}:\ m\in\mathbb N\}$ and for all $m\in\NN,$
 $m^{-s}$ is an eigenvector of $\mathcal C_\varphi$ corresponding to the eigenvalue $m^{-i\tau}$.
 To determine $\ker(\mathcal C_\varphi-m^{-i\tau})$, observe that it is sufficient to study when $n^{-\varphi(s)}=m^{-i\tau}n^{-s}$ for all $n\in\mathbb N$,
 namely when $m^{-i\tau}=n^{-i\tau}$.
 \begin{itemize}
  \item[(a)] Assume that $\tau=2k\pi/\log(m_0/n_0)$. Then it is easy to show that
  \begin{align*}
  n^{-i\tau}=m^{-i\tau}&\iff \exists \ell \in\mathbb Z, \tau\log(n/m)=2\ell\pi\\
  &\iff n=m\left(\frac{n_0}{m_0}\right)^{\frac \ell k}.
  \end{align*}
  \item[(b)] If there exists $n\neq m$ such that $n^{-i\tau}=m^{-i\tau},$ then $\tau=2k\pi/\log(m/n)$ for some $k\in\ZZ.$
 \end{itemize}
% \noindent (iv) Observe that $\mathcal C_\varphi$ also induces a composition operator on $H(\CC_0)$. Therefore, the necessary conditions
% are consequences of the classical K\"{o}nigs theorem (see e.g. \cite{sh}). It remains to prove that if $\varphi'(\alpha)\neq 0,$
% the numbers $(\varphi'(\alpha))^j$, $j\geq 0$, are indeed eigenvalues. This is obvious for $j=0$. For $j\geq 1,$ let us consider
% the sequence $(u_n)\subset\mathcal D(\CC_0)$ defined by
% $$u_n(s)=\frac{\varphi_n(s)-\alpha}{(\varphi'(\alpha))^n}.$$
% The proof of \cite{sh} shows that $(u_n)$ converges uniformly on the compact subsets of $\CC_0$ to some $u\in H(\CC_0)$
% satisfying $u\circ\varphi=\varphi'(\alpha)u$. Let $\sigma>0$ such that $\varphi(\CC_\sigma)$ is mapped onto some compact subset
% of $\CC_0$. Then $(u_n)$ also converges uniformly to $u$ on $\CC_\sigma$. Hence, $u\in\mathcal D(\CC_0)$ and $u^j\in\mathcal D(\CC_0)$
% for all $j\geq 1$. We conclude because $u^j\circ\varphi=(\varphi'(\alpha))^j u.$
\end{proof}

\begin{remark}\label{rem:multiplicity}
 When $\varphi(s)=s+i\tau$ with $\tau=2k\pi/\log(m_0/n_0)$, for any $N\geq 1$, there are eigenspaces $\ker(\mathcal C_\varphi-m^{-i\tau})$
 with dimension greater than or equal to $N$, e.g. if $m=m_0^{N}$. In that case, $n^{-s}\in\ker(\mathcal C_\varphi-m^{-i\tau})$
 for $n=m_0^{N-l}n_0^l$, $l=0,\dots,N$.
\end{remark}

Schr\"{o}der's equation also gives some information on the spectrum of $C_\varphi$ acting on $\mathcal H^2$.
\begin{proposition}\label{prop:spectrumh2}
 Let $\varphi\in\GH$ with characteristic $1$ and let $u$ be its K\"{o}nigs map. Assume that $C_\varphi$ is compact.
 Then $\sigma(C_\varphi)=\{0\}\cup\{m^{-c_1}:\ m\in\NN\}$. Moreover, for each $m\geq 1,$ $\ker(C_\varphi-m^{-c_1})$
 has dimension $1$ and is spanned by $m^{-u}$.
\end{proposition}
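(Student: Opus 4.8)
The plan is to read off $\sigma(C_\varphi)$ and the eigenspaces from the ``lower triangular'' form of $C_\varphi$ in the orthonormal basis $(e_n)_{n\ge1}$, $e_n=n^{-s}$, of $\mathcal H^2$. First I would observe that compactness forces $\A(c_1)>0$: indeed $n^{-s}\to0$ weakly, so $\|C_\varphi(n^{-s})\|=\|n^{-\varphi}\|\to0$, and an elementary computation (writing $n^{-\varphi(s)}=n^{-c_1}n^{-s}n^{-\psi_0(s)}$ and noting that $n^{-\psi_0}$ has constant term $1$) gives $\|n^{-\varphi}\|\ge n^{-\A(c_1)}$; hence the numbers $n^{-c_1}$ ($n\ge1$) are pairwise distinct and tend to $0$. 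Next, Lemma~\ref{lem:compogd} applied to $u(s)=n^{-s}$ shows that $C_\varphi(n^{-s})=n^{-\varphi(s)}$ is a Dirichlet series supported on the multiples of $n$, with $\langle C_\varphi e_n,e_k\rangle=0$ unless $n\mid k$ and $\langle C_\varphi e_n,e_n\rangle=n^{-c_1}$.

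For the inclusion $\{0\}\cup\{m^{-c_1}:m\in\NN\}\subseteq\sigma(C_\varphi)$ I would use the closed subspaces $F_N=\overline{\mathrm{span}}\{e_n:n\ge N\}$, which are $C_\varphi$-invariant by the triangular structure and satisfy $F_1=\mathcal H^2$. With respect to the orthogonal splitting $F_N=\CC e_N\oplus F_{N+1}$, the operator $C_\varphi|_{F_N}$ is block lower triangular with diagonal blocks $N^{-c_1}$ and $C_\varphi|_{F_{N+1}}$, whence $\sigma(C_\varphi|_{F_N})=\{N^{-c_1}\}\cup\sigma(C_\varphi|_{F_{N+1}})$. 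Iterating from $N=1$ gives $\{m^{-c_1}:1\le m\le N\}\subseteq\sigma(C_\varphi)$ for every $N$, and closedness of the spectrum (together with $m^{-c_1}\to0$) then yields $0\in\sigma(C_\varphi)$ as well. For the reverse inclusion, compactness gives $\sigma(C_\varphi)\setminus\{0\}=\sigma_p(C_\varphi)$; and if $C_\varphi v=\lambda v$ with $v=\sum v_ne_n\neq0$, $\lambda\neq0$, then pairing with $e_{n_0}$, where $n_0=\min\{n:v_n\neq0\}$, the triangular structure forces $n_0^{-c_1}v_{n_0}=\lambda v_{n_0}$, i.e.\ $\lambda=n_0^{-c_1}$. (The pairing $\langle C_\varphi v,e_{n_0}\rangle=\sum_{n\mid n_0}v_n\langle C_\varphi e_n,e_{n_0}\rangle$ is a finite sum, hence continuous in $v$, so this computation is legitimate for every $v\in\mathcal H^2$.) Thus $\sigma_p(C_\varphi)\subseteq\{m^{-c_1}:m\in\NN\}$ and the two spectra coincide.

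Finally, fix $m\ge1$. Since $m^{-c_1}$ is a nonzero point of the spectrum of the compact operator $C_\varphi$ it is an eigenvalue, so $\ker(C_\varphi-m^{-c_1})\neq\{0\}$; for an eigenvector $v=\sum v_ne_n$ the argument above, together with the distinctness of the $n^{-c_1}$, forces $v_n=0$ for $n<m$ and $v_m\neq0$, after which the relations $\langle C_\varphi v-m^{-c_1}v,e_k\rangle=0$ for $k>m$ determine $v$ uniquely once $v_m$ is normalized; comparing with the recursion in the proof of Theorem~\ref{sfe-thm}(ii) identifies this unique solution with the Dirichlet series of $m^{-u}$. Hence $m^{-u}\in\mathcal H^2$ and $\ker(C_\varphi-m^{-c_1})=\mathrm{span}(m^{-u})$ is one-dimensional. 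The step I expect to require the most care is the lower bound $\{m^{-c_1}\}\subseteq\sigma(C_\varphi)$: because the off-diagonal part of $C_\varphi$ is genuinely present, one really needs the block-triangulation along the flag $(F_N)_N$ to guarantee that no diagonal value $n^{-c_1}$ is erased from the spectrum; the passage from Dirichlet polynomials to arbitrary elements of $\mathcal H^2$ when extracting low-order coefficients is a minor point, harmless since each such coefficient involves only finitely many terms.
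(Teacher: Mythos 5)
Your proof is correct, but it takes a genuinely different route from the paper's on the main point, which is the determination of the spectrum.

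The paper's proof is very short: it simply cites \cite{b2} for the equality $\sigma(C_\varphi)=\{0\}\cup\{m^{-c_1}:m\in\NN\}$, then invokes compactness to promote each $m^{-c_1}$ to an eigenvalue, and finally quotes the recursion in the proof of Theorem~\ref{sfe-thm}(ii) (valid for any $f\in\mathcal D$) for the one-dimensionality of the eigenspace, with $m^{-u}$ supplying the eigenvector. Your argument, by contrast, rederives the spectrum from scratch by exploiting the ``lower triangular'' structure of $C_\varphi$ in the orthonormal basis $(n^{-s})$: the flag of invariant subspaces $F_N=\overline{\mathrm{span}}\{n^{-s}:n\ge N\}$ together with the block decomposition $F_N=\CC e_N\oplus F_{N+1}$ gives the inclusion $\{m^{-c_1}:m\le N\}\subset\sigma(C_\varphi)$ for every $N$, and then closedness plus $m^{-c_1}\to 0$ inserts $0$; compactness and the coefficient recursion give the reverse inclusion. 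What you gain is a proof that is self-contained within the present paper, and in particular does not depend on the (different, longer) argument of \cite{b2}. You also make explicit a point that is left tacit in the paper: that compactness forces $\A(c_1)>0$ (via $n^{-s}\to 0$ weakly and the lower bound $\|n^{-\varphi}\|\ge n^{-\A(c_1)}$), which is what ensures the diagonal entries $n^{-c_1}$ are pairwise distinct and accumulate only at $0$. Your handling of the multiplicity and the identification of the eigenvector with $m^{-u}$ is essentially the same recursion argument the paper borrows from the proof of Theorem~\ref{sfe-thm}(ii), just spelled out in terms of $\langle C_\varphi e_n,e_k\rangle$. One small remark: the step $\sigma\bigl(C_\varphi|_{F_N}\bigr)=\{N^{-c_1}\}\cup\sigma\bigl(C_\varphi|_{F_{N+1}}\bigr)$ does hold here, but only because the upper-left block is a scalar (so invertibility of the block operator forces invertibility of both diagonal blocks); for a general block-triangular operator with infinite-dimensional blocks the union of diagonal spectra need not equal the spectrum, so it is worth saying this explicitly.
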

\begin{proof}
 It was already proved in \cite{b2} that  $\sigma(C_\varphi)=\{0\}\cup\{m^{-c_1}:\ m\in\NN\}$. By compactness of
 $C_\varphi$, each $m^{-c_1}$ is an eigenvalue. Moreover, the proof of Theorem \ref{sfe-thm} shows that the subspace of solutions of the equation $f\circ\varphi=m^{-c_1}f$ with $f\in\mathcal D$ it at most one-dimensional. Since $m^{-u}$ solves this equation, $\ker(C_\varphi-m^{-c_1})=\textrm{span}(m^{-u}).$
\end{proof}

As a corollary we get the non trivial fact that, for a compact $C_\varphi$ with characteristic $1$, $m^{-u}$
belongs to $\mathcal H^2$ for all $m\geq 1$. Hence, the compactness of $C_\varphi$ gives information
on the K\"{o}nigs map of $\varphi$. Conversely, the following statement indicates that the knowledge of some properties of the K\"{o}nigs map
of $\varphi$ implies compactness of $C_\varphi$.

\begin{proposition}\label{prop:automaticcompactness}
 Let $\varphi(s)= s+\psi(s)\in\GH$ with characteristic $1$, univalent, and with $\psi$ non constant.
 Assume that the K\"{o}nigs map $u$ of $\varphi$ belongs to $\GH$. Then there exists $n\geq 1$ and $\delta>0$ such that $\varphi^{[n]}(\CC_0)\subset \CC_\delta$. In particular, $C_{\varphi^{[n]}}$ is compact.
\end{proposition}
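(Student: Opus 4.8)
The idea is to convert the Königs equation $u\circ\varphi=u+c_1$ into a lower bound for the real part of $\varphi^{[n]}$ by inverting $u$; this inversion is legitimate precisely because $u$ is injective and, being in $\GH$ with characteristic $1$, has an inverse which is again a Dirichlet-type symbol. First I would record the elementary facts. Since $\psi$ is non-constant, the proof of Theorem \ref{thm:lfm}(i) gives $\A(c_1)>0$. Iterating $u\circ\varphi=u+c_1$ yields, by an immediate induction, $u\circ\varphi^{[n]}=u+nc_1$ on $\CC_0$ for every $n\geq 1$. Because $u\in\GH$ we have $u(\CC_0)\subset\CC_0$, so for every $s\in\CC_0$,
$$\A\big(u(\varphi^{[n]}(s))\big)=\A(u(s))+n\A(c_1)>n\A(c_1),$$
hence $u\big(\varphi^{[n]}(\CC_0)\big)\subset u(\CC_0)\cap\CC_{n\A(c_1)}$.

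The core of the argument is the inversion of $u$. As $\varphi$ is univalent and $\psi$ non-constant, Theorem \ref{thm:lfm}(i) ensures that $u$ is injective; since $u\in\GD$ has characteristic $1$, Lemma \ref{lem:inverse} applies and $u^{-1}\colon u(\CC_0)\to\CC_0$ belongs to $\GD$, necessarily with characteristic $1$. By Lemma \ref{lem:reverseinclusion} applied to $u$ there is $\sigma^*>0$ with $\CC_{\sigma^*}\subset u(\CC_0)$, so $u^{-1}$ is in particular defined on $\CC_{\sigma^*}$; and by Lemma \ref{lem:gdinfini} applied to $u^{-1}$ there is $\sigma_1\geq\sigma^*$ with $u^{-1}(\CC_{\sigma_1})\subset\CC_1$. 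Now choose $n$ with $n\A(c_1)\geq\sigma_1$. Since $\CC_{\sigma_1}\subset u(\CC_0)$, the inclusion above gives $u(\varphi^{[n]}(\CC_0))\subset\CC_{\sigma_1}$, and applying $u^{-1}$ (the genuine inverse of the bijection $u\colon\CC_0\to u(\CC_0)$) gives
$$\varphi^{[n]}(\CC_0)=u^{-1}\big(u(\varphi^{[n]}(\CC_0))\big)\subset u^{-1}(\CC_{\sigma_1})\subset\CC_1.$$
This proves the first assertion with $\delta=1$; in fact $\delta$ could be prescribed arbitrarily large by choosing the target half-plane in Lemma \ref{lem:gdinfini} farther to the right and enlarging $n$ accordingly.

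For the last assertion, $\varphi^{[n]}\in\GH$ still has characteristic $1$ and sends $\CC_0$ into $\CC_1$, so the compactness of $C_{\varphi^{[n]}}$ on $\mathcal H^2$ follows from the known compactness criterion for composition operators whose symbol has positive characteristic and maps $\CC_0$ strictly inside a smaller half-plane (see \cite{b2}).

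I expect the main obstacle to be the inversion step: one must check that Lemma \ref{lem:inverse} really produces an element of $\GD$, so that $u^{-1}$ is not merely a set-theoretic inverse but again a Dirichlet-series symbol to which the mapping lemmas apply, and then keep careful track of the domain $u(\CC_0)$ of $u^{-1}$ — using Lemma \ref{lem:reverseinclusion} to guarantee $u^{-1}$ is defined on a full right half-plane and Lemma \ref{lem:gdinfini} to push that half-plane arbitrarily far to the right. Once this bookkeeping is settled, everything else is a direct consequence of the Königs equation.
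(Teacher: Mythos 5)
Your proof is correct and takes essentially the same route as the paper's: iterate the Königs equation to get $u\circ\varphi^{[n]}=u+nc_1$, use $u\in\GH$ to bound $\A(u\circ\varphi^{[n]})$ from below by $n\A(c_1)$, and then invert $u$ via Lemmas \ref{lem:inverse} and \ref{lem:gdinfini}. You are a bit more explicit than the paper about the domain of $u^{-1}$ (invoking Lemma \ref{lem:reverseinclusion} to ensure it covers a full right half-plane), but this is the same argument spelled out in greater detail.
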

\begin{proof}
 Writing $\psi(s)=\sum_{k\geq 1}c_k k^{-s},$ we know that for any $n\geq 1,$
\begin{equation}\label{eq:compactness}
u\circ\varphi^{[n]}=u+nc_1
\end{equation}
 and that $\A(c_1)>0$ since $\psi$ is not constant.
 Moreover applying Lemmas \ref{lem:gdinfini} and \ref{lem:inverse}, there exist $n\geq 1$ and $\delta>0$ such that $u^{-1}(\CC_{n\A(c_1)})\subset\CC_\delta.$
 Since $u\in\GH,$ \eqref{eq:compactness} yields $u\circ\varphi^{[n]}(\CC_0)\subset \CC_{n\A(c_1)}$ so that $\varphi^{[n]}(\CC_0)\subset \CC_\delta$
 and $C_{\varphi^{[n]}}$ is compact.
\end{proof}

\begin{remark}
If we only assume that $u\in\GG$ with characteristic $1$ is such that $m^{-u}$ belongs to $\GH$
for some integer $m\geq 1$,
then the proof of Proposition \ref{prop:spectrumh2} still shows that $\ker(C_\varphi-m^{-c_1})=\textrm{span}(m^{-u})$. %This will be used in Section \ref{sec:c01}.
\end{remark}

\subsection{Spectrum of composition operators}

We now discuss the spectrum of composition operators on $\hinfplus$. Our first step is to characterize the invertibility of $\mathcal C_\varphi.$ It turns out that this is more delicate than for $H(\DD)$ (see \cite{ABC}). Indeed, there are many multiplicative homomorphisms on $\hinfplus$ which are not a point evaluation at some point of $\CC_0$ because of the polydisc hidden behind $\hinfplus$ by the Bohr transform. We are inspired by \cite[Theorem 2.1]{Bou14}.

\begin{theorem}\label{thm:invertible}
Let $\varphi\in\GH.$ The following assertions are equivalent.
\begin{itemize}
\item[(i)] $\mathcal C_\varphi$ is invertible on $\hinfplus$.
\item[(ii)] There exists $\tau\in\RR$ such that $\varphi(s)=s+i\tau.$
\end{itemize}
\end{theorem}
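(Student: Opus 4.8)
The plan is to prove both implications of Theorem~\ref{thm:invertible}. The direction (ii)$\Rightarrow$(i) is easy: if $\varphi(s)=s+i\tau$, then $\mathcal C_\varphi$ is a diagonal operator on the monomial basis $(n^{-s})$, sending $n^{-s}$ to $n^{-i\tau}n^{-s}$, and its inverse is simply $\mathcal C_{\psi}$ with $\psi(s)=s-i\tau$, which is again a symbol in $\GH$ inducing a bounded operator on $\hinfplus$. So the content is in (i)$\Rightarrow$(ii).

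\medskip

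For (i)$\Rightarrow$(ii), I would argue by contraposition and split into two cases according to the characteristic $c_0$ of $\varphi$. First, suppose $c_0\geq 2$. Then by Lemma~\ref{lem:exponent}, $2^{-\varphi(s)}=\sum_{k\geq 1}b_k k^{-s}$ with $b_k=0$ unless $2^{c_0}\mid k$; in particular the function $2^{-s}$ is not in the range of $\mathcal C_\varphi$, since any $f\in\hinfplus$ with $f\circ\varphi=2^{-s}$ would have to have, after composition, no term in $2^{-s}$ (the composition of any Dirichlet series with $\varphi$ only produces frequencies that are $c_0$-th powers up to the $\alpha(k,n)$ coefficients — more precisely Lemma~\ref{lem:compogd} shows the lowest nonconstant frequency of $f\circ\varphi$ is of the form $m^{c_0}$). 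Hence $\mathcal C_\varphi$ is not surjective and so not invertible. This disposes of $c_0\geq 2$.

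\medskip

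Now suppose $c_0=1$ and $\varphi(s)\neq s+i\tau$, so that writing $\varphi(s)=s+c_1+\psi_0(s)$ we have $\psi_0\not\equiv 0$ (the case $c_1\notin i\RR$ and $\psi_0\equiv 0$, i.e. $\varphi(s)=s+c_1$ with $\A(c_1)>0$, should be treated too — there $\mathcal C_\varphi$ is the diagonal operator $n^{-s}\mapsto n^{-c_1}n^{-s}$ which is injective but not bounded below, hence not invertible). When $\psi_0\not\equiv 0$, Lemma~\ref{lem:iteration}(ii) gives $\A(\varphi^{[n]}(s))\to+\infty$ uniformly on half-planes, and by Theorem~\ref{sfe-thm}(ii) the point spectrum of $\mathcal C_\varphi$ contains all $m^{-c_1}$ with $m\in\NN$; since $\A(c_1)>0$, these accumulate at $0$, and $\mathcal C_\varphi$ has $0$ in its spectrum (an operator on a Fréchet space with infinitely many distinct eigenvalues accumulating at $0$ cannot be invertible — indeed if $\mathcal C_\varphi$ were invertible, $\mathcal C_\varphi^{-1}$ would have eigenvalues $m^{c_1}$ which is fine, but one rather argues: the eigenvectors $m^{-u}$ for distinct $m$ are linearly independent and if $\mathcal C_\varphi$ were invertible with continuous inverse, one derives a contradiction with the accumulation at $0$ of the eigenvalues via the resolvent being unbounded near $0$; alternatively, $2^{-s}$ or rather $2^{-u}$ lies in the closure of the range but the range is not closed). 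The cleanest route here is probably to show directly that $\mathcal C_\varphi$ is not surjective: mimicking the $c_0\geq2$ argument, solve $f\circ\varphi=g$ for $g=2^{-s}$ via Lemma~\ref{lem:compogd}'s recursion $(1-n^{-c_1})a_n = \text{(lower order)}$ and show the resulting coefficients $a_n$ grow too fast to define an element of $\hinfplus$, because $1-n^{-c_1}$ does not stay bounded away from... — no, it does; so surjectivity must fail for a subtler reason, and the accumulation-of-eigenvalues argument is the safe one.

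\medskip

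The main obstacle I anticipate is precisely this last point: on a Fréchet space $\hinfplus$, ``invertible'' means bijective with continuous inverse, and one must leverage the structure (via the Bohr transform to $\ttinf$, or via the multiplicative homomorphisms mentioned in the paragraph preceding the theorem and the reference to \cite{Bou14}) to rule out invertibility in the borderline situations. My expectation is that the authors use the description of the character space (spectrum) of the Fréchet algebra $\hinfplus$: $\mathcal C_\varphi$ invertible forces $\varphi$ to induce a bijection of this character space, and the ``extra'' characters coming from the infinite polydisc force $\varphi$ to be of the form $s\mapsto s+i\tau$. So the real plan, following \cite[Theorem~2.1]{Bou14}, is: (1) identify $\hinfplus$ with (an algebra of functions on) the infinite polytorus/polydisc via Bohr lift; (2) observe that $\mathcal C_\varphi$ invertible implies the Bohr-lifted ``symbol'' is an automorphism of the relevant domain; (3) classify such automorphisms and conclude they correspond exactly to vertical translations $s\mapsto s+i\tau$; and the hard part is step (2), controlling what invertibility of $\mathcal C_\varphi$ says about $\varphi$ through the Bohr correspondence, since surjectivity onto $\hinfplus$ is a strong constraint that interacts delicately with the uniform-convergence seminorms $P_\veps$.
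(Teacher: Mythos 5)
Your (ii)$\Rightarrow$(i) and the $c_0\ge 2$ case of (i)$\Rightarrow$(ii) are fine and essentially what the paper does at the very end (Lemma~\ref{lem:exponent} shows $2^{-s}$ is not in the range). But the heart of the theorem is $c_0=1$, and there your proposal has a genuine gap: you try three approaches (accumulation of eigenvalues at $0$, a coefficient recursion, and a speculative appeal to the Bohr lift and the character space à la \cite{Bou14}) and you close none of them. In particular the ``accumulation of eigenvalues at $0$ forces non-invertibility'' argument is not automatic on the Fréchet space $\hinfplus$: the eigenvectors $m^{-u}$ are not normalized, and for a Fréchet space the open mapping theorem says continuous bijections \emph{are} invertible, so the only thing that can fail is bijectivity, not ``boundedness below.'' For the same reason your treatment of $\varphi(s)=s+c_1$ with $\A(c_1)>0$ (``injective but not bounded below, hence not invertible'') is not a valid argument; the right statement is that $\mathcal C_\varphi$ is not surjective there.

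What you are missing is the paper's key idea, which is a \emph{natural boundary} argument, and it is more elementary than the Bohr-lift route you guessed. The paper first notes that invertibility forces $\varphi$ to be univalent on $\CC_0$, and then shows that the boundary values $\varphi(i\tau)$ must satisfy $\A(\varphi(i\tau))=0$ for a.e. $\tau$. If not, take $f\in\hinfplus$ with $f\circ\varphi=2^{-s}$, locate a point $\tau_0$ on the imaginary axis where $\varphi(i\tau_0)\in\CC_0$ and $f'(\varphi(i\tau_0))\neq 0$, and use the local inverse of $f$ together with the continuity of $2^{-s}$ up to $i\RR$ to analytically continue $\varphi$ across the imaginary axis near $i\tau_0$, say to $\tilde\varphi$ on $\CC_0\cup D_2$ with values in $\CC_0$. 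Now choose $g\in\hinfplus$ whose natural boundary is $i\RR$ (e.g. $g(s)=\sum\veps_n n^{-1}n^{-s}$ with a suitably chosen sign sequence) and let $g_1\in\hinfplus$ satisfy $g_1\circ\varphi=g$; then $g_1\circ\tilde\varphi$ analytically continues $g$ past its natural boundary, a contradiction. Having shown $\A(\varphi(i\tau))=0$ a.e., conjugation by the Cayley map gives a univalent inner self-map of $\DD$, hence a disc automorphism, hence $\varphi(s)=c_0s+i\tau$, and $c_0\ge 2$ is excluded as above. Note that this single natural boundary mechanism also disposes of your borderline case $\varphi(s)=s+c_1$ with $\A(c_1)>0$: the preimage $f(s)=g(s-c_1)$ of a function $g$ with natural boundary $i\RR$ has a natural boundary at $\A(s)=\A(c_1)>0$, so it cannot lie in $\hinfplus$, and $\mathcal C_\varphi$ is not surjective.
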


\begin{proof}
We just have to show that $(i)\implies (ii)$ and we start with $\varphi\in\GH$ such that
$\mathcal C_\varphi$ is invertible. We first observe that $\varphi$ is one-to-one on $\CC_0.$
Indeed, let $s_0\neq s_1\in\CC_0$ and $D\in\hinfplus$ be such that $D(s_0)\neq D(s_1).$
Since $D=f\circ\varphi$ for some $f\in\hinfplus,$ we shall have $\varphi(s_0)\neq\varphi(s_1).$

We recall that $\varphi(i\tau)=\lim_{\sigma\to 0}\varphi(\sigma+i\tau)$ exists for a.e. $\tau\in\RR$ and we prove that $\A(\varphi(i\tau))=0$ for a.e. $\tau\in\mathbb R.$
By contradiction, assume that
$$E=\{\tau\in\mathbb R :\ \A(\varphi(i\tau))>0\}$$
has positive Lebesgue measure. Let also $f\in\hinfplus$ be such that $f\circ\varphi=2^{-s}$. There exist four real numbers $a$, $b$, $c$ and $d$ with
$0<a<b$ and $c<d$ such that
$$E_{a,b,c,d}=\{\tau\in\mathbb R:\ a\leq \A(\varphi(i\tau))\leq b,\ c\leq \B(\varphi(i\tau))\leq d\}$$
has positive Lebesgue measure. The set $\varphi(E_{a,b,c,d})$ cannot be finite (otherwise $\varphi$ would map a subset of $i\mathbb R$ having positive measure to a single point, forcing $\varphi$ to be constant) and therefore there exists $\tau_0\in E_{a,b,c,d}$ such that $f'(\varphi(i\tau_0))\neq 0,$ since $f$ is not constant. In particular, there exists a disc $D_1$ centered at $\varphi(i\tau_0)$ and contained in $\CC_0$ such that $f_{|D_1}$ is invertible; we denote by $f^{-1}$ its inverse.

By continuity of $2^{-s}$ up to the imaginary axis, $f\circ\varphi(i\tau_0)=2^{-i\tau_0}$ and thus there is a disc $D_2$ centered at $i\tau_0$ such that $f^{-1}\circ 2^{-s}$ is analytic on $D_2$ and maps $D_2$ into $D_1\subset \CC_0$. Since for $\A(s)>0$ and $s\in D_2,$ $f^{-1}(2^{-s})=\varphi(s),$ we get an analytic extension of $\varphi,$ that we shall call $\tilde\varphi$, to $\CC_0\cup D_2$, and which maps $\CC_0\cup D_2$ into $\CC_0.$

Let now $(\veps_n)_{n\in\mathbb N}$ be a sequence contained in $\{-1,+1\}$ and such that $g(s)=\sum_{n\geq 1}\frac{\veps_n}n n^{-s}$ admits $i\mathbb R$ as its natural boundary (see \cite{Qu80} for the existence of such a sequence).
This function belongs to $\hinfplus$ since the Dirichlet series converges absolutely
on each half-plane $\CC_\sigma,$ $\sigma>0.$ Therefore there is $g_1\in\hinfplus$ such that
$g_1\circ\varphi=g.$ The function $g_1\circ\tilde\varphi$ is then an analytic extension of $g$ to $\CC_0\cup D_2,$ which contradicts that $i\mathbb R$ is its natural boundary.

Therefore we have shown that $\varphi$ is a univalent self-map of $\CC_0$ such that $\A(\varphi(i\tau))=0$ for a.e. $\tau\in\mathbb R.$ If we conjugate it by the Cayley map, we find a univalent self-map of the unit disc which is inner. Hence, its conjugate is an automorphism of $\DD$ and $\varphi$ itself is an automorphism of $\CC_0.$ Since it fixes $+\infty$ and belongs to $\GH,$ one can write it $\varphi(s)=c_0 s+i\tau$
for some positive integer $c_0$ and some $\tau\in\mathbb R.$ We conclude because if $c_0\geq 2,$ $2^{-s}$ cannot be in the range of $\mathcal C_\varphi$ (see e.g. Lemma \ref{lem:exponent}) and thus $\mathcal C_\varphi$ is not invertible.
\end{proof}

We are now able to compute the spectrum of $\mathcal C_\varphi$ for $\varphi\in\GH$ not an automorphism.

\begin{theorem}\label{thm:spectrumcomposition}
Let $\varphi\in\GH,$ $\varphi(s)=c_0 s+\sum_{k\geq 1}c_k k^{-s},$ $\varphi$ not an automorphism.
\begin{itemize}
\item[(a)] If $c_0\geq 2,$ then $\sigma(\mathcal C_\varphi)=\{0,1\}.$
\item[(b)] If $c_0=1,$ then $\sigma(\mathcal C_\varphi)=\{0\}\cup\{m^{-c_1}:\ m\in\NN\}.$
\end{itemize}
\end{theorem}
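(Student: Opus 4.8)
The plan is to exploit the König map $u$ together with Theorems \ref{thm:invertible}, \ref{sfe-thm} and the iteration lemmas, and to use the standard fact that for a bounded operator $T$ on a Fréchet algebra, $\lambda\in\sigma(\mathcal C_\varphi)$ iff $\mathcal C_\varphi-\lambda$ fails to be invertible; since $\mathcal C_\varphi$ is a unital algebra homomorphism composed with itself under iteration, the spectral mapping for iterates will be the main tool. More precisely, we have $\mathcal C_\varphi^n=\mathcal C_{\varphi^{[n]}}$, and Lemma \ref{lem:iteration} tells us that in both cases $\A\varphi^{[n]}(s)\to+\infty$ locally uniformly on $\CC_0$, which forces strong contraction-type estimates: for any $f\in\hinfplus$, $\mathcal C_\varphi^n f=f\circ\varphi^{[n]}\to f(+\infty)$ uniformly on every $\CC_\veps$. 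This is the engine driving everything.

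First I would treat case (a), $c_0\geq 2$. From Theorem \ref{sfe-thm}(i), $\sigma_p(\mathcal C_\varphi)=\{1\}$, and $1\in\sigma(\mathcal C_\varphi)$ since constants are fixed. To see $0\in\sigma(\mathcal C_\varphi)$: if $\mathcal C_\varphi$ were invertible we would contradict Theorem \ref{thm:invertible} (as $\varphi$ is not of the form $s+i\tau$). For the reverse inclusion, fix $\lambda\notin\{0,1\}$ and solve $(\mathcal C_\varphi-\lambda)f=g$ by the Neumann-type series $f=-\lambda^{-1}\sum_{n\geq 0}\lambda^{-n}\mathcal C_\varphi^n g$; the point is that $\mathcal C_\varphi^n g=g\circ\varphi^{[n]}$ converges to $g(+\infty)$ so rapidly (using Lemma \ref{lem:dirinfini}, $|g\circ\varphi^{[n]}(s)-g(+\infty)|\lesssim 2^{-\A\varphi^{[n]}(s)}\leq 2^{-c_0^n\veps}$ on $\CC_\veps$ by Lemma \ref{lem:iteration}(i)) that the series converges in $\hinfplus$ whenever $|\lambda|>1$; and for $0<|\lambda|<1$ one instead writes the resolvent by summing in the other direction after splitting off the $g(+\infty)$ part, or argues that $\mathcal C_\varphi-\lambda$ is injective with closed dense range. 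Actually the cleanest route for all $\lambda\neq 0,1$ is: decompose $\hinfplus=\CC\cdot 1\oplus X_0$ where $X_0=\{f: f(+\infty)=0\}$ is $\mathcal C_\varphi$-invariant, note $\mathcal C_\varphi|_{\CC\cdot 1}=\mathrm{id}$ so it contributes only $\{1\}$, and show $\mathcal C_\varphi|_{X_0}$ is quasinilpotent, i.e. $\|\mathcal C_\varphi^n|_{X_0}\|^{1/n}\to 0$ in the Fréchet sense, using the $2^{-c_0^n\veps}$ decay; this gives $\sigma(\mathcal C_\varphi|_{X_0})=\{0\}$ and hence $\sigma(\mathcal C_\varphi)=\{0,1\}$.

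For case (b), $c_0=1$, the same decomposition $\hinfplus=\CC\cdot 1\oplus X_0$ applies and again $\mathcal C_\varphi|_{\CC\cdot 1}=\mathrm{id}$ contributes $1=1^{-c_1}$. On $X_0$ we no longer have quasinilpotence: Theorem \ref{sfe-thm}(ii) gives $\sigma_p(\mathcal C_\varphi|_{X_0})=\{m^{-c_1}:m\geq 2\}$, with eigenvectors $m^{-u}$. The eigenvalue $0$ belongs to $\sigma(\mathcal C_\varphi)$ again by Theorem \ref{thm:invertible}. So it remains to show $\sigma(\mathcal C_\varphi)\subset\{0\}\cup\{m^{-c_1}:m\geq 1\}$, i.e. for $\lambda\neq 0$ not of the form $m^{-c_1}$, solve $(\mathcal C_\varphi-\lambda)f=g$. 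Here I would solve Schröder-type equations coefficientwise exactly as in the uniqueness argument inside the proof of Theorem \ref{sfe-thm}: writing $f(s)=\sum a_n n^{-s}$, $g(s)=\sum g_n n^{-s}$, the relation $(\mathcal C_\varphi-\lambda)f=g$ together with Lemma \ref{lem:compogd} yields $(n^{-c_1}-\lambda)a_n=g_n-\sum_{k\mid n,\,k<n}\alpha(k,n)a_k$, which is solvable for every $n$ precisely because $\lambda\neq n^{-c_1}$ for all $n$. The hard part will be the analytic control: one must prove that the formally-constructed solution $f$ actually lies in $\hinfplus$ — the coefficientwise recursion alone does not guarantee convergence of the resulting Dirichlet series on all of $\CC_0$ with the right growth. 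I expect the right argument is functional-analytic rather than coefficientwise: away from the spectrum of the point spectrum, use that $\mathcal C_\varphi|_{X_0}$ has a spectral-gap/semi-Fredholm structure — for $|\lambda|>2^{-\A(c_1)}$ the Neumann series $\sum\lambda^{-n}\mathcal C_\varphi^n$ converges on $X_0$ by Lemma \ref{lem:iteration}(ii) (which gives $\A\varphi^{[n]}(s)\geq \veps+n(\A(c_1)-\alpha)$, hence decay $2^{-\A\varphi^{[n]}}\lesssim 2^{-n(\A(c_1)-\alpha)}$), handling all $\lambda$ with $|\lambda|$ large; and for the remaining annuli $m^{-c_1}$-levels, peel off the finitely many eigenvectors $k^{-u}$ with $k^{-c_1}$ of modulus $\geq|\lambda|$ (there are finitely many such $k$ since $\A(c_1)>0$), i.e. pass to the $\mathcal C_\varphi$-invariant closed subspace $Y_\lambda=\{f\in X_0: f$ has no $k^{-s}$-component for those finitely many $k\}$ on which the spectral radius of $\mathcal C_\varphi$ drops below $|\lambda|$, invert there by Neumann series, and reassemble. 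Verifying that these finite-codimension subspaces are closed, $\mathcal C_\varphi$-invariant, and that the spectral radius genuinely drops is the step requiring the most care.
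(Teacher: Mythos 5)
Your proposal is essentially the paper's proof. In (a), the bound $|g\circ\varphi^{[n]}(s)|\lesssim 2^{-\A\varphi^{[n]}(s)}\leq 2^{-c_0^n\veps}$ already makes the Neumann series $-\sum_{n\geq 0}\lambda^{-n-1}\,g\circ\varphi^{[n]}$ converge in $\hinfplus$ for \emph{every} $\lambda\neq 0$, so your initial restriction to $|\lambda|>1$ (and the proposed separate argument on $0<|\lambda|<1$) is superfluous; your ``quasinilpotent on $X_0$'' reformulation is just this same estimate read correctly, and it is what the paper proves by directly exhibiting $F=\frac{a}{1-\lambda}+G$. In (b), your plan of peeling off the eigenvectors $1,2^{-u},\dots,(m-1)^{-u}$ for $m$ with $m^{-\A(c_1)}<|\lambda|$ and inverting by a Neumann series on the complement $F_m=\overline{\textrm{span}}(n^{-s}:n\geq m)$ via Lemma~\ref{lem:iteration}(ii) is exactly the paper's $E_m\oplus F_m$ decomposition; the verifications you flag as delicate (that the splitting is direct, that both pieces are $\mathcal C_\varphi$-stable, and that the spectral radius drops on $F_m$) are carried out there using the mean value $\lim_{T\to\infty}\frac1{2T}\int_{-T}^{T}f(1+it)k^{-it}\,dt$ to separate coefficients and the bound $|g(w)|\leq M\,m^{-\A(w)}$ for $g\in F_m$.
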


\begin{proof}
(a) By Theorem \ref{sfe-thm} and Theorem \ref{thm:invertible}, we already know
that $\{0,1\}\subset\sigma(\mathcal C_{\varphi})$. Let us show that if $\lambda\notin \{0,1\},$ then $\mathcal C_\varphi-\lambda I$ is surjective (hence invertible by Theorem \ref{sfe-thm}). Let $f\in\hinfplus$ and write it $f=a+g,$ with $a\in \CC$ and $g\in\hinfplus,$ $g(+\infty)=0.$ Let us define, for $s\in\CC_0,$
$$G(s)=-\sum_{n\geq 0}\frac{g(\varphi^{[n]}(s))}{\lambda^{n+1}}.$$
Since, for all $n\geq 0$ and all $s\in\CC_0,$ $\A(\varphi^{[n]}(s))\geq c_0^n \A(s)$ and since, by Lemma \ref{lem:dirinfini}, there exist $\sigma_0>0$ and $M>0$ such that $|g(w)|\leq M2^{-\A(w)}$ if $\A(w)\geq \sigma_0,$ the series which defines $G$ converges normally, hence uniformly, on each $\CC_\veps,$ $\veps>0.$ Therefore $G$ belongs to $\hinfplus$. Let us finally set
$$F=\frac{a}{1-\lambda}+G\in\hinfplus.$$
It is easy to check that
$$F\circ\varphi-\lambda F=f.$$

(b) Again, we know that $\{0\}\cup\{m^{-c_1}:\ m\in\NN\}$ is contained in $\sigma(\mathcal C_\varphi)$. Let $\lambda\notin \{0\}\cup\{m^{-c_1}:\ m\in\NN\}$ and let us show that $\mathcal C_\varphi-\lambda I$ is surjective. Let $m\in \NN$ be such that
$m^{-\A(c_1)}<|\lambda|$ and let us set
\begin{align*}
E_m&=\textrm{span}(1,2^{-u},\dots,(m-1)^{-u})\\
F_m&=\overline{\textrm{span}}(n^{-s}:\ n\geq m)
\end{align*}
where $u$ is the K\"onigs map of $\varphi$ and the closure is taken in $\hinfplus.$
We have already observed that, for all $k\geq 2,$ $k^{-u}$ belongs to $\hinfplus$.
In particular, $E_m\subset\hinfplus$ and it is not hard to show that $\hinfplus=E_m\oplus F_m.$ First of all, observe that, for all $k\geq 2,$
\begin{equation}\label{eq:directsum}
k^{-u}=a_k k^{-s}+\sum_{l> k}a_{l,k}l^{-s}
\end{equation}
with $a_k\neq 0.$
\begin{itemize}
\item let $f\in E_m\cap F_m$ and let us write $f=b_1+\sum_{k=2}^{m-1} b_k k^{-u}.$
We shall have $b_1=0$ because any function in $F_m$ tends to $0$ as $\A(s)$ goes to $+\infty.$ If $f\neq 0,$ let $k\in\{2,\dots,m-1\}$ be the smallest integer such that $b_k\neq 0$. Then  by \eqref{eq:directsum}
$$\lim_{T\to+\infty}\frac 1{2T}\int_{-T}^T f(1+it)k^{-it}dt=\frac{a_k b_k}k$$
whereas, for any function in $F_m,$ the left hand side of this equality is always equal to zero. This is a contraction and we conclude that $E_m\cap F_m=\{0\}.$
\item let $f\in\hinfplus$ and write it $f=b_1+\sum_{k\geq 2}b_k k^{-s}.$ Then by \eqref{eq:directsum}, one can find by induction complex numbers $c_2,\dots,c_{m-1}$ such that
$$f(s)-b_1-\sum_{j=2}^{m-1}c_k k^{-u}\in F_m$$
so that $f\in E_m+F_m.$
\end{itemize}
Observe now that $E_m$ and $F_m$ are $\mathcal C_\varphi$-stable and that on $E_m,$
$\mathcal C_\varphi$ acts as a diagonal operator with respect to the basis $(1,2^{-u},\dots,(m-1)^{-u})$,  with coefficients $1,2^{-c_1},\dots,(m-1)^{-c_1}$. Therefore,
$\mathcal C_{\varphi}-\lambda I_{|E_m}$ is surjective. Finally, let $g\in F_m$ and let us define as above
$$G(s)=-\sum_{n\geq 0}\frac{g(\varphi^{[n]}(s))}{\lambda^{n+1}}.$$
Let $\alpha>0$ be such that $m^{-\A(c_1)+\alpha}<|\lambda|.$
Since $|g(w)|\leq M m^{-\A(w)}$ for some $M>0$ and all $w\in\CC_\sigma$ for some large $\sigma,$ and by applying Lemma \ref{lem:iteration}, we get the normal convergence of $G$ on each half-plane $\CC_\veps,$ $\veps>0$, so that $G\in\hinfplus$ satisfies $C_\varphi(G)-\lambda G=g.$
\end{proof}

\begin{question}
What is the spectrum of $\mathcal C_\varphi$ provided $\varphi(s)=s+i\tau$? Observe that this is already a delicate question for composition operators induced by a rotation on $H(\DD)$ (see \cite{Bon20}).
\end{question}

\subsection{Point spectrum of weighted composition operators}

Let $D\in\mathcal \hinfplus$ and $\varphi\in\GH$. We now consider the weighted composition operator
$W_{D,\varphi}$ defined on $\hinfplus$ by $W_{D,\varphi}=D\cdot f \circ\varphi$. Observe that this is a self-map of $\hinfplus$
since this last space is an algebra.

\begin{theorem}
 Let $D\in\hinfplus\backslash\{0\}$ and $\varphi\in\GH$, $\varphi(s)=c_0 s +\sum_{n\geq 1}c_nn^{-s}$, $c_0\geq 1$.
 \begin{itemize}
  \item[(a)] If $c_0\geq 2$ and $D(\infty)\neq 0,$ $\sigma_p(W_{D,\varphi})=\{D(\infty)\}$;
  \item[(b)] If $c_0=1$ and $D(\infty)\neq 0,$ $\sigma_p(W_{D,\varphi})=\{D(\infty)m^{-c_1}:\ m\in\NN\}$;
  \item[(c)] If $D(\infty)=0,$ $\sigma_p(W_{D,\varphi})=\varnothing$.
 \end{itemize}
\end{theorem}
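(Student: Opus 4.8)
The plan is to prove the two inclusions separately. The inclusion $\sigma_p(W_{D,\varphi})\subseteq(\cdot)$ comes from an elementary look at leading Dirichlet coefficients and is valid for every $\varphi\in\GH$; the reverse inclusion is obtained by constructing explicit eigenvectors. The basic computational input is that, by the argument of Lemma~\ref{lem:exponent} carried out with an arbitrary integer $n$ in place of $2$, one has $n^{-\varphi(s)}=\sum_k b^{(n)}_k k^{-s}$ with $b^{(n)}_k=0$ unless $n^{c_0}\mid k$, and $b^{(n)}_{n^{c_0}}=n^{-c_1}$; together with Lemma~\ref{lem:dirinfini} (which yields that the leading coefficients of a product of Dirichlet series multiply), this controls the lowest-frequency term of $D\cdot(f\circ\varphi)$.

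\textbf{The inclusion.} Let $W_{D,\varphi}f=\lambda f$ with $f\neq0$, and write $f=\sum_{n\geq m}a_n n^{-s}$ with $a_m\neq0$ and $D=\sum_{j\geq j_0}d_j j^{-s}$ with $d_{j_0}\neq0$. Then $f\circ\varphi$ has leading term $a_m m^{-c_1}(m^{c_0})^{-s}$, hence $D\cdot(f\circ\varphi)$ has leading term $d_{j_0}a_m m^{-c_1}(j_0 m^{c_0})^{-s}$, which is nonzero; in particular $W_{D,\varphi}$ is injective, so $0\notin\sigma_p(W_{D,\varphi})$ in all three cases and we may take $\lambda\neq0$. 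Comparing the leading terms of $D\cdot(f\circ\varphi)$ and of $\lambda f=\sum_{n\geq m}\lambda a_n n^{-s}$ forces $j_0 m^{c_0}=m$ and $d_{j_0}a_m m^{-c_1}=\lambda a_m$. The first relation gives $j_0=1$, i.e. $D(\infty)\neq0$ — this already proves (c) — and then $m^{c_0}=m$, so $\lambda=D(\infty)m^{-c_1}$. If $c_0\geq2$, the equation $m^{c_0}=m$ forces $m=1$, whence $\sigma_p(W_{D,\varphi})\subseteq\{D(\infty)\}$; if $c_0=1$ we obtain $\sigma_p(W_{D,\varphi})\subseteq\{D(\infty)m^{-c_1}:m\in\NN\}$.

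\textbf{The eigenvectors.} Suppose $D(\infty)\neq0$, and, when $c_0=1$, suppose in addition that $\varphi(s)\neq s+i\tau$ (no extra hypothesis is needed when $c_0\geq2$). Then $\A(\varphi^{[j]}(s))\to+\infty$ locally uniformly on $\CC_0$; more precisely, Lemma~\ref{lem:iteration} yields $\A(\varphi^{[j]}(s))\geq c_0^j\veps$ on $\CC_\veps$ if $c_0\geq2$, and $\A(\varphi^{[j]}(s))\geq\veps+j\A(c_1)/2$ for $j$ large if $c_0=1$ (recall $\A(c_1)>0$ then). Since Lemma~\ref{lem:dirinfini} gives $|D(w)-D(\infty)|\leq C\,2^{-\A(w)}$ for $\A(w)$ large, it follows that $\sum_{j}P_\veps\!\big(D\circ\varphi^{[j]}/D(\infty)-1\big)<\infty$ for every $\veps>0$, so that
$$h(s)=\prod_{j=0}^{+\infty}\frac{D(\varphi^{[j]}(s))}{D(\infty)}$$
converges in $\hinfplus$ (the partial products are uniformly bounded and Cauchy in each seminorm $P_\veps$, by submultiplicativity of $P_\veps$) to a function with $h(\infty)=1$, in particular $h\neq0$. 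Writing $P_N$ for the $N$-th partial product, the identity $P_{N+1}=(D/D(\infty))\cdot(P_N\circ\varphi)$ passes to the limit — using continuity of $\mathcal C_\varphi$ and of multiplication by $D$ on $\hinfplus$ — and gives $D\cdot(h\circ\varphi)=D(\infty)h$, i.e. $D(\infty)\in\sigma_p(W_{D,\varphi})$; this finishes (a). For (b), let $u\in\GG$ be the K\"onigs map of $\varphi$, so $u\circ\varphi=u+c_1$ and, as in the proof of Theorem~\ref{sfe-thm}, $m^{-u}\in\hinfplus$ for every $m\geq1$. Then $f:=h\cdot m^{-u}\in\hinfplus\setminus\{0\}$ (as $m^{-u}$ is nowhere zero) and
$$W_{D,\varphi}f=D\cdot(h\circ\varphi)\cdot m^{-u\circ\varphi}=\big(D(\infty)h\big)\cdot m^{-c_1}m^{-u}=D(\infty)m^{-c_1}f,$$
so $D(\infty)m^{-c_1}\in\sigma_p(W_{D,\varphi})$, completing (b).

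\textbf{Main obstacle.} The delicate part is the handling of the infinite product $h$: showing the partial products stay uniformly bounded and form a Cauchy sequence in the Fr\'echet topology of $\hinfplus$, and justifying the interchange of the limit with composition by $\varphi$ and with multiplication by $D$. A separate comment is in order for $\varphi(s)=s+i\tau$: there $\A(\varphi^{[j]}(s))$ stays bounded, the product defining $h$ need not converge, and $W_{D,\varphi}$ reduces to a multiplication operator composed with a rotation, whose point spectrum is genuinely more subtle (compare the Question preceding the statement); the construction above thus requires $\varphi$ to be different from $s+i\tau$ (equivalently, for $c_0=1$, that $\varphi$ is not an automorphism of $\CC_0$).
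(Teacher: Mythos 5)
Your proof is essentially correct and follows the same two-step strategy as the paper: a leading-coefficient comparison for the inclusion $\sigma_p(W_{D,\varphi})\subseteq\{D(\infty)m^{-c_1}\}$ (which also disposes of case (c) and of the eigenvalue $0$), followed by an infinite-product construction of an eigenfunction $h$ for $D(\infty)$, then multiplication by $m^{-u}$ in the $c_0=1$ case. Two remarks on the comparison.

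First, your normalization $h=\prod_j D(\varphi^{[j]})/D(\infty)$ is a mild simplification of the paper's choice, which normalizes the factor $D(\varphi^{[k]}(s))$ by $D(\varphi^{[k]}(1))$ for $k\geq n_0$ (where $n_0$ is picked so that $D(\varphi^{[k]}(1))\neq 0$) and leaves the first $n_0$ factors unnormalized. Normalizing by the constant $D(\infty)\neq 0$ avoids the preliminary choice of $n_0$ altogether, and the estimate $|D(w)/D(\infty)-1|\lesssim 2^{-\A(w)}$ combined with Lemma \ref{lem:iteration} gives the same convergence in every seminorm $P_\veps$. Both versions produce an $h\in\hinfplus$ with $h(\infty)=1$ satisfying $W_{D,\varphi}h=D(\infty)h$, and your factorization $f=h\cdot m^{-u}$ is the same as the paper's $fg$ with $g$ an eigenfunction of $\mathcal C_\varphi$ for $m^{-c_1}$.

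Second, you correctly flag the case $\varphi(s)=s+i\tau$: there the iterates do not tend to $+\infty$, the product need not converge, and the claimed equality in (b) is in fact \emph{false} — for $\varphi(s)=s$ and any non-constant $D$, $W_{D,\varphi}$ is the multiplication operator by $D$, whose point spectrum is empty (the Dirichlet-series ring is an integral domain), while the theorem would predict $\{D(\infty)\}$. The paper's proof passes silently over this: it opens the converse direction with ``Since $\A(\varphi^{[n]}(1))$ goes to $+\infty$,'' which fails precisely when $\varphi(s)=s+i\tau$. So the hypothesis $\varphi\not\equiv s+i\tau$ (equivalently, $\varphi$ not a vertical translation) should be added to the statement of part (b); your observation locates a genuine gap in the paper rather than in your argument.

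Aside from that caveat — which is the paper's, not yours — the proof is sound: the uniform boundedness and Cauchy property of the partial products in each $P_\veps$, the passage to the limit in the functional equation $P_{N+1}=(D/D(\infty))\cdot\mathcal C_\varphi(P_N)$, and the bound $m^{-u}\in\hinfplus$ from Theorem \ref{sfe-thm} are all used correctly.
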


\begin{proof}
 Let us write $D(s)=D(\infty)+\sum_{k\geq 2}a_k k^{-s}$. Let $\lambda\in\sigma_p(W_{D,\varphi})$ and let $f(s)=\sum_{k\geq 1}a_k k^{-s}$
 be a nonzero related eigenvector. Let $m$ be the smallest integer such that $a_m\neq 0$. Then looking at the coefficients of $W_{D,\varphi}(f)$,
 we have $m^{c_0}=m$ and $D(\infty)m^{-c_1}=\lambda$. This shows the first inclusion in cases (a) and (b). If $D(\infty)=0,$
 then we shall have $\lambda=0$ but this is impossible since $W_{D,\varphi}$ is injective provided $D\neq 0$.

 Conversely, let us assume that $D(\infty)\neq 0$. Since $\A(\varphi^{[n]}(1))$ goes to $+\infty,$
 there exists $n_0\in\NN$ such that, for $n\geq n_0,$ $D(\varphi^{[n]}(1))\neq 0.$ Let us define, for $n\geq n_0,$ $s\in\CC_0,$
 $$f_n(s)=\prod_{k=0}^{n_0-1}D(\varphi^{[k]}(s)) \prod_{k=n_0}^n \frac{D(\varphi^{[k]}(s))}{D(\varphi^{[k]}(1))}$$
 and let us show that $(f_n)$ converges uniformly on each half-plane $\CC_\sigma$, $\sigma>0$.
 Indeed let $\nu(\sigma)>0$ be such that $\varphi^{[n]}(\CC_\sigma)\subset \CC_{\sigma+n\nu(\sigma)}$ (see Lemma \ref{lem:iteration}).
 Then for all $s\in\CC_\sigma,$ 
 \begin{align*}
  \left|\frac{D(\varphi^{[n]}(s))}{D(\varphi^{[n]}(1))}-1\right|&=\left|\frac{D(\varphi^{[n]}(s))-D(\varphi^{[n]}(1))}{D(\varphi^{[n]}(1))}\right|\\
  &\leq C2^{-n\nu(\sigma)}
 \end{align*}
 by Lemma \ref{lem:dirinfini}. Let $f\in\hinfplus$ be the limit of $(f_n)$. Since for $n\geq n_0$ and $s\in\CC_0,$
\begin{align*}
W_{D,\varphi}(f_n)(s)&=
D(s)\prod_{k=0}^{n_0-1}D(\varphi^{[k+1]}(s)) \prod_{k=n_0}^n \frac{D(\varphi^{[k+1]}(s))}{D(\varphi^{[k]}(1))}\\
  &=D(\varphi^{[n+1]}(1))f_{n+1}(s),
 \end{align*}
 we get, letting $n$ to $+\infty$, $W_{D,\varphi}(f)=D(\infty) f.$ When $c_0=1$ and $m\in\NN,$
 let $g\in\hinfplus$ be such that $\mathcal C_\varphi(g)=m^{-c_1}g$. Then $fg\in\hinfplus$ and
 $$W_{D,\varphi}(fg)=W_{D,\varphi}(f)\mathcal C_\varphi(g)=D(\infty)m^{-c_1}fg.$$
\end{proof}

\begin{remark}
 Let us mention some consequences of the proof. The function $f$ defined above satisfies $f(+\infty)\neq 0$
 (this follows from the normal convergence of the product on $\CC_\sigma\cup\{+\infty\}$). In particular,
 there exists $h\in\hinfplus$ such that $W_{D,\varphi}(h)=D(\infty)h$ and $h(\infty)=1$.
 When $c_0=1,$ the function $hm^{-u},$ where $u$ is the K\"onigs function of $\varphi,$
 starts with $m^{-s}$ and satisfies
 $$W_{D,\varphi}(hm^{-u})=D(\infty)m^{-c_1} hm^{-u}.$$
\end{remark}

\subsection{Spectrum of weighted composition operators}
Arguing as above, we can now describe the spectrum of weighted composition operators on $\hinfplus$.

\begin{theorem}\label{thm:spectrumweighted}
 Let $D\in\hinfplus\backslash\{0\}$ and $\varphi\in\GH,$ $\varphi(s)=c_0s+\sum_{n\geq 1}c_n n^{-s},$ $\varphi$ is not an automorphism.
 \begin{itemize}
  \item[(a)] If $c_0\geq 2$ and $D(\infty)\neq 0,$ $\sigma(W_{D,\varphi})=\{0,D(\infty)\}.$
  \item[(b)] If $c_0=1$ and $D(\infty)\neq 0,$ $\sigma(W_{D,\varphi})=\{0\}\cup \{D(\infty)m^{-c_1}:\ m\in\NN\}.$
  \item[(c)] If $D(\infty)=0$, then $\sigma_p(W_{D,\varphi})=\{0\}.$
 \end{itemize}
\end{theorem}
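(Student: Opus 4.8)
The plan is to mimic the proof of Theorem~\ref{thm:spectrumcomposition}, combining the description of $\sigma_p(W_{D,\varphi})$ obtained above with three invertibility arguments, one per case. Since $\hinfplus$ is a Fréchet space, any continuous linear bijection of $\hinfplus$ onto itself is automatically a topological isomorphism, so it suffices, for each $\lambda\in\CC$, to decide whether $W_{D,\varphi}-\lambda I$ is bijective. I would first record that $0\in\sigma(W_{D,\varphi})$ in all cases: writing $W_{D,\varphi}=M_D\mathcal C_\varphi$ with $M_D$ multiplication by $D$, surjectivity of $W_{D,\varphi}$ would force $M_D$ to be surjective, hence $1/D\in\hinfplus$ (take a preimage of $1$), hence $D$ a unit of $\hinfplus$ and $\mathcal C_\varphi=M_{1/D}W_{D,\varphi}$ surjective; as $\mathcal C_\varphi$ is always one-to-one this would make it invertible, and Theorem~\ref{thm:invertible} would force $\varphi(s)=s+i\tau$, contradicting that $\varphi$ is not an automorphism. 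Together with the point-spectrum computation (which gives $D(\infty)\in\sigma_p(W_{D,\varphi})$ in (a) and $\{D(\infty)m^{-c_1}:m\in\NN\}\subseteq\sigma_p(W_{D,\varphi})$ in (b)), this yields the inclusions ``$\supseteq$'' in each case, and, once $W_{D,\varphi}-\lambda I$ is shown to be bijective for all $\lambda\ne0$, also the whole of case (c).

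For the reverse inclusions the tool is the formal Neumann-type solution of $D\cdot F\circ\varphi-\lambda F=g$, namely
\[
F(s)=-\sum_{n\ge0}\frac1{\lambda^{n+1}}\Bigl(\prod_{k=0}^{n-1}D(\varphi^{[k]}(s))\Bigr)g(\varphi^{[n]}(s)),
\]
so everything reduces to the normal convergence of this series on each $\CC_\veps$. Lemma~\ref{lem:dirinfini} controls the factors: if $D(\infty)\ne0$ then $D(\varphi^{[k]}(s))-D(\infty)=O\bigl(2^{-\A(\varphi^{[k]}(s))}\bigr)$, which is summable over $k$ by Lemma~\ref{lem:iteration}, so $\bigl|\prod_{k<n}D(\varphi^{[k]}(s))\bigr|$ remains comparable to $|D(\infty)|^n$; if $D(\infty)=0$ the same lemmas make this product decay super-exponentially in $n$. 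In case (c) the latter decay makes the series converge for every $\lambda\ne0$ and every $g\in\hinfplus$, giving surjectivity of $W_{D,\varphi}-\lambda I$, which is also injective because $\sigma_p(W_{D,\varphi})=\varnothing$; hence $\sigma(W_{D,\varphi})=\{0\}$. In case (a), where $c_0\ge2$ and $\A(\varphi^{[n]}(s))\ge c_0^n\A(s)$, I would first peel off the constant term: write $g=g(\infty)+g_0$ with $g_0(\infty)=0$, seek $F=\dfrac{g(\infty)}{D(\infty)-\lambda}+F_0$ with $F_0(\infty)=0$ (valid since $\lambda\ne D(\infty)$), and reduce to solving $D\cdot F_0\circ\varphi-\lambda F_0=g_1$ with $g_1(\infty)=0$; then $g_1(\varphi^{[n]}(s))=O\bigl(2^{-c_0^n\A(s)}\bigr)$ decays super-exponentially and dominates the exponential growth of the products, so the series converges for every $\lambda\ne0$. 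Thus $\sigma(W_{D,\varphi})\subseteq\{0,D(\infty)\}$.

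Case (b), $c_0=1$, is the delicate one and should be treated exactly as in Theorem~\ref{thm:spectrumcomposition}(b). Given $\lambda\notin\{0\}\cup\{D(\infty)m^{-c_1}:m\in\NN\}$, I would fix $m$ so large that $|D(\infty)|\,m^{-\A(c_1)}<|\lambda|$ (possible because $\A(c_1)>0$ since $\varphi$ is not an automorphism) and split $\hinfplus=E_m\oplus F_m$ with
\[
E_m=\mathrm{span}\bigl(h,\ h\,2^{-u},\ \dots,\ h\,(m-1)^{-u}\bigr),\qquad F_m=\overline{\mathrm{span}}\bigl(n^{-s}:n\ge m\bigr),
\]
where $h\in\hinfplus$ is the eigenfunction with $W_{D,\varphi}h=D(\infty)h$, $h(\infty)=1$, and $u$ is the K\"onigs map of $\varphi$. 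Since $h\,k^{-u}$ begins with a nonzero multiple of $k^{-s}$ and its remaining Dirichlet coefficients are supported on proper multiples of $k$ (Lemma~\ref{lem:exponent} together with the remark following the point-spectrum theorem), the triangular argument used in Theorem~\ref{thm:spectrumcomposition}(b) shows this is a genuine direct sum, and both summands are $W_{D,\varphi}$-invariant. On the finite-dimensional $E_m$, $W_{D,\varphi}$ is diagonal with eigenvalues $D(\infty)k^{-c_1}$, $1\le k\le m-1$, so $W_{D,\varphi}-\lambda I$ is invertible there; on $F_m$, every $g$ satisfies $|g(w)|\le C\,m^{-\A(w)}$ for large $\A(w)$, whence Lemma~\ref{lem:iteration} gives $|g(\varphi^{[n]}(s))|\le C_\alpha\,m^{-n(\A(c_1)-\alpha)}$ for any $\alpha\in(0,\A(c_1))$, so the Neumann series converges (its effective ratio being $\approx|D(\infty)|m^{-\A(c_1)}/|\lambda|<1$) and provides a solution which again lies in $F_m$. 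This gives $\sigma(W_{D,\varphi})\subseteq\{0\}\cup\{D(\infty)m^{-c_1}:m\in\NN\}$, completing the proof. The step I expect to be the main obstacle is exactly this case (b): one has to choose $m$ so that the ``diagonal block'' $E_m$ absorbs precisely those point-spectrum values that could obstruct surjectivity, while on the complementary block $F_m$ the decay $m^{-\A(w)}$ of its elements outruns the merely exponential growth $|D(\infty)|^n$ of $\prod_{k<n}D(\varphi^{[k]}(s))$; and one must verify that $\hinfplus=E_m\oplus F_m$ with both pieces $W_{D,\varphi}$-invariant, which rests on the precise Dirichlet-coefficient pattern of the eigenfunctions $h\,k^{-u}$.
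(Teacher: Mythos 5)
Your proposal is correct and follows essentially the same route as the paper: establish $0\in\sigma(W_{D,\varphi})$ by showing invertibility of $W_{D,\varphi}$ would force $1/D\in\hinfplus$ and hence invertibility of $\mathcal C_\varphi$, contradicting Theorem~\ref{thm:invertible}; then solve $W_{D,\varphi}F-\lambda F=g$ via the Neumann-type series, directly when $c_0\geq 2$ or $D(\infty)=0$, and after the finite/shifted decomposition $\hinfplus=E_m\oplus F_m$ with $E_m=\mathrm{span}(h,h2^{-u},\dots,h(m-1)^{-u})$ and $F_m=\overline{\mathrm{span}}(n^{-s}:n\geq m)$ when $c_0=1$. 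The one cosmetic difference is in case (a): the paper peels off the constant term of $f$ by writing $f=f(\infty)h+g$ with $g(\infty)=0$ (using the eigenfunction $h$ so that the remaining $g$ is handled by the series), whereas you subtract the plain constant $g(\infty)$ and absorb the resulting extra term $D-D(\infty)$ into a modified right-hand side $g_1$ with $g_1(\infty)=0$; both variants work and reduce to the same convergence estimate.
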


\begin{proof}
We first consider the case where $D(\infty)\neq 0$.
 We begin by proving that if $W_{D,\varphi}$ is bijective, then $\varphi$ is an automorphism. So let us assume that $W_{D,\varphi}$ is bijective
 and let us show that, for all $\veps>0,$ there exists $\alpha>0$ such that $|D(s)|\geq\alpha$ for all $s\in\CC_\veps.$ If this was not the case,
 there would exist a sequence $(s_n)\subset\CC_\veps$ such that $D(s_n)\to 0.$ Let $f\in\hinfplus$ be such that $W_{D,\varphi}(f)=2+2^{-s}.$
 Then
 $$|D(s_n) f\circ\varphi(s_n)|=|2+2^{-s_n}|\geq 1.$$
 But $\varphi(s_n)\in\CC_\veps$ and $f$ is bounded on $\CC_\veps$; thus there is a contradiction.
 We then deduce that $1/D,$ which belongs to $\mathcal D$ since $D(\infty)\neq 0,$ is bounded on each $\CC_\veps$
 and therefore belongs to $\hinfplus.$ This yields that
 $$\mathcal C_\varphi=W_{1/D,\textrm{Id}}W_{D,\varphi}$$
 is also a bijection of $\hinfplus,$ hence that $\varphi$ is an automorphism.

 Therefore, since we assume that $\varphi$ is not an automorphism, we have shown that $0\in\sigma(W_{D,\varphi})$.
 Let us now prove (a). Let $\lambda\notin \{0,D(\infty)\}$ and let $f\in\hinfplus.$ Let also $h\in\hinfplus$ be such that
 $W_{D,\varphi}(h)=D(\infty) h$ and $h(\infty)=1.$ Let us consider $g=f-f(\infty)h,$ which satisfies $g(\infty)=0.$
 We also set, for $k\geq 0$ and $s\in\CC_0,$
 $$D_k(s)=D(s)D(\varphi(s))\cdots D(\varphi^{[k-1]}(s))$$
 and
 $$G(s)=-\sum_{n\geq 0}\frac{D_n(s)g(\varphi^{[n]}(s))}{\lambda^{n+1}}.$$
 Since, for any $\sigma>0,$ there exists $C>0$ such that $|D_n(s)|\leq C^n$ for all $n\in\mathbb N$
 and all $s\in\CC_\sigma,$ the argument of the proof of Theorem \ref{thm:spectrumcomposition} shows that
 $G$ belongs to $\hinfplus$ and that $W_{D,\varphi}(G)-\lambda G=g.$
 We set
 $$F=\frac{f(\infty)}{1-\lambda}h+G$$
 and verify that
 $$W_{D,\varphi}(F)-\lambda F=f(\infty)h+g=f,$$
 so that $W_{D,\varphi}-\lambda I$ is surjective.

 Let us now turn to (b). Let $\lambda\notin\{0\}\cup\{D(\infty)m^{-c_1}:\ m\in\NN\}$ and let us show that $W_{D,\varphi}-\lambda I$ is surjective.
 Let again $h\in\hinfplus$ be such that $W_{D,\varphi}(h)=h$ and $h(\infty)=1.$ For $m$ large enough, let us consider
 \begin{align*}
  E_m&=\textrm{span}(h,h2^{-u},\dots,h(m-1)^{-u})\\
  F_m&=\overline{\textrm{span}}(n^{-s}:\ n\geq m).
 \end{align*}
As before, $\hinfplus=E_m\oplus F_m$ and $E_m$, $F_m$ are $W_{D,\varphi}$-stable. Moreover, $W_{D,\varphi}$ acts as a diagonal operator on $E_m$
with respect to the basis $(h,h2^{-u},\dots,h(m-1)^{-u})$, with diagonal coefficients $(D(\infty),D(\infty)2^{-c_1},\dots,D(\infty)m^{-c_1}).$
Therefore, $W_{D,\varphi}-\lambda I_{|E_m}$ is surjective.

Let us now fix $g\in F_m$ and let us define
$$G(s)=-\sum_{n\geq 0}\frac{D_n(s)g(\varphi^{[n]}(s))}{\lambda^{n+1}}.$$
We just have to prove that $G\in\hinfplus.$ Let $\sigma>0$. Since $\A(\varphi^{[n]}(s))\to+\infty$
uniformly on $\CC_\sigma,$ there exists $C>0$ such that, for all $s\in\CC_\sigma,$
for all $n\geq 1,$
$$|D_n(s)|\leq C\big(|D(\infty)|+1\big)^n.$$
The uniform convergence of the series now follows from an argument similar to that of Theorem \ref{thm:spectrumcomposition},
choosing $m$ sufficiently large so that
$$m^{-\A(c_1)}\leq \frac{|\lambda|}{|D(\infty)|+1}.$$
Finally, when $D(\infty)=0,$ it is not hard to show that for any $g\in\hinfplus$ (even without assuming $g(\infty)=0$),
for any $\lambda\in\CC^*,$ the series
$$G(s)=-\sum_{n\geq 0}\frac{D_n(s)g(\varphi^{[n]}(s))}{\lambda^{n+1}}$$
converges in $\hinfplus$ and satisfies $W_{D,\varphi}G-\lambda G=g,$ so that $W_{D,\varphi}-\lambda I$
is surjective. This is not the case for $W_{D,\varphi},$ since any function $f$ in $\textrm{Ran}(W_{D,\varphi})$
satisfies $f(s)\to 0$ as $\A(s)\to+\infty.$
\end{proof}

%%%%%%%%%%%%%%%%%%%%%%%%%%%%%%%%%%%%%%%%%%%%%%%%%%%%%%%%%%%%%%%%%%%%%
%%%%%%%%%%%%%%%%%%%%%%%%%%%%%%%%%%%%%%%%%%%%%%%%%%%%%%%%%%%%%%%%%%%%%
%%%%%%%%%%%%%%%%%%%%%%%%%%%%%%%%%%%%%%%%%%%%%%%%%%%%%%%%%%%%%%%%%%%%%

\section{Cyclic composition operators} \label{sec:cyclic}

Recall that an operator $T$ on a Hilbert space $H$ is cyclic provided there exists a vector
$x\in H$ such that $\{P(T)(x):\ P\in\mathbb C[X]\}$ is dense in $H$.
Cyclic composition operators on $H^2(\DD)$ have been thoroughly studied in \cite{bs}.
It seems that nothing is known about this property on $\mathcal H^2$ (there is a mistake in the proof of Theorem 2.3 in \cite{yao}).
In this section, we initiate this study thanks to the existence of K\"onigs maps. We start with some general considerations.

\begin{proposition}
 Let $\varphi(s)=c_0s+\psi(s)\in \mathcal{G}.$ If $C_\varphi$ is cyclic on $\mathcal H^2$, then $c_0=1.$
\end{proposition}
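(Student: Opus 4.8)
Since $c_0$ is a non-negative integer, it suffices to prove the contrapositive in the two cases $c_0\geq 2$ and $c_0=0$. The plan is, in each case, to exhibit two linearly independent vectors in $\ker C_\varphi^{*}$; this suffices, because if $x$ is a cyclic vector for an operator $T$ on a Hilbert space $H$ then $\{T^{n}x:n\geq 1\}\subseteq\operatorname{Ran}(T)$, hence $H=\overline{\textrm{span}}\{T^{n}x:n\geq 0\}\subseteq\overline{\,\CC x+\operatorname{Ran}(T)\,}$ and therefore $\codim\overline{\operatorname{Ran}(T)}=\dim\ker T^{*}\leq 1$. So I only need to determine, when $c_0\neq 1$, which Dirichlet coefficients of $f\circ\varphi$ are forced to vanish for all $f\in\mathcal H^{2}$.

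The case $c_0\geq 2$ is immediate. Writing $\varphi(s)=c_0s+c_1+\psi_0(s)$, the computation of Lemma~\ref{lem:exponent} — carried out with $2$ replaced by an arbitrary integer $n\geq 2$, the proof via Lemma~\ref{lem:compogd} being identical — shows that $n^{-\varphi(s)}$ is a Dirichlet series supported on the multiples of $n^{c_0}$, hence on integers $\geq n^{c_0}\geq 2^{c_0}\geq 4$. Since moreover $1^{-\varphi(s)}\equiv 1$, for every $f=\sum_{n\geq 1}a_n n^{-s}\in\mathcal H^{2}$ the Dirichlet series $f\circ\varphi=a_1+\sum_{n\geq 2}a_n n^{-\varphi}$ has vanishing coefficient at $2^{-s}$ and at $3^{-s}$; equivalently, $\langle C_\varphi f,2^{-s}\rangle=\langle C_\varphi f,3^{-s}\rangle=0$ for every $f$. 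Thus $2^{-s}$ and $3^{-s}$ are two linearly independent vectors in $\ker C_\varphi^{*}$, so $C_\varphi$ is not cyclic.

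For $c_0=0$, write $\varphi=\psi=c_1+\psi_0$ with $\psi_0(s)=\sum_{k\geq 2}\gamma_k k^{-s}$. Since $n^{-\varphi(s)}=n^{-c_1}\exp(-\psi_0(s)\log n)$ is a Dirichlet series supported on the multiplicative semigroup $S$ generated by $\{k\geq 2:\gamma_k\neq 0\}$, the closed range $\overline{\operatorname{Ran}(C_\varphi)}=\overline{\textrm{span}}\{n^{-\varphi}:n\geq 1\}$ is contained in $\overline{\textrm{span}}\{m^{-s}:m\in S\}$, whence $m^{-s}\in\ker C_\varphi^{*}$ for every integer $m\notin S$. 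If only finitely many primes occur in the support of $\psi_0$, then $S$ omits infinitely many integers (all products of the remaining primes), and $\ker C_\varphi^{*}$ is infinite-dimensional. Otherwise, choose three primes $p_1,p_2,p_3$ with $\gamma_{p_i}\neq 0$; since a prime is never a product of integers $\geq 2$, the coefficient of $p_i^{-s}$ in $n^{-\varphi(s)}$ equals exactly $-\gamma_{p_i}\,n^{-c_1}\log n$, so $\langle n^{-\varphi},\,\overline{\gamma_{p_2}}\,p_1^{-s}-\overline{\gamma_{p_1}}\,p_2^{-s}\rangle=0$ for every $n\geq 1$, and likewise with $p_2$ replaced by $p_3$; hence $\overline{\gamma_{p_2}}\,p_1^{-s}-\overline{\gamma_{p_1}}\,p_2^{-s}$ and $\overline{\gamma_{p_3}}\,p_1^{-s}-\overline{\gamma_{p_1}}\,p_3^{-s}$ are two linearly independent elements of $\ker C_\varphi^{*}$. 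In all cases $\dim\ker C_\varphi^{*}\geq 2$, and $C_\varphi$ is not cyclic.

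The support bookkeeping for $c_0\geq 2$ and for the finitely-many-primes part of $c_0=0$ is routine. The only point calling for a genuine (if short) computation is the infinitely-many-primes sub-case of $c_0=0$: there a naive support count fails, since one may have $S=\mathbb N$, and one must instead isolate the exact linear-in-$\log n$ term of $\exp(-\psi_0(s)\log n)$. This is the one place where the characteristic-$0$ structure of $\mathcal G$ really enters, and I expect it to be the main — and only mild — difficulty.
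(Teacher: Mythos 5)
Your argument for $c_0\geq 2$ is essentially the paper's: both exhibit $2^{-s}$ and $3^{-s}$ as independent vectors in $(\ran C_\varphi)^\perp$ by inspecting the forced zero Dirichlet coefficients of $f\circ\varphi$, then invoke the bound $\codim\overline{\ran T}\leq 1$ for a cyclic $T$.

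For $c_0=0$ you take a genuinely different route. The paper quotes the fact that a symbol $\varphi\in\mathcal D$ is never univalent on $\CC_{1/2}$, then uses the open mapping theorem to produce infinitely many pairs $s\neq w$ with $\varphi(s)=\varphi(w)$ and hence infinitely many reproducing-kernel differences $K_s-K_w\in\ker C_\varphi^*$. You instead read off, from the expansion $n^{-\varphi}=n^{-c_1}\exp(-\psi_0(s)\log n)$, which coefficients of $C_\varphi f$ are constrained: when only finitely many primes $p$ carry a nonzero $\gamma_p$, infinitely many prime monomials $p^{-s}$ lie outside the multiplicative support $S$ and are annihilated by $C_\varphi^*$; otherwise your three-prime trick isolates the exact coefficient $-\gamma_{p_i}n^{-c_1}\log n$ of $p_i^{-s}$ in $n^{-\varphi}$ and produces two independent kernel vectors by cancellation. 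What your approach buys is self-containedness — you avoid the univalence theorem for characteristic-zero symbols — at the cost of a case split and some coefficient bookkeeping; the paper's argument is shorter and yields infinite co-dimension in every sub-case at once (your second sub-case only delivers two kernel vectors, though it could be iterated). One minor imprecision in yours: the parenthetical ``(all products of the remaining primes)'' overstates what $S$ omits — a product $q_1q_2$ of two primes with $\gamma_{q_1}=\gamma_{q_2}=0$ can still lie in $S$ if $\gamma_{q_1q_2}\neq 0$ — but the remaining primes themselves already give infinitely many integers outside $S$, so the conclusion is unaffected.
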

\begin{proof}
 If $c_{0}\geq2$, then clearly $\{2^{-s},3^{-s}\}\subset (\ran (C_{\varphi}))^{\perp}$.
Since the orthogonal complement of the range of a cyclic operator has dimension at most one, then $C_{\varphi}$ is not cyclic.
If $c_0=0,$ then $\varphi\in\mathcal D$ and therefore $\varphi$ is not univalent on $\CC_{1/2}$ (see e.g. \cite[Theorem 2.3]{cgl}).
We can now argue as in \cite[Page 18]{bs}: by the open mapping theorem for holomorphic functions,
there exist infinitely many pairs of distinct points $s$ and $w$ in $\CC_{1/2}$ such that $\varphi(s)=\varphi(w)$.
For each such pair,
$$C_\varphi^*(K_s-K_w)=K_{\varphi(s)}-K_{\varphi(w)}=0$$
where $K_s$ is the reproducing kernel at $s\in\CC_{1/2}$. Therefore, $(\ran (C_{\varphi}))^{\perp}$
is infinite dimensional and $C_\varphi$ is not cyclic.
\end{proof}

\begin{question}
 The above proof shows that provided $C_\varphi$ is cyclic, $\varphi$ is univalent on $\CC_{1/2}$.
 Is it also univalent on $\CC_0$?
\end{question}

We then settle the case of linear symbols.

\begin{proposition}\label{linear:cyclic}
Let $\varphi(s)=s+c_1$ with $\A (c_{1})\geq0$.
Then $C_{\varphi}$ is cyclic on $\mathcal{H}^{2}$ if and only if
$c_{1}\neq i \frac{2k\pi}{\log(m/n)}$ with $k\in\mathbb{Z}\backslash \{0\}$,
$m, n\geq1$, $m\neq n$.
\end{proposition}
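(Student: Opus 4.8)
The plan is to observe that $C_\varphi$ acts \emph{diagonally} on the canonical orthonormal basis of $\mathcal H^2$ and then to read off cyclicity from the multiplicities of its eigenvalues. Since $\{n^{-s}\}_{n\ge 1}$ is an orthonormal basis of $\mathcal H^2$ and $C_\varphi(n^{-s})=n^{-\varphi(s)}=n^{-c_1}n^{-s}$, the operator $C_\varphi$ is a diagonal, hence normal, operator with eigenvalues $\lambda_n:=n^{-c_1}$ and eigenvectors $n^{-s}$. Because $|\lambda_n|=n^{-\A(c_1)}$, the $\lambda_n$ are pairwise distinct as soon as $\A(c_1)>0$ (and then $\lambda_n\to 0$, so $C_\varphi$ is even compact); and if $\A(c_1)=0$, writing $c_1=i\tau$ with $\tau\in\RR$, one has $\lambda_n=\lambda_m$ for some $n\ne m$ exactly when $\tau\log(n/m)\in 2\pi\ZZ$, that is, exactly when $\tau=\tfrac{2k\pi}{\log(m/n)}$ for some $k\in\ZZ\setminus\{0\}$ and some $m\ne n$. (The value $c_1=0$ gives $\varphi=\mathrm{Id}$ and $C_\varphi=I$, which is trivially non-cyclic on the infinite-dimensional space $\mathcal H^2$; it is the only degenerate case and is tacitly excluded.) Thus the proposition reduces to the statement: \emph{$C_\varphi$ is cyclic if and only if the eigenvalues $\lambda_n$ are pairwise distinct.}

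For the ``only if'' direction, assume $c_1=i\,\tfrac{2k\pi}{\log(m/n)}$ with $k\ne 0$, $m\ne n$, so that $\mu:=n^{-c_1}=m^{-c_1}$ and $E:=\ker(C_\varphi-\mu)$ contains the two independent vectors $n^{-s}$ and $m^{-s}$, hence $\dim E\ge 2$. As $C_\varphi$ is normal, $E=\ker(C_\varphi^*-\bar\mu)$ as well, so $E$ reduces $C_\varphi$ and $\mathcal H^2=E\oplus E^\perp$ with both summands $C_\varphi$-invariant and $C_\varphi|_E=\mu I$. For any $x=x_E+x_{E^\perp}$ and any polynomial $P$, $P(C_\varphi)x=P(\mu)x_E+P(C_\varphi)x_{E^\perp}\in\CC x_E\oplus E^\perp$, and this last subspace is proper (it meets $E$ in at most one dimension, whereas $\dim E\ge 2$). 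Hence $x$ is not cyclic, and since $x$ was arbitrary, $C_\varphi$ fails to be cyclic. (Alternatively, one may just invoke Remark~\ref{rem:multiplicity}, which exhibits exactly these high-multiplicity eigenspaces.)

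For the ``if'' direction it suffices to prove that a diagonal operator $D=\mathrm{diag}(\lambda_n)$ on $\ell^2$ with pairwise distinct entries is cyclic. Pick $x=\sum_n c_n e_n$ with every $c_n\ne 0$ and $\sum_n|c_n|^2<\infty$ (say $c_n=2^{-n}$), and let $y=\sum_n d_n e_n$ be orthogonal to $\overline{\textrm{span}}\{P(D)x:P\in\CC[X]\}$; then $\sum_n\lambda_n^k c_n\overline{d_n}=0$ for all $k\ge 0$, so the complex measure $\mu:=\sum_n c_n\overline{d_n}\,\delta_{\lambda_n}$ (of total variation $\le\|x\|\,\|y\|$) has all analytic moments $\int z^k\,d\mu$ equal to $0$. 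When $\A(c_1)>0$ the measure $\mu$ is carried by the countable compact set $\{\lambda_n\}\cup\{0\}$, which has empty interior and connected complement; its Cauchy transform is analytic off that set and vanishes at $\infty$, hence vanishes identically, forcing $\mu=0$. When $\A(c_1)=0$ the measure $\mu$ is purely atomic on $\TT$ — thus singular with respect to arc length — and has vanishing analytic moments, so by the F.\ and M.\ Riesz theorem it is absolutely continuous with respect to arc length, whence again $\mu=0$. In both cases $c_n\overline{d_n}=0$ for all $n$, so $d_n=0$ and $y=0$; therefore $x$ is cyclic, and so is $C_\varphi$.

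The substantial point is the sufficiency, and within it the exclusion of annihilating measures. This is elementary when $\A(c_1)>0$ (a finite measure on a thin planar set with all analytic moments zero is zero), but when $\A(c_1)=0$ it genuinely requires the F.\ and M.\ Riesz theorem, since the eigenvalues $\{n^{-i\tau}\}$ are in fact dense in $\TT$ for every $\tau\ne 0$, so no elementary polynomial-approximation argument on the support is available. The only remaining thing to check is the bookkeeping identifying ``$\lambda_n$ pairwise distinct'' with the arithmetic condition on $c_1$ stated in the proposition (modulo the trivial symbol $\varphi=\mathrm{Id}$).
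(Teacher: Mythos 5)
Your proof is correct and, at bottom, follows the same route as the paper: observe that $C_\varphi$ is diagonal with respect to the canonical basis $(n^{-s})_{n\geq 1}$, with eigenvalues $\lambda_n=n^{-c_1}$, and reduce the proposition to the statement that such a diagonal operator is cyclic precisely when the $\lambda_n$ are pairwise distinct. The difference is that the paper disposes of this criterion in one line by citing [Seu, Lemma~1], whereas you supply a self-contained proof of the two spectral pictures that actually occur: when $\A(c_1)>0$ the eigenvalues accumulate only at $0$, so the compact support $\{\lambda_n\}\cup\{0\}$ has connected complement and empty interior, and a Cauchy-transform (or, more directly, Mergelyan) argument kills any annihilating measure; when $\A(c_1)=0$ the eigenvalues lie densely on $\TT$, the annihilating measure is purely atomic with vanishing non-positive Fourier coefficients, and the F.\ and M.\ Riesz theorem forces it to vanish. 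Both steps are sound, and the Mergelyan/Rudin--Carleson dichotomy you exploit is exactly the one the paper deploys later for the commutant question in Proposition~\ref{prop:diagonal}. A small bonus of your write-up is catching the degenerate value $c_1=0$: there $C_\varphi=I$ fails to be cyclic even though the arithmetic condition in the statement is vacuously satisfied, so the proposition as literally written is off by this one exceptional case; the underlying eigenvalue-distinctness criterion of course still gives the right answer.
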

\begin{proof}
 This immediately follows from the characterization of cyclic diagonal operators, which can be found
 in \cite[Lemma 1]{Seu}: a diagonal operator $D$ on a Hilbert space $H$ given by $De_n=s_ne_n$
 where $(e_n)$ is an orthonormal basis of $H$ is cyclic if and only if $s_n\neq s_m$ for any $n\neq m.$
\end{proof}

We aim in this section to provide a sufficient condition for a composition operator on $\mathcal H^2$
to be cyclic and therefore to give nontrivial examples. We shall deduce the cyclicity of $C_\varphi$
from properties of the K\"{o}nigs map of $\varphi$.

\begin{lemma} \label{thm:cyclic}
Let $\varphi(s)=s+\sum_{k\geq 1}c_k k^{-s}\in\GH$ and $\A(c_1)>0$ be such that its K\"{o}nigs map $u$ belongs to $\GH$ and assume that $C_u$ has dense range.
 Then $C_\varphi$ is cyclic.
\end{lemma}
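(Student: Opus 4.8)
The plan is to read off from the K\"onigs equation $u\circ\varphi=u+c_1$ an intertwining relation between $C_\varphi$ and an explicit diagonal operator that is visibly cyclic, and then to transport cyclicity along the intertwiner.

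First I would introduce the linear symbol $\varphi_0(s)=s+c_1$. Since $\A(c_1)>0$, $\varphi_0$ belongs to $\GH$, so $C_{\varphi_0}$ is a bounded operator on $\mathcal H^2$; concretely $C_{\varphi_0}(n^{-s})=n^{-c_1}n^{-s}$, so $C_{\varphi_0}$ is diagonal with respect to the orthonormal basis $(n^{-s})_{n\geq1}$, with diagonal coefficients $n^{-c_1}$. These coefficients are pairwise distinct, because their moduli $n^{-\A(c_1)}$ are strictly decreasing in $n$ (again using $\A(c_1)>0$), so Proposition \ref{linear:cyclic} gives that $C_{\varphi_0}$ is cyclic. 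I would then fix a cyclic vector $g\in\mathcal H^2$ for $C_{\varphi_0}$.

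Next I would check the intertwining identity $C_\varphi C_u=C_u C_{\varphi_0}$ on $\mathcal H^2$. All three composition operators are bounded because $\varphi,\varphi_0\in\GH$ and, by hypothesis, $u\in\GH$; and for $f\in\mathcal H^2$, using $u\circ\varphi=u+c_1$,
\begin{equation*}
 C_\varphi C_u f=(f\circ u)\circ\varphi=f\circ(u\circ\varphi)=f\circ(u+c_1)=(f\circ\varphi_0)\circ u=C_u C_{\varphi_0}f .
\end{equation*}
Iterating, $C_\varphi^{n}C_u=C_u C_{\varphi_0}^{n}$ for all $n\geq0$, hence $P(C_\varphi)C_u=C_uP(C_{\varphi_0})$ for every polynomial $P$.

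Finally I would take $x=C_u g$ as candidate cyclic vector for $C_\varphi$. Indeed $P(C_\varphi)x=C_u\big(P(C_{\varphi_0})g\big)$ for all $P$, so
\begin{equation*}
 \{P(C_\varphi)x:\ P\in\mathbb C[X]\}=C_u\big(\{P(C_{\varphi_0})g:\ P\in\mathbb C[X]\}\big).
\end{equation*}
The set $\{P(C_{\varphi_0})g:\ P\in\mathbb C[X]\}$ is dense in $\mathcal H^2$ since $g$ is cyclic for $C_{\varphi_0}$; as $C_u$ is continuous, the closure of $C_u\big(\{P(C_{\varphi_0})g\}\big)$ contains $C_u(\mathcal H^2)=\ran C_u$, which is dense in $\mathcal H^2$ by hypothesis. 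Hence $\{P(C_\varphi)x:\ P\in\mathbb C[X]\}$ is dense in $\mathcal H^2$ and $x$ is cyclic for $C_\varphi$. This is really an instance of the general principle that a bounded operator with dense range intertwining a cyclic operator into a second one forces the second one to be cyclic; accordingly I do not expect a genuine obstacle here, the only point to watch being the boundedness of all composition operators involved, which the membership of $\varphi$, $\varphi_0$ and $u$ in $\GH$ guarantees.
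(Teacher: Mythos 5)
Your proof is correct and follows the same route as the paper: both intertwine $C_\varphi$ with the diagonal operator $C_{\varphi_0}$ (the paper calls it $C_\tau$) via $C_\varphi C_u=C_uC_{\varphi_0}$, pick a cyclic vector for the diagonal operator, and push it forward by $C_u$, using the dense-range hypothesis to conclude. Your write-up is merely a bit more explicit in spelling out the closure/dense-range step and in citing Proposition~\ref{linear:cyclic} for the cyclicity of $C_{\varphi_0}$.
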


\begin{proof}
Let $\tau(s)=s+c_1$. Then $C_\tau$ is cyclic.
Let $f$ be a cyclic vector for $C_\tau$ and let $g\in\mathcal H^2,$
 $(P_n)\subset\CC[X]$ such that $(P_n(C_\tau)(f))$ goes to $g$ in $\mathcal H^2$.
 Then $(C_u(P_n(C_\tau)(f)))$ converges to $g\circ u$. Now, the relation $u\circ\varphi=\tau\circ u$
 yields
 $$C_u(P_n(C_\tau)(f))=P_n(C_\varphi)(f\circ u)$$
 so that $f\circ u$ is a cyclic vector for $C_\varphi$.
\end{proof}

\color{black}

\begin{remark}
We could also prove this lemma using a result of Clancey and Rogers \cite[Theorem 3]{cr} and the description of the spectrum given by Proposition \ref{prop:spectrumh2}.
\end{remark}

To prove that $C_u$ has dense range, one may appeal to the following lemma which is reminiscent from \cite[Theorem 3.4]{bs}.

\begin{lemma}\label{lem:denserange}
Let $\varphi(s)=s+\sum_{k\geq 1}c_k k^{-s}\in\GH$ be univalent and $\A(c_1)>0$.
Let $u$ be its K\"{o}nigs map and assume that it belongs to $\GH.$
If $C_\varphi$ has dense range, then $C_u$ has dense range.
\end{lemma}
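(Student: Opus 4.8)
The plan is to exploit the relation $u\circ\varphi=\tau\circ u$ where $\tau(s)=s+c_1$, which by Theorem~\ref{thm:lfm} gives the Königs map, together with the fact that $u$ and $u^{-1}$ behave well on remote half-planes. Since $C_\varphi$ has dense range on $\mathcal H^2$, the adjoint $C_\varphi^*$ is injective; equivalently, no nonzero reproducing kernel $K_s$, $s\in\CC_{1/2}$, lies in $\ker C_\varphi^*$, but more usefully the span of $\{K_{\varphi(s)}:s\in\CC_{1/2}\}$ is dense, i.e. $\varphi(\CC_{1/2})$ is a \emph{uniqueness set} for $\mathcal H^2$. I want to conclude that $u(\CC_{1/2})$ is also a uniqueness set, which is exactly the statement that $C_u$ has dense range (again since $C_u^*K_s=K_{u(s)}$). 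So the whole problem reduces to: \emph{if $\varphi(\CC_{1/2})$ is a uniqueness set for $\mathcal H^2$, so is $u(\CC_{1/2})$.}

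First I would record the structural input from the Königs construction. By Theorem~\ref{thm:lfm}(i) and the hypothesis $u\in\GH$, we have $u\circ\varphi=u+c_1$, hence $u\circ\varphi^{[n]}=u+nc_1$ for all $n$, so $u(\varphi^{[n]}(\CC_{1/2}))=nc_1+u(\CC_{1/2})$. By Lemma~\ref{lem:gdinfini} (applied to $\varphi$) together with Lemma~\ref{lem:reverseinclusion} (applied to $u$, which is injective, so $u^{-1}\in\GD$ by Lemma~\ref{lem:inverse}), one controls how $u$ and its inverse move half-planes around: for $\sigma$ large, $u(\CC_\sigma)$ contains a half-plane and is contained in a half-plane. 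The key geometric fact I would isolate is that $u(\CC_{1/2})$ differs from $\varphi^{[n]}(\CC_{1/2})$, pushed forward by $u$ and translated, only by a bounded perturbation near infinity; and a set containing a half-plane is trivially a uniqueness set for $\mathcal H^2$ (functions in $\mathcal H^2$ are analytic on $\CC_{1/2}$ and a half-plane is a set of uniqueness for analytic functions).

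The heart of the argument is then a transfer-of-uniqueness step à la \cite[Theorem 3.4]{bs}. Suppose $f\in\mathcal H^2$ vanishes on $u(\CC_{1/2})$; I want $f\equiv 0$. Since $u$ is a symbol of a bounded composition operator on a suitable sense (or at least $f\circ u\in\mathcal D$ via Lemma~\ref{lem:compogd}), $f\circ u$ is a Dirichlet series vanishing on $\CC_{1/2}$... wait, that would be immediate and too strong, so the correct reading is the reverse: one takes $f$ vanishing on $u(\CC_{1/2})$ and wants to produce, from $\varphi(\CC_{1/2})$ being a uniqueness set, that $f=0$. The mechanism: from $u\circ\varphi^{[n]}=u+nc_1$ we get that if $f$ vanishes on $u(\CC_{1/2})$ then the translate $f(\cdot-nc_1)$ vanishes on $u(\varphi^{[n]}(\CC_{1/2}))=u(\CC_{1/2})+nc_1$... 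I would instead argue directly on the half-plane containment: pick $p$ with $\varphi^{[p]}(\CC_{1/2})\subset\CC_{\sigma_0}$ for $\sigma_0$ large (Lemma~\ref{lem:gdinfini}), and use that $u$ restricted to $\CC_{\sigma_0}$ is injective with $u^{-1}\in\GD$; then $u(\CC_{\sigma_0})$ contains a half-plane $\CC_{\sigma_1}$ by Lemma~\ref{lem:reverseinclusion}, so $f$ vanishing on $u(\CC_{1/2})\supset u(\varphi^{[p]}(\CC_{1/2}))$ forces $f$ to vanish on a set from which, combined with the uniqueness of $\varphi(\CC_{1/2})$ and the intertwining, $f\equiv 0$ follows. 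The main obstacle is precisely making this chain of inclusions honest: $u(\CC_{1/2})$ need not contain a full half-plane (it is $u$ of a half-plane, and $u$ is only nice near $+\infty$), so one cannot simply say "$u(\CC_{1/2})$ contains a half-plane hence is a uniqueness set." One must genuinely use that $\varphi(\CC_{1/2})$ is a uniqueness set and transport that property through the functional equation, controlling the non-injective part of $u$ on the left edge of $\CC_{1/2}$ — that bookkeeping, rather than any deep idea, is where the care goes.
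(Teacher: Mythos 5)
Your reduction at the very start is incorrect, and it undermines the whole argument. You claim that density of $\textrm{Ran}(C_\varphi)$ (equivalently, injectivity of $C_\varphi^*$) is equivalent to density of $\textrm{span}\{K_{\varphi(s)}:s\in\CC_{1/2}\}$, i.e.\ to $\varphi(\CC_{1/2})$ being a uniqueness set. But a function $f\in\mathcal H^2$ is orthogonal to every $K_{\varphi(s)}$ exactly when $f\circ\varphi\equiv 0$, so $\overline{\textrm{span}}\{K_{\varphi(s)}\}=\ker(C_\varphi)^\perp$. Hence ``$\varphi(\CC_{1/2})$ is a uniqueness set'' is equivalent to \emph{injectivity of $C_\varphi$}, not injectivity of $C_\varphi^*$. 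Since $\varphi$ (and likewise $u$) is a nonconstant analytic map, its image contains an open set and $C_\varphi$, $C_u$ are always injective; so the statement you reduced to is automatically true for every symbol and, in particular, makes no use of the hypothesis that $C_\varphi$ has dense range. (The same confusion shows up in the intermediate claim: $K_{\varphi(s)}$ is never the zero vector, so ``no $K_s$ lies in $\ker C_\varphi^*$'' is vacuous and is not equivalent to injectivity of $C_\varphi^*$.) You in fact notice midway that the argument is ``immediate and too strong,'' which is precisely the symptom of this error, but the subsequent attempt to repair it never produces a valid chain of implications.

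The paper's proof bypasses range/kernel duality entirely and works directly with the functional equation. Choose $m$ with $\CC_{m\A(c_1)}\subset u(\CC_0)$ (possible since $u\in\GH$, by Lemma~\ref{lem:reverseinclusion}), set $v(s)=u^{-1}(s+mc_1)$, and check that $v\in\GH$; then $u\circ\varphi^{[m]}=u+mc_1$ gives the factorization $\varphi^{[m]}=v\circ u$. Since $C_v$ is bounded on $\mathcal H^2$, this yields
$$\textrm{Ran}(C_u)\supset\{(f\circ v)\circ u:\ f\in\mathcal H^2\}=\textrm{Ran}(C_{\varphi^{[m]}}),$$
and $\textrm{Ran}(C_{\varphi^{[m]}})=\textrm{Ran}(C_\varphi^m)$ is dense whenever $\textrm{Ran}(C_\varphi)$ is. This is the step that genuinely exploits the hypothesis and that your proposal never reaches.
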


\begin{proof}
 Let $m\in\mathbb N$ be such that $\CC_{m\A(c_1)}\subset u(\CC_0)$ and define, for $s\in\CC_0,$
 $$v(s)=u^{-1}(s+mc_1).$$
 Then $v\in\GH$. Moreover, since $u\circ \varphi^{[m]}=u+mc_1,$ $\varphi^{[m]}=v\circ u$. Hence
 \begin{align*}
  \textrm{Ran}(C_u)&\supset \{f\circ v\circ u:\ f\in\mathcal H^2\}\\
  &\supset \textrm{Ran}(C_{\varphi^{[m]}})
 \end{align*}
 which is dense since $C_\varphi$ has dense range.
\end{proof}

Combining Lemma \ref{thm:cyclic} and Lemma \ref{lem:denserange}, we get the following sufficient condition for a composition operator to be cyclic.

\begin{theorem}\label{cor:cyclic}
 Let $\varphi(s)=s+\sum_{k\geq 1}c_k k^{-s}\in\GH$ be univalent %with characteristic $1$
 and $\A(c_1)>0$. Assume that its K\"onigs maps belongs to $\GH$ and that $C_\varphi$ has dense range. Then $C_\varphi$ is cyclic.
\end{theorem}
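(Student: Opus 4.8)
The plan is to obtain this statement by simply chaining Lemma \ref{lem:denserange} and Lemma \ref{thm:cyclic}, which were set up precisely for this purpose. We are given $\varphi(s)=s+\sum_{k\geq1}c_kk^{-s}\in\GH$ univalent, with $\A(c_1)>0$, Königs map $u\in\GH$, and $C_\varphi$ of dense range. First I would invoke Lemma \ref{lem:denserange}: its hypotheses (univalence of $\varphi$, $\A(c_1)>0$, $u\in\GH$, $C_\varphi$ of dense range) are exactly those at hand, so $C_u$ has dense range. Then I would invoke Lemma \ref{thm:cyclic}, whose hypotheses are $\varphi\in\GH$ with $\A(c_1)>0$, $u\in\GH$, and $C_u$ of dense range — the last of which we have just established — and whose conclusion is exactly that $C_\varphi$ is cyclic. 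This is the whole argument.

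It is worth recalling where the content actually sits, so the corollary does not look vacuous. Lemma \ref{thm:cyclic} uses the semiconjugacy $u\circ\varphi=\tau\circ u$ with $\tau(s)=s+c_1$: since $C_\tau$ is cyclic on $\mathcal H^2$ by Proposition \ref{linear:cyclic} (the assumption $\A(c_1)>0$ excludes the exceptional purely imaginary values), a cyclic vector $f$ for $C_\tau$ is transported through $C_u$ to $f\circ u$, and the identity $C_u P_n(C_\tau)=P_n(C_\varphi)C_u$ shows the $C_\varphi$-orbit of $f\circ u$ is dense precisely because $C_u$ has dense range. Lemma \ref{lem:denserange} supplies that density: using Lemma \ref{lem:gdinfini} and Lemma \ref{lem:inverse} one picks $m$ with $\CC_{m\A(c_1)}\subset u(\CC_0)$, sets $v(s)=u^{-1}(s+mc_1)\in\GH$, notes $\varphi^{[m]}=v\circ u$, whence $\ran(C_u)\supset\ran(C_{\varphi^{[m]}})$, which is dense since $C_\varphi$ (and hence each iterate) has dense range.

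I do not expect a genuine obstacle here; the only care needed is to verify that the hypotheses line up verbatim, in particular that both "univalence of $\varphi$" and "$C_\varphi$ has dense range" are genuinely assumed in Theorem \ref{cor:cyclic} (they are), since Lemma \ref{lem:denserange} requires both. All the real effort was already expended earlier: in proving that the Königs map inherits membership in $\GH$ under the stated conditions, and in the range-transport computations of Lemmas \ref{thm:cyclic} and \ref{lem:denserange}.
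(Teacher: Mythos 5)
Your proof is correct and is precisely the argument the paper gives: the authors state the theorem as an immediate consequence of combining Lemma~\ref{thm:cyclic} and Lemma~\ref{lem:denserange}, exactly as you do. The hypothesis check you perform (univalence and dense range of $C_\varphi$ feeding into Lemma~\ref{lem:denserange}, whose conclusion then feeds into Lemma~\ref{thm:cyclic}) matches the paper's intent and is the whole content of the deduction.
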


To get examples of cyclic composition operators, we need a way to prove that $C_\varphi$ has dense range.
On the Hardy space of the unit disc,
this is linked to polynomial approximation in $A(\overline{\varphi(\DD)})$. For instance, if $\varphi(\DD)$
is a Jordan domain, then $C_\varphi$ has dense range. We do not have this kind of tools at our disposal.
We just give a sufficient condition which is enough to give nontrivial examples.

\begin{lemma} \label{lem:cyclic}
 Let $\varphi\in\GH$ be univalent with characteristic $1$. Assume that there exist $a<0<\sigma'< \sigma$ such that
 \begin{itemize}
  \item[(a)] $\varphi$ extends to a univalent map on $\CC_a$;
  \item[(b)] $\varphi(\CC_0)\subset\CC_\sigma$ and $\CC_{\sigma'}\subset \varphi(\CC_a)$.
 \end{itemize}
 Then $C_\varphi$ has dense range.
\end{lemma}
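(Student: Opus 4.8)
The aim is to show $\overline{\ran(C_\varphi)}=\mathcal H^2$, and since $\ran(C_\varphi)$ is a linear subspace it suffices to prove that $\overline{\ran(C_\varphi)}$ contains a dense family; the Dirichlet polynomials are the natural choice. So the plan is: given a Dirichlet polynomial $P$, produce $f\in\mathcal H^2$ with $f\circ\varphi=P$ (or at least realize $P$ as a limit of functions $f\circ\varphi$). This is the Dirichlet-series counterpart of the classical fact that, on $H^2(\DD)$, $C_\varphi$ has dense range once $\overline{\varphi(\DD)}$ sits nicely inside the domain of a univalent extension of $\varphi$.

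The function $f$ is forced to be $f=P\circ\varphi^{-1}$. By hypothesis (a) the extension of $\varphi$ to $\CC_a$ is injective and, being of the form $c_0 s+\psi(s)$ on $\CC_0$ with $c_0=1$ and $\psi$ a Dirichlet series, it belongs to $\GD$; hence Lemma \ref{lem:inverse} gives $\varphi^{-1}\colon\varphi(\CC_a)\to\CC_a$ in $\GD$, with characteristic $1$. By hypothesis (b), $\varphi(\CC_a)\supset\CC_{\sigma'}$, so $f=P\circ\varphi^{-1}$ is defined on $\CC_{\sigma'}$ and, by Lemma \ref{lem:compogd} (with $P\in\mathcal D$), it is a Dirichlet series. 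Two observations make the construction run. First, because $\sigma>\sigma'$ and $\varphi(\CC_0)\subset\CC_\sigma$, we have $\varphi(\CC_{1/2})\subset\varphi(\CC_0)\subset\CC_\sigma\subset\CC_{\sigma'}\subset\varphi(\CC_a)$, so $f\circ\varphi$ makes sense on $\CC_{1/2}$ and equals $P\circ\varphi^{-1}\circ\varphi=P$ there, as required. Second, since $\varphi^{-1}$ takes values in $\CC_a$, we have $\A(\varphi^{-1}(w))>a$ for every $w\in\varphi(\CC_a)$; writing $P=\sum_{n\le N}a_n n^{-s}$, this gives the uniform bound $|f(w)|\le\sum_{n\le N}|a_n|n^{-a}$ on all of $\varphi(\CC_a)$, in particular on $\CC_{\sigma'}$. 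Thus $f$ is a Dirichlet series that is bounded on the half-plane $\CC_{\sigma'}$.

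It remains to deduce that $f\in\mathcal H^2$, i.e. that the Dirichlet coefficients of $f$ are square summable, and this is the only genuinely delicate step. It is exactly here that the two nested half-planes of the hypotheses must be exploited: $f$ is controlled a priori only on $\CC_{\sigma'}$ with $\sigma'>0$, whereas $\mathcal H^2$ lives over $\CC_{1/2}$, so boundedness on $\CC_{\sigma'}$ by itself is not enough. I would argue through the structure of $f$: each term is $a_n n^{-\varphi^{-1}}$, whose Dirichlet series, by (the analogue of) Lemma \ref{lem:exponent}, is supported only on multiples of $n$, and which is the product of $n^{-s}$ with $n^{-(\varphi^{-1}(s)-s)}$ — an exponential of a Dirichlet series, hence with rapidly decaying coefficients — so that, combined with the uniform boundedness just obtained, the coefficients of $f$ turn out to be $\ell^2$. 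Once $f\in\mathcal H^2$ is established we get $P=f\circ\varphi\in\ran(C_\varphi)$ for every Dirichlet polynomial $P$, whence $C_\varphi$ has dense range. The main obstacle, then, is precisely squeezing square summability of $f$ out of its boundedness on $\CC_{\sigma'}$ together with the holomorphy and mapping properties of $\varphi^{-1}$ near $\overline{\varphi(\CC_0)}$; if a clean estimate is not available, one instead approximates, realizing $P$ as an $\mathcal H^2$-limit of functions of the form $\widetilde f\circ\varphi$ with $\widetilde f\in\mathcal H^2$, which still yields $P\in\overline{\ran(C_\varphi)}$ and hence the conclusion.
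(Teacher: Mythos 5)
Your construction starts from the right object, $f=P\circ\varphi^{-1}$, and you correctly identify the crux: you only control $f$ on $\CC_{\sigma'}$ with $\sigma'>0$, and boundedness there does not give $f\in\mathcal H^2$. Your first attempt to close this — coefficient decay of $n^{-\varphi^{-1}}$ via the structure of the exponential of a Dirichlet series — does not actually work: since $\varphi^{-1}$ only maps $\CC_{\sigma'}$ into $\CC_a$ with $a<0$ and $\sigma'$ can be arbitrarily large, the coefficients $b_k$ of $P\circ\varphi^{-1}=\sum_k b_k k^{-s}$ satisfy bounds like $(b_k k^{-\sigma})_k$ bounded for $\sigma>\sigma'$, which is far from $\sum|b_k|^2<\infty$. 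There is no reason for $P\circ\varphi^{-1}$ to lie in $\mathcal H^2$, and the lemma is not claiming that $P\in\ran(C_\varphi)$, only that $P\in\overline{\ran(C_\varphi)}$.

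Your closing sentence (``one instead approximates, realizing $P$ as an $\mathcal H^2$-limit of functions $\widetilde f\circ\varphi$'') is exactly the paper's route, but the concrete mechanism is missing and it is the essential input. The paper invokes Bohr's theorem: a Dirichlet series in $\mathcal D$ bounded on $\CC_{\sigma'}$ has abscissa of uniform convergence at most $\sigma'$, hence is uniformly approximable on $\CC_\sigma$ (recall $\sigma>\sigma'$) by Dirichlet polynomials $Q$. Then, because $\varphi(\CC_0)\subset\CC_\sigma$, the substitution $w=\varphi(s)$ gives $\sup_{s\in\CC_0}|P(s)-Q\circ\varphi(s)|=\sup_{w\in\varphi(\CC_0)}|P\circ\varphi^{-1}(w)-Q(w)|<\veps$, and one concludes by $\|P-C_\varphi Q\|_2\le\|P-C_\varphi Q\|_\infty<\veps$. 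This is where the strict gap $\sigma'<\sigma$ earns its keep, and where the $\mathcal H^\infty\hookrightarrow\mathcal H^2$ contraction is used. Without spelling out this approximation step your proof has a genuine gap.
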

\begin{proof}
 Let $P$ be a Dirichlet polynomial. Since $\varphi^{-1}$ is defined on $\CC_{\sigma'}$
 and maps $\CC_{\sigma'}$ into $\CC_a,$
 $P\circ\varphi^{-1}$ belongs to $\mathcal D$ and is bounded on $\CC_{\sigma'}$. By Bohr's theorem, for all $\veps>0,$
 there exists a Dirichlet polynomial $Q$ such that, for all $w\in\CC_\sigma,$
 $$|P\circ\varphi^{-1}(w)-Q(w)|<\veps.$$
 Let now $s\in\CC_0$ and set $w=\varphi(s)\in\CC_\sigma$. Then
 $$|P(s)-Q\circ\varphi(s)|=|P\circ\varphi^{-1}(w)-Q(w)|<\veps.$$
 Since $\|f\|_2\leq \|f\|_\infty$ for all $f\in\mathcal H^\infty,$
 $$\|P-C_\varphi(Q)\|_2<\veps$$
 which shows that $C_\varphi$ has dense range.
\end{proof}

\begin{example}\label{ex:cyclic}
Let $\varphi(s)=s+c_1+c_2 2^{-s}$ with $c_1>0$ and $c_2\in \mathbb C$ satisfying
$$|c_2|< \frac{-\log\log(2)}{1+\log(2)}\textrm{ and }|c_2|<c_1.$$
 Then $C_\varphi$ is cyclic.
\end{example}

\begin{proof}
Let $a<0$.
It is not hard to show that $\varphi$ extends to $\CC_a$ with
 $$\varphi(\CC_0)\subset \CC_{c_1-|c_2|}\textrm{ and }\varphi(\CC_a)\supset \CC_{a+c_1+|c_2|2^{-a}}.$$
The conditions of Lemma \ref{lem:cyclic} are satisfied as soon as
 $$a+c_1+|c_2|2^{-a}<c_1-|c_2|$$
 and $c_1-|c_2|>0$. Now, the injectivity of $\varphi$ on $\CC_a$ is guaranteed by
$$(\log 2)2^{-a}\leq 1\iff a\geq \frac{\log\log(2)}{\log(2)}.$$
 The choice $a=\log\log(2)/\log(2)$ gives the condition of the example.
\end{proof}

\begin{question}
 Let $\varphi\in\GH$ be univalent. Under which conditions does $C_\varphi$ have dense range?
\end{question}

\section{Commutant of composition operators with linear symbols} \label{sec:affine}

A way (introduced by Worner in \cite{Wo02}) to prove that a composition operator does not have the minimal commutant property
is to study which multiplication operators commute with it. The multiplication operators on $\mathcal H^2$
are the maps $T_b:f\mapsto bf$ where $b\in\mathcal H^\infty$. Indeed, we may use the following proposition,
which is similar to \cite[Theorem 1.1]{llsr18}. For the sake of completeness, we include a proof.

\begin{proposition} \label{prop:notminimalmultiplier}
Let $\varphi\in\mathcal{G}$ and $b\in \mathcal{H}^{\infty}$.
Then $T_{b}\in \{C_{\varphi}\}'$ if and only if $b\circ\varphi=b$. If in addition $b$ is non-constant,
then $T_{b}\notin \overline{\alg(C_{\varphi})}^{\sigma}$.
\end{proposition}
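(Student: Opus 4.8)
The plan is to prove the two implications of the equivalence separately, and then to address the non-membership statement, which is the real content. For the easy direction, if $b\circ\varphi=b$, then for any $f\in\mathcal H^2$ one has $C_\varphi(T_b f)=(bf)\circ\varphi=(b\circ\varphi)(f\circ\varphi)=b\,(f\circ\varphi)=T_b(C_\varphi f)$, so $T_b\in\{C_\varphi\}'$. Conversely, if $T_bC_\varphi=C_\varphi T_b$, then applying both sides to the constant function $\mathbf 1$ gives $b\circ\varphi=T_b(C_\varphi\mathbf 1)=C_\varphi(T_b\mathbf 1)=b$, since $C_\varphi\mathbf 1=\mathbf 1$. (One should check $C_\varphi\mathbf 1=\mathbf 1$: the constant function is fixed by composition, and it lies in $\mathcal H^2$.) So the equivalence is essentially a one-line computation.

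The substantive claim is that a non-constant such $b$ cannot lie in $\overline{\alg(C_\varphi)}^\sigma$. First I would record that if $T_b\in\overline{\alg(C_\varphi)}^\sigma$, then $T_b$ commutes with everything in $\{C_\varphi\}'$, in particular with every operator that commutes with $C_\varphi$; more usefully, $T_b$ lies in the von Neumann–type closure, so it is a $\sigma$-limit of polynomials $p_\alpha(C_\varphi)$. The strategy I would follow mirrors \cite[Theorem 1.1]{llsr18}: test the relation against a well-chosen family of functionals or vectors on which the polynomials $p(C_\varphi)$ act in a controlled way, and derive a contradiction with $b$ non-constant. Concretely, since $b\circ\varphi=b$, iterating gives $b\circ\varphi^{[n]}=b$ for all $n$; combined with the fact (established in the proof of Theorem~\ref{thm:lfm} and used in Theorem~\ref{sfe-thm}) that $\A\varphi^{[n]}(s)\to+\infty$ for every $s\in\CC_{1/2}$ when $\varphi$ is not of the form $s+i\tau$, one gets that $b$ is constant along each orbit closure and hence (by continuity at $+\infty$) $b(s)=b(+\infty)$ identically — so in the non-automorphism cases the hypothesis ``$b$ non-constant'' is already vacuous and the statement is trivially true. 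The genuinely delicate case is $\varphi(s)=c_0s+i\tau$ (an automorphism-type symbol), where non-constant fixed $b$ can exist (e.g. $\tau=2k\pi/\log(m_0/n_0)$, where $b$ can be built from the monomials $n^{-s}$ appearing in Remark~\ref{rem:multiplicity}).

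In that remaining case I would argue as follows. The operator $C_\varphi$ acts diagonally: $C_\varphi(n^{-s})=n^{-i\tau}\,n^{-s}$ if $c_0=1$, and more generally permutes/rescales the basis $(n^{-s})$. Thus $\alg(C_\varphi)$ consists of operators that are \emph{diagonal} (or block-diagonal according to the fibers $\{n: n^{-i\tau}=\text{const}\}$) with respect to the orthonormal basis $(n^{-s})_{n\ge1}$, and the $\sigma$-closure preserves this: every element of $\overline{\alg(C_\varphi)}^\sigma$ is still diagonalized by this basis with entries depending only on the value $n^{-i\tau}$, i.e. constant on each fiber. On the other hand, $T_b$ with $b=\sum_k \beta_k k^{-s}$ non-constant has a genuinely non-diagonal matrix in the basis $(n^{-s})$ — multiplication by $k^{-s}$ sends $n^{-s}$ to $(kn)^{-s}$, shifting the index — so $T_b$ can be diagonal in this basis only if $b$ is constant. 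The main obstacle is making the ``$\sigma$-closure stays diagonal on fibers'' step rigorous: one must verify that the WOT-closure of the (commutative) algebra of fiber-constant diagonal operators is again contained in the bounded fiber-constant diagonal operators, which follows because being diagonal with prescribed off-diagonal zero entries, and having fixed linear relations among diagonal entries, are WOT-closed conditions (each is the intersection of kernels of WOT-continuous functionals $T\mapsto\langle Te_i,e_j\rangle$). Comparing the two descriptions forces $b$ constant, giving the contradiction; hence $T_b\notin\overline{\alg(C_\varphi)}^\sigma$ whenever $b$ is non-constant.
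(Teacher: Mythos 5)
Your proof of the equivalence $T_b\in\{C_\varphi\}'\iff b\circ\varphi=b$ is fine; applying the commutation relation to the constant function is a harmless substitute for the paper's choice of $2^{-s}$.

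For the non-membership, you have taken a long detour that the paper avoids entirely, and the detour has a gap. The paper's argument is a single functional trick, uniform in $\varphi$: since $b\circ\varphi=b$, the vector $b\in\mathcal H^\infty\subset\mathcal H^2$ satisfies $C_\varphi b=b$, hence $p(C_\varphi)b=p(1)b$ for every polynomial $p$. As $b$ is non-constant, $b$ and $b^2$ are linearly independent, so there is $g\in\mathcal H^2$ with $\langle b,g\rangle=0$ and $\langle b^2,g\rangle=1$; the WOT-continuous functional $\Lambda(T)=\langle Tb,g\rangle$ then vanishes on $\alg(C_\varphi)$ but satisfies $\Lambda(T_b)=1$, so $T_b\notin\overline{\alg(C_\varphi)}^\sigma$. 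No case distinction on the shape of $\varphi$ is needed at all.

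Your route instead splits on the form of $\varphi$, and the ``non-automorphism'' branch leans on the claim that $\A\varphi^{[n]}(s)\to+\infty$, forcing $b$ to be constant. That escape-to-infinity fact is proved in the paper only for $\varphi\in\GH$, i.e.\ characteristic $\geq 1$ (Lemma~\ref{lem:iteration} and the proof of Theorem~\ref{thm:lfm}). The Proposition, however, is stated for all of $\mathcal G$, which includes characteristic $0$; for such symbols the iterates converge to the Denjoy--Wolff point in $\overline{\CC_{1/2}}\cup\{\infty\}$, and that point need not be $+\infty$. Your argument, as written, leaves this case open. It can be patched --- since $\varphi(\CC_0)\subset\CC_{1/2}$ and $\CC_{1/2}$ is open, the Denjoy--Wolff point is either an interior point of $\CC_{1/2}$ or $+\infty$, and $b$ is constant in either case --- but that is a separate argument you would need to supply. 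The diagonal-matrix reasoning you sketch for $\varphi(s)=s+i\tau$ can be made rigorous, but given that the paper's $\Lambda$-functional handles everything at once, I would drop the casework entirely.
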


\begin{proof}
If $b\circ\varphi=b$, then it is easy to see that $T_{b}C_{\varphi}f=C_{\varphi}T_{b}f$ for any $f\in\mathcal{H}^{2}$,
i.e., $T_{b}\in \{C_{\varphi}\}'$. Conversely, if $T_b\in\{C_{\varphi}\}',$ it is sufficient to apply the previous inequality with a non-vanishing Dirichlet series,
e.g. $2^{-s}$, to get $b\circ\varphi=b$.
In addition, if $b$ is non-constant, then clearly $b$ and $b^{2}$ are linearly independent. So there is $g\in\mathcal{H}^{2}$ such that $\langle b,g\rangle=0$ and $\langle b^{2},g\rangle=1$.
Define a linear functional $\Lambda: \mathcal{B}(\mathcal{H}^{2})\rightarrow\mathbb{C}$ by $\Lambda(T)=\langle Tb,g\rangle$.
Clearly, $\Lambda$ is continuous in the weak operator topology,
and thus the subspace $\ker \Lambda$ is closed in the weak operator topology.
Then $\overline{\alg(C_{\varphi})}^{\sigma}\in \ker \Lambda$.
On the other hand, $\Lambda(T_{b})=1$.
Therefore $T_{b}\notin \ker \Lambda$ and it follows that $T_{b}\notin\overline{\alg(C_{\varphi})}^{\sigma}$, as the desired result.
\end{proof}

Unfortunately, and contrary to what happens on the disc (see \cite{llsr18,llsr19}),
there are very few cases where we can exclude the minimal commutant property from Proposition \ref{prop:notminimalmultiplier}.
Indeed, as a consequence of Theorem \ref{sfe-thm}, it is easy to see that $\ker(\mathcal C_\varphi-1)$ has dimension greater than $1$
if and only $\varphi(s)=s+i\tau$ where $\tau=2k\pi/\log(m_0)$ for some $k\in\mathbb Z$ and some $m_0\geq 2$.
In this latter case, for any $\ell\in\mathbb Z$ such that $\ell/k\in\mathbb N$, the Dirichlet series $b(s)=m_0^{-\frac{\ell}k s}$ satisfies $b\circ\varphi=b$ and it belongs to $\mathcal H^\infty.$
We thus obtain the following corollary.

\begin{corollary}
 Let $\varphi(s)=s+i\tau$, $\tau\in\mathbb R$. Assume that there exist $k\in\mathbb Z^*$ and $m_0\geq 2$ such that $\tau=2k\pi/\log(m_0)$.
 Then $C_\varphi$ fails to have a minimal commutant.
\end{corollary}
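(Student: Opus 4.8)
The plan is to obtain the corollary as an immediate application of Proposition~\ref{prop:notminimalmultiplier}: it suffices to exhibit a non-constant $b\in\mathcal H^\infty$ with $b\circ\varphi=b$. The natural choice is $b(s)=m_0^{-s}$. First I would verify the functional equation: since $\tau\log(m_0)=2k\pi$ with $k\in\mathbb Z^*$, one has $m_0^{-i\tau}=e^{-i\tau\log m_0}=1$, whence
\[
b(\varphi(s))=m_0^{-(s+i\tau)}=m_0^{-s}\cdot m_0^{-i\tau}=m_0^{-s}=b(s),\qquad s\in\CC_0.
\]
Next I would check the regularity: $b$ is a single Dirichlet monomial with $|b(s)|=m_0^{-\A s}\le 1$ on $\CC_0$, hence $b\in\mathcal H^\infty$, and $b$ is clearly non-constant. (Also $\varphi\in\mathcal G$ since it has characteristic $1$ and $\psi\equiv i\tau$, so $C_\varphi$ is bounded on $\mathcal H^2$ and the statement is meaningful.)

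With $b$ in hand, Proposition~\ref{prop:notminimalmultiplier} gives at once that $T_b\in\{C_\varphi\}'$ and $T_b\notin\overline{\alg(C_\varphi)}^{\sigma}$. Since $\overline{\alg(C_\varphi)}^{\sigma}\subset\{C_\varphi\}'$ always holds, this shows the inclusion is strict, i.e. $C_\varphi$ fails to have a minimal commutant.

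There is no real obstacle here: the corollary is just the specialization $\ell=k$ of the multiplier $b(s)=m_0^{-\ell s/k}$ produced in the paragraph preceding the statement, combined with Proposition~\ref{prop:notminimalmultiplier}. The only point worth recording is that $b=m_0^{-s}$ genuinely lies in $\mathcal H^\infty$, which is transparent. Alternatively, one could argue via Theorem~\ref{sfe-thm}(iii)(a): in this regime $\ker(\mathcal C_\varphi-1)$ has dimension at least two and is spanned by Dirichlet monomials $n^{-s}$, each of which lies in $\mathcal H^\infty$, so any non-constant such monomial serves as the multiplier $b$; both routes lead to the same conclusion.
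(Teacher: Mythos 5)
Your proof is correct and coincides with the paper's intended argument: the paragraph preceding the corollary already produces the multiplier $b(s)=m_0^{-\frac{\ell}{k}s}$, and your choice $\ell=k$, i.e. $b(s)=m_0^{-s}$, is exactly the specialization invoked, combined with Proposition~\ref{prop:notminimalmultiplier}. The verification that $b\in\mathcal H^\infty$, $b$ non-constant, and $b\circ\varphi=b$ is carried out correctly.
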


Nevertheless, we will be able to settle completely the case of symbols $\varphi(s)=s+c_1$, with $\A(c_1)\geq 0$.
We first need a result on diagonal operators on a Hilbert space which shows that having the minimal
commutant property is related to an interpolation problem.

\begin{theorem}\label{thm:diagonal}
 Let $H$ be a Hilbert space with orthonormal basis $(e_n)_{n\in\mathbb N}$, let $(\lambda_n)$
 be a bounded sequence of complex numbers and let $D$ be the diagonal operator defined by $D(e_n)=\lambda_n e_n$.
 Assume that, for all bounded sequences of complex numbers $(\mu_n)$, there exists $M>0$ such that, for all $\veps>0,$
 for all $N\geq 1,$ there exists $P\in\mathbb C[X]$ such that
 $$\left\{
 \begin{array}{rcl}
  |P(\lambda_n)-\mu_n|&\leq&\veps,\ n=1,\dots,N\\
  \sup_{n\in\NN}|P(\lambda_n)|&\leq&M.
 \end{array}\right.$$
Then $D$ has a minimal commutant.
\end{theorem}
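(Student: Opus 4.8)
The plan is to establish both inclusions in $\overline{\alg(D)}^{\sigma}=\{D\}'$; since $\overline{\alg(D)}^{\sigma}\subseteq\{D\}'$ holds for any operator, the content lies in the reverse inclusion. First I would observe that the interpolation hypothesis forces the $\lambda_n$ to be pairwise distinct: if $\lambda_i=\lambda_j$ with $i\neq j$, then $P(\lambda_i)=P(\lambda_j)$ for every polynomial $P$, contradicting the hypothesis applied to a bounded sequence $(\mu_n)$ with $\mu_i=0$ and $\mu_j=1$ (take $\veps<1/2$ and $N\geq\max(i,j)$). Once the $\lambda_n$ are distinct, the usual matrix-entry computation — writing $Te_n=\sum_m t_{mn}e_m$, the relation $TD=DT$ yields $t_{mn}(\lambda_m-\lambda_n)=0$ — shows that $\{D\}'$ is exactly the algebra of all bounded operators that are diagonal with respect to $(e_n)$. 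Hence it suffices to prove that every bounded diagonal operator lies in $\overline{\alg(D)}^{\sigma}$.

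To this end I would fix a bounded sequence $(\mu_n)$ and let $T$ be the diagonal operator $Te_n=\mu_n e_n$. Let $M$ be the constant attached by the hypothesis to $(\mu_n)$, and for each integer $k\geq 1$ apply the hypothesis with $\veps=1/k$ and $N=k$ to obtain a polynomial $P_k$ with $|P_k(\lambda_n)-\mu_n|\leq 1/k$ for $n=1,\dots,k$ and $\sup_n|P_k(\lambda_n)|\leq M$. Put $A_k:=P_k(D)\in\alg(D)$. Since $D$ is diagonal, $A_k$ is the diagonal operator with entries $P_k(\lambda_n)$, so $\|A_k\|=\sup_n|P_k(\lambda_n)|\leq M$ for all $k$; meanwhile, for each fixed $n$ one has $P_k(\lambda_n)\to\mu_n$ as $k\to\infty$. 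Consequently $\langle A_k e_j,e_l\rangle\to\langle Te_j,e_l\rangle$ for all basis vectors $e_j,e_l$ (both sides vanishing unless $j=l$).

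Finally I would deduce $A_k\to T$ in the weak operator topology. Since the $A_k$ and $T$ all have norm at most $M$, and since $\langle A_k x,y\rangle\to\langle Tx,y\rangle$ on the total family $\{e_n\}$, a routine $\veps/3$ approximation of arbitrary $x,y\in H$ by finite linear combinations of the $e_n$ upgrades this to $\langle A_k x,y\rangle\to\langle Tx,y\rangle$ for all $x,y$. Thus $T\in\overline{\alg(D)}^{\sigma}$, and combined with the first step this gives $\{D\}'=\overline{\alg(D)}^{\sigma}$, i.e.\ $D$ has a minimal commutant. The one point requiring care — and the reason the hypothesis must supply $M$ independently of $N$ and $\veps$ — is that bare pointwise convergence $P_k(\lambda_n)\to\mu_n$ of the diagonal entries does not by itself yield weak-operator convergence; it is precisely the uniform bound $\sup_k\|A_k\|\leq M$ that legitimizes the passage from the basis vectors to general $x,y$.
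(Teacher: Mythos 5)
Your proof is correct and takes essentially the same approach as the paper: use the interpolation hypothesis to produce uniformly bounded polynomials $P_k$ whose values converge pointwise to the eigenvalue sequence of a given $T\in\{D\}'$, then upgrade convergence on the basis vectors to operator-topology convergence via the uniform norm bound. The only cosmetic differences are that you spell out the injectivity of $(\lambda_n)$ and prove convergence in the weak operator topology, whereas the paper observes the same estimates actually give strong operator convergence (which of course implies WOT convergence, and is what your density argument would yield verbatim if you tested $\|A_kx-Tx\|$ instead of $\langle A_kx,y\rangle$).
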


\begin{proof}
 Let us first observe that the assumptions imply that the sequence $(\lambda_n)$ is injective.
 Let $T\in\{D\}'$. There exists a sequence $(\mu_n)\subset\CC^\NN$, bounded by $\|T\|$,
 such that $Te_n=\mu_n e_n$ for all $n\in\NN$. Let, for $N\geq 1,$ $P_N\in\CC[X]$ be such that
 $\sup_{n\in\NN}|P_N(\lambda_n)|\leq M$ and $|P_N(\lambda_n)-\mu_n|\leq 1/N$
 for $n=1,\dots,N$. Then $(P_N(D))$ is a bounded sequence of $\mathcal B(H)$
 since
 $$\|P_N(D)\|=\sup_{n\in\NN} \|P_N(e_n)\|\leq M.$$
 Furthermore $P_N(D)(e_n)=P_N(\lambda_n)e_n$ goes to $Te_n$ for each $n\in\NN$.
 It follows easily that $(P_N(D))$ converges strongly to $T$.
\end{proof}

Using well-known results of interpolation, we then deduce a result on diagonal operators
which covers all cases associated to a composition operator with symbol $\varphi(s)=s+c_1$.

\begin{proposition}\label{prop:diagonal}
 Let $H$ be a Hilbert space with orthonormal basis $(e_n)_{n\in\mathbb N}$, let $(\lambda_n)$
 be a bounded sequence of complex numbers and let $D$ be the diagonal operator defined by $D(e_n)=\lambda_n e_n$.
 \begin{itemize}
  \item[(i)] If there exists $n\neq m$ such that $\lambda_n=\lambda_m,$ then $\{D\}'$ is not commutative and therefore $D$ fails to have a minimal commutant.
  \item[(ii)] If the sequence $(\lambda_n)$ is injective and convergent, then $D$ has a minimal commutant.
  \item[(iii)] If the sequence $(\lambda_n)$ is injective and contained in some circle, then $D$ has a minimal commutant.
 \end{itemize}
\end{proposition}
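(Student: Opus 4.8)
The plan is to treat the three cases separately; (ii) and (iii) will both be reduced to the interpolation criterion of Theorem~\ref{thm:diagonal}, while (i) is immediate. For (i), suppose $\lambda_n=\lambda_m$ with $n\neq m$ and set $V=\s(e_n,e_m)$. Since $(e_k)$ is orthonormal, $D$ is block-diagonal for the decomposition $H=V\oplus V^\perp$ and acts on $V$ as $\lambda_n I_V$, so $S\mapsto S\oplus I_{V^\perp}$ is a unital algebra embedding of $\mathcal B(V)\cong M_2(\CC)$ into $\{D\}'$. Hence $\{D\}'$ is not commutative, whereas $\overline{\alg(D)}^{\sigma}$ is always commutative; the two therefore cannot coincide and $D$ has no minimal commutant.

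For (ii), replacing $D$ by $D-\ell I$ where $\ell=\lim_n\lambda_n$ (which alters neither $\alg(D)$ nor $\{D\}'$), I may assume $\lambda_n\to 0$, so that $K:=\{\lambda_n:n\geq 1\}\cup\{0\}$ is a compact subset of $\CC$. Being countable, $K$ has empty interior and $\CC\setminus K$ is connected, so Lavrentiev's approximation theorem gives that polynomials are dense in $C(K)$. Now fix a bounded sequence $(\mu_n)$, put $C=\sup_n|\mu_n|$, and let $N\geq 1$ and $\veps\in(0,1)$ be given. Since the $\lambda_n$ are distinct I can choose $g\in C(K)$ with $g(\lambda_n)=\mu_n$ for $n\leq N$ and $\|g\|_{C(K)}\leq C$ (a bump-function combination), and then a polynomial $P$ with $\|P-g\|_{C(K)}<\veps$. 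Then $|P(\lambda_n)-\mu_n|<\veps$ for $n\leq N$ and $\sup_n|P(\lambda_n)|\leq\|P\|_{C(K)}<C+1$, so Theorem~\ref{thm:diagonal} applies with $M=C+1$.

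For (iii), I first normalise: if the $\lambda_n$ lie on $\{|\lambda-a|=r\}$, conjugating by the affine map $\lambda\mapsto(\lambda-a)/r$ (which again preserves $\alg(D)$ and $\{D\}'$) lets me assume $\lambda_n\in\TT$ for all $n$. Then $\sup_n|P(\lambda_n)|\leq\|P\|_{C(\TT)}$ for every polynomial $P$, so it suffices to produce, for each bounded $(\mu_n)$ with $C=\sup_n|\mu_n|$, each $N$ and each $\veps>0$, a polynomial $P$ with $|P(\lambda_n)-\mu_n|<\veps$ for $n\leq N$ and $\|P\|_{C(\TT)}\leq C$; Theorem~\ref{thm:diagonal} will then finish with $M=C$. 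Since $\{\lambda_1,\dots,\lambda_N\}$ is a finite, hence Lebesgue-null, subset of $\TT$, the Rudin--Carleson peak-interpolation theorem provides $f$ in the disc algebra $A(\DD)$ with $f(\lambda_n)=\mu_n$ for $n\leq N$ and $\|f\|_{C(\overline\DD)}=\max_{n\leq N}|\mu_n|\leq C$; its Fej\'er means $\sigma_m f$ are polynomials with $\|\sigma_m f\|_{C(\overline\DD)}\leq\|f\|_{C(\overline\DD)}\leq C$ and $\sigma_m f\to f$ uniformly on $\TT$, so $P=\sigma_m f$ works for $m$ large. The delicate point is exactly this last case: when $\{\lambda_n\}$ is dense in $\TT$ (as for $\varphi(s)=s+i\tau$) polynomials are far from dense in $C(\TT)$, so the argument of (ii) is unavailable, and the naive Lagrange interpolant at $\lambda_1,\dots,\lambda_N$ has sup-norm comparable to a Lebesgue constant, which is unbounded when the nodes are poorly separated. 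Peak interpolation for $A(\DD)$ removes this dependence — it yields an interpolant of norm equal to $\max_{n\leq N}|\mu_n|$ with no dependence on the number or configuration of the nodes — which is precisely the uniformity in $N$ and $\veps$ required by Theorem~\ref{thm:diagonal}.
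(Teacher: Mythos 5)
Your proof is correct and follows essentially the same route as the paper: the same $2\times 2$ block argument for (i), and the same reduction of (ii) and (iii) to the interpolation criterion of Theorem~\ref{thm:diagonal} via Mergelyan (Lavrentiev, since $K$ has empty interior) and Rudin--Carleson respectively. Where you supply the Rudin--Carleson-plus-Fej\'er-means details for (iii), the paper simply delegates to \cite{llsr18}.
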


\begin{proof}
 (i) Let $E=\textrm{span}(e_n,e_m)$ and decompose $H$ as $E\oplus E^\perp$. Then for any $A\in \mathcal L(E)$, $A\oplus 0$ belongs to $\{D\}'$
 and since $\dim(E)\geq 2,$ $\mathcal L(E)$ is not commutative.

 (ii) Let $(\mu_n)$ be a bounded sequence of complex numbers, let $\veps>0$ and let $N\geq 1$. Let $\lambda=\lim_{n\to+\infty}\lambda_n$.
The set $K=\{\lambda_n:\ n\in\NN\}\cup\{\lambda\}$ is compact and has connected complement. Define $g$ on $K$ by $g(\lambda_n)=\mu_n$ if $n\leq N,$ $g(\lambda_n)=0$ if $n>N$ and $\lambda\notin\{\lambda_1,\dots,\lambda_N\},$
$g(\lambda_n)=\mu_m$ if $\lambda=\lambda_m$ with $m\in\{1,\dots,N\}.$
Then $g$ is continuous on $K$ and by Mergelyan's theorem (for example,
see \cite[Theorem 20.5]{rudin})
there exists $P\in\CC[X]$ such that
 $$|P(\lambda_n)-\mu_n|\leq \veps\textrm{ if }n\leq N,\ |P(\lambda_n)|\leq \|\mu\|_\infty\textrm{ if }n>N.$$
 By Theorem \ref{thm:diagonal}, $D$ has a minimal commutant.

%\noindent
(iii) The proof is similar,
replacing Mergelyan's theorem by Rudin-Carleson's interpolation theorem
(for example, see \cite{Carleson} and \cite{rudin56}).
Details are given in \cite[Proposition 3.2]{llsr18}.
\end{proof}

\begin{corollary}
 Let $\varphi(s)=s+c_1$ with $\A(c_1)\geq 0$, $c_1\neq 0$. Then $C_\varphi$ has a minimal commutant if and only if $c_1\neq i\tau$ where $\tau=2k\pi/\log(m_0/n_0)$
 with $k\in\ZZ^*,$ $m_0,n_0\geq 1$, $m_0\neq n_0$
\end{corollary}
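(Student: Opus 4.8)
The plan is to observe that for the linear symbol $\varphi(s)=s+c_1$ the operator $C_\varphi$ is diagonal on $\mathcal H^2$ with respect to the canonical orthonormal basis $(n^{-s})_{n\geq 1}$, and then to invoke Proposition \ref{prop:diagonal}. Indeed $C_\varphi(n^{-s})=n^{-(s+c_1)}=n^{-c_1}n^{-s}$, so $C_\varphi=D$ where $D$ is the diagonal operator with eigenvalue sequence $\lambda_n=n^{-c_1}$, which is bounded since $\A(c_1)\geq 0$. It then remains to decide, in terms of $c_1$, which hypothesis of Proposition \ref{prop:diagonal} is in force, and in particular when $(\lambda_n)$ is injective.

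First I would treat the case $\A(c_1)>0$. Here $|\lambda_n|=n^{-\A(c_1)}\to 0$, so $(\lambda_n)$ is convergent; it is also injective, since $\lambda_n=\lambda_m$ forces $(n/m)^{-\A(c_1)}=1$ and hence $n=m$. By Proposition \ref{prop:diagonal}(ii), $C_\varphi$ has a minimal commutant, which is consistent with the statement because $c_1$ is then not purely imaginary.

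Next, suppose $c_1=i\tau$ with $\tau\in\RR^*$. Then $|\lambda_n|=1$ for all $n$, so the eigenvalues lie on the unit circle. The key elementary computation, already carried out in the proof of Theorem \ref{sfe-thm}(iii), is that for $n\neq m$ one has $\lambda_n=\lambda_m$ if and only if $e^{-i\tau\log(n/m)}=1$, i.e. $\tau=2k\pi/\log(n/m)$ for some $k\in\ZZ^*$ (the value $k=0$ being excluded since $\tau\neq 0$). Hence $(\lambda_n)$ is injective precisely when $\tau$ cannot be written as $2k\pi/\log(m_0/n_0)$ with $k\in\ZZ^*$ and $m_0\neq n_0$ positive integers. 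In the injective case Proposition \ref{prop:diagonal}(iii) yields the minimal commutant property, and in the non-injective case Proposition \ref{prop:diagonal}(i) shows that $\{C_\varphi\}'$ is not even commutative, so $C_\varphi$ fails to have a minimal commutant. Assembling the three cases gives the claimed equivalence.

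Since the whole argument is a bookkeeping reduction to Proposition \ref{prop:diagonal}, I do not expect any genuine obstacle; the only delicate point is the number-theoretic description of the coincidences $n^{-i\tau}=m^{-i\tau}$, which has already been established in Section \ref{sec:spectrum}.
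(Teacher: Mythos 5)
Your proposal follows exactly the same route as the paper's proof: you identify $C_\varphi$ as a diagonal operator with eigenvalue sequence $(n^{-c_1})$ and then apply, respectively, parts (ii), (iii), and (i) of Proposition \ref{prop:diagonal} depending on whether $\A(c_1)>0$, $c_1=i\tau$ with $(n^{-i\tau})$ injective, or $c_1=i\tau$ with $(n^{-i\tau})$ non-injective. The only difference is that you spell out the injectivity check in each case slightly more explicitly; the argument itself is identical.
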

\begin{proof}
 If $\A(c_1)>0$, we may apply (ii) of Proposition \ref{prop:diagonal}. If $\A(c_1)=0$ and $c_1$ is not equal to some $i\tau$ where $\tau=2k\pi/\log(m_0/n_0)$
 with $k\in\ZZ^*,$ $m_0,n_0\geq 1$, $m_0\neq n_0$, we may apply (iii). If $\A(c_1)=0$ and $c_1=i\tau$ where $\tau=2k\pi/\log(m_0/n_0)$
 with $k\in\ZZ^*,$ $m_0,n_0\geq 1$, $m_0\neq n_0$, we may apply (i) (see also Remark \ref{rem:multiplicity}).
\end{proof}

\begin{question}
Does there exist a diagonal operator associated to an injective sequence $(\lambda_n)$
 which fails to have a minimal commutant? If so, can we give a characterization of diagonal operators with a minimal commutant? 
\end{question}

\section{Commutant of composition operators with large characteristic}

\label{sec:c02}

We now handle the case of symbols with characteristic greater than or equal to $2$ by proving Theorem \ref{thm:c02}.
The idea of the proof is to find another composition operator which commutes with $C_\varphi$ and which is not in $\overline{\textrm{alg}(C_\varphi)}^\sigma$.
To find this composition operator, we will start from the K\"{o}nigs map of $\varphi$.
To prove that it is not in $\overline{\textrm{alg}(C_\varphi)}^\sigma$, we shall use the following lemma.

\begin{lemma}\label{lem:notinthealgebra}
Let $\varphi(s)=c_0s + \psi(s)$, $\tilde\varphi(s)=\tilde{c_0}s+\tilde\psi(s)\in\GH$ be such that $c_0,\tilde{c_0}\geq 2$ 
and the characteristic of $\tilde\varphi$ is not a power of the characteristic of $\varphi$. Then $C_{\tilde\varphi}\notin \overline{\mathrm{alg}(C_\varphi)}^\sigma$.
\end{lemma}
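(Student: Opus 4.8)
\emph{Proof plan.} The plan is to argue by contradiction, testing everything against the single Dirichlet monomial $2^{-s}\in\mathcal H^2$. So I would assume $C_{\tilde\varphi}\in\overline{\mathrm{alg}(C_\varphi)}^\sigma$ and fix a net of polynomials $P_\alpha(X)=\sum_{j\ge 0}a_j^{(\alpha)}X^j$ with $P_\alpha(C_\varphi)\to C_{\tilde\varphi}$ in the weak operator topology; then, for every fixed $n\ge 1$, the $n$-th Dirichlet coefficient of $P_\alpha(C_\varphi)(2^{-s})$ converges to the $n$-th Dirichlet coefficient of $C_{\tilde\varphi}(2^{-s})=2^{-\tilde\varphi(s)}$.

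The arithmetic input is Lemma \ref{lem:exponent}. By Lemma \ref{lem:composition} each iterate $\varphi^{[j]}$ belongs to $\GH$ and has characteristic $c_0^j$, so $C_\varphi^{j}(2^{-s})=2^{-\varphi^{[j]}(s)}$ is a Dirichlet series supported on the multiples of $2^{c_0^j}$ whose coefficient at $2^{c_0^j}$ is nonzero; likewise $2^{-\tilde\varphi(s)}$ is supported on the multiples of $2^{\tilde c_0}$ with a nonzero coefficient at $2^{\tilde c_0}$. The first --- and crucial --- step would be to recover the limits of the coefficients of the $P_\alpha$. The key observation is that, for a fixed $n$, only the indices $j$ with $c_0^j\le v_2(n)$ (here $v_2(n)$ is the exponent of $2$ in $n$) contribute to the $n$-th coefficient of $P_\alpha(C_\varphi)(2^{-s})=\sum_j a_j^{(\alpha)}2^{-\varphi^{[j]}(s)}$. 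Reading off, for $j=0,1,2,\dots$, the coefficients at $2^{-s},(2^{c_0})^{-s},(2^{c_0^2})^{-s},\dots$ then produces a triangular linear system in the $a_j^{(\alpha)}$ whose diagonal entries are the nonzero numbers furnished by Lemma \ref{lem:exponent} --- the triangularity being exactly what the hypothesis $c_0\ge 2$ buys, since it forces the integers $2^{c_0^j}$ to be pairwise distinct. Solving this system by induction on $j$, I would obtain $a_j^{(\alpha)}\to\ell_j\in\mathbb C$ for every $j$.

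Passing to the limit coefficient by coefficient --- each coefficient being a finite sum, so that no convergence issue arises --- I would then obtain the identity of Dirichlet series
\[
2^{-\tilde\varphi(s)}=\sum_{j\ge 0}\ell_j\,2^{-\varphi^{[j]}(s)} .
\]
Since $2^{-\tilde\varphi(s)}\ne 0$, not all the $\ell_j$ vanish; let $j_0$ be the smallest index with $\ell_{j_0}\ne 0$. As the series $2^{-\varphi^{[j]}(s)}$ have pairwise distinct lowest terms, there is no cancellation at the bottom: the smallest integer carrying a nonzero coefficient on the right-hand side is $2^{c_0^{j_0}}$, whereas on the left-hand side it is $2^{\tilde c_0}$. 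Hence $\tilde c_0=c_0^{j_0}$, and since $\tilde c_0\ge 2$ we must have $j_0\ge 1$, so the characteristic of $\tilde\varphi$ is a power of the characteristic of $\varphi$ --- contradicting the hypothesis.

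I expect the hard part to be that first step. Weak operator convergence of $P_\alpha(C_\varphi)$ gives a priori no control on the individual coefficients of the polynomials $P_\alpha$, nor on their degrees; converting it into the coefficientwise control above relies entirely on the ``triangular'' arithmetic structure of the supports of the $2^{-\varphi^{[j]}(s)}$, which is where $c_0\ge 2$ (and, for the final contradiction, $\tilde c_0\ge 2$) enters essentially.
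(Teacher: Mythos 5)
Your proposal is correct and takes essentially the same approach as the paper: both test the weak convergence of $P_\alpha(C_\varphi)$ against $2^{-s}$ and pair with $2^{-ms}$, both rely on Lemma \ref{lem:exponent} to expose the triangular structure of the supports of $2^{-\varphi^{[j]}(s)}$, and both solve the resulting triangular system inductively. The only difference is cosmetic: the paper stops after showing $a_{\alpha,k}\to 0$ for $k\le \ell$ (where $c_0^\ell<\tilde c_0<c_0^{\ell+1}$) and derives a direct contradiction at the coefficient of $(2^{\tilde c_0})^{-s}$, treating the cases $\tilde c_0<c_0$ and $\tilde c_0>c_0$ separately, whereas you push through to the full limit identity $2^{-\tilde\varphi}=\sum_j \ell_j\,2^{-\varphi^{[j]}}$ and compare lowest-order supports, which handles both cases uniformly and makes the role of the hypothesis ``$\tilde c_0$ not a power of $c_0$'' slightly more transparent.
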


\begin{proof}
We proceed by contradiction and we assume
that there is a net $(P_\alpha)\subset\mathbb C[X]$ such that $P_\alpha(C_\varphi)$ converges
to $C_{\tilde\varphi}$ in the weak operator topology. Write each $P_\alpha$ as $P_\alpha(X)=\sum_{k}a_{\alpha,k}X^k.$
Assume first that the characteristic of $\tilde\varphi$ is greater than the characteristic of $\varphi$ and let $\ell\geq 1$ be such that $c_0^\ell<\tilde{c_0}<c_0^{\ell+1}$. 
For $j,m\in\NN$ we set
\begin{align*}
 d_{j,m}&=\langle 2^{-\varphi^{[j]}(s)},2^{-ms}\rangle\\
 e_m&=\langle 2^{-\tilde\varphi(s)},2^{-ms}\rangle.
\end{align*}
Lemma \ref{lem:exponent} tells us that $d_{j,m}=0$ provided $c_0^j>m,$ that $d_{j,c_0^j}\neq 0,$ that $e_m=0$ provided $m<\tilde{c_0},$
and that $e_{\tilde c_0}\neq 0.$ Hence, for $m=c_0^j$ with $j\leq\ell,$ we get
$$\langle P_\alpha(C_\varphi)(2^{-s}),2^{-ms}\rangle=\sum_{k=0}^j a_{\alpha,k}d_{k,c_{0}^j}.$$
Now, this goes to $\langle C_{\tilde\varphi}(2^{-s}),2^{-ms}\rangle$ which is equal to $0$. Taking successively $j=0,1,\dots,\ell,$ we get by a straightforward induction that $a_{\alpha,k}$ tends
to $0$ for all $k$ in $\{0,\dots,\ell\}$. Hence,
$$\langle P_\alpha(C_\varphi)(2^{-s}),2^{-\tilde{c_0}s}\rangle=\sum_{k=1}^\ell a_{\alpha,k}d_{k,\tilde{c_0}}$$
goes to zero. This contradicts that its limit is $\langle C_{\tilde\varphi}(2^{-s}),2^{-\tilde{c_0}s}\rangle=e_{\tilde c_0}\neq 0$. When $\tilde{c_0}<c_0$, the proof is simpler since for all $P\in\CC[X],$ 
$$\langle P(C_\varphi)(2^{-s}),2^{-\tilde{c_0}s}\rangle=0$$
whereas 
$$\langle 2^{-\tilde{\varphi}(s)},2^{-\tilde{c_0}s}\rangle\neq 0.$$
\end{proof}

Endowed with this lemma, we can prove a more general result than Theorem \ref{thm:c02}.
\begin{theorem}\label{thm:c02bis}
Let $\varphi\in\GH$ with $\textrm{char}(\varphi)\geq 2$ and let $u\in\GH$ be its K\"onigs function. Assume that there exists $\sigma_1\geq 0$ such that $u_{|\CC_{\sigma_1}}$ is one-to-one and, for all $c>0$ large enough, $cu(\CC_0)\subset u( \CC_{\sigma_1})$.
Then $C_\varphi$ fails to have a minimal commutant.
\end{theorem}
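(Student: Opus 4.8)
The plan is to produce a composition operator $C_{\tilde\varphi}$ that commutes with $C_\varphi$ but does not lie in $\overline{\mathrm{alg}(C_\varphi)}^\sigma$; Lemma \ref{lem:notinthealgebra} will then finish the argument. The symbol $\tilde\varphi$ will be manufactured from the K\"onigs map $u$, which satisfies $u\circ\varphi=c_0u$ with $c_0=\textrm{char}(\varphi)\geq 2$.

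First, I would fix the characteristic of $\tilde\varphi$. For $\ell\geq 1$ the integer $c_0^\ell+1$ lies strictly between $c_0^\ell$ and $c_0^{\ell+1}$ (since $c_0\geq 2$), hence is not a power of $c_0$, and $c_0^\ell+1\to+\infty$; so I may choose $\ell$ large enough that, putting $\tilde c_0:=c_0^\ell+1\geq 3$, the hypothesis gives $\tilde c_0\,u(\CC_0)\subset u(\CC_{\sigma_1})$. Because $u_{|\CC_{\sigma_1}}$ is one-to-one it is a biholomorphism onto the open set $u(\CC_{\sigma_1})$, so one can define, for $s\in\CC_0$,
$$\tilde\varphi(s)=\bigl(u_{|\CC_{\sigma_1}}\bigr)^{-1}\bigl(\tilde c_0\,u(s)\bigr),$$
a holomorphic map from $\CC_0$ into $\CC_{\sigma_1}\subset\CC_0$ (here we use $\sigma_1\geq 0$) satisfying $u\circ\tilde\varphi=\tilde c_0\,u$ on $\CC_0$. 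To check that $\tilde\varphi\in\GH$: the inverse of $u_{|\CC_{\sigma_1}}$ belongs to $\GD$ with characteristic $1$ by Lemma \ref{lem:inverse}, while $\tilde c_0u$ belongs to $\GD$ with characteristic $\tilde c_0$, so on a sufficiently remote half-plane $\tilde\varphi$ coincides with their composition, which lies in $\GD$ with characteristic $\tilde c_0$ by Lemma \ref{lem:compogd}; since $\tilde\varphi$ is defined on all of $\CC_0$ and maps it into $\CC_0$, we conclude $\tilde\varphi\in\GH$, and in particular $C_{\tilde\varphi}$ is bounded on $\mathcal H^2$.

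Next I would verify that $\varphi$ and $\tilde\varphi$ commute as maps. Composing $u\circ\varphi=c_0u$ with $u\circ\tilde\varphi=\tilde c_0u$ gives, on $\CC_0$,
$$u\circ(\varphi\circ\tilde\varphi)=c_0\tilde c_0\,u=u\circ(\tilde\varphi\circ\varphi).$$
Both $\varphi\circ\tilde\varphi$ and $\tilde\varphi\circ\varphi$ belong to $\GH$ by Lemma \ref{lem:composition}, hence by Lemma \ref{lem:gdinfini} map some half-plane $\CC_{\sigma_2}$ into $\CC_{\sigma_1}$, where $u$ is injective; therefore $\varphi\circ\tilde\varphi=\tilde\varphi\circ\varphi$ on $\CC_{\sigma_2}$, and then on all of $\CC_0$ by the identity theorem. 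Consequently $C_{\tilde\varphi}C_\varphi=C_{\varphi\circ\tilde\varphi}=C_{\tilde\varphi\circ\varphi}=C_\varphi C_{\tilde\varphi}$, so $C_{\tilde\varphi}\in\{C_\varphi\}'$. On the other hand $\textrm{char}(\varphi)=c_0\geq 2$, $\textrm{char}(\tilde\varphi)=\tilde c_0\geq 2$, and $\tilde c_0$ is not a power of $c_0$, so Lemma \ref{lem:notinthealgebra} gives $C_{\tilde\varphi}\notin\overline{\mathrm{alg}(C_\varphi)}^\sigma$. Hence $\overline{\mathrm{alg}(C_\varphi)}^\sigma\subsetneq\{C_\varphi\}'$, i.e. $C_\varphi$ does not have a minimal commutant.

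The step I expect to require the most care is the claim that $\tilde\varphi\in\GH$ --- that the map given by the local-inverse formula genuinely has the structure of a Dirichlet-series symbol with the integer characteristic $\tilde c_0$, so that $C_{\tilde\varphi}$ is a bona fide bounded operator on $\mathcal H^2$; everything after that is a short computation combined with Lemma \ref{lem:notinthealgebra}. A secondary point to dispatch is that $\tilde c_0$ can be taken simultaneously large enough for the inclusion hypothesis and not a power of $c_0$, which is why one works with the explicit family $\tilde c_0=c_0^\ell+1$.
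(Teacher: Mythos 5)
Your proof is correct and is essentially the paper's own argument: build $\tilde\varphi = (u_{|\CC_{\sigma_1}})^{-1}(\tilde c_0\, u)$ for a large integer $\tilde c_0$ that is not a power of $c_0$, use Lemmas \ref{lem:inverse} and \ref{lem:compogd} to see $\tilde\varphi\in\GH$ with $\mathrm{char}(\tilde\varphi)=\tilde c_0$, verify $\varphi\circ\tilde\varphi=\tilde\varphi\circ\varphi$ by comparing $u\circ(\cdot)$ on a remote half-plane and extending analytically, and finish with Lemma \ref{lem:notinthealgebra}. The only cosmetic difference is that you make the concrete choice $\tilde c_0=c_0^\ell+1$, whereas the paper simply picks some sufficiently large integer not a power of $c_0$.
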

\begin{proof}
Let $c$ be an integer which is not a power of $c_0$ and satisfies $cu(\CC_0)\subset \CC_{\sigma_1}$.
We define
$$\tilde\varphi(s)=u^{-1}(cu(s))$$
where $u^{-1}$ is the reciprocal of $u_{|\CC_{\sigma_1}}$. Since $cu\in\GD,$ $u^{-1}\in\GD$ we get that $\tilde\varphi\in\GD$. Moreover, $\tilde\varphi$ is defined on $\CC_0$
and $\tilde\varphi(\CC_0)\subset\CC_{\sigma_1}\subset\CC_0$.
In particular, we obtain that $\tilde\varphi\in\GH$ with $\textrm{char}(\tilde\varphi)=c$.
Let now $s\in\CC_0$. On the one hand,
\begin{align*}
u\circ \tilde\varphi\circ\varphi(s)&=cu\circ\varphi(s)\\
&=cc_0 u(s).
\end{align*}
On the other hand,
\begin{align*}
u\circ \varphi\circ\tilde\varphi(s)&=c_0 u(\tilde\varphi(s))\\
&=c_0 c_0 c u(s).
\end{align*}
Since $u_{|\CC_{\sigma_1}}$ is univalent, $\tilde \varphi\circ\varphi=\varphi\circ\tilde\varphi$ on some half-plane $\CC_\sigma$, therefore on $\CC_0$ by analytic continuation.
Hence the maps $\tilde \varphi$ and $\varphi$ commute and we deduce that
the composition operators $C_\varphi$ and $C_{\tilde\varphi}$ also commute.
Then we conclude by Lemma \ref{lem:notinthealgebra}.
\end{proof}

\begin{proof}[Proof of Theorem \ref{thm:c02}]
 Let $u\in\GH$ be the K\"{o}nigs function of $\varphi$.
 There exists $\delta>0$ such that $u(\CC_\veps)\subset\CC_\delta$. From $u\circ \varphi=c_0 u,$
 we then obtain that there exists $\eta>0$ such that $u(\CC_0)\subset\CC_\eta$.
 By Lemma \ref{lem:reverseinclusion}, there exist $\sigma_0,\sigma_1>0$ such that
 $u_{|\CC_{\sigma_1}}$ is one-to-one and $\CC_{\sigma_0}\subset u(\CC_{\sigma_1})$.
 We deduce that $cu(\CC_0)\subset u(\CC_{\sigma_1})$
for all sufficiently large integers $c$. The result now follows from Theorem \ref{thm:c02bis}.
\end{proof}

Theorem \ref{thm:c02bis} is also useful to show that a composition operator with an affine symbol and characteristic greater than or equal to $2$ does not have the minimal commutant property.
\begin{corollary}
Let $\varphi(s)=c_0s+c_1$ with $c_0\geq 2$ and $\A(c_1)\geq 0$. Then $C_\varphi$ fails to have a minimal commutant. 
\end{corollary}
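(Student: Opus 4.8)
The plan is to deduce this from Theorem \ref{thm:c02bis}; the only real work is to identify the K\"onigs map $u$ of $\varphi$ and to check that it meets the two geometric hypotheses of that theorem.

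First I would look for an affine K\"onigs map. Writing $u(s)=s+d_1$ with $\mathrm{char}(u)=1$, Schr\"oder's equation $u\circ\varphi=c_0u$ reads $c_0s+c_1+d_1=c_0s+c_0d_1$, which forces $d_1=c_1/(c_0-1)$; thus $u(s)=s+c_1/(c_0-1)$. Since $\A(c_1)\geq 0$ and $c_0\geq 2$ we get $\A(d_1)\geq 0$, so $u(\CC_0)=\CC_{\A(d_1)}\subseteq\CC_0$ and $u\in\GH$ with characteristic $1$. By the uniqueness part of Theorem \ref{thm:lfm}(ii), this $u$ is precisely the K\"onigs map of $\varphi$ (when $c_1=0$ it is just the identity).

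It then remains to verify the hypotheses of Theorem \ref{thm:c02bis} with $\sigma_1=0$. As $u$ is affine, $u_{|\CC_0}$ is one-to-one. Putting $d=\A(d_1)=\A(c_1)/(c_0-1)\geq 0$, one has $u(\CC_0)=\CC_d$, and for any real $c>0$, $cu(\CC_0)=\{c(s+d_1):\A(s)>0\}=\CC_{cd}$; hence $cu(\CC_0)=\CC_{cd}\subseteq\CC_d=u(\CC_0)$ whenever $cd\geq d$, i.e. for every $c\geq 1$ (trivially so if $d=0$). In particular it holds for all sufficiently large $c$, so Theorem \ref{thm:c02bis} applies and $C_\varphi$ fails to have a minimal commutant. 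There is no genuinely hard step here: once $\varphi$ is affine the K\"onigs map is read off directly, and the rest reduces to elementary half-plane inclusions; the only points requiring a word of care are the appeal to Theorem \ref{thm:lfm}(ii) to be sure the affine candidate really is \emph{the} K\"onigs map, and the degenerate case $\A(c_1)=0$, where the inclusion $cu(\CC_0)\subseteq u(\CC_0)$ is actually an equality.
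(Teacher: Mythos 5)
Your proof is correct and follows the same route as the paper: identify the affine Königs map $u(s)=s+c_1/(c_0-1)$ and apply Theorem \ref{thm:c02bis}. The paper simply states these two facts without elaboration, whereas you supply the straightforward verifications (solving Schröder's equation, checking $u\in\GH$, and checking the half-plane inclusion $cu(\CC_0)\subseteq u(\CC_0)$ with $\sigma_1=0$), including the degenerate case $\A(c_1)=0$; the substance is the same.
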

\begin{proof}
In that case,  the K\"{o}nigs map of $\varphi$ is given by $u(s)=s+c_1/(c_0-1)$. It is plain that it satisfies the assumptions of Theorem \ref{thm:c02bis}.
\end{proof}

\begin{remark}
For the case of affine symbols, choosing $c=c_0+1,$ we can easily give a formula for $\tilde\varphi$:
$$\tilde\varphi(s)=(c_0+1)s+\frac{c_0c_1}{c_0-1}.$$
\end{remark}

%%%%%%%%%%%%%%%%%%%%%%%%%%%%%%%%%
%%%%%%%%%%%%%%%%%%%%%%%%%%%%%%%%%
%%%%%%%%%%%%%%%%%%%%%%%%%%%%%%%%%

\section{Commutant of composition operators with characteristic equal to $1$}

\label{sec:c01}

\subsection{Statement of the result} In this section we study composition operators whose symbol has characteristic equal to $1$. We shall prove the following theorem, which goes in the opposite direction to the results of the previous section.

\begin{theorem}\label{thm:characteristic1}
Let $\varphi(s)=s+\psi(s)\in\GH$ be univalent, and $\psi$ is not constant.
Let $u$ be the K\"{o}nigs function
of $\varphi$ and let $\Omega=\{w\in\CC_0:\ w+\overline{u(\CC_0)}\subset u(\CC_0)\}$.
Assume that $u\in\GH$, that $\Omega$ is open and contains $(0,+\infty)$. Then $C_\varphi$ has a minimal commutant.
\end{theorem}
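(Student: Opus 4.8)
The plan is to conjugate $\varphi$, after passing to a suitable iterate, into a one‑parameter semigroup of composition operators built from the K\"onigs map, and then to reduce the minimal commutant property to an interpolation statement in the spirit of Theorem~\ref{thm:diagonal}. \textbf{Step 1 (a semigroup of symbols).} Since $\psi$ is non‑constant, $\A(c_1)>0$, and from the definition of $\Omega$ one checks that $\Omega$ is an additive sub‑semigroup of $\CC$ (if $w_1,w_2\in\Omega$ then $w_1+w_2+\overline{u(\CC_0)}\subset w_1+u(\CC_0)\subset u(\CC_0)$). Using $u\in\GH$ together with Lemmas~\ref{lem:gdinfini} and~\ref{lem:reverseinclusion} one gets $pc_1\in\Omega$ for all large $p$; since $\overline{\alg(C_{\varphi^{[p]}})}^\sigma\subset\overline{\alg(C_\varphi)}^\sigma\subset\{C_\varphi\}'\subset\{C_{\varphi^{[p]}}\}'$ and $u$ is also a K\"onigs map of $\varphi^{[p]}$ with the same $\Omega$, it suffices to treat $\varphi^{[p]}$, so we may assume $c_1\in\Omega$. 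For $w\in\Omega$ set $\varphi_w=u^{-1}\circ(\mathrm{id}+w)\circ u$. By the defining property of $\Omega$ and the univalence of $u$ (Theorem~\ref{thm:lfm}), $\varphi_w$ maps $\CC_0$ into $\CC_0$; by Lemma~\ref{lem:inverse} and the composition lemmas it lies in $\GH$ with characteristic $1$, and $\varphi_w\circ\varphi_{w'}=\varphi_{w+w'}$, $\varphi=\varphi_{c_1}$. Thus every $C_{\varphi_w}$ is a bounded operator on $\mathcal H^2$ commuting with $C_\varphi$, and $e_m:=m^{-u}\in\mathcal H^\infty\subset\mathcal H^2$ are common eigenvectors: $C_\varphi e_m=m^{-c_1}e_m$, $C_{\varphi_w}e_m=m^{-w}e_m$.

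\textbf{Step 2 (the commutant is the diagonal algebra of $(e_m)$).} By Proposition~\ref{prop:automaticcompactness} some iterate of $C_\varphi$ is compact, so $C_\varphi$ is a Riesz operator with $\sigma(C_\varphi)=\{0\}\cup\{m^{-c_1}:m\ge1\}$, each $m^{-c_1}$ being a simple eigenvalue with eigenspace $\CC e_m$ (Proposition~\ref{prop:spectrumh2}); moreover $\mathrm{span}\{e_m:m\ge1\}$ is dense in $\mathcal H^2$ — equivalently $C_u$ has dense range, which follows from the present hypotheses as in Lemmas~\ref{lem:denserange}--\ref{lem:cyclic}. Hence every $T\in\{C_\varphi\}'$ leaves each $\CC e_m$ invariant, so $Te_m=\lambda_m e_m$ with $\sup_m|\lambda_m|\le\|T\|$, and $T$ is determined by the bounded sequence $(\lambda_m)$. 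Therefore $\{C_\varphi\}'$ equals the commutative algebra $\mathcal E$ of all bounded operators diagonalized by $(e_m)$, and $\mathcal E$ contains $\alg(C_\varphi)$ as well as every $C_{\varphi_w}$, $w\in\Omega$.

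\textbf{Step 3 (approximation and conclusion).} It remains to show $\mathcal E\subset\overline{\alg(C_\varphi)}^\sigma$. First, $C_{\varphi_t}\in\overline{\alg(C_\varphi)}^\sigma$ for each $t>0$: the principal branch of $z\mapsto z^{t/c_1}$ is continuous on the compact set $\sigma(C_\varphi)$, whose complement is connected, so by Mergelyan's theorem there are polynomials $P_N$ converging to it uniformly on $\sigma(C_\varphi)$; then $P_N(C_\varphi)e_m=P_N(m^{-c_1})e_m\to m^{-t}e_m=C_{\varphi_t}e_m$ for each $m$, and — granting $\sup_N\|P_N(C_\varphi)\|<\infty$ — the density from Step 2 yields $P_N(C_\varphi)\to C_{\varphi_t}$ in the weak operator topology. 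Once $(C_{\varphi_t})_{t>0}\subset\overline{\alg(C_\varphi)}^\sigma$, integrating this (weak‑$*$ continuous) semigroup against finite measures produces inside $\overline{\alg(C_\varphi)}^\sigma$ the resolvent‑type operators $e_m\mapsto(\lambda+\log m)^{-1}e_m$; since the induced closed generator is diagonalized by $(e_m)$ with the discrete spectrum $\{-\log m:m\ge1\}$, the weak‑operator‑closed algebra generated by these resolvents exhausts $\mathcal E$. With the trivial inclusion $\overline{\alg(C_\varphi)}^\sigma\subset\{C_\varphi\}'$ this gives $\overline{\alg(C_\varphi)}^\sigma=\{C_\varphi\}'$, i.e. $C_\varphi$ has a minimal commutant.

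\textbf{Main obstacle.} The delicate point is the uniform bound $\sup_N\|P_N(C_\varphi)\|<\infty$ in Step 3 — equivalently, that $C_\varphi$ admits a bounded continuous functional calculus on $\sigma(C_\varphi)$ (and the attendant fact that its weak‑operator‑closed algebra recovers all of $\mathcal E$). This is not automatic: when $\psi$ is non‑constant the eigenvectors $e_m=m^{-u}$ are \emph{not} uniformly bounded in $\mathcal H^2$‑norm (their norms grow like a positive power of $m$), so $C_\varphi$ is not similar to a diagonal operator and one cannot simply invoke Proposition~\ref{prop:diagonal}(ii). It is exactly here that the hypotheses ``$u\in\GH$'' and ``$\Omega$ open and $\supset(0,+\infty)$'' must be exploited: they make the $\varphi_w$ legitimate self‑maps of $\CC_0$ and force the geometry of $u(\CC_0)$, hence the interpolating character of the eigenvalue sequence $(m^{-c_1})$, to be regular enough for the functional calculus estimate to hold.
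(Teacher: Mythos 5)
Your Steps 1 and 2 set up the same family $\varphi_w=u^{-1}\circ(\mathrm{id}+w)\circ u$ that the paper uses, and the observation that every $T\in\{C_\varphi\}'$ satisfies $Te_m=\lambda_m e_m$ with $\sup_m|\lambda_m|\le\|T\|$ is correct (it is exactly what opens the paper's Lemma \ref{lem:approximation}). But there are two real gaps.

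First, in Step 2 you assert that $\mathrm{span}\{e_m\}$ is dense, citing Lemmas \ref{lem:denserange}--\ref{lem:cyclic}. Those lemmas need hypotheses not present here: Lemma \ref{lem:denserange} assumes $C_\varphi$ has dense range, and Lemma \ref{lem:cyclic} assumes $\varphi$ extends univalently to a larger half-plane with specific inclusions. Neither is part of Theorem \ref{thm:characteristic1}. The paper's proof never uses this density; instead it controls $P(C_\varphi)f - TC_{\varphi_w}f$ for \emph{every} $f\in\mathcal H^2$ by writing $f\circ u^{-1}=\sum_n b_n n^{-s}$ with the explicit estimate $|b_n|\le\zeta(2)^{1/2}n^\nu\|f\|_2$ (Lemma \ref{lem:compoh2}) and then summing. (As an aside, the eigenvectors are not unboundedly large: since $u\in\GH$ with characteristic $1$, $C_u$ is a contraction, so $\|e_m\|_2=\|m^{-u}\|_2\le 1$; the obstruction is that the coefficients $b_n$ of $f$ in the $(e_m)$--expansion can grow like $n^\nu$, not that $\|e_m\|$ grows.)

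Second, and this is the gap you flag yourself but do not close: the uniform bound $\sup_N\|P_N(C_\varphi)\|<\infty$ needed to push $C_{\varphi_t}$ into $\overline{\alg(C_\varphi)}^\sigma$ for small $t>0$ is not available. When $t$ is small the target function $z\mapsto z^{t/c_1}$ does not vanish to high order at $0$, so Mergelyan approximants cannot be forced to decay at the rate $n^{-m\A(c_1)}$ required to dominate the $n^\nu$ growth of the coefficients. This is precisely why the paper does not try to approximate $C_{\varphi_t}$ directly. Its Lemma \ref{lem:approximation} approximates $TC_{\varphi_w}$ (for the fixed $T\in\{C_\varphi\}'$ under study) only for $\A(w)\ge\sigma_1$ large, using a polynomial of the special form $P(z)=z^m Q(z)$: the factor $z^m$ produces a built-in decay $|P(n^{-c_1})|\le n^{-m\A(c_1)}$ that defeats the $n^\nu$ growth, and the factor $n^{-w}$ supplies the remaining decay for the tail $\sum_{n>N}b_n\lambda_n n^{-w}n^{-u}$. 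To get from the half-plane $\CC_{\sigma_1}$ to the full interval $(0,+\infty)$ and then to $\Lambda(T)=0$, the paper does not integrate the semigroup; it uses the analyticity of $w\mapsto\Lambda(TC_{\varphi_w})$ on $\Omega$ (proved via Mattner's theorem) together with the hypothesis that $\Omega$ is open and contains $(0,+\infty)$, giving $\Lambda(TC_{\varphi_\sigma})=0$ for all $\sigma>0$; then $C_{\varphi_\sigma}\to I$ in WOT as $\sigma\to0^+$ (checked on reproducing kernels). Your resolvent-integration argument would need, independently, that the WOT-closed algebra generated by the resolvents exhausts all of $\{C_\varphi\}'$ and that the needed norm bounds hold uniformly, neither of which you establish. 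So the proposal, as written, does not close; the analytic continuation device together with the $P(z)=z^m Q(z)$ trick is the missing engine of the proof.
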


\begin{remark}
The statement does not depend on the map $u$ that we choose, since there are equal up to an additive constant.
Observe also that since $\psi(s)=\sum_{n\geq 1}c_n n^{-s}$ is not constant and maps $\CC_0$ into $\CC_0,$ $\A(c_1)>0$.
\end{remark}

This theorem is inspired by Theorem 6.1 of \cite{llsr18} where a similar theorem is given for composition operators acting on $H^2(\DD)$. Our proof will follow the lines of \cite{llsr18}, even if working with Dirichlet series add substantial difficulties.

Of course, Theorem \ref{thm:characteristic1} is interesting only if we are able to provide maps satisfying its assumptions.
For the case of the disc, the example which was given was that of loxodromic linear fractional maps
(where we can compute explicitly the K\"{o}nigs function).
We do not have such a tool at our disposal and part of our work will be devoted to producing such examples. As a consequence of our methods,
we will get new examples in the case of the disc.

\subsection{Proofs}
In what follows, we fix $\varphi\in\GH$ univalent with characteristic $1$ and we denote by $u$ its K\"{o}nigs map which is also univalent.
We will write $\varphi(s)=s+\sum_{n\geq 1}c_n n^{-s}$ and will always assume that $\A(c_1)>0$.

We first need a complement to Lemma \ref{lem:compogd}.
\begin{lemma}\label{lem:compoh2}
There exists $\nu>0$ such that, for all $f\in\mathcal H^2$, there exists a sequence of complex numbers $(b_n)$ such that, for all $s\in \CC_{\nu}$, $f\circ u^{-1}(s)$ is well defined and equal to $\sum_{n\geq 1}b_n n^{-s}$. Moreover, for all $n\geq 1,$
$$|b_n|\leq \zeta(2)^{1/2} n^\nu \|f\|_2.$$
\end{lemma}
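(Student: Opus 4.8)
The plan is to realize $f\circ u^{-1}$ as a genuine Dirichlet series and to bound its coefficients by the supremum of $f\circ u^{-1}$ over a fixed right half-plane. First I would fix the constant $\nu$: since $u\in\GH$ is univalent with characteristic $1$, Lemma~\ref{lem:inverse} shows that $u^{-1}\colon u(\CC_0)\to\CC_0$ belongs to $\GD$, so by Lemma~\ref{lem:gdinfini} applied to $u^{-1}$ we may choose $\nu>0$, depending only on $\varphi$, with $\CC_\nu\subset u(\CC_0)$ and $u^{-1}(\CC_\nu)\subset\CC_1$. This $\nu$ will be the constant in the statement, and the key point is that it is uniform in $f$.

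Now fix $f(s)=\sum_{k\ge1}a_kk^{-s}\in\mathcal H^2$. Then $f\in\mathcal D$, and since $u^{-1}(\CC_\nu)\subset\CC_1\subset\CC_{1/2}$ the composition $f\circ u^{-1}$ is well defined and holomorphic on $\CC_\nu$. Applying Lemma~\ref{lem:compogd} (with $f$, of characteristic $0$, as the outer function and $u^{-1}\in\GD$ as the inner one), I get that $f\circ u^{-1}\in\mathcal D$; write $f\circ u^{-1}(s)=\sum_{n\ge1}b_nn^{-s}$, the series being convergent in some remote half-plane and coinciding with $f\circ u^{-1}$ there. By the Cauchy--Schwarz inequality, for every $w$ with $\A(w)\ge1$,
\[
|f(w)|\le\sum_{k\ge1}|a_k|k^{-\A(w)}\le\Big(\sum_{k\ge1}|a_k|^2\Big)^{1/2}\Big(\sum_{k\ge1}k^{-2}\Big)^{1/2}=\zeta(2)^{1/2}\|f\|_2,
\]
hence $\sup_{s\in\CC_\nu}|f\circ u^{-1}(s)|\le\zeta(2)^{1/2}\|f\|_2$.

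It remains to convert this sup bound into the coefficient bound. Consider $g(s):=f\circ u^{-1}(s+\nu)=\sum_{n\ge1}(b_nn^{-\nu})n^{-s}$: it belongs to $\mathcal D$ and defines a function bounded by $\zeta(2)^{1/2}\|f\|_2$ on $\CC_0$, so $g\in\mathcal H^\infty\subset\hinfplus$; in particular its Dirichlet series converges uniformly on each $\CC_\veps$, $\veps>0$, which already gives the asserted convergence $f\circ u^{-1}(s)=\sum_{n\ge1}b_nn^{-s}$ on all of $\CC_\nu$. For each $\sigma>0$ --- which exceeds the abscissa of uniform convergence of $g$ --- the coefficients of $g$ are recovered by
\[
b_nn^{-\nu}=\lim_{T\to+\infty}\frac1{2T}\int_{-T}^{T}g(\sigma+it)\,n^{\sigma+it}\,dt,
\]
and since $|g|\le\zeta(2)^{1/2}\|f\|_2$ on $\CC_\sigma$ this forces $|b_nn^{-\nu}|\le\zeta(2)^{1/2}\|f\|_2\,n^{\sigma}$; letting $\sigma\to0^+$ yields $|b_n|\le\zeta(2)^{1/2}n^\nu\|f\|_2$.

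I do not expect a real obstacle here. The only delicate points are keeping $\nu$ independent of $f$, and identifying $f\circ u^{-1}$ with an actual Dirichlet series so that its coefficients $b_n$ make sense --- and this last point is exactly what Lemma~\ref{lem:compogd} supplies. After that the argument is the standard coefficient estimate for bounded Dirichlet series, the only bookkeeping being the choice of the half-plane $\CC_1$, which is what produces the constant $\zeta(2)^{1/2}$.
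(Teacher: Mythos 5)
Your proof is correct and follows essentially the same route as the paper: fix $\nu$ so that $u^{-1}(\CC_\nu)\subset\CC_1$, bound $|f\circ u^{-1}|$ on $\CC_\nu$ by $\zeta(2)^{1/2}\|f\|_2$ via Cauchy--Schwarz, invoke Bohr's theorem to push the Dirichlet series of $f\circ u^{-1}$ (supplied by Lemma~\ref{lem:compogd} together with Lemma~\ref{lem:inverse}) down to $\CC_\nu$, and read off the coefficient bound from the mean-value formula. The only cosmetic differences are that you obtain $\nu$ by applying Lemma~\ref{lem:gdinfini} to $u^{-1}\in\GD$ rather than Lemma~\ref{lem:reverseinclusion} to $u$, and you extract the coefficient bound on interior lines $\A(s)=\nu+\sigma$ and let $\sigma\to0^+$, which is a slightly tidier way to handle the boundary line $\A(s)=\nu$ than the integral written directly at $\nu$ in the paper.
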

\begin{proof}
 By Lemma \ref{lem:reverseinclusion}, there exists $\nu>0$ such that $\CC_{\nu}\subset u(\CC_{1})$.
 Therefore, $f\circ u^{-1}(s)$ is well defined provided $\A(s)>\nu$. By Lemma \ref{lem:inverse} and Lemma \ref{lem:compogd}, $f\circ u^{-1}$ belongs to $\mathcal D$. Moreover, for any $s\in\CC_\nu,$ 
$$|f\circ u^{-1}(s)|\leq \zeta (2\A(u^{-1}(s)))^{1/2}\|f\|_2\leq \zeta(2)^{1/2}\|f\|_2.$$
Therefore, $f\circ u^{-1}$ is bounded on $\CC_\nu$ so that there exists a sequence of complex numbers $(b_n)$ such that
$f\circ u^{-1}(s)=\sum_{n\geq 1}b_n n^{-s}$
for all $s\in\CC_\nu.$ 
 Finally, for all $n\geq 1$, we obtain
 \begin{align*}
  |b_n n^{-\nu}|&=\left|\lim_{T\to+\infty}\frac1{2T}\int_{-T}^T n^{it} f\circ u^{-1}(\nu+it)dt\right|\\
  &\leq \zeta(2)^{1/2} \|f\|_2.
 \end{align*}
\end{proof}

We fix $\nu>0$ given by the previous lemma (it only depends on $u$).
\begin{lemma}\label{lem:representation}
 Let $f\in\mathcal H^2$ and write, for $\A(s)\geq \nu,$
  $$f\circ u^{-1}(s)=\sum_{n\geq 1}b_n n^{-s}.$$
Let $(\lambda_n)$ be a sequence of complex numbers such that $\sum_n|\lambda_n|n^{\nu}$ converges. Then the series
$$\sum_{n\geq 1}\lambda_n b_n n^{-u}$$
converges in $\mathcal H^2$. More precisely, for all $N\geq 1,$
$$\left\|\sum_{n\geq N}\lambda_n b_n n^{-u}\right\|_2\leq \zeta(2)^{1/2} \|f\|_2 \sum_{n\geq N} |\lambda_n| n^{\nu}.$$
\end{lemma}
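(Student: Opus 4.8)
The plan is to reduce everything to the single uniform estimate $\|n^{-u}\|_{\mathcal H^2}\le 1$, valid for every $n\ge 1$. To obtain it, I would first note that $n^{-u}$ is a genuine Dirichlet series of $\mathcal D$: viewing $n^{-w}$ as a one-term Dirichlet series of characteristic $0$ and composing it with $u\in\GD$, Lemma \ref{lem:compogd} (or the computation already carried out for $m^{-u}$ in the proof of Theorem \ref{sfe-thm}, see also Lemma \ref{lem:exponent}) gives $n^{-u}\in\mathcal D$. Since we are in the situation of Theorem \ref{thm:characteristic1}, $u\in\GH$ maps $\CC_0$ into $\CC_0$, so $\A(u(s))>0$ and hence $|n^{-u(s)}|=n^{-\A(u(s))}\le 1$ for all $s\in\CC_0$. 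Thus $n^{-u}\in\mathcal H^\infty$ with $\|n^{-u}\|_\infty\le 1$, and the inequality $\|g\|_2\le\|g\|_\infty$ on $\mathcal H^\infty$ yields $\|n^{-u}\|_2\le 1$.

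Once this is available, the lemma follows from the triangle inequality in $\mathcal H^2$ together with the coefficient bound $|b_n|\le\zeta(2)^{1/2}n^{\nu}\|f\|_2$ furnished by Lemma \ref{lem:compoh2}. Explicitly, for $M\ge N\ge 1$,
$$\Big\|\sum_{n=N}^{M}\lambda_n b_n n^{-u}\Big\|_2\le\sum_{n=N}^{M}|\lambda_n|\,|b_n|\,\|n^{-u}\|_2\le\zeta(2)^{1/2}\|f\|_2\sum_{n=N}^{M}|\lambda_n|\,n^{\nu}.$$
Because $\sum_n|\lambda_n|n^{\nu}<\infty$, the right-hand side is the tail of a convergent series, so the partial sums of $\sum_n\lambda_n b_n n^{-u}$ form a Cauchy sequence in the Banach space $\mathcal H^2$ and the series converges there; letting $M\to\infty$ with $N$ fixed gives the asserted estimate for the tail.

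I do not expect a serious obstacle in this argument. The only delicate point is the uniform bound $\|n^{-u}\|_2\le 1$, and this is precisely where the hypothesis $u\in\GH$ — rather than merely $u\in\GG$, where $\A(u)$ may be unbounded below on $\CC_0$ — is essential: if $u$ were only in $\GG$, the functions $n^{-u}$ would no longer be bounded on $\CC_0$, their $\mathcal H^2$-norms could grow with $n$, and one would have to replace this step by a weighted, $n$-dependent estimate (and accordingly strengthen the summability hypothesis on $(\lambda_n)$). The verification that $n^{-w}$ fits the hypotheses of Lemma \ref{lem:compogd}, and the standard fact $\|g\|_2\le\|g\|_\infty$ on $\mathcal H^\infty$, are routine.
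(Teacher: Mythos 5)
Your proof is correct and follows the same approach as the paper: both reduce the lemma to the uniform bound $\|n^{-u}\|_2\le 1$ together with the coefficient estimate from Lemma \ref{lem:compoh2} and the triangle inequality. The only cosmetic difference is that you obtain $\|n^{-u}\|_2\le 1$ via the contractive embedding $\mathcal H^\infty\hookrightarrow\mathcal H^2$ applied to $n^{-u}$, whereas the paper invokes the (equivalent) fact that $C_u$ is a contraction on $\mathcal H^2$ for $u\in\GH$.
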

\begin{proof}
 It is well known that $C_u$ is a contraction on $\mathcal H^2$ so that, for all $n\geq 1,$ $\|n^{-u}\|_2\leq 1.$
 Therefore the result follows from the estimate on $b_n$ and the triangle inequality.
\end{proof}
%% Prendre l'adh��rence ou non???
Let $\Omega=\{w\in\CC:\ w+\overline{u(\CC_0)}\subset u(\CC_0)\}$. By Lemma \ref{lem:reverseinclusion}, there exists $\sigma_0\geq 0$
such that $\CC_{\sigma_0}\subset \Omega$. For $w\in\Omega$ and $s\in\CC_0,$ we set
$$\varphi_w(s)=u^{-1}(w+u(s)).$$
Lemma \ref{lem:inverse} and \ref{lem:compogd} ensure that $\varphi_w\in \GH$ with characteristic $1$.

\begin{lemma}\label{lem:approximation}
 There exists $\sigma_1>0$ such that, for every $w\in\CC_{\sigma_1}$, for all $T\in\{C_\varphi\}',$ for all $\veps>0,$
 there exists $P\in\CC[X]$ such that
 $$\|P(C_\varphi)-TC_{\varphi_w}\|<\veps.$$
\end{lemma}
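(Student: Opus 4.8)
The plan is to exploit the diagonalizable structure of $C_\varphi$ on the subspace spanned by the eigenvectors $n^{-u}$, together with the representation of $C_{\varphi_w}$ provided by the preceding lemmas. First I would record the action of $C_{\varphi_w}$ on the eigenvectors: since $u\circ\varphi_w = w+u$, the relation $u\circ\varphi = u+c_1$ gives $u\circ\varphi_w\circ\varphi = u\circ\varphi\circ\varphi_w$, so $\varphi_w$ and $\varphi$ commute and $C_{\varphi_w}$ indeed lies in $\{C_\varphi\}'$ (this is implicit, but I'd note it). More importantly, for each $n\geq 1$ we have $n^{-u}\circ\varphi_w = n^{-(w+u)} = n^{-w}\, n^{-u}$, so $C_{\varphi_w}$ acts diagonally on $\overline{\textrm{span}}(n^{-u}:n\geq 1)$ with eigenvalue $n^{-w}$ on $n^{-u}$. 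Since $\A(c_1)>0$, the eigenvalues $\{m^{-c_1}:m\in\NN\}$ of $C_\varphi$ (from Proposition \ref{prop:spectrumh2}/Theorem \ref{sfe-thm}) are distinct, and the spanning set $\{n^{-u}:n\geq 1\}$ is a Riesz-type basis in the sense that any $f\in\mathcal H^2$ has a representation $f = \sum_n b_n n^{-u}$ obtained by writing $f\circ u^{-1}(s) = \sum_n b_n n^{-s}$ (this is exactly the content of Lemmas \ref{lem:compoh2} and \ref{lem:representation}, granting convergence once the coefficient decay is good enough).

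Next I would fix $w\in\CC_{\sigma_1}$ with $\sigma_1>\nu$ large (to be chosen), and reduce the problem to a polynomial interpolation statement for the eigenvalue sequences. Given $T\in\{C_\varphi\}'$, the goal is to approximate $TC_{\varphi_w}$ in operator norm by $P(C_\varphi)$. I expect the argument to proceed by showing that $TC_{\varphi_w}$ is, up to controllable error, itself "diagonal" in the appropriate sense: apply $TC_{\varphi_w}$ to a general $f=\sum_n b_n n^{-u}$; since $C_{\varphi_w}f = \sum_n n^{-w} b_n n^{-u}$ and, by the convergence estimate in Lemma \ref{lem:representation}, the tail $\sum_{n\geq N} n^{-w} b_n n^{-u}$ is small in norm once $\A(w)>\nu$, the operator $TC_{\varphi_w}$ is well approximated by $T$ applied to a finite sum of eigenvectors, on which $T$ acts through the scalars $\mu_n$ defined by $T(n^{-u})$'s leading behavior. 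Then I would invoke a polynomial interpolation result: one needs a polynomial $P$ with $P(m^{-c_1})$ prescribed (matching the required scalars) on a finite set while $\sup_m|P(m^{-c_1})|$ stays bounded — which is precisely the hypothesis of Theorem \ref{thm:diagonal}, and is available here because $\{m^{-c_1}:m\in\NN\}\cup\{0\}$ is compact with connected complement (Mergelyan, as in Proposition \ref{prop:diagonal}(ii)). The weight $n^{-w}$ with $\A(w)$ large is what makes the tail summable against the $n^\nu$ growth of the $b_n$ from Lemma \ref{lem:compoh2}, so $\sigma_1$ should be taken larger than $\nu$ plus whatever margin the geometric summation needs.

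The main obstacle I anticipate is not the interpolation step but controlling the operator norm uniformly: unlike the disc case, the eigenvectors $n^{-u}$ need not form an orthonormal (or even unconditional) basis, so passing from "$P(C_\varphi)$ and $TC_{\varphi_w}$ agree on each $n^{-u}$ up to $\veps$" to "$\|P(C_\varphi)-TC_{\varphi_w}\|<\veps$" requires the quantitative tail bound $\|\sum_{n\geq N}\lambda_n b_n n^{-u}\|_2 \leq \zeta(2)^{1/2}\|f\|_2\sum_{n\geq N}|\lambda_n|n^\nu$ from Lemma \ref{lem:representation}, used with $\lambda_n$ a bounded sequence times $n^{-w}$. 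One must split any $f$ into a low-frequency part $\sum_{n<N}b_n n^{-u}$, where $P(C_\varphi)$ and $TC_{\varphi_w}$ are made to match by interpolation at the finitely many points $m^{-c_1}$ for $m<N$, and a high-frequency part, where both operators are shown to be small in norm — $P(C_\varphi)$ because $\sup_m|P(m^{-c_1})|\leq M$ and $C_\varphi$ is a contraction with $\|C_\varphi f\|\to$ controllable behavior on the tail, and $TC_{\varphi_w}$ because of the $n^{-w}$ decay. Balancing $N$ against $\veps$ and the interpolation constant $M$ then finishes the proof.
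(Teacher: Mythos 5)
Your overall strategy matches the paper's: diagonalize $T$ and $C_{\varphi_w}$ on the eigenvectors $n^{-u}$, reduce to a polynomial interpolation problem at the points $m^{-c_1}$, and control tails via the quantitative bound of Lemma \ref{lem:representation}. However, there is a concrete gap in the interpolation step that you did not resolve, and it is precisely the technical heart of the lemma.

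You propose to invoke Mergelyan to produce a polynomial $P$ matching $\lambda_n n^{-w}$ at $n^{-c_1}$ for $n\leq N$ while keeping $\sup_m|P(m^{-c_1})|\leq M$, then argue the high-frequency part of $P(C_\varphi)f$ is small ``because $\sup_m|P(m^{-c_1})|\leq M$ and $C_\varphi$ is a contraction.'' This does not close the estimate. The coefficients $b_n$ of $f\circ u^{-1}$ are only controlled by $|b_n|\leq \zeta(2)^{1/2}n^\nu\|f\|$, so the tail contribution of $P(C_\varphi)f$ is bounded (via Lemma \ref{lem:representation}) by a constant times $\sum_{n>N}|P(n^{-c_1})|\,n^\nu$, which diverges whenever $|P(n^{-c_1})|$ is merely bounded. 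Indeed, without a decaying weight the series $\sum_n b_n\lambda_n n^{-u}$ need not even converge in $\mathcal H^2$ --- Lemma \ref{lem:representation} requires $\sum_n|\lambda_n|n^\nu<\infty$, so the naive ``split $f$ into low and high frequency parts'' in the $n^{-u}$-representation is not even well-posed. An operator-norm bound on $P(C_\varphi)$ times a tail of $f$ does not rescue this, since the tail $\sum_{n>N}b_n n^{-u}$ is not a convergent object to begin with.

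The missing idea, which the paper supplies, is to look for $P$ in the special form $P(z)=z^m Q(z)$ with $m$ chosen so that $m\A(c_1)>\max(\nu+2,\sigma_0)$, and to apply Mergelyan to $Q$ (with target values $\lambda_n n^{-w}/n^{mc_1}$ on the first $N$ nodes and $|Q|\leq 1$ elsewhere). The factor $z^m$ forces $|P(n^{-c_1})|\leq n^{-m\A(c_1)}$ for all $n>N$, which beats the $n^\nu$ growth and makes $\sum_{n>N}n^\nu|P(n^{-c_1})|$ convergent and small. It also ensures $P(C_\varphi)f=\sum_n b_n P(n^{-c_1})n^{-u}$ is a legitimate convergent series, since each $C_\varphi^k$ with $k\geq m$ applied to $f$ produces the absolutely summable weights $n^{-kc_1}$. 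This is not a cosmetic refinement: without the decay forced by the factor $z^m$, the three-term splitting of $\|P(C_\varphi)f-TC_{\varphi_w}f\|$ cannot be bounded. The rest of your outline (choice of $\sigma_1>\max(\sigma_0,\nu)+2$, using $n^{-w}$ to kill the $TC_{\varphi_w}$ tail, and combining with the $\eta$-precision on the first $N$ nodes) is correct once this device is in place.
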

\begin{proof}
 Let $T\in\{C_\varphi\}'$ and $\veps>0$. Since any $n^{-c_1}$ is an eigenvalue of $C_\varphi$ of multiplicity $1$ with associated
 eigenvector $n^{-u}$, there exists a sequence of complex numbers $(\lambda_n)_{n\geq 1}$ such that $Tn^{-u}=\lambda_n n^{-u}$
 and $\|(\lambda_n)\|_\infty\leq \|T\|.$ Let $f\in\mathcal H^2$ and write, for $s\in\CC_{\nu},$
 $$f\circ u^{-1}(s)=\sum_{n\geq 1} b_n n^{-s}.$$
 Thus, for $\A(s)>0$ and $\A(w)>\max(\nu,\sigma_0),$
 $$f\circ\varphi_w(s)=\sum_{n\geq 1}b_n n^{-w} n^{-u}.$$
 If moreover $\A(w)\geq\nu+2,$ the right hand side of this equality converges in $\mathcal H^2$. This yields
$$TC_{\varphi_w}f=\sum_{n\geq 1}b_n \lambda_n n^{-w}n^{-u}.$$
 Let $N\geq 1$, $\eta>0$ be such that
 $$\sum_{n\geq N+1}n^{-2}\leq\veps\textrm{ and }\eta\sum_{n=1}^{+\infty}n^{-2}\leq\veps.$$
 Let also $m\in\NN$ be such that $m\A(c_1)>\max(\nu+2,\sigma_0)$ and consider the compact set
 $$K=\{n^{-c_1}:\ n\geq 1\}\cup\{0\}.$$
 Its complement is connected and by Mergelyan's theorem, there exists $Q\in\CC[X]$ such that
 $$
 \left\{
 \begin{array}{rcll}
  \displaystyle \left| Q(n^{-c_1})-\frac{\lambda_n n^{-w}}{n^{mc_1}}\right|&<&\eta&\textrm{ for }n\in\{1,\dots,N\}\\[0.5cm]
  \displaystyle \left| Q(n^{-c_1})\right|&<&1&\textrm{ for }n\geq N+1.
 \end{array}
 \right.
 $$
 We set $P(z)=z^m Q(z)$ and observe that
  $$
 \left\{
 \begin{array}{rcll}
  \displaystyle \left| P(n^{-c_1})-\lambda_n n^{-w}\right|&<&\eta n^{-m\A(c_1)}&\textrm{ for }n\in\{1,\dots,N\}\\
  \displaystyle \left| P(n^{-c_1})\right|&<& n^{-m\A(c_1)}&\textrm{ for }n\geq N+1.
 \end{array}
 \right.
 $$
 For $k\geq m,$ we know that
 $$C_\varphi^k f=f\circ\varphi_{kc_1}=\sum_{n\geq 1}b_n n^{-kc_1}n^{-u}$$
 so that by linearity
 $$P(C_\varphi)f=\sum_{n\geq 1}b_n P(n^{-c_1})n^{-u}.$$
 Therefore
 \begin{align*}
  & \|P(C_\varphi)(f)-TC_{\varphi_w}f\|_2\\
  \leq & \left\|\sum_{n=1}^N b_n \big(P(n^{-c_1})-\lambda_n n^{-w}\big) n^{-u}\right\|_2+\\
  &\quad\quad\left\|\sum_{n=N+1}^{+\infty}b_n P(n^{-c_1})n^{-u}\right\|_2+\left\|\sum_{n=N+1}^{+\infty}b_n\lambda_n n^{-w}n^{-u}\right\|_2\\
 \leq & \zeta(2)^{1/2}\|f\|_{2}\eta\sum_{n=1}^N n^{\nu}n^{-m\A(c_1)}+\\
  &\quad\quad\zeta(2)^{1/2}\|f\|_{2} \sum_{n=N+1}^{+\infty}n^{\nu}n^{-m\A(c_1)}+\zeta(2)^{1/2}\|f\|_{2}\|T\|\sum_{n=N+1}^{+\infty}n^\nu n^{-\A(w)}\\
 \leq &\zeta(2)^{1/2}\|f\|_{2} (2+\|T\|)\veps.
 \end{align*}
Therefore the lemma is proved for $\sigma_1=\max(\sigma_0,\nu)+2$.
\end{proof}

We need a last lemma.
\begin{lemma}
 Let $\Lambda:\mathcal B(\mathcal H^2)\to\CC$ be a linear functional which is continuous with respect to the weak operator topology
 and let $T\in\mathcal B(\mathcal H^2)$.  Assume that $\Omega$ is open. Then $F:\Omega\to\CC,$ $F(w)=\Lambda(TC_{\varphi_{w}})$ is analytic.
\end{lemma}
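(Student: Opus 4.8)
The plan is to reduce the analyticity of $F$ on $\Omega$ to the analyticity, for each fixed pair $f,g\in\mathcal H^2$, of the scalar function $w\mapsto\langle C_{\varphi_w}f,g\rangle$, and then to establish the latter by testing against reproducing kernels and controlling the limit by a uniform operator‑norm bound. The first step is the standard functional‑analytic reduction: a linear functional on $\mathcal B(\mathcal H^2)$ that is continuous for the weak operator topology has the form $R\mapsto\sum_{j=1}^N\langle Rx_j,y_j\rangle$ for finitely many $x_j,y_j\in\mathcal H^2$. Applying this to $\Lambda$ and using $\langle TC_{\varphi_w}x_j,y_j\rangle=\langle C_{\varphi_w}x_j,T^*y_j\rangle$, one gets $F(w)=\sum_{j=1}^N\langle C_{\varphi_w}x_j,T^*y_j\rangle$, so it suffices to prove that $w\mapsto\langle C_{\varphi_w}f,g\rangle$ is holomorphic on $\Omega$ for arbitrary $f,g\in\mathcal H^2$.

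Next I would test against reproducing kernels. For $s\in\CC_{1/2}$ let $K_s\in\mathcal H^2$ be the reproducing kernel, so $\langle h,K_s\rangle=h(s)$ for all $h\in\mathcal H^2$. Since $\varphi_w\in\GH$ induces a bounded composition operator on $\mathcal H^2$, we have $(C_{\varphi_w}f)(s)=f(\varphi_w(s))$ for $s\in\CC_{1/2}$, whence
$$\langle C_{\varphi_w}f,K_s\rangle=f\big(\varphi_w(s)\big)=f\big(u^{-1}(w+u(s))\big).$$
As $u$ is univalent on $\CC_0$, its inverse is holomorphic on the open set $u(\CC_0)$; and for $w\in\Omega$, $s\in\CC_{1/2}$ the point $w+u(s)$ lies in $u(\CC_0)$ by the very definition of $\Omega$. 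Hence $w\mapsto\langle C_{\varphi_w}f,K_s\rangle$ is holomorphic on $\Omega$, and therefore so is $w\mapsto\langle C_{\varphi_w}f,g\rangle$ for every $g$ in the linear span of the reproducing kernels.

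To reach an arbitrary $g$, I would use that the span of $\{K_s:\ s\in\CC_{1/2}\}$ is dense in $\mathcal H^2$ (a vector orthogonal to it vanishes identically on $\CC_{1/2}$), together with the bound $\sup_{w\in\Omega}\|C_{\varphi_w}\|\le 1$, valid because each $\varphi_w$ has characteristic $1$ and composition operators with such symbols are contractions on $\mathcal H^2$ (the contraction property already invoked for $C_u$). Picking $g_k\to g$ with each $g_k$ a finite combination of reproducing kernels, one has
$$\big|\langle C_{\varphi_w}f,g\rangle-\langle C_{\varphi_w}f,g_k\rangle\big|\le\|C_{\varphi_w}\|\,\|f\|_2\,\|g-g_k\|_2\le\|f\|_2\,\|g-g_k\|_2$$
uniformly in $w\in\Omega$, so $w\mapsto\langle C_{\varphi_w}f,g\rangle$ is a uniform limit on $\Omega$ of holomorphic functions, hence holomorphic; combined with the first step this yields the analyticity of $F$.

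The argument is essentially routine, and there is no single hard step; the two points to keep an eye on are the holomorphy of $w\mapsto\varphi_w(s)$ for fixed $s$ (immediate from $\varphi_w(s)=u^{-1}(w+u(s))$ and the openness of $\Omega$, which guarantees $w+u(s)$ stays in the domain of the holomorphic $u^{-1}$) and the uniform norm bound $\|C_{\varphi_w}\|\le 1$ that legitimizes the passage to the limit — the latter being the place where $\textrm{char}(\varphi_w)=1$ is genuinely used.
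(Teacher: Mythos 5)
Your proof is correct, and it takes a genuinely different route from the paper. Both proofs begin with the same standard reduction, expressing the WOT-continuous functional $\Lambda$ as a finite sum $\sum_j\langle\,\cdot\, x_j,y_j\rangle$ and hence reducing to the holomorphy of $w\mapsto\langle C_{\varphi_w}f,g\rangle$ for fixed $f,g\in\mathcal H^2$. After that the paper works on the Dirichlet-coefficient side: using Lemma \ref{lem:compogd} it writes $f\circ\varphi_w(s)=\sum_n a_n(w)n^{-s}$ with each $a_n$ holomorphic on $\Omega$, expands $F(w)=\sum_n a_n(w)\overline{d_n}$ where $T^*g=\sum_n d_n n^{-s}$, and invokes Mattner's theorem on holomorphy of a sum, legitimized by the uniform $\ell^1$ bound $\sum_n|a_n(w)\overline{d_n}|\le\|C_{\varphi_w}f\|_2\|T^*g\|_2\le\|f\|_2\|T^*g\|_2$. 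You instead work on the test-vector side: for $g$ a reproducing kernel $K_s$ the pairing collapses to $f\bigl(u^{-1}(w+u(s))\bigr)$, which is manifestly holomorphic on $\Omega$ because the definition of $\Omega$ keeps $w+u(s)$ inside $u(\CC_0)$ where $u^{-1}$ is holomorphic; the general $g$ is then handled by density of the kernel span together with the uniform contraction bound $\|C_{\varphi_w}\|\le 1$ and Weierstrass's theorem on locally uniform limits. Both arguments hinge on exactly the same key estimate (the contraction property of characteristic-one composition operators), but yours bypasses Mattner's theorem and the coefficient bookkeeping of Lemma \ref{lem:compogd} in favor of the reproducing-kernel structure of $\mathcal H^2$; the paper's version stays more closely aligned with the Dirichlet-series machinery developed earlier in the section. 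Either is a legitimate proof.
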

\begin{proof}
 Any linear functional on $\mathcal B(\mathcal H^2)$ which is continuous with respect to the weak operator topology
 can be expressed as a linear combination of finitely many linear functionals of the form $\langle Rf,g\rangle$,
 $f,g\in\mathcal H^2$ (see \cite[Chapter 2]{Takesaki}). Hence we may assume that $\Lambda(T)=\langle Tf,g\rangle$ so that
 \begin{align*}
  F(w)&=\langle TC_{\varphi_w}f,g\rangle\\
  &=\langle C_{\varphi_w}f,T^*g\rangle.
 \end{align*}
By Lemma \ref{lem:compogd}, we may write for $\A(s)$ large enough and $w\in\Omega,$
$$u^{-1}(u(s)+w)=s+\sum_{n\geq 1}c_n(w)n^{-s}$$
where each map $w\in\Omega \mapsto c_n(w)$, $n\geq 1$, is analytic in $\Omega$.
We do the same thing for $f\circ\varphi_w(s)=f(u^{-1}(u(s)+w))$ which can be written
$\sum_{n\geq 1}a_n(w) n^{-s}$ where each $w\in\Omega\mapsto a_n(w)$, $n\geq 1,$
is holomorphic as a finite linear combination of finite products of $k\mapsto c_k(w)$.
Writing $T^* g=\sum_{n\geq 1}d_n n^{-s}$, we get
$$F(w)=\sum_{n\geq 1}a_n(w)\overline{d_n}.$$
It follows from a result of Mattner (see \cite{Mattner}) that $F$ is holomorphic on $\Omega$ since, for all $w\in\Omega,$
by the Cauchy-Schwarz inequality,
\begin{align*}
 \sum_{n\geq 1}|a_n(w) \overline{d_n}|&\leq \|C_{\varphi_w}(f)\|_2\cdot \|T^* g \|_2\\
 &\leq \|f\|_2 \|T^* g\|_2.
\end{align*}
\end{proof}

We are now ready for the proof of Theorem \ref{thm:characteristic1}.
\begin{proof}[Proof of Theorem \ref{thm:characteristic1}]
 Let $\Lambda$ be a linear functional on $\mathcal B(\mathcal H^2)$ which is continuous in the weak operator topology and that vanishes
 on $\{P(C_\varphi):\ P\in\CC[X]\}$. Let $T\in\{C_\varphi\}'$. We need to prove that $\Lambda(T)=0$. Let us consider $F(w)=\Lambda(TC_{\varphi_w})$
 which is analytic on $\Omega$. By Lemma \ref{lem:approximation}, we know that $F=0$ on some half-plane $\CC_{\sigma_1}$,
 therefore also on $(0,+\infty)$ by analytic continuation. We will get the conclusion if we are able to prove
 that $C_{\varphi_\sigma}\to I$ in the weak operator topology as $\sigma\to 0$. Namely we need to show that
 $\langle C_{\varphi_\sigma}f,g\rangle$ tends to $\langle f,g\rangle$ for every $f,g\in\mathcal H^2$.
 Since the family $(C_{\varphi_\sigma})$ is uniformly bounded, it suffices to verify the convergence when $g$ belongs
 to a total set, for instance $\{k_w:\ \A(w)>1/2\}$ where $k_w$ is the reproducing kernel at $w$. Now, for all $w\in\CC_{1/2},$
 \begin{align*}
  \langle C_{\varphi_\sigma}f,k_w\rangle&=f(\varphi_\sigma(w))\\
  &=f(u^{-1}(\sigma+u(w)))\\
  &\to f(w)=\langle f,k_w\rangle
 \end{align*}
 as $\sigma\to 0.$
\end{proof}

\subsection{Examples}
We need to give examples of maps satisfying the assumptions of Theorem \ref{thm:characteristic1}.
We start with a rather general result.
\begin{proposition}\label{prop:open}
 Let $u:\CC_0\to\CC_0$ be univalent and holomorphic and assume that it extends continuously to $i\RR$. Assume that
 \begin{itemize}
  \item[(a)] $t\mapsto \B(u(it))$ is a bijection of $\RR$;
  \item[(b)] for all $A>0$, there exists $T\in\mathbb R$ such that, for all $\sigma>0$ and for all $t\in\RR$
  \begin{align*}
   t\geq T&\implies \B(u(\sigma+it))\geq A\\
   t\leq -T&\implies \B(u(\sigma+it))\leq -A;
  \end{align*}
  \item[(c)] for all $A>0$, there exists $\sigma_0>0$ such that, for all $t\in\RR,$ for all $\sigma\geq\sigma_0,$
  $$\A(u(\sigma+it))\geq A.$$
 \end{itemize}
 Let $\Omega=\{w\in\CC_0:\ w+\overline{u(\CC_0)}\subset u(\CC_0)\}$. Then $\Omega$ is open and $(0,+\infty)\subset\Omega$.
\end{proposition}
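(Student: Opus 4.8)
The plan is to pin down the image $u(\CC_0)$ explicitly and then read off both assertions from it. Since $u$ is continuous on $i\RR$ and, by $(a)$, $t\mapsto\B(u(it))$ is a continuous bijection of $\RR$, hence a strictly monotone homeomorphism, I would reparametrise the boundary curve $\Gamma:=u(i\RR)$ by its imaginary part, writing $\Gamma=\{\alpha(y)+iy:\ y\in\RR\}$ for a continuous $\alpha:\RR\to[0,+\infty)$ (non-negative since $u(\CC_0)\subset\CC_0$). I would then prove $u(\CC_0)=V$, where $V:=\{x+iy:\ x>\alpha(y)\}$, via the argument principle applied to the univalent map $u$ on the exhausting rectangles $R_n=\{s:\ 0<\A s<n,\ |\B s|<n\}$: for $z_0\notin u(\partial R_n)$, the winding number of $u(\partial R_n)$ about $z_0$ equals the number of solutions of $u(s)=z_0$ in $R_n$, namely $0$ or $1$. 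For $n$ large, $(c)$ puts the image of the right side of $\partial R_n$ in $\{\A>|\A z_0|\}$ and $(b)$ puts the images of the horizontal sides in $\{\B>|\B z_0|\}$ and $\{\B<-|\B z_0|\}$, while the image of the left side is the sub-arc of $\Gamma$ between $\B(u(-in))$ and $\B(u(in))$, which meets $\{\B=\B z_0\}$ exactly once; counting crossings with the horizontal ray leaving $z_0$ in the $-\A$ direction then gives winding number $1$ when $\A z_0>\alpha(\B z_0)$ and $0$ when $0<\A z_0<\alpha(\B z_0)$. Letting $n\to\infty$ and using that $u(\CC_0)$ is open with $u(\CC_0)\subset\CC_0$, this gives $V\subset u(\CC_0)\subset\overline V$, and then $u(\CC_0)=V$ because $\overline V\setminus V=\Gamma$ has empty interior in $\overline V$. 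With $\overline{u(\CC_0)}=\overline V=\{x+iy:\ x\geq\alpha(y)\}$ in hand, the inclusion $(0,+\infty)\subset\Omega$ is immediate, since $\sigma+\overline V=\{x+iy:\ x\geq\alpha(y)+\sigma\}\subset V$ for real $\sigma>0$ and $\sigma\in\CC_0$.

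For the openness of $\Omega$, I would first record that $w+\overline V\subset V$ is equivalent to $\A(w)+\alpha(y)>\alpha(\B(w)+y)$ for every $y\in\RR$, since $\overline V=\Gamma+[0,+\infty)$ and $V+[0,+\infty)\subset V$. Then I would consider $d(w):=\inf_{z\in\overline V}\mathrm{dist}(w+z,V^{c})$, which is finite (as $0\in V^{c}$) and $1$-Lipschitz in $w$, being an infimum of the $1$-Lipschitz maps $w\mapsto\mathrm{dist}(w+z,V^{c})$; hence $\{w\in\CC_0:\ d(w)>0\}$ is open, and it is trivially contained in $\Omega$. For the reverse inclusion I would argue by contradiction: if $w\in\Omega$ but $d(w)=0$, pick $z_n\in\overline V$ and $z_n'\in V^{c}$ with $|w+z_n-z_n'|\to0$; since $w\in\Omega$ forces $w+\overline V$ and $V^{c}$ to be disjoint, a bounded $(z_n)$ would have a subsequential limit $z_*\in\overline V$ with $w+z_*\in V^{c}$, a contradiction, so $|z_n|\to\infty$. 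Writing $z_n=u(\hat s_n)$ with $\hat s_n\in\overline{\CC_0}$, hypotheses $(c)$ and $(b)$ force $\A(\hat s_n)$ to stay bounded and $|\B(\hat s_n)|\to\infty$, whence $w+z_n$ stays at bounded distance from $\Gamma$; substituting $y=\B(z_n)$ into the characterisation of $\Omega$ above then contradicts $w\in\Omega$. Therefore $\Omega=\{w\in\CC_0:\ d(w)>0\}$ is open.

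The hard part is this last step: converting the family of \emph{pointwise} strict inequalities defining $\Omega$ into a \emph{uniform} one, i.e.\ excluding the escape-to-infinity sequence. This is exactly where the uniformity in hypotheses $(b)$ and $(c)$ — which are precisely a description of how $u$, and hence the boundary curve $\Gamma$, behaves near $+\infty$ — must be exploited; by comparison, the argument-principle identification of $u(\CC_0)$ and the inclusion $(0,+\infty)\subset\Omega$ are routine.
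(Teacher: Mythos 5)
Your identification $u(\CC_0)=\{x+iy: x>\alpha(y)\}=:V$ via the argument principle on exhausting rectangles is correct and is essentially the paper's own Rouch\'e/open-mapping argument in different clothing; the rewriting $w\in\Omega\iff \A(w)+\alpha(y)>\alpha(y+\B w)$ for every $y\in\RR$ and the resulting inclusion $(0,+\infty)\subset\Omega$ also go through. You are right to isolate the openness of $\Omega$ as the real difficulty: the paper's own proof in fact ends after establishing the image identity with the bare assertion that it suffices, and says nothing about openness.

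The gap is precisely in the escape-to-infinity case. You claim that (b) and (c) force $\A(\hat s_n)$ to stay bounded and that substituting $y=\B(z_n)$ into the characterisation of $\Omega$ then yields a contradiction; neither is established. Hypothesis (c) gives only the one-sided implication $\A(\hat s_n)\to\infty\Rightarrow\A(z_n)\to\infty$, not its converse, and nothing in (a)--(c) forces $\alpha=\A\bigl(u(i\,\cdot)\bigr)$ to be bounded, so $\A(\hat s_n)\to\infty$ together with $\mathrm{dist}(w+z_n,V^c)\to 0$ is perfectly compatible with (a)--(c). More to the point, even granting $|\B(z_n)|\to\infty$: write $\gamma_n=\alpha(y_n)+iy_n\in\Gamma$ for a nearest boundary point, so $|w+z_n-\gamma_n|=\varepsilon_n\to 0$. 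Combining $z_n\in\overline V$ (i.e.\ $\A(z_n)\ge\alpha(\B z_n)$), the estimate $|\A(w+z_n)-\alpha(y_n)|\le\varepsilon_n$, and the $\Omega$-inequality at $y=\B(z_n)$ yields only
$\alpha(\B z_n+\B w)<\alpha(y_n)+\varepsilon_n$ with $|y_n-(\B z_n+\B w)|\le\varepsilon_n$,
which is merely a statement that $\alpha$ has small oscillation over a shrinking interval --- no contradiction. The Euclidean distance from $w+z_n$ to the graph $\Gamma$ can tend to $0$ while every pointwise margin $\A(w)+\alpha(y)-\alpha(y+\B w)$ stays strictly positive, if $\alpha$ oscillates or grows steeply near infinity. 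Closing this requires a uniform control on $\alpha$ (boundedness, or a uniform modulus of continuity), which hypotheses (a)--(c) do not supply; in the paper's intended application (Theorem~\ref{thm:exs}) the K\"onigs map is $\mathcal C^1$ up to $i\RR$ with uniformly bounded derivative, which silently provides exactly this, but at the level of generality of the proposition this step is genuinely open in your write-up (and unaddressed in the paper's).
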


\begin{proof}
 Denote by $\alpha(t)=\A (u(it))$ and by $\beta(t)=\B (u(it))$. It suffices to show that
 $$u(\CC_0)=\{\sigma+i\beta(t):\ t\in\mathbb R,\ \sigma>\alpha(t)\}.$$
Take $w\in u(\CC_0)$ and write it $w=\sigma+i\beta(t)$ for some $\sigma>0$ and some $t\in\mathbb R$. Assume that $\sigma\leq \alpha(t)$.
By the open mapping theorem, we can assume that $\sigma<\alpha(t)$.
Define $\sigma_0=\inf\{\A(\tilde w):\ \B(\tilde w)=\beta(t)\textrm{ and }\tilde w\in u(\CC_0)\}$ so that $\sigma_0<\alpha(t)$. Let $(s_n)\subset\CC_0$ be such that
$u(s_n)\to \sigma_0+i\beta(t)$. Since $|u(s)|\to +\infty$ as $|s|\to+\infty$, we may assume that $(s_n)$ is bounded
and, extracting if necessary, that it converges to some $s\in\overline{\CC_0}$, so that $u(s)=\sigma_0+i\beta(t)$.
If $s\in\CC_0$, this contradicts the definition of $\sigma_0$ (again by applying the open mapping theorem). If $s=i\tau,$ then $\alpha(i\tau)<\alpha(it)$ and $\beta(i\tau)=\beta(it),$ which contradict  the injectivity of $\beta$.
Therefore, $u(\CC_0)\subset \{\sigma+i\beta(t):\ t\in\mathbb R,\ \sigma>\alpha(t)\}.$
Conversely, let $w=\sigma+i\beta(t)$ with $\sigma>\alpha(t)$. Let $T,A>0$ be very large and let $K=[0,A]\times [-T,T]$, $\Gamma=\partial K$.
Then the result above shows that $u$ is injective on $K$ so that, using the assumptions, $u(\Gamma)$ is a simple curve that surrounds $w$. Therefore, $w\in u(\CC_0)$.
\end{proof}

We now show how the K\"{o}nigs map of $\varphi\in\GH$ may satisfy the above assumptions.

\begin{theorem}\label{thm:exs}
 Let $\varphi(s)=s+\psi(s)\in\GH$ be such that $\psi$ extends $\mathcal C^1$ to $i\mathbb R$. Assume that
 \begin{itemize}
  \item[(a)] there exist $p\geq 1$ and $\delta>0$ such that $\varphi^{[p]}(\overline{\CC_0})\subset\CC_\delta.$
  \item[(b)] for each $n\geq 0,$ $|\psi'|$ is bounded on $\varphi^{[n]}(\overline{\CC_0})$ by $M_n>0$.
  \item[(c)] $\displaystyle 1-\left(\prod_{n\geq 0}(1+M_n)\right)\sum_{n=0}^{+\infty}M_n>0$.
 \end{itemize}
Then $C_\varphi$ has a minimal commutant.
\end{theorem}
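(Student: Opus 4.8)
The plan is to verify that the K\"onigs map $u$ of $\varphi$ satisfies the hypotheses of Proposition \ref{prop:open}, and then to invoke Theorem \ref{thm:characteristic1}. A few reductions come first. If $\psi$ is constant, then (a) forces $\psi\equiv c_1$ with $\A(c_1)>0$ (the alternative $\psi\equiv i\tau$ being incompatible with (a)), and then $C_\varphi$ acts diagonally on $\mathcal H^2$ with eigenvalues $(n^{-c_1})_{n\ge 1}$, an injective sequence tending to $0$, so the conclusion follows from Proposition \ref{prop:diagonal}(ii); hence I may assume $\psi$ is not constant, whence $\A(c_1)>0$. Next, (c) gives $M_0<1$ (since $\prod_n(1+M_n)\ge 1$ and $\sum_n M_n\ge M_0$), so the mean value inequality applied to $\psi$ along segments of $\overline{\CC_0}$ yields $|\varphi(s)-\varphi(s')|\ge(1-M_0)|s-s'|$, and $\varphi$ is injective on $\CC_0$. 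By (a) we have $\varphi^{[p]}(\CC_0)\subset\CC_\delta$, so the argument in the proof of Theorem \ref{thm:lfm}(i) puts $u$ in $\GH$; since $\varphi$ is injective, $u$ is injective too (Theorem \ref{thm:lfm}). I will write $u(s)=s+\Psi(s)$ with the normalisation $u=\lim_n(\varphi^{[n]}-\varphi^{[n]}(1))$, so that $\Psi(s)=-1+\sum_{j\ge 0}\bigl(\psi(\varphi^{[j]}(s))-\psi(\varphi^{[j]}(1))\bigr)$.

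The heart of the matter is to show that $u$ extends to a $\mathcal C^1$ map on $\overline{\CC_0}$, with complex derivative there given by the uniformly convergent product $u'(s)=\prod_{j\ge 0}\varphi'(\varphi^{[j]}(s))=\prod_{j\ge 0}\bigl(1+\psi'(\varphi^{[j]}(s))\bigr)$. Indeed, since $\psi\in\mathcal C^1(\overline{\CC_0})$, each iterate $\varphi^{[j]}$ is $\mathcal C^1$ on $\overline{\CC_0}$ with $(\varphi^{[j]})'=\prod_{i<j}(1+\psi'(\varphi^{[i]}))$, hence by (b) one has $\bigl|(\psi\circ\varphi^{[j]})'\bigr|\le M_j\prod_{i\ge 0}(1+M_i)$ on $\overline{\CC_0}$, and $\sum_j M_j\prod_i(1+M_i)<1$ by (c). The same normal-convergence argument as in the proof of Theorem \ref{thm:lfm}(i), now carried out up to $i\RR$ thanks to (a) (via Lemmas \ref{lem:iteration} and \ref{lem:dirinfini}), shows that the series defining $\Psi$ converges uniformly on $\overline{\CC_0}$; combined with the summable domination of its termwise derivatives, it converges in $\mathcal C^1(\overline{\CC_0})$, which re-proves $u\in\GH$, shows $\Psi$ is bounded on $\overline{\CC_0}$, and identifies $u'$. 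Then the telescoping identity $\prod_{j=0}^{N}(1+a_j)-1=\sum_{k=0}^{N}a_k\prod_{j<k}(1+a_j)$ gives, for every $s\in\overline{\CC_0}$, $|u'(s)-1|\le\bigl(\prod_{j\ge 0}(1+M_j)\bigr)\bigl(\sum_{j\ge 0}M_j\bigr)<1$, so that $\A u'(s)>0$ throughout $\overline{\CC_0}$.

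With this in place the rest is bookkeeping. Setting $C=\sup_{\overline{\CC_0}}|\Psi|<\infty$, the estimates $|\A u(\sigma+it)-\sigma|\le C$ and $|\B u(\sigma+it)-t|\le C$ yield conditions (c) and (b) of Proposition \ref{prop:open} with $\sigma_0=A+C$ and $T=A+C$. For its condition (a), the $\mathcal C^1$ function $t\mapsto\B u(it)$ has derivative $\frac{d}{dt}\B u(it)=\B\bigl(iu'(it)\bigr)=\A u'(it)>0$, hence is strictly increasing, and since $\B u(it)=t+\B\Psi(it)$ with $\B\Psi$ bounded it is a bijection of $\RR$. Proposition \ref{prop:open} then shows that $\Omega=\{w\in\CC_0:\ w+\overline{u(\CC_0)}\subset u(\CC_0)\}$ is open and contains $(0,+\infty)$. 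As $\varphi\in\GH$ is univalent with characteristic $1$, $\psi$ is non-constant, $u\in\GH$, and $\Omega$ is open and contains $(0,+\infty)$, Theorem \ref{thm:characteristic1} applies and $C_\varphi$ has a minimal commutant.

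The one genuinely delicate point I expect is the $\mathcal C^1$ step: justifying term-by-term differentiation of the series for $u$ up to the imaginary axis and extracting the bound $|u'-1|<1$. This is precisely where hypothesis (c) enters, together with the product-telescoping identity and the uniform bound $\prod_i(1+M_i)$ on $\|(\varphi^{[j]})'\|_{\infty,\overline{\CC_0}}$; everything else reduces to Proposition \ref{prop:open} and Theorem \ref{thm:characteristic1}.
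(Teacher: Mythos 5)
Your proof is correct and follows essentially the same strategy as the paper's: verify that the K\"onigs map $u$ of $\varphi$ satisfies the hypotheses of Proposition \ref{prop:open} and then invoke Theorem \ref{thm:characteristic1}. The only deviations are technical polish: where the paper bounds $\sup_{\overline{\CC_0}}|(\varphi^{[n]})'|$ via the recursion $A_n=A_{n-1}(1+M_{n-1})\le\prod_k(1+M_k)$ and then works with $f_n(t)=\B(\varphi^{[n]}(\sigma+it))$ and a Lipschitz estimate on $\B u_n$ before passing to the limit, you obtain the same lower bound $\A u'>0$ on $\overline{\CC_0}$ directly from the product representation $u'=\prod_j(1+\psi'\circ\varphi^{[j]})$ together with the telescoping identity, and you also spell out the degenerate case $\psi$ constant (which falls outside the literal hypotheses of Theorem \ref{thm:characteristic1} and is covered by Proposition \ref{prop:diagonal}(ii)), a case the paper's proof glosses over.
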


\begin{proof}
We first observe that the assumptions imply $M_0<1,$ so that $\varphi$ is univalent on $\CC_0$.
 Let $u$ be the K\"{o}nigs map of $\varphi$ which belongs to $\GH$ by (a). We have to prove that $u$ satisfies the assumptions of Proposition \ref{prop:open}.
 We follow the proof of Theorem \ref{thm:lfm} and we use its notations. We know that, for all $s\in\overline{\CC_0}$,
 \begin{equation}\label{eq:thmexs}
 \varphi^{[n]}(s)=s+nc_1+\sum_{j=0}^{n-1}\psi(\varphi^{[j]}(s)).
 \end{equation}
 Assumption (a) implies the uniform convergence of $(u_n)$ on $\overline{\CC_0}$ which implies that $u$ extends continuously to $i\mathbb R$.
 Let $C_n=\sup_{s\in\overline{\CC_0}} |(\varphi^{[n]})'(s)|$ so that $C_0=1$, $C_1=1+M_0$. Then by derivating
 \eqref{eq:thmexs} we get, for all $n\geq 1$,
 $$C_n\leq 1+\sum_{j=0}^{n-1}C_j M_j.$$
 Therefore, if we denote by $(A_n)$ the sequence satisfying $A_0=1$ and
 $$A_n=1+\sum_{j=0}^{n-1}A_j M_j=A_{n-1}+A_{n-1}M_{n-1}=A_{n-1}(1+M_{n-1})$$
 we get $C_n\leq A_n\leq\ell:=\prod_{k\geq 0}(1+M_k)$ for all $n\geq 1$. Let now $\sigma\geq 0$ and let us set, for $t\in\mathbb R,$ $f_n(t)=\B(\varphi^{[n]}(\sigma+it))$. Then
 $$f_n'(t)\geq 1-\sum_{j=0}^{n-1}\ell M_n\geq a:=1-\ell\sum_{j=0}^{+\infty}M_j.$$
 This implies
 $$\B(u_n(\sigma+it_1))-\B(u_n(\sigma+it_0))\geq a(t_1-t_0)\textrm{ for all }t_0<t_1.$$
 Taking the limit yields
 \begin{equation}
 \B(u(\sigma+it_1))-\B(u(\sigma+it_0))\geq a(t_1-t_0) \label{eq:largeimaginarypart}
 \end{equation}
 so that $t\in\RR\mapsto \B(u(it))$ is a bijection from $\RR$ onto $\RR$. Moreover, since $\B(u([0,+\infty))$ is bounded, we
 easily deduce from \eqref{eq:largeimaginarypart} that $u$ satisfies assumption (b) of  Proposition \ref{prop:open}.
 Finally, assumption (c) is satisfied by all maps in $\GH.$
\end{proof}
\begin{remark}
 Under the assumption $\varphi^{[p]}(\overline{\CC_0})\subset\CC_\delta,$
 and using Lemma \ref{lem:dirinfini} and \ref{lem:iteration},
 there exist $C,\sigma>0$ such that, for all $n\in\mathbb N,$
 $M_n\leq C 2^{-n\sigma}$ which ensures that $\sum_{n}M_n$ is convergent.
\end{remark}

We now show a concrete example.
\begin{example}\label{ex:characteristic1}
 Let $\varphi(s)=s+a+b 2^{-s}$ where $b\in\CC^*,$ $a>|b|$ and
 \begin{equation}\label{eq:assumptionexs}
 \frac{|b|\log(2)}{1-2^{-a+|b|}}\exp\left(\frac{|b|\log( 2)}{1-2^{-a+|b|}}\right)<1.
 \end{equation}
 Then $C_\varphi$ has a minimal commutant.
\end{example}
This is for instance the case if $a=5/2$ and $b=1/2$.
\begin{proof}
 It is easy to check that $\varphi^{[n]}(\overline{\CC_0})\subset \CC_{n(a-|b|)}$ for all $n\geq 0$.
 Therefore, with the notations of Theorem \ref{thm:exs},
 $$M_n\leq b \log( 2)2^{-n(a-|b|)}.$$
 We get
 \begin{align*}
  \ell=\prod_{n=0}^{+\infty}(1+M_n)&=\exp\left(\sum_{n=0}^{+\infty}\log(1+M_n)\right)\\
  &\leq \exp\left(\sum_{n=0}^{+\infty} |b|\log( 2) 2^{-n(a-|b|)}\right)\\
  &\leq \exp\left(\frac{|b|\log( 2)}{1-2^{-a+|b|}}\right)
 \end{align*}
and
$$\sum_{n=0}^{+\infty}M_n=\frac{|b|\ln 2}{1-2^{a-|b|}}.$$
The condition $1-\ell\sum_{n=0}^{+\infty}M_n>0$ is clearly implied by \eqref{eq:assumptionexs}.
\end{proof}

\begin{remark}
During the proof of Theorem \ref{thm:characteristic1}, the assumption $u\in\GH$ was used to be sure that $n^{-u}\in\mathcal H^2$ for all $n\geq 1$ with $\|n^{-u}\|\leq 1.$ It turns out that one only needs $\|n^{-u}\|\leq n^{\sigma}$ for some $\sigma>0$ and for all $n\geq 1.$ Therefore, one could replace the assumption ``$u\in\GH$'' by ``there exists $\sigma>0$ such that for all $n\geq 1,$ $n^{-u}\in\mathcal H^2$ with $\|n^{-u}\|\leq n^\sigma$''. We have already observed that provided $C_\varphi$ is compact on $\mathcal H^2,$ then $n^{-u}\in\mathcal H^2$ for all $n\geq 1$. Unfortunately, we do not have a control of the norm of $n^{-u}$.
\end{remark}

%%%%%%%%%%%%%%%%%%%%%%%%%%%%%%%%%%%%%%%%%%%%%%%%%%%%%%%%%%%
%%% COMMUTANT DICS %%%%%%%%%%%%%%%%%%%%%%%%%%%%%%%%%%%%%%%%
%%%%%%%%%%%%%%%%%%%%%%%%%%%%%%%%%%%%%%%%%%%%%%%%%%%%%%%%%%%

\section{Commutant of composition operators on the disc} \label{sec:disc}

In this section, we show that the methods used for Dirichlet series have also applications to
composition operators on the Hardy space $H^2(\DD)$. 
We focus on self maps $\varphi$ of $\DD$ satisfying $\varphi(0)=0$.

\subsection{Zero derivative at the attractive fixed point}

We first concentrate on the case $\varphi'(0)=0$. It seems that under this assumption, the minimal commutant property
was settled only if $\varphi(z)=z^2$ (see \cite{cl98}); for this symbol, $C_\varphi$ does not have the minimal commutant property. 
We intend to give a much more general result, which may be applied to symbols writing $\varphi(z)=\lambda z^p \psi(z)$ with $p\geq 2,$ $\lambda\neq 0,$ $\psi(0)=1$ and $\psi$
does not vanish on $\DD$. For these symbols, B\"ottcher's equation $u\circ\varphi = \lambda u^p$
plays the role of Schr\"oder's equation. It is more difficult to solve on the whole disc. It is known (see \cite[p. 124-127]{Val})
that, provided $\varphi(0)=\cdots=\varphi^{(p-1)}(0)=0$ and $\varphi^{(p)}(0)\neq 0,$ there exists a solution to this equation which is defined, analytic
and univalent in a neighbourhood of $0$.

We shall also need the following analogue of Lemma \ref{lem:notinthealgebra}.
\begin{lemma}\label{lem:notinthealgebradisc}
  Let $\varphi,\tilde \varphi:\DD\to\DD$ be holomorphic and such that there exist $p,q\geq 2$ with $q\notin p\NN$ and
$$\begin{array}{ll}
\varphi^{(j)}(0)=0\textrm{ for }j=1,\dots,p-1&\varphi^{(p)}(0)\neq 0,\\
\tilde \varphi^{(j)}(0)=0\textrm{ for }j=1,\dots,q-1&\tilde\varphi^{(q)}(0)\neq 0,\\
\end{array}$$
then $C_{\tilde\varphi}\notin \overline{\mathrm{alg}(C_\varphi)}^\sigma.$
\end{lemma}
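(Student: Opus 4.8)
The plan is to transcribe the proof of Lemma \ref{lem:notinthealgebra} to the disc setting, with the order of vanishing of a self-map at the origin now playing the role of the characteristic and $p^k$ playing the role of $c_0^k$ (compare Lemma \ref{lem:exponent}). I would work in $H^2(\DD)$ with its orthonormal basis $(z^n)_{n\ge 0}$ and test everything on the vector $z\in H^2(\DD)$, for which $C_\varphi^{k}z=\varphi^{[k]}$. First, a one-line induction on $k$ shows that, since $\varphi$ vanishes to order exactly $p$ at $0$, the iterate $\varphi^{[k]}$ vanishes to order exactly $p^k$ at $0$ and its $p^k$-th Taylor coefficient is nonzero; likewise $\tilde\varphi$ vanishes to order exactly $q$. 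Accordingly I would set, for $j,m\ge 0$,
\[
d_{j,m}=\langle \varphi^{[j]},z^{m}\rangle,\qquad e_{m}=\langle \tilde\varphi,z^{m}\rangle ,
\]
so that $d_{0,m}=\delta_{1,m}$; for $j\ge 1$ one has $d_{j,m}=0$ whenever $m<p^{j}$ and $d_{j,p^{j}}\ne 0$; and $e_{m}=0$ for $m<q$ while $e_{q}\ne 0$.

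Then I would argue by contradiction. Suppose there is a net $(P_\alpha)\subset\CC[X]$, written $P_\alpha=\sum_k a_{\alpha,k}X^k$, with $P_\alpha(C_\varphi)\to C_{\tilde\varphi}$ in the weak operator topology; then for every $m\ge 0$,
\[
\sum_{k}a_{\alpha,k}\,d_{k,m}=\langle P_\alpha(C_\varphi)z,z^{m}\rangle\longrightarrow e_{m}.
\]
If $q<p$, then for \emph{every} polynomial $P$ one already has $\langle P(C_\varphi)z,z^{q}\rangle=0$, since $d_{0,q}=0$ (because $q\ge 2$) and $d_{k,q}=0$ for $k\ge 1$ (because $q<p\le p^{k}$); this contradicts $e_q\ne 0$. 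If $q>p$, then, since $q$ is not a multiple of $p$, there is a unique $\ell\ge 1$ with $p^{\ell}<q<p^{\ell+1}$. Testing against $z^{p^{j}}$ for $j=1,\dots,\ell$ and using that $p^{j}\le p^{\ell}<q$ (so $e_{p^{j}}=0$) together with $\langle P_\alpha(C_\varphi)z,z^{p^{j}}\rangle=\sum_{k=1}^{j}a_{\alpha,k}d_{k,p^{j}}$, an induction on $j$ that invokes $d_{j,p^{j}}\ne 0$ gives $a_{\alpha,k}\to 0$ for $k=1,\dots,\ell$. Finally $d_{k,q}=0$ for $k=0$ and for $k\ge \ell+1$, so $\langle P_\alpha(C_\varphi)z,z^{q}\rangle=\sum_{k=1}^{\ell}a_{\alpha,k}d_{k,q}\to 0$, contradicting convergence to $e_q\ne 0$.

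I do not expect a serious obstacle here once the analogy with Lemma \ref{lem:notinthealgebra} is set up; the only points that need a little care are the bookkeeping of the gap structure — the orders of vanishing $p^{k}$ of the iterates $\varphi^{[k]}$, which is exactly where the hypotheses $\varphi^{(j)}(0)=0$ and $\varphi^{(p)}(0)\ne 0$ enter — and the observation that the constant term $a_{\alpha,0}$ of $P_\alpha$ never affects a Taylor coefficient of order $\ge 2$, so that the test vector should be $z$ rather than a constant function. The hypothesis $q\notin p\NN$ is used precisely to guarantee the strict inequalities $p^{\ell}<q<p^{\ell+1}$ (equivalently, to exclude $q=p$ and $q=p^{\ell}$ for $\ell\ge 1$).
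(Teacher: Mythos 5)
Your proof is correct and follows the same strategy as the paper: testing the convergence $P_\alpha(C_\varphi)\to C_{\tilde\varphi}$ against the vector $z$ and against the sparse coordinates $z^{p^j}$ and $z^q$, then using the gap structure of the Taylor coefficients of $\varphi^{[j]}$ to extract $a_{\alpha,k}\to0$ and derive a contradiction.

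In fact your version is slightly more careful than the paper's on two points. First, the paper's proof asserts $\varphi^{[j]}(z)=\lambda^j z^{pj}\psi_j(z)$, i.e.\ that $\varphi^{[j]}$ vanishes to order $pj$ at the origin; this is a slip, since composing a map of order $p$ with itself $j$ times gives order $p^j$ (and leading coefficient $\lambda^{1+p+\cdots+p^{j-1}}$, not $\lambda^j$). You use the correct order $p^j$ and accordingly choose $\ell$ with $p^\ell<q<p^{\ell+1}$, rather than the paper's $\ell p<q<(\ell+1)p$; your version makes the analogy with Lemma~\ref{lem:notinthealgebra} (characteristic $c_0^j$ for the $j$-th iterate) exact. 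Second, you explicitly dispose of the case $q<p$, which the paper's disc version does not handle (the choice of $\ell\ge1$ with $\ell p<q$ is vacuous there), though its Dirichlet-series predecessor Lemma~\ref{lem:notinthealgebra} does cover the analogous $\tilde c_0<c_0$ case. Your observation that $q\notin p\NN$ (together with $q\ge2$) only enters to ensure $q$ is not a power of $p$ is also correct and explains why the hypothesis, while stronger than strictly needed for the argument, is the one that matches the application in Corollary~\ref{cor:zeroderivative}.
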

\begin{proof}
  We proceed by contradiction and we assume
that there is a net $(P_\alpha)\subset\mathbb C[X]$ such that $P_\alpha(C_\varphi)$ converges
to $C_{\tilde\varphi}$ in the weak operator topology. Write each $P_\alpha$ as $P_\alpha(X)=\sum_{k}a_{\alpha,k}X^k.$
The key point is that there exists $\lambda,\mu\neq 0$ such that, for all $j\geq 1,$
\begin{align*}
 \varphi^{[j]}(z)&=\lambda^j z^{pj}\psi_j(z)\textrm{ with }\psi_j(0)\neq 0\\
 \tilde\varphi(z)&=\mu z^q \tilde\psi(z)\textrm{ with }\tilde\psi(0)\neq 0.
\end{align*}
Setting
\begin{align*}
 d_{j,m}&=\langle C_{\varphi^{[j]}}(z),z^m\rangle=\langle \varphi^{[j]},z^m\rangle\\
 e_m&=\langle C_{\tilde\varphi}(z),z^m\rangle=\langle\tilde\varphi,z^m\rangle,
\end{align*}
we get $d_{j,m}=0$ if $pj>m$, $e_m=0$ if $q>m$ and $e_q\neq 0.$ Let $\ell\geq 1$ be such that $\ell p<q<(\ell+1)p$. From
$$\langle P_\alpha(C_\varphi)(z),z^{jp}\rangle=\sum_{k=1}^j a_{\alpha,k}d_{k,jp}\to \langle C_{\tilde \varphi}(z),z^{jp}\rangle=0,$$
for all $j=1,\dots,\ell,$ we get that $a_{\alpha,k}\to 0$ for all $k=1,\dots,\ell$ and this contradicts that
$$\langle P_\alpha(C_\varphi)(z),z^q\rangle=\sum_{k=1}^{\ell}a_{\alpha,k}d_{k,q}\to \langle C_{\tilde \varphi}(z),z^{q}\rangle\neq 0.$$
Therefore, $C_{\tilde \varphi}\notin \overline{\textrm{alg}(C_\varphi)}^\sigma.$
\end{proof}

We are now in position to give a general result.
\begin{theorem}\label{thm:zeroderivative}
 Let $\varphi:\DD\to\DD$ be analytic which writes $\varphi(z)=\lambda z^p \psi(z)$ with $p\geq 2,$ $\lambda\neq 0$ and $\psi(0)=1$.
 Assume that there exists $u:\DD\to\CC$ an analytic solution to B\"ottcher's equation $u\circ\varphi=\lambda u^p$, $u(0)=0,$
 satisfying the two following properties:
 \begin{itemize}
  \item[(a)] there exists $r>0$ such that $u_{|D(0,r)}$ is univalent.
  \item[(b)] there exists $q\in\NN\backslash p\NN$ and $\mu\in \CC$ with $\mu\lambda^q=\lambda \mu^p$ and $\mu u^q(\DD)\subset u(D(0,r)).$
 \end{itemize}
 Then $C_\varphi$ fails to have a minimal commutant. 
\end{theorem}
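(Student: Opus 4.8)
The plan is to transcribe the argument of Theorem \ref{thm:c02bis} to the B\"ottcher setting: I will produce a composition operator $C_{\tilde\varphi}$ that commutes with $C_\varphi$ but lies outside $\overline{\alg(C_\varphi)}^\sigma$, the latter obstruction being certified by Lemma \ref{lem:notinthealgebradisc}. Let $u^{-1}\colon u(D(0,r))\to D(0,r)$ be the inverse of the univalent map $u_{|D(0,r)}$ from assumption (a); since $u(D(0,r))$ is open, $u^{-1}$ is analytic. By assumption (b), $\mu\,u(z)^q\in u(D(0,r))$ for every $z\in\DD$, so we may define
\[
\tilde\varphi(z)=u^{-1}\big(\mu\,u(z)^q\big),\qquad z\in\DD .
\]
Then $\tilde\varphi(\DD)\subset D(0,r)\subset\DD$, so $\tilde\varphi$ is an analytic self-map of $\DD$ and $C_{\tilde\varphi}$ is bounded on $H^2(\DD)$. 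Because $u(0)=0$ and $u'(0)\neq0$ (univalence near $0$), the map $z\mapsto\mu\,u(z)^q$ vanishes to order exactly $q$ at $0$ and $u^{-1}$ has nonzero derivative at $0$, whence $\tilde\varphi^{(j)}(0)=0$ for $j=1,\dots,q-1$ and $\tilde\varphi^{(q)}(0)\neq0$; likewise $\varphi(z)=\lambda z^{p}\psi(z)$ with $\psi(0)=1$ gives $\varphi^{(j)}(0)=0$ for $j=1,\dots,p-1$ and $\varphi^{(p)}(0)\neq0$.

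Next I would verify that $\varphi$ and $\tilde\varphi$ commute. By construction $u\circ\tilde\varphi=\mu\,u^{q}$ on $\DD$, and B\"ottcher's equation gives $u\circ\varphi=\lambda\,u^{p}$, so for every $z\in\DD$
\[
u\big(\tilde\varphi(\varphi(z))\big)=\mu\,u(\varphi(z))^{q}=\mu\lambda^{q}\,u(z)^{pq},\qquad
u\big(\varphi(\tilde\varphi(z))\big)=\lambda\,u(\tilde\varphi(z))^{p}=\lambda\mu^{p}\,u(z)^{pq},
\]
and these agree since $\mu\lambda^{q}=\lambda\mu^{p}$. Thus $u\circ(\tilde\varphi\circ\varphi)=u\circ(\varphi\circ\tilde\varphi)$ on all of $\DD$. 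Both $\tilde\varphi\circ\varphi$ and $\varphi\circ\tilde\varphi$ fix $0$, hence on a sufficiently small disc about $0$ they take values in $D(0,r)$, where $u$ is injective; therefore $\tilde\varphi\circ\varphi=\varphi\circ\tilde\varphi$ there, and then on all of $\DD$ by the identity theorem. Consequently $C_\varphi C_{\tilde\varphi}=C_{\tilde\varphi\circ\varphi}=C_{\varphi\circ\tilde\varphi}=C_{\tilde\varphi}C_\varphi$, i.e.\ $C_{\tilde\varphi}\in\{C_\varphi\}'$.

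Finally, since $q\notin p\NN$ and $p,q\ge2$, the vanishing orders computed above let us apply Lemma \ref{lem:notinthealgebradisc} to the pair $\varphi,\tilde\varphi$, yielding $C_{\tilde\varphi}\notin\overline{\alg(C_\varphi)}^\sigma$. Combined with $C_{\tilde\varphi}\in\{C_\varphi\}'$, this shows $\overline{\alg(C_\varphi)}^\sigma\subsetneq\{C_\varphi\}'$, so $C_\varphi$ fails to have a minimal commutant. The only point requiring genuine care is the commutation step: one must upgrade the globally valid identity $u\circ(\tilde\varphi\circ\varphi)=u\circ(\varphi\circ\tilde\varphi)$ to $\tilde\varphi\circ\varphi=\varphi\circ\tilde\varphi$ using only that $u$ is \emph{locally} univalent near the common fixed point $0$, together with analytic continuation; the remaining verifications (self-mapness of $\tilde\varphi$, boundedness of $C_{\tilde\varphi}$, and the order-of-vanishing bookkeeping needed to invoke Lemma \ref{lem:notinthealgebradisc}) are routine.
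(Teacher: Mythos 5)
Your proof is correct and follows essentially the same path as the paper's: define $\tilde\varphi=u^{-1}(\mu u^{q})$, verify it is an analytic self-map of $\DD$ fixing $0$ with the right vanishing order, show $u\circ(\tilde\varphi\circ\varphi)=u\circ(\varphi\circ\tilde\varphi)$ using B\"ottcher's equation and the relation $\mu\lambda^{q}=\lambda\mu^{p}$, deduce $\tilde\varphi\circ\varphi=\varphi\circ\tilde\varphi$ from the local univalence of $u$ near $0$ plus analytic continuation, and finish with Lemma~\ref{lem:notinthealgebradisc}. Your write-up is in fact a bit more explicit than the paper's (in particular you correct a small slip in the paper's displayed computation, where ``$u\circ\tilde\varphi$'' should read ``$u\circ\tilde\varphi\circ\varphi$,'' and you spell out the order-of-vanishing check), but the underlying argument is the same.
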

\begin{proof}
 For $z\in\DD,$ let us set $\tilde\varphi(z)=u^{-1}(\mu u^q(z))$. This defines an analytic self-map of $\DD.$ Moreover, for any $z\in \DD,$
 \begin{align*}
  u\circ\varphi\circ\tilde\varphi(z)&=\lambda u^p(\tilde\varphi(z))\\
  &=\lambda \mu^p u^{pq}(z)
 \end{align*}
whereas 
\begin{align*}
 u\circ\tilde\varphi(z)&=\mu u^q(\varphi(z))\\
 &=\mu \lambda^q u^{pq}(z).
\end{align*}
Since $u_{|D(0,r)}$ is univalent, we get that $\varphi\circ\tilde\varphi=\tilde\varphi\circ\varphi$ in a neighbourhood of $0,$
hence in $\DD.$ Moreover, since $u'(0)\neq 0$ (by univalence of $u_{|D(0,r)}$), it is easy to check that $\varphi$
and $\tilde\varphi$ satisfy the assumptions of Lemma \ref{lem:notinthealgebradisc}. Hence
$C_{\tilde\varphi}\notin \overline{\mathrm{alg}(C_\varphi)}^\sigma.$
\end{proof}

The function $\varphi(z)=z^p$ satisfies the assumptions of the above theorem, since we can take $u(z)=z$ and  $r=1$.

We now give a corollary which will be easier to apply.

\begin{corollary}\label{cor:zeroderivative}
 Let $\varphi:\DD\to\DD$ be analytic which writes $\varphi(z)=\lambda z^p \psi(z)$ with $p\geq 2,$ $\lambda\neq 0$ and $\psi(0)=1$.
 Assume that there exists $u:\DD\to\CC$ an analytic solution to B\"ottcher's equation $u\circ\varphi=\lambda u^p$
 with $u(0)=0,$ $u'(0)\neq 0$ and $|\lambda|\cdot \|u\|_\infty^{p-1}<1.$
 Then $C_\varphi$ fails to have a minimal commutant. 
\end{corollary}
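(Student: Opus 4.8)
The plan is to derive this corollary from Theorem~\ref{thm:zeroderivative} by verifying its two hypotheses (a) and (b) for the given $\varphi$ and $u$, the hypotheses on the form of $\varphi$ and on B\"ottcher's equation $u\circ\varphi=\lambda u^p$, $u(0)=0$, being already part of the statement. First I would deal with (a): since $u'(0)\neq 0$, the inverse function theorem gives some $r>0$ on which $u$ is univalent, and then $u(D(0,r))$ is an open set containing $u(0)=0$, so there is $\rho>0$ with $D(0,\rho)\subset u(D(0,r))$. I would also record that $\|u\|_\infty>0$: otherwise $u\equiv 0$ and $u'(0)=0$, a contradiction (note that the assumption $|\lambda|\,\|u\|_\infty^{p-1}<1$ implicitly forces $u$ to be bounded).

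Next I would set $L:=|\lambda|\,\|u\|_\infty^{p-1}<1$ and, for an integer $q$ not divisible by $p$ (to be chosen large), let $\mu\in\CC$ satisfy $\mu^{p-1}=\lambda^{q-1}$. Then $\lambda\mu^p=\lambda\mu\cdot\mu^{p-1}=\lambda\mu\cdot\lambda^{q-1}=\mu\lambda^q$, so the algebraic constraint in (b) holds, and moreover $|\mu|=|\lambda|^{(q-1)/(p-1)}$, whence
$$|\mu|\,\|u\|_\infty^{q}=|\lambda|^{\frac{q-1}{p-1}}\|u\|_\infty^{q}=\bigl(|\lambda|\,\|u\|_\infty^{p-1}\bigr)^{\frac{q-1}{p-1}}\,\|u\|_\infty=L^{\frac{q-1}{p-1}}\,\|u\|_\infty.$$
Since $L<1$ and $\tfrac{q-1}{p-1}\to\infty$, I can choose $q$ not divisible by $p$ (for instance $q=p^{j}+1$ with $j$ large, so that $q\equiv 1\pmod p$ and $q\ge 2$) large enough that the right-hand side is $<\rho$. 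Then $|\mu u(z)^{q}|\le|\mu|\,\|u\|_\infty^{q}<\rho$ for every $z\in\DD$, hence $\mu u^{q}(\DD)\subset D(0,\rho)\subset u(D(0,r))$, which is exactly assumption (b). Theorem~\ref{thm:zeroderivative} then applies and yields that $C_\varphi$ fails to have a minimal commutant.

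The only point requiring care is that a single scalar bound, $|\lambda|\,\|u\|_\infty^{p-1}<1$, must simultaneously control the growth of $\|u\|_\infty^{q}$ and the size of $|\mu|=|\lambda|^{(q-1)/(p-1)}$ as $q\to\infty$; the displayed identity shows that these two effects telescope, so that $|\mu|\,\|u\|_\infty^{q}$ is merely a fixed multiple of $L^{(q-1)/(p-1)}$, which tends to $0$. Everything else --- the appeal to the inverse function theorem, the choice of a $(p-1)$-st root of $\lambda^{q-1}$, and the freedom to take $q$ arbitrarily large while avoiding the multiples of $p$ --- is routine.
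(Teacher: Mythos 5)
Your proof is correct and follows essentially the same route as the paper's: invoke the inverse function theorem (or open mapping theorem) for assumption (a), observe that $|\mu|\,\|u\|_\infty^{q}=(|\lambda|\,\|u\|_\infty^{p-1})^{(q-1)/(p-1)}\|u\|_\infty\to 0$, and pick a suitable $q\notin p\NN$ to get assumption (b). Your version is slightly more explicit in choosing a $(p-1)$-th root of $\lambda^{q-1}$ and in producing a concrete sequence $q=p^{j}+1$, but these are minor elaborations of the same argument.
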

\begin{proof}
 Since $u'(0)\neq 0,$ there exists $r>0$ such that $u_{|D(0,r)}$ is univalent. By the open mapping theorem,
 $u(D(0,r))$ contains some neighbourhood of $0$. Now, the sequence $(\lambda^{(q-1)/(p-1)}\|u\|_\infty^q)_{q\geq 1}$
 goes to zero. Therefore, there exists $q\in\NN\backslash p\NN$ such that, setting
 $\mu=\lambda^{(q-1)/(p-1)},$ 
 $$\mu u^q(\DD)\subset u(D(0,r)).$$
 The result now follows from Theorem \ref{thm:zeroderivative}.
\end{proof}

To get concrete examples, we now need to find a solution to B\"ottcher's equation with adequate estimates. This will
be done thanks to the following lemma.

\begin{lemma}\label{lem:bottcher}
 Let $\varphi:\DD\to\DD$ be holomorphic which writes $\varphi(z)=\lambda z^p \psi(z)$ with $p\geq 2,$ $\lambda\neq 0$, $\psi(0)=1$
 and $\psi$ does not vanish on $\DD.$ 
 Then there exists $u\in H(\DD)$ such that $u\circ\varphi=\lambda u^p,$ $u(0)=0$, $u'(0)=1$ and $ |\lambda|\cdot \|u\|_\infty\leq 1$.
Moreover, if $\|\varphi^{[n]}\|_\infty<1$ for some $n\geq 1,$ then $ |\lambda|\cdot \|u\|_\infty^{p-1}<1.$
\end{lemma}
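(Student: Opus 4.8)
The plan is to construct $u$ by the classical Böttcher iteration scheme, tailored so that the uniform estimate $|\lambda|\cdot\|u\|_\infty\le 1$ comes out automatically. First I would normalize: write $\varphi(z)=\lambda z^p\psi(z)$ with $\psi(0)=1$ and $\psi$ non-vanishing on $\DD$, so that on the simply connected domain $\DD$ there is a well-defined holomorphic branch of $\psi^{1/p}$, hence of $\varphi(z)^{1/(p^{k})}$ for each $k$, once we fix the branches consistently. The natural candidate is
$$u(z)=\lim_{k\to+\infty}\left(\lambda^{-1}\varphi^{[k]}(z)\right)^{1/p^{k}}\cdot\lambda^{-1/(p-1)}\cdot(\text{normalizing constant}),$$
or, more cleanly, $u(z)=z\prod_{k\ge 0}\Big(\psi\big(\varphi^{[k]}(z)\big)\Big)^{1/p^{k+1}}$ after a short computation; I would verify that $\varphi^{[k]}(z)=\lambda^{1+p+\cdots+p^{k-1}}z^{p^k}\prod_{j=0}^{k-1}\psi(\varphi^{[j]}(z))^{p^{k-1-j}}$ by induction, take $p^k$-th roots, and check that the infinite product converges locally uniformly on $\DD$ because $\varphi^{[k]}(z)\to 0$ (as $0$ is the Denjoy–Wolff point, $\|\varphi\|_\infty\le 1$ with equality excluded near $0$), so $\psi(\varphi^{[k]}(z))\to 1$, and the exponents $p^{-(k+1)}$ decay geometrically. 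That gives $u\in H(\DD)$ with $u(0)=0$, $u'(0)=1$, and the functional equation $u\circ\varphi=\lambda u^p$ follows by passing to the limit in the telescoping identity for $\varphi^{[k]}$.

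Next I would extract the uniform bound. From the functional equation $u\circ\varphi=\lambda u^p$ we get $\|\lambda u^p\|_\infty=\|u\circ\varphi\|_\infty\le\|u\|_\infty$ (since $\varphi(\DD)\subset\DD$), i.e.\ $|\lambda|\cdot\|u\|_\infty^p\le\|u\|_\infty$. If $\|u\|_\infty<\infty$ and $u\not\equiv 0$ this already yields $|\lambda|\cdot\|u\|_\infty^{p-1}\le 1$, hence a fortiori $|\lambda|\cdot\|u\|_\infty\le\|u\|_\infty^{2-p}\cdot 1$ — but that is the wrong direction when $p\ge 2$ unless $\|u\|_\infty\le 1$. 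So the cleaner route is: the inequality $|\lambda|\cdot\|u\|_\infty^p\le\|u\|_\infty$ gives directly $|\lambda|\cdot\|u\|_\infty^{p-1}\le 1$, which is \emph{exactly} the second claimed inequality in its non-strict form; and then $|\lambda|\cdot\|u\|_\infty\le\|u\|_\infty^{2-p}$. To get the stated $|\lambda|\cdot\|u\|_\infty\le 1$ I actually need $\|u\|_\infty\ge 1$ is impossible or the product representation gives $\|u\|_\infty\le 1$ outright; I would instead argue that $|\lambda|\cdot\|u\|_\infty^{p-1}\le 1$ combined with $p\ge 2$ forces $|\lambda|\cdot\|u\|_\infty\le(|\lambda|\cdot\|u\|_\infty^{p-1})^{1/(p-1)}\cdot|\lambda|^{1-1/(p-1)}$… — this is the delicate point, so let me re-route: the honest proof is that the functional equation gives $|\lambda|\|u\|_\infty^{p-1}\le 1$ for free, and the weaker-looking $|\lambda|\|u\|_\infty\le 1$ should be read as following once we know $\|u\|_\infty$ is finite, because if $|\lambda|\ge 1$ then $\|u\|_\infty\le|\lambda|^{-1/(p-1)}\le 1$, while if $|\lambda|<1$ then $|\lambda|\|u\|_\infty\le\|u\|_\infty\le|\lambda|^{-1/(p-1)}$, and a separate Schwarz-lemma argument on $u/z$ (using $u(\DD)\subset\{|\lambda|^{-1/(p-1)}\DD\}$ and $u'(0)=1$) pins down $\|u\|_\infty\le|\lambda|^{-1/(p-1)}$, whence $|\lambda|\cdot\|u\|_\infty\le|\lambda|^{1-1/(p-1)}\le 1$ precisely because $p\ge 2$. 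I expect this bookkeeping to be the main obstacle: getting the constant exactly right rather than off by a power.

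Finally, for the strict inequality under the extra hypothesis $\|\varphi^{[n]}\|_\infty<1$ for some $n$: iterating the functional equation $n$ times gives $u\circ\varphi^{[n]}=\lambda^{1+p+\cdots+p^{n-1}}u^{p^n}$, so $|\lambda|^{(p^n-1)/(p-1)}\|u\|_\infty^{p^n}=\|u\circ\varphi^{[n]}\|_\infty\le\sup_{|w|\le\|\varphi^{[n]}\|_\infty}|u(w)|$. Since $\|\varphi^{[n]}\|_\infty<1$, this last supremum is $<\|u\|_\infty$ (the maximum modulus of the non-constant function $u$ on $\DD$ is not attained on a proper subdisc), so $|\lambda|^{(p^n-1)/(p-1)}\|u\|_\infty^{p^n}<\|u\|_\infty$, i.e.\ $|\lambda|^{(p^n-1)/(p-1)}\|u\|_\infty^{p^n-1}<1$, that is $\big(|\lambda|^{1/(p-1)}\|u\|_\infty\big)^{p^n-1}<1$, hence $|\lambda|^{1/(p-1)}\|u\|_\infty<1$, which rearranges to $|\lambda|\cdot\|u\|_\infty^{p-1}<1$ as desired. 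I would close by noting that $u\not\equiv 0$ (since $u'(0)=1$), so all the divisions and strict-maximum arguments above are legitimate.
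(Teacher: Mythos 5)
Your proof is essentially correct and follows the same Böttcher-iteration construction as the paper: define
$u(z)=z\prod_{k\ge 0}\psi(\varphi^{[k]}(z))^{1/p^{k+1}}$, show normal convergence via the fact that $\varphi^{[k]}\to 0$ locally uniformly (so the logarithm of $\psi$ stays in a fixed compact set and the exponents $p^{-(k+1)}$ are summable), and pass to the limit in the telescoping identity to obtain $u\circ\varphi=\lambda u^p$ with $u(0)=0$, $u'(0)=1$. Where you diverge from the paper is in extracting the norm bound. The paper estimates directly from the infinite product, $\|u\|_\infty\le\prod_{j\ge 1}\|\psi\|_\infty^{1/p^j}=\|\psi\|_\infty^{1/(p-1)}$, and then $|\lambda|\|u\|_\infty^{p-1}\le|\lambda|\|\psi\|_\infty=\|\varphi\|_\infty\le 1$. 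You instead use the functional equation to get $|\lambda|\|u\|_\infty^p=\|u\circ\varphi\|_\infty\le\|u\|_\infty$ (after using the product representation only to know $\|u\|_\infty<\infty$), which gives $|\lambda|\|u\|_\infty^{p-1}\le 1$ directly. Both routes are valid. The lengthy detour you take trying to deduce $|\lambda|\cdot\|u\|_\infty\le 1$ from this is, I believe, chasing a misprint: what the paper's proof actually establishes, and what is used downstream in Corollary \ref{cor:zeroderivative}, is $|\lambda|\cdot\|u\|_\infty^{p-1}\le 1$. (That said, your observation that $|\lambda|\le 1$, being forced by the Schwarz lemma since $\varphi$ vanishes to order $p\ge 2$ at $0$, does allow one to deduce $|\lambda|\|u\|_\infty\le|\lambda|^{(p-2)/(p-1)}\le 1$ if one insists on the literal statement.)

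For the strict inequality under $\|\varphi^{[n]}\|_\infty<1$, your argument is actually cleaner than the paper's. You iterate the functional equation to get $u\circ\varphi^{[n]}=\lambda^{(p^n-1)/(p-1)}u^{p^n}$ and apply the strict maximum-modulus principle to the non-constant function $u$ on the closed subdisc $\overline{D(0,\|\varphi^{[n]}\|_\infty)}$, which yields $|\lambda|^{1/(p-1)}\|u\|_\infty<1$ and hence $|\lambda|\|u\|_\infty^{p-1}<1$. The paper instead argues that one of the factors $\|\psi^{1/p^{n+1}}\circ\varphi^{[n]}\|_\infty$ is strictly less than $\|\psi\|_\infty^{1/p^{n+1}}$, which implicitly requires $\psi$ to be non-constant for the maximum-modulus step; your argument avoids this (inconsequential but real) gap, since $u$ is never constant.
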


\begin{proof}
 For $n\geq 1$ and $z\in\DD,$ let us set
 $$u_n(z)=z\psi^{\frac 1p}(z)\cdots \psi^{\frac1{p^n}}(\varphi^{[n-1]}(z)).$$
 We first observe that $(u_n)$ converges uniformly on the compact subsets of $\DD.$
 Indeed, let $K$ be a compact subset of $\DD$ and let us consider $L:\DD\to\CC$ a logarithm of $\psi$.
 Then, for all $n\geq 1$ and all $z\in K,$
 \begin{align*}
  \left|\psi^{\frac1{p^n}}(\varphi^{[n-1]}(z))-1\right|&=\left|\exp\left(\frac 1{p^n}L(\varphi^{[n-1]}(z))\right)-1\right|\\
  &\leq \frac{C_K}{p^n}
 \end{align*}
 since $\{L(\varphi^{[n-1]}(z)):\ n\geq 1,\ z\in K\}$ is a compact subset of $\CC$ ($0$ is the Denjoy-Wolff point of $\varphi$).
 By convergence of $\sum_{n\geq 1}1/p^n,$ the sequence $(u_n)$ converges uniformly on all compact subsets of $\DD$ to a function $u\in H(\DD)$.
 Moreover, for $n\geq 1$ and $z\in \DD,$ writing
 \begin{align*}
  u_n\circ\varphi(z)&=\varphi(z)\psi^{\frac 1p}(\varphi(z))\cdots \psi^{\frac1{p^n}}(\varphi^{[n]}(z))\\
  &=\frac{\varphi(z)}{z^p\psi(z)} z^p \psi(z) \psi^{\frac 1p}(\varphi(z))\cdots \psi^{\frac1{p^n}}(\varphi^{[n]}(z))\\
  &=\lambda u_{n+1}^p(z)
 \end{align*}
 and taking the limit, we get $u\circ\varphi=\lambda u^p$. 
 By definition of $u_n,$ $u_n(0)=0$ and since
 $$u_n'(0)=\prod_{j=1}^n \psi^{\frac 1{p^j}}(0)=1$$
  for all $n\in\NN,$
 we also get $u'(0)=1,$ $u$ is not constant. Finally the definition of $u$ yields
 \begin{align*}
\|u\|_\infty&\leq\prod_{j=1}^{+\infty}\|\psi\|_\infty^{\frac 1{p^j}}=\|\psi\|_{\infty}^{1/(p-1)}.
\end{align*}
We conclude because $|\lambda|\cdot \|u\|_\infty^{p-1}\leq |\lambda|\cdot  \|\psi\|_\infty=\|\varphi\|_\infty.$
If $\|\varphi^{[n]}\|_\infty<1$, then $\|\psi^{\frac 1{p^n}}\circ\varphi^{[n]}\|<\|\psi\|_\infty^{\frac 1{p^n}},$
yielding the strict inequality.
\end{proof}

As a consequence of Corollary \ref{cor:zeroderivative} and Lemma \ref{lem:bottcher}, we obtain the following sufficient
condition which applies to plenty of examples.

\begin{corollary}\label{cor:zeroderivativefinal}
 Let $\varphi:\DD\to\DD$ be holomorphic which writes $\varphi(z)=\lambda z^p \psi(z)$ with $p\geq 2,$ $\lambda\neq 0$, $\psi(0)=1$
 and $\psi$ does not vanish on $\DD.$ Assume that there exists $n\geq 1$ such that $\|\varphi^{[n]}\|_\infty<1.$
 Then $C_\varphi$ does not have a minimal commutant.
\end{corollary}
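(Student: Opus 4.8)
The plan is to simply combine the two preceding results, so the argument is very short. First I would invoke Lemma \ref{lem:bottcher}: since $\psi$ does not vanish on $\DD$, it furnishes a function $u\in H(\DD)$ solving B\"ottcher's equation $u\circ\varphi=\lambda u^p$ with $u(0)=0$, $u'(0)=1$ and $|\lambda|\cdot\|u\|_\infty\leq 1$. The key point is the last clause of that lemma: the hypothesis $\|\varphi^{[n]}\|_\infty<1$ for some $n\geq 1$ upgrades this to the \emph{strict} inequality $|\lambda|\cdot\|u\|_\infty^{p-1}<1$. (Concretely, one factor $\psi^{1/p^n}\circ\varphi^{[n-1]}$ in the infinite product defining $u$ has sup-norm strictly less than $\|\psi\|_\infty^{1/p^n}$, which is where the strictness enters.)

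Next I would feed this $u$ into Corollary \ref{cor:zeroderivative}. Its hypotheses are now all in place: $\varphi(z)=\lambda z^p\psi(z)$ with $p\geq 2$, $\lambda\neq 0$, $\psi(0)=1$; and $u$ is an analytic solution of $u\circ\varphi=\lambda u^p$ with $u(0)=0$, $u'(0)=1\neq 0$ and $|\lambda|\cdot\|u\|_\infty^{p-1}<1$. Corollary \ref{cor:zeroderivative} therefore applies and yields that $C_\varphi$ fails to have a minimal commutant, which is exactly the assertion.

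Since both ingredients are already established, there is no real obstacle left in this final step; all the work has been done upstream. If one wished to be fully self-contained, the only things worth recalling are why the chain in Corollary \ref{cor:zeroderivative} goes through: univalence of $u$ near $0$ comes from $u'(0)\neq 0$ via the inverse function theorem, and then one sets $\mu=\lambda^{(q-1)/(p-1)}$ and chooses $q\in\NN\setminus p\NN$ large enough that $\mu u^q(\DD)$ lies inside the (open, by the open mapping theorem) image $u(D(0,r))$; this is possible because $|\lambda|\cdot\|u\|_\infty^{p-1}<1$ forces $|\mu|\cdot\|u\|_\infty^q=(|\lambda|\cdot\|u\|_\infty^{p-1})^{(q-1)/(p-1)}\|u\|_\infty\to 0$. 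The commuting symbol $\tilde\varphi=u^{-1}(\mu u^q)$ then satisfies, via Lemma \ref{lem:notinthealgebradisc}, $C_{\tilde\varphi}\notin\overline{\mathrm{alg}(C_\varphi)}^\sigma$ while $C_{\tilde\varphi}\in\{C_\varphi\}'$, exactly as in Theorem \ref{thm:zeroderivative}. But for the corollary as stated, citing Lemma \ref{lem:bottcher} and Corollary \ref{cor:zeroderivative} suffices.
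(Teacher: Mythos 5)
Your proposal is correct and matches the paper's intended argument exactly: the paper states this corollary with no written proof, prefacing it only with "As a consequence of Corollary \ref{cor:zeroderivative} and Lemma \ref{lem:bottcher}", which is precisely the two-step combination you carry out (Lemma \ref{lem:bottcher} produces $u$ with $u(0)=0$, $u'(0)=1$, and the strict bound $|\lambda|\cdot\|u\|_\infty^{p-1}<1$ under the hypothesis $\|\varphi^{[n]}\|_\infty<1$, and then Corollary \ref{cor:zeroderivative} applies). The extra paragraph recalling why Corollary \ref{cor:zeroderivative} works is accurate but not needed for this step.
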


\begin{example}
 Let $\varphi(z)=\lambda z^2(1+\delta z^2)$ with $\lambda(1+\delta)<1$. Then $C_\varphi$ fails to have a minimal commutant.
\end{example}

\begin{question}
 Can we dispend with the assumption that $\psi$ does not vanish on $\DD$ in Corollary \ref{cor:zeroderivativefinal}?
 Without this assumption, one can still build a solution to B\"ottcher's equation by considering now 
 $$u_n(z)=\varphi^{[k]}(z)\psi^{\frac 1p}(\varphi^{[k+1]}z)\cdots \psi^{\frac1{p^n}}(\varphi^{[k+n]}(z)).$$
 But the limit will not be univalent near the origin.
\end{question}

\subsection{Non-zero derivative at the attractive fixed point}
We now study the minimal commutant property for self-maps $\varphi$ of $\DD$ with $\varphi(0)=0$ et $\varphi'(0)\neq 0$. 
When $\varphi$ is a rotation, $\varphi(z)=\omega z$, it is known that $C_\varphi$ has a minimal commutant if and only
if $\omega$ is not a root of unity (see \cite{llsr18}). 
When $\varphi$ is not a rotation, the following theorem, which inspired Theorem \ref{thm:characteristic1}, is proved in \cite{llsr18}.
Recall that a set $S\subset\CC$ is said to be strictly starlike with respect to the origin if $r\bar S\subset S$ for all $0\leq r<1$. 

\begin{theoremllsr}
Let $\varphi:\DD\to\DD$ be holomorphic, univalent, with $\varphi(0)=0$. Let $u$ be
the K\"{o}nigs function of $\varphi$ and suppose that $u(\DD)$ is bounded and strictly starlike with respect to the origin. Then the operator $C_\varphi$ has a minimal commutant.
\end{theoremllsr}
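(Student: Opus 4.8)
The plan is to transpose to the disc the argument used for Theorem~\ref{thm:characteristic1} (this is in fact the original proof of \cite{llsr18}). As in the discussion preceding the statement we work under the standing assumption that $\varphi$ is not a rotation, so that $\lambda:=\varphi'(0)$ satisfies $0<|\lambda|<1$ by the Schwarz lemma, and $u$ is the K\"onigs function normalized by $u'(0)=1$, with $u\circ\varphi=\lambda u$. Since $S:=u(\DD)$ is bounded, $u\in H^\infty(\DD)\subset H^2(\DD)$, hence $u^n\in H^2(\DD)$ with $\|u^n\|_2\le\|u\|_\infty^n$ and $C_\varphi u^n=u^n\circ\varphi=\lambda^n u^n$. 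Comparing the lowest-order Taylor coefficients one sees that if $f\circ\varphi=\mu f$ with $f\not\equiv 0$ vanishing to order exactly $m$ at the origin then $\mu=\lambda^m$; since $0<|\lambda|<1$ the integer $m$ is thus determined by $\mu$, and a recursion on the higher coefficients gives $\ker(C_\varphi-\lambda^n)=\mathrm{span}(u^n)$ for every $n\ge 0$. Consequently any $T\in\{C_\varphi\}'$ acts as $Tu^n=\lambda_n u^n$ for some sequence $(\lambda_n)$ with $\|(\lambda_n)\|_\infty\le\|T\|$. The goal is then the usual duality statement: every $\sigma$-continuous linear functional $\Lambda$ on $\mathcal B(H^2(\DD))$ vanishing on $\alg(C_\varphi)$ vanishes on all of $\{C_\varphi\}'$; by Hahn--Banach this yields $\{C_\varphi\}'\subset\overline{\alg(C_\varphi)}^\sigma$, which together with the trivial reverse inclusion is the theorem.

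First I would introduce the model family. Put $\Omega=\{w\in\CC:\ w\overline S\subset S\}$. As $u$ is open and $S$ is bounded, $\Omega$ is open; the hypothesis that $S$ is strictly starlike with respect to the origin says exactly that $[0,1)\subset\Omega$, so $\Omega$ also contains a disc $\{|w|<\delta_0\}$. For $w\in\Omega$ set $\varphi_w=u^{-1}\!\circ(w\,u)$, which makes sense because $u$ is univalent on $\DD$ and $w\,u(z)\in w\overline S\subset S=u(\DD)$; it is a holomorphic self-map of $\DD$ fixing $0$, with $u\circ\varphi_w=w\,u$. Injectivity of $u$ gives $\varphi_w\circ\varphi=\varphi\circ\varphi_w$, $\varphi_\lambda=\varphi$ and $\varphi_{\lambda^k}=\varphi^{[k]}$, so each $C_{\varphi_w}$ lies in $\{C_\varphi\}'$, and $\|C_{\varphi_w}\|\le 1$ by Littlewood's subordination principle.

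The heart of the matter, and the step I expect to be the main obstacle, is the approximation lemma: there is $\sigma_1\in(0,\delta_0)$ such that for every $T\in\{C_\varphi\}'$ and every $w$ with $|w|<\sigma_1$, the operator $TC_{\varphi_w}$ is a norm-limit of polynomials in $C_\varphi$. Fix a disc $D(0,\rho)\subset S$. For $f\in H^2(\DD)$ expand $f\circ u^{-1}(\zeta)=\sum_{n\ge 0}b_n\zeta^n$ on $D(0,\rho)$; Cauchy's estimates together with the $H^2$ point-evaluation bound on compact subsets of $\DD$ give $|b_n|\le C_{\rho'}\|f\|_2\,\rho'^{-n}$ for every $\rho'<\rho$. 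For $|w|$ small one then has $f\circ\varphi_w=\sum_n b_n w^n u^n$, and for $k$ large enough that $|\lambda|^k\|u\|_\infty<\rho$ one has $C_\varphi^k f=\sum_n b_n\lambda^{kn}u^n$, all series converging in $H^2$ by the bounds on $b_n$ and $\|u^n\|_2$; hence $TC_{\varphi_w}f=\sum_n b_n w^n\lambda_n u^n$ and $P(C_\varphi)f=\sum_n b_n P(\lambda^n)u^n$ whenever $P(z)=z^mQ(z)$ with $m$ large. Applying Mergelyan's theorem on the compact set $K=\{\lambda^n:\ n\ge 1\}\cup\{0\}$, whose complement is connected, one picks for given $\veps>0$ and $N\ge 1$ a polynomial $Q$ with $|P(\lambda^n)-w^n\lambda_n|\le\eta\,|\lambda|^{mn}$ for $n\le N$ and $|P(\lambda^n)|\le|\lambda|^{mn}$ for $n>N$; splitting $\|P(C_\varphi)f-TC_{\varphi_w}f\|_2$ into the three corresponding pieces and using the above estimates — with $m$ chosen large enough that $|\lambda|^m\|u\|_\infty<\rho'$, and $\sigma_1$ small enough that $|w|\,\|u\|_\infty<\rho'$ — gives $\|P(C_\varphi)-TC_{\varphi_w}\|\le C(2+\|T\|)\veps$. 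The delicate bookkeeping here is to keep every series convergent in $H^2$ (balancing $|\lambda|^k\|u\|_\infty$ and $|w|\,\|u\|_\infty$ against the radius $\rho$) and to take the auxiliary exponent $m$ large enough to swallow the possible growth of $b_n$ and of $\|u^n\|_2$.

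Finally I would close by analytic continuation. Let $\Lambda$ be $\sigma$-continuous, vanishing on $\alg(C_\varphi)$, and let $T\in\{C_\varphi\}'$; define $F(w)=\Lambda(TC_{\varphi_w})$ on $\Omega$. Writing $\Lambda$ as a finite sum of functionals $R\mapsto\langle Rf,g\rangle$, it suffices to treat $F(w)=\langle C_{\varphi_w}f,T^*g\rangle$. Expanding $f\circ\varphi_w=\sum_n a_n(w)z^n$, each $a_n$ is analytic on $\Omega$ (because $w\mapsto\varphi_w$ is), one has $\sum_{n>M}|a_n(w)|^2\le\|C_{\varphi_w}f\|_2^2\le\|f\|_2^2$ uniformly in $w$, and with $T^*g=\sum_n d_n z^n$ one gets $F(w)=\sum_n a_n(w)\overline{d_n}$ with tails bounded uniformly in $w$ by $\|f\|_2\big(\sum_{n>M}|d_n|^2\big)^{1/2}\to 0$; hence $F$ is a locally uniform limit of analytic functions, so analytic on $\Omega$. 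By the approximation lemma $F\equiv 0$ on $\{|w|<\sigma_1\}$, and since this open set, the disc $\{|w|<\delta_0\}$ and the segment $[0,1)$ all lie in the same connected component of $\Omega$, the identity theorem gives $F\equiv 0$ on $[0,1)$. It remains to note that $C_{\varphi_r}\to I$ in the weak operator topology as $r\to 1^-$: these operators are contractions, so it suffices to test against the total family of reproducing kernels, and $\langle C_{\varphi_r}f,k_w\rangle=f\big(u^{-1}(r\,u(w))\big)\to f(w)=\langle f,k_w\rangle$ by continuity of $u^{-1}$ on $S$. Therefore $\Lambda(T)=\lim_{r\to1^-}\Lambda(TC_{\varphi_r})=\lim_{r\to1^-}F(r)=0$, and the proof is complete.
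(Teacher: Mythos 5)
The paper does not actually prove Theorem~A: it quotes it from \cite{llsr18} and only proves its Dirichlet-series analogue, Theorem~\ref{thm:characteristic1}. What you have done is reconstruct a proof of Theorem~A by running the paper's proof of Theorem~\ref{thm:characteristic1} backwards through the dictionary $n^{-u}\leftrightarrow u^n$, $\CC_\sigma\leftrightarrow D(0,\rho)$, $w+u(\CC_0)\leftrightarrow w\,u(\DD)$, and this works. Each of the paper's steps has a faithful disc counterpart in your argument: the estimate $|b_n|\leq C_{\rho'}\|f\|_2\,\rho'^{-n}$ plays the role of Lemma~\ref{lem:compoh2}; $\|u^n\|_2\leq\|u\|_\infty^n$ replaces the contractivity bound $\|n^{-u}\|_2\leq 1$ of Lemma~\ref{lem:representation} (and is precisely where boundedness of $u(\DD)$ is used, in parallel with the hypothesis $u\in\GH$ in the Dirichlet setting); the Mergelyan/approximation step mirrors Lemma~\ref{lem:approximation}; and the analytic-continuation closing argument on $\Omega\supset[0,1)$ together with $C_{\varphi_r}\to I$ (WOT) as $r\to 1^-$ is the disc version of the final paragraph of the paper's proof, the starlikeness hypothesis playing the role of ``$\Omega$ open containing $(0,+\infty)$.'' Your justification of analyticity of $w\mapsto\Lambda(TC_{\varphi_w})$ via uniform tail estimates in $\ell^2$ is in fact slightly cleaner than the paper's appeal to Mattner's theorem.

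One small slip worth fixing: in the Mergelyan step you take $K=\{\lambda^n:\ n\geq 1\}\cup\{0\}$, but the relevant indices here run from $n=0$, since the expansion $f\circ u^{-1}(\zeta)=\sum_{n\geq 0}b_n\zeta^n$ has a constant term $b_0=f(0)$, the eigenvalue $\lambda^0=1$ corresponds to the constants, and the interpolation data include $P(1)\approx\lambda_0$. You should take $K=\{\lambda^n:\ n\geq 0\}\cup\{0\}$; its complement is still connected, so Mergelyan applies without change. (In the paper's Dirichlet version the indexing $n\geq 1$ already contains $1^{-c_1}=1$, which is why the off-by-one is easy to overlook when translating.) With this correction your argument is complete and follows the same route as the proof of Theorem~\ref{thm:characteristic1}.
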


This theorem is applied in \cite{llsr18} only when $\varphi$ is a linear fractional loxodromic self-map of the unit disc. In that case, one can compute explicitly $u$ and verify that $u(\DD)$ is an open disc containing $0,$ hence it is bounded and strictly starlike with respect to the origin.

Our aim, in this subsection, is to give a sufficient condition on $\varphi$ which allows us to prove that $u(\DD)$ is bounded and strictly starlike with respect to the origin, without computing $u$.
We did not try to get the best possible conditions.
Our aim was to obtain composition operators with minimal commutant which are not induced by a linear fractional map.

\begin{theorem}
Let $\psi:\DD\to\CC$ be holomorphic with extends $\mathcal C^1$ to $\overline \DD$ and $\psi(0)=0$.
Let $\lambda\in\DD$ and let us set $\varphi(z)=\lambda z(1+\psi(z))$.
Assume that $\varphi$ is univalent on $\overline{\DD}$ and that $\varphi(\overline \DD)\subset \DD$. Assume also that, for all $z\in\overline\DD,$
\begin{equation}\label{eq:series}\sum_{j\geq 0}\A\left(\frac{(\varphi^{[j]})'(z) \psi'(\varphi^{[j]}(z))}{1+\psi(\varphi^{[j]}(z))}\right)>-1.
\end{equation}
Then $C_\varphi$ has a minimal commutant.
\end{theorem}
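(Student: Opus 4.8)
The plan is to deduce the statement from Theorem~A, so the whole task reduces to exhibiting the K\"onigs function $u$ of $\varphi$ and checking that $u(\DD)$ is bounded and strictly starlike with respect to the origin. Observe first that $\varphi$ is holomorphic on $\DD$, extends $\mathcal C^1$ to $\overline\DD$ with $\varphi(\overline\DD)\subset\DD$, satisfies $\varphi(0)=\lambda\cdot 0\cdot(1+\psi(0))=0$, and that $\lambda=\varphi'(0)$ is nonzero since $\varphi$ is univalent; as $\lambda\in\DD$ we have $0<|\lambda|<1$, and $\varphi$ is not an automorphism of $\DD$ (its range omits a neighbourhood of $\partial\DD$), in particular not a rotation. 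Hence the K\"onigs function $u$ exists: it is univalent (because $\varphi$ is), normalized by $u(0)=0$, $u'(0)=1$, satisfies $u\circ\varphi=\lambda u$, and is the locally uniform limit of $u_n:=\varphi^{[n]}/\lambda^{n}$.

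First I would record the multiplicative iteration formula, immediate by induction from $\varphi(z)=\lambda z(1+\psi(z))$:
\[\varphi^{[n]}(z)=\lambda^{n}z\prod_{j=0}^{n-1}\bigl(1+\psi(\varphi^{[j]}(z))\bigr),\qquad\text{hence}\qquad u_n(z)=z\prod_{j=0}^{n-1}\bigl(1+\psi(\varphi^{[j]}(z))\bigr).\]
The next step is to upgrade the convergence $u_n\to u$ to uniform convergence on the \emph{closed} disc. Since $\varphi(\overline\DD)$ is a compact subset of $\DD$ and $\varphi$ is not a rotation, Schwarz's lemma applied to $z\mapsto\varphi(z)/z$ yields $\theta\in(0,1)$ with $|\varphi(z)|\le\theta|z|$ on $\varphi(\overline\DD)$, whence $|\varphi^{[j]}(z)|\le C\theta^{j}$ uniformly on $\overline\DD$; as $\psi$ is $\mathcal C^{1}$ with $\psi(0)=0$ it is Lipschitz on $\overline\DD$, so $|\psi(\varphi^{[j]}(z))|\le C'\theta^{j}$, and from $(\varphi^{[j]})'(z)=\prod_{i<j}\varphi'(\varphi^{[i]}(z))$ together with $\varphi'(w)=\lambda\bigl(1+\psi(w)+w\psi'(w)\bigr)$ one obtains $|(\varphi^{[j]})'(z)|\le C''|\lambda|^{j}$ on $\overline\DD$. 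These estimates (together with the non-vanishing of $1+\psi$ along the orbits, implicit in \eqref{eq:series}) give uniform convergence on $\overline\DD$ of the infinite product and, after logarithmic differentiation, of $(u_n')$ as well; consequently $u$ extends to a $\mathcal C^{1}$ map on $\overline\DD$ with $u'$ the uniform limit of $u_n'$, $u$ does not vanish on $\overline\DD\setminus\{0\}$ (use the functional equation and $1+\psi\ne0$ on $\overline\DD$), and $u(\DD)\subseteq u(\overline\DD)$ is bounded.

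For strict starlikeness, logarithmic differentiation of the product formula gives, for $z\in\overline\DD$,
\[\frac{zu'(z)}{u(z)}=1+z\sum_{j\ge 0}\frac{(\varphi^{[j]})'(z)\,\psi'(\varphi^{[j]}(z))}{1+\psi(\varphi^{[j]}(z))},\]
and hypothesis \eqref{eq:series} is designed precisely so that $\A\!\left(zu'(z)/u(z)\right)>0$ on $\overline\DD$. On $\DD$ this is the classical criterion that $u$ is a starlike univalent function, so $u(\DD)$ is starlike; on $\partial\DD$, writing $\zeta=e^{i\theta}$, it reads $\frac{d}{d\theta}\arg u(e^{i\theta})=\A\!\left(\zeta u'(\zeta)/u(\zeta)\right)>0$. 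Thus the boundary curve $\theta\mapsto u(e^{i\theta})$ is a $\mathcal C^1$ curve whose argument increases strictly; by the argument principle (valid since $u$ is continuous on $\overline\DD$, holomorphic and univalent on $\DD$ with its only zero a simple one at the origin) the total increase over $[0,2\pi]$ equals $2\pi$, so the curve is a Jordan curve $\rho(\alpha)e^{i\alpha}$ with $\rho$ continuous and positive, and $u(\DD)$ is its interior $\{te^{i\alpha}:0\le t<\rho(\alpha)\}$. Since $r\rho(\alpha)<\rho(\alpha)$ for $0\le r<1$, this domain is strictly starlike with respect to the origin. Hence $u(\DD)$ is bounded and strictly starlike, and Theorem~A yields the minimal commutant property for $C_\varphi$.

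The main obstacle, I expect, is that everything delicate takes place on $\partial\DD$ rather than inside $\DD$: one must push the uniform convergence of $u_n$ and $u_n'$ all the way to the boundary (this is where the geometric decay of the orbits on $\overline\DD$ and the non-vanishing of $1+\psi$ along orbits are essential), and one must extract \emph{strict} starlikeness, not merely starlikeness, which forces control of $u$ on $\partial\DD$; the bookkeeping that turns \eqref{eq:series} into the strict positivity of $\A(zu'(z)/u(z))$ (equivalently, of $\frac{d}{d\theta}\arg u(e^{i\theta})$) is the technical heart of the argument.
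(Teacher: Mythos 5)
Your proposal follows the paper's argument step for step: the same choice of $u_n=\varphi^{[n]}/\lambda^n$, the same upgrade of convergence to $\overline\DD$ via the geometric decay of the orbit, the same reduction to Theorem~A through strict starlikeness of $u(\DD)$, and the same use of the winding number on $\partial\DD$ to identify $u(\DD)$ as a strictly starlike Jordan domain. One difference is worth flagging, because it actually points to a typographical slip in the paper that carries over into your own write-up.

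Your logarithmic differentiation of $u_n(z)=z\prod_{j=0}^{n-1}\bigl(1+\psi(\varphi^{[j]}(z))\bigr)$ gives
\[
\frac{zu_n'(z)}{u_n(z)}=1+z\sum_{j=0}^{n-1}\frac{(\varphi^{[j]})'(z)\,\psi'(\varphi^{[j]}(z))}{1+\psi(\varphi^{[j]}(z))},
\]
which is the \emph{correct} formula; the paper's display at the corresponding step omits the extraneous factor $z$ in front of the sum. However, once the $z$ is present (as in your display), the hypothesis \eqref{eq:series} as stated, namely $\sum_{j}\A(a_j(z))>-1$ where $a_j(z)=\frac{(\varphi^{[j]})'(z)\psi'(\varphi^{[j]}(z))}{1+\psi(\varphi^{[j]}(z))}$, does \emph{not} directly yield $\A\bigl(zu'(z)/u(z)\bigr)>0$: you need $\A\bigl(z\sum_j a_j(z)\bigr)>-1$, and for $z\in\partial\DD$ the quantity $\A(zS)$ can be much smaller than $\A(S)$ (take $S=-0.99+10i$ and $z=i$). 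Your sentence ``hypothesis \eqref{eq:series} is designed precisely so that $\A(zu'(z)/u(z))>0$'' glosses over this; the hypothesis that matches your (correct) formula is $\sum_j\A\bigl(z\,a_j(z)\bigr)>-1$. To be fair, the paper has the identical mismatch hidden by its missing $z$, and its Corollary is unaffected because it only uses the crude bound $\A(zS)\geq -|zS|\geq -\sum_j|a_j|$ with $|z|\leq 1$; but as a matter of logic, your proposal (and the statement) should either insert the $z$ into \eqref{eq:series}, or justify the passage from the stated hypothesis to $\A(z\sum_j a_j)>-1$, which is not automatic.

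A smaller remark: the bound $|(\varphi^{[j]})'(z)|\leq C''|\lambda|^j$ you assert on $\overline\DD$ is true, but it is not an immediate consequence of the pointwise expression for $\varphi'$; it relies (as the paper does) on the factorization $(\varphi^{[j]})'=\prod_{i<j}\varphi'(\varphi^{[i]})$ combined with $|\varphi'(\varphi^{[i]}(z))/\lambda-1|\leq C_1|\varphi^{[i]}(z)|\leq C_2|\lambda|^i$, which gives uniform convergence of the normalized product rather than a direct comparison with $\|\varphi'\|_\infty$. Worth stating the chain explicitly, since $\|\varphi'\|_\infty$ need not equal $|\lambda|$.

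Aside from that point, your argument is essentially the paper's.
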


\begin{proof}
The assumptions imply that $0$ is the Denjoy-Wolff point of $\varphi$ with $\varphi'(0)=\lambda$. For $n\geq 1$ and $z\in\overline\DD$, we set
$$u_n(z)=\frac{\varphi^{[n]}(z)}{\lambda^n}.$$
The proof of \cite{sh} ensures that $(u_n)$ converges uniformly on compact subsets of $\DD$ to $u$ and since $\varphi(\overline\DD)$ is a compact subset of $\DD$, we have in fact uniform convergence of $(u_n)$ on $\overline\DD$ and we can extend continuously $u$ to $\overline\DD$. Hence, we may write
$$\varphi^{[j]}(z)=\lambda^j u(z)+\lambda^j \veps_j(z)$$
where $\sup_{j\geq 1,z\in\overline\DD}|\veps_j(z)|<+\infty$. In particular,
$\|\varphi^{[j]}\|_\infty\leq C|\lambda|^j.$ Write now
$$u_n'(z)=\prod_{j=0}^{n-1}\frac{\varphi'(\varphi^{[j]}(z))}{\lambda}$$
and observe that
$$\left|\frac{\varphi'(\varphi^{[j]}(z))}{\lambda}-1\right|\leq C_1 |\varphi^{[j]}(z)|\leq C_2 |\lambda|^j.$$
Hence
we get that the product converges uniformly on $\overline\DD$ so that $u$ is $\mathcal C^1$ on $\overline\DD.$

Furthermore, it is easy to check by induction that, for all $n\geq 1$ and all $z\in\overline\DD,$
$$\varphi^{[n]}(z)=\lambda^n z(1+\psi(z))\cdots (1+\psi(\varphi^{[n-1]}(z)))$$
so that
$$\frac{zu_n'(z)}{u_n(z)}=1+\sum_{j=0}^{n-1}\frac{(\varphi^{[j]})'(z) \psi'(\varphi^{[j]}(z))}{1+\psi(\varphi^{[j]}(z))}.$$
We take the limit as $n\to+\infty$: by \eqref{eq:series} we obtain that,  for all $z\in\overline{\DD},$
$$\A\left(\frac{zu'(z)}{u(z)}\right)>0.$$
Now, fix $\theta_0\in\mathbb R$ and consider $\log$ a determination of the logarithm around $u(e^{i\theta_0})$ and $\arg$ the associated determination of the argument. Then
\begin{align*}
\frac{\partial (\arg(u\circ e^{i\theta}))}{\partial \theta}({\theta_0})&=\frac{\partial\big(\B(\log(u\circ e^{i\theta}))\big)}{\partial\theta}({\theta_0})\\
&=\B \frac{\partial\big(\log(u\circ e^{i\theta})\big)}{\partial\theta}({\theta_0})\\
&=\B \left(ie^{i\theta_0}\frac{u'(e^{i\theta_0})}{u(e^{i\theta_0})}\right)\\
&=\A \left(e^{i\theta_0}\frac{u'(e^{i\theta_0})}{u(e^{i\theta_0})}\right)>0.
\end{align*}
Therefore, the argument of $u(e^{i\theta})$ is locally increasing. Moreover, by the argument principle, since $u$ has only a simple zero at $0$ in $\overline\DD$, the winding number of $u(\TT)$ around the origin is equal to $1$. This easily implies that there exists $g:[0,2\pi]\to[0,2\pi]$ increasing and verifying $g(0)=0$, $g(2\pi)=2\pi$ and $\rho:[0,2\pi]\to(0,+\infty)$ such that
$$u(\overline \DD)=\{\rho e^{ig(\theta)}:\ \theta\in[0,2\pi],\ \rho\in[0,\rho(\theta)]\}.$$
Thus $u(\DD)$ is bounded and strictly starlike with respect to the origin and by Theorem A, $C_\varphi$ has a minimal commutant.
\end{proof}

We deduce the following more readable corollary, where we replace \eqref{eq:series} by a condition which is easier to testify.

\begin{corollary}
Let $\psi:\DD\to\CC$ be holomorphic with extends $\mathcal C^1$ to $\overline \DD$ and $\psi(0)=0$.
Let $\lambda\in\DD$ and let us set $\varphi(z)=\lambda z(1+\psi(z))$. Assume that $\varphi$ is univalent on $\overline{\DD}$ and that $\varphi(\overline \DD)\subset \DD$. Assume also that
$$\left\|\frac{\psi'}{1+\psi}\right\|_\infty\times\frac{1}{1-\|\varphi'\|_\infty}<1.$$
Then $C_\varphi$ has a minimal commutant.
\end{corollary}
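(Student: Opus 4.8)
The plan is to deduce this corollary directly from the preceding theorem by showing that the displayed inequality forces condition \eqref{eq:series} to hold at every point of $\overline\DD$; all the remaining hypotheses of the theorem ($\psi$ holomorphic, $\mathcal C^1$ on $\overline\DD$, $\psi(0)=0$; $\lambda\in\DD$; $\varphi(z)=\lambda z(1+\psi(z))$ univalent on $\overline\DD$ with $\varphi(\overline\DD)\subset\DD$) are assumed verbatim here and require no further comment.

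First I would record some easy preliminaries. Since $\varphi$ is univalent on $\overline\DD$ and $\varphi(0)=0$, the factor $1+\psi$ does not vanish on $\overline\DD$: a zero $z_0$ would give $\varphi(z_0)=0=\varphi(0)$ with $z_0\neq 0$. Being continuous on the compact set $\overline\DD$, the function $\psi'/(1+\psi)$ is therefore bounded, so $\|\psi'/(1+\psi)\|_\infty<\infty$; likewise $\|\varphi'\|_\infty<\infty$ because $\varphi$ extends $\mathcal C^1$ to $\overline\DD$. The form of the hypothesis, in which $\frac1{1-\|\varphi'\|_\infty}$ plays the role of the geometric sum $\sum_{j\geq 0}\|\varphi'\|_\infty^{\,j}$, presupposes $\|\varphi'\|_\infty<1$, and I will use this. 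Finally, $\varphi(\overline\DD)\subset\DD$ implies that every iterate $\varphi^{[j]}$ maps $\overline\DD$ into $\overline\DD$ (into $\DD$ when $j\geq 1$), so all the points at which $\psi'$, $1+\psi$ and $(\varphi^{[j]})'$ are evaluated in \eqref{eq:series} lie in $\overline\DD$.

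The core of the argument is a geometric-series domination. By the chain rule $(\varphi^{[j]})'(z)=\prod_{i=0}^{j-1}\varphi'(\varphi^{[i]}(z))$ (an empty product equal to $1$ when $j=0$), and since each $\varphi^{[i]}(z)\in\overline\DD$ we get $|(\varphi^{[j]})'(z)|\leq\|\varphi'\|_\infty^{\,j}$ for all $z\in\overline\DD$ and all $j\geq 0$. Hence, for every $z\in\overline\DD$,
\begin{align*}
\sum_{j\geq 0}\left|\A\!\left(\frac{(\varphi^{[j]})'(z)\,\psi'(\varphi^{[j]}(z))}{1+\psi(\varphi^{[j]}(z))}\right)\right|
&\leq\left\|\frac{\psi'}{1+\psi}\right\|_\infty\sum_{j\geq 0}\|\varphi'\|_\infty^{\,j}\\
&=\left\|\frac{\psi'}{1+\psi}\right\|_\infty\times\frac1{1-\|\varphi'\|_\infty}<1,
\end{align*}
the final inequality being precisely the hypothesis. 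In particular the series in \eqref{eq:series} converges absolutely and its sum is $\geq -\|\psi'/(1+\psi)\|_\infty/(1-\|\varphi'\|_\infty)>-1$, uniformly in $z\in\overline\DD$. Therefore \eqref{eq:series} holds, the previous theorem applies, and $C_\varphi$ has a minimal commutant.

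There is essentially no serious obstacle: the proof is a short estimate once one recognizes the hypothesis as exactly the geometric bound needed to control \eqref{eq:series}. The only points requiring a little care --- handled above --- are the bookkeeping of the $j=0$ term (where $\varphi^{[0]}=\mathrm{id}$ contributes $\psi'(z)/(1+\psi(z))$), the fact that $1+\psi$ is zero-free on $\overline\DD$ so that $\psi'/(1+\psi)$ is genuinely bounded there, and the observation that all iterates remain in $\overline\DD$ so that the chain-rule product bound and these sup-norm bounds can be applied at every iterate.
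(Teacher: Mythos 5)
Your argument is correct and follows exactly the paper's own route: bound $|(\varphi^{[j]})'(z)|\leq\|\varphi'\|_\infty^j$ by the chain rule and then dominate the series in \eqref{eq:series} by the geometric series $\|\psi'/(1+\psi)\|_\infty\sum_j\|\varphi'\|_\infty^j$, which by hypothesis is $<1$. The extra preliminaries you record (zero-freeness of $1+\psi$, iterates staying in $\overline\DD$, implicit assumption $\|\varphi'\|_\infty<1$) are sound housekeeping but do not change the substance.
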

\begin{proof}
We observe that, for all $z\in\overline\DD$ and all $j\geq 0,$
$$|(\varphi^{[j]})'(z)|=\left|\prod_{k=0}^{j-1}\varphi'(\varphi^{[k]}(z))\right|\leq \|\varphi'\|_{\infty}^j.$$
Hence, \eqref{eq:series} is satisfied since
\begin{align*}
\sum_{j\geq 0}\A\left(\frac{(\varphi^{[j]})'(z) \psi'(\varphi^{[j]}(z))}{1+\psi(\varphi^{[j]}(z))}\right)\geq -\sum_{j\geq 0} \|(\varphi^{[j]})'\|_\infty \times  \left\|\frac{\psi'}{1+\psi}\right\|_\infty.
\end{align*}
\end{proof}

\begin{example}
Let $\varphi(z)=\lambda z(1+az)$ with $\lambda,a>0$ and
$$\frac{\lambda a(1+2a)}{1-a}<1.$$
Then $C_\varphi$ has a minimal commutant.
\end{example}
\begin{proof}
It is easy to check that $\|\varphi'\|_\infty=\lambda(1+2a)$ so that, setting
$\psi(z)=az$,
$$\left\|\frac{\psi'}{1+\psi}\right\|_\infty=\frac{a}{1-a}.$$
\end{proof}
Thus, if $\varphi(z)=\frac{1}{3}z\left(1+\frac 12z\right)$, then $C_\varphi$ has a minimal commutant.

\begin{question}
Beyond linear fractional maps, there is another family of maps with simple K\"{o}nigs functions, the lens maps.
If we denote by $\mathcal C(z)=(1+z)/(1-z)$ the Cayley map which sends $\DD$ onto $\CC_0$ and if $\alpha\in(0,\pi/2)$,
the lens map $\varphi_\alpha$ is defined by
$$\varphi_\alpha(z)=\frac{\mathcal C(z)^\alpha-1}{\mathcal C(z)^\alpha+1}.$$
Its K\"{o}nigs function is then the function
$$u(z)=\log\left(\frac{1+z}{1-z}\right)$$
and $u(\DD)$ is the strip $\mathbb R\times (-\pi/2,\pi/2)$ which is strictly starlike with respect
to the origin but is unbounded. Does $C_{\varphi_\alpha}$ have a minimal commutant?
\end{question}

\begin{remark}
In view of the above question, it could be observed that one can slightly relax the assumptions of Threorem A. Indeed, the condition $u(\DD)$ bounded is very strong (and equivalent to saying that $\|\varphi^{[n]}\|_\infty<1$ for some positive integer $n$. A careful look at the proof of Theorem A shows that we could replace it by the two following conditions:
\begin{itemize}
\item $\Omega=\{w\in\CC:\ w+\overline{u(\DD)}\subset\DD\}$ is open.
\item There exists $C>0$ such that, for all $n\geq 1,$ $u^n$ belongs to $H^2(\DD)$ with $\|u^n\|\leq C^n.$
\end{itemize}
Unfortunately, for the lens maps above, it is not hard to show that $\|u^n\|/C^n\to+\infty$ for all $C>0.$
\end{remark}

\section{Double commutant property}

This last section is devoted to studying the double commutant property
for composition operators on $\mathcal H^2$ induced by affine symbols
(as this is done on $H^2(\DD)$ for composition operators induced by linear fractional maps in \cite{llsr19}).
Given any subset $\mathcal{A}\in \mathcal{B}(H)$,
recall that the commutant of $\mathcal{A}$ is defined as the family of all operators that commute with all the operators in $\mathcal{A}$,
that is,
\begin{equation*}
  \mathcal{A}':=\bigcap_{A\in\mathcal{A}}\{A\}'=\bigcap_{A\in\mathcal{A}}\{T\in\mathcal{B}(H):TA=AT\}.
\end{equation*}
The double commutant of $\mathcal{A}$ is defined as $\mathcal{A}''=(\mathcal{A}')'$.
Clearly, $\mathcal{A}$ is contained in $\mathcal{A}''$,
and then so is $\overline{\mathcal{A}}^{\sigma}$,
the closure of $\mathcal{A}$ in the weak operator topology $\sigma$.
As is well-known,
$\overline{\mathcal{A}}^{\sigma}=\mathcal{A}''$ holds for any self-adjoint, unital subalgebra $\mathcal{A}\subseteq \mathcal{B}(H)$,
by the von Neumann's double commutant theorem.
A subalgebra $\mathcal{A}\subseteq \mathcal{B}(H)$ is said to possess the double commutant property provided that $\overline{\mathcal{A}}^{\sigma}=\mathcal{A}''$.
Since for $A\in\mathcal B(H),$ $\overline{\textrm{alg}(A)}^\sigma\subset\{A\}''\subset \{A\}'$,
it is clear that an operator with a minimal commutant has the double commutant property. Thus for $\varphi\in \GH$ and its K\"{o}nigs function satisfying the conditions of Theorem \ref{thm:characteristic1}, $C_\varphi$ has the double commutant property.

\subsection{Symbols with characteristic $1$}

In this subsection, we shall prove that any symbol $\varphi(s)=s+c_1$ gives rise to an operator with the double commutant property.
One could think that it is easy since $C_\varphi$ is a diagonal operator and in particular a normal operator.
Therefore, by a result of Turner \cite{Turner}, it has the double commutant property if and only if
every invariant subspace of $C_\varphi$ is a reducing subspace. Nevertheless, it is not so easy to determine
which sequences have this property, see e.g. \cite[Lemma 6]{MarSeu}.

This motivates us to state a sufficient condition similar to Theorem \ref{thm:diagonal} but now for the double commutant property.

\begin{theorem}\label{thm:diagonalbi}
 Let $H$ be a Hilbert space with orthonormal basis $(e_n)$, let $I\subset\mathbb N,$ let $(\lambda_n)_{n\in I}\subset\CC^I$
 be bounded and let $D\in\mathcal B(H)$
 be a diagonal operator with respect to $(e_n)$ such that $\sigma_p(D)=\{\lambda_n:\ n\in I\}$.
 Assume that for all $(\mu_n)_{n\in I}\subset\CC^I$ bounded, there exists $M>0$ such that, for all $N\geq 1,$
 for all $\veps>0,$ there exists $P\in\CC[X]$ such that
  $$\left\{
 \begin{array}{rcl}
  |P(\lambda_n)-\mu_n|&\leq&\veps,\ n\in I\cap\{1,\dots,N\}\\
  \sup_{n\in I}|P(\lambda_n)|&\leq&M.
 \end{array}\right.$$
Then $D$ has the double commutant property.
\end{theorem}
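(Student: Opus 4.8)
The plan is to follow the proof of Theorem \ref{thm:diagonal} almost verbatim, the one additional ingredient being an explicit description of $\{D\}''$. Write $De_n=d_ne_n$ for the bounded sequence of diagonal entries of $D$ and, for $\lambda\in\sigma_p(D)$, put $H_\lambda=\overline{\textrm{span}}\{e_n:\ d_n=\lambda\}$, so that $H=\bigoplus_{\lambda\in\sigma_p(D)}H_\lambda$. First I would check that $\{D\}'$ is the algebra of operators that are block-diagonal for this decomposition: if $T\in\{D\}'$ and $v\in H_\lambda$ then $D(Tv)=T(Dv)=\lambda Tv$, so $T$ leaves each $H_\lambda$ invariant, hence also each $H_\lambda^{\perp}=\overline{\bigoplus_{\mu\neq\lambda}H_\mu}$; thus $\{D\}'=\bigoplus_\lambda\mathcal B(H_\lambda)$. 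In particular $\{D\}'$ contains the orthogonal projection $P_\lambda$ onto $H_\lambda$ as well as every operator $A\oplus 0$ with $A\in\mathcal B(H_\lambda)$.

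Next I would compute $\{D\}''$. If $T\in\{D\}''$, then commuting with $P_\lambda$ shows $T$ leaves $H_\lambda$ invariant, and commuting with all of $\mathcal B(H_\lambda)$ forces $T|_{H_\lambda}=\mu_\lambda I_{H_\lambda}$ for some $\mu_\lambda\in\CC$; hence $T=\bigoplus_\lambda\mu_\lambda I_{H_\lambda}$ with $\sup_\lambda|\mu_\lambda|=\|T\|$. Writing $\mu_n:=\mu_{\lambda_n}$ for $n\in I$, this produces a bounded sequence $(\mu_n)_{n\in I}$ with the property that $Te_m=\mu_ne_m$ whenever $d_m=\lambda_n$.

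Then I would invoke the hypothesis for exactly this sequence $(\mu_n)_{n\in I}$: it yields a constant $M>0$ and, for every $N\geq 1$, a polynomial $P_N\in\CC[X]$ with $|P_N(\lambda_n)-\mu_n|\leq 1/N$ for $n\in I\cap\{1,\dots,N\}$ and $\sup_{n\in I}|P_N(\lambda_n)|\leq M$. Since $\sigma_p(D)=\{\lambda_n:\ n\in I\}$, every diagonal entry $d_m$ is of the form $\lambda_n$ with $n\in I$, so $\|P_N(D)\|=\sup_m|P_N(d_m)|\leq M$ and the sequence $(P_N(D))\subset\alg(D)$ is uniformly bounded. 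Fixing $m$ and letting $n(m)\in I$ be the least index with $\lambda_{n(m)}=d_m$, we have $n(m)\leq N$ for all large $N$, so $P_N(D)e_m=P_N(\lambda_{n(m)})e_m\to\mu_{n(m)}e_m=Te_m$. By uniform boundedness, $(P_N(D))$ then converges to $T$ in the strong, hence weak, operator topology, so $T\in\overline{\alg(D)}^\sigma$. As the reverse inclusion $\overline{\alg(D)}^\sigma\subset\{D\}''$ always holds, we obtain $\overline{\alg(D)}^\sigma=\{D\}''$, i.e.\ $D$ has the double commutant property.

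The descriptions of $\{D\}'$ and $\{D\}''$ are standard, but are the place where the diagonal (and not merely normal) structure of $D$ is really used. The only subtlety I foresee is purely organizational: the interpolation hypothesis controls $P_N$ only on the representatives indexed by $I\cap\{1,\dots,N\}$, whereas the convergence argument needs $P_N(D)$ controlled on every basis vector $e_m$; this is resolved by the observation that $P_N(D)$ is determined by the restriction of $P_N$ to $\sigma_p(D)=\{\lambda_n:\ n\in I\}$. No other difficulty is anticipated.
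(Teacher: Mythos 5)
Your proof is correct and follows the same route as the paper's: decompose $H$ into the eigenspaces $H_\lambda=\ker(D-\lambda I)$, observe that $\{D\}'$ is the algebra of block-diagonal operators for this decomposition so that any $S\in\{D\}''$ acts as a scalar $\mu_\lambda$ on each $H_\lambda$, and then invoke the interpolation hypothesis for the bounded sequence $(\mu_{\lambda_n})$ exactly as in the proof of Theorem \ref{thm:diagonal}. The one organizational subtlety you flagged — that the approximating polynomials $P_N$ are controlled only on $\{\lambda_n:\ n\in I\}$ but this suffices since every diagonal entry of $D$ lies in that set — is handled correctly.
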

Observe that the assumptions imply that $\lambda_n\neq \lambda_m$ provided $n\neq m,$
but they do not say anything on the dimension of $\ker(C_\varphi-\lambda_n I)$.
\begin{proof}
 For $n\in I,$ let $H_n=\ker(D-\lambda_n I)$ and observe that we have the orthogonal decomposition $H=\bigoplus_n H_n$
 and that $D=\bigoplus_n \lambda_n I_n$ where $I_n$ is the identity map on $H_n$.
 It is easy to check that for all $(A_n)_{n\in I}$ with $A_n\in\mathcal B(H_n),$
 $T=\bigoplus_n A_n$ belongs to $\{D\}'$. This yields that $S$ belongs to $\{D\}''$ if and only if there exists
 a bounded sequence $(\mu_n)_{n\in I}$ of complex numbers such that $S=\bigoplus_n \mu_n I_n$. We
 conclude as in the proof of Theorem \ref{thm:diagonal} that $S\in\overline{\textrm{alg}(D)}^\sigma$.
\end{proof}

\begin{corollary}\label{cor:diagonalbi}
 Let $H$ be a Hilbert space with orthonormal basis $(e_n)$, let $D\in\mathcal B(H)$
 be a diagonal operator with respect to $(e_n)$ which is an isometry. Then $D$ has the double commutant property.
\end{corollary}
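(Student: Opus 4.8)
The plan is to deduce this directly from Theorem~\ref{thm:diagonalbi}, in the same spirit in which part~(iii) of Proposition~\ref{prop:diagonal} was deduced from Theorem~\ref{thm:diagonal}. Write $De_n=d_n e_n$. As $D$ is an isometry, $|d_n|=1$ for every $n$, so $\sigma_p(D)$ is a subset of the unit circle $\TT$. Let $I\subseteq\NN$ index the distinct values of $(d_n)$ and put $\lambda_n=d_n$ for $n\in I$, so that $\sigma_p(D)=\{\lambda_n:\ n\in I\}$ with the $\lambda_n$ $(n\in I)$ pairwise distinct and lying on $\TT$. It then suffices to verify the interpolation hypothesis of Theorem~\ref{thm:diagonalbi}: given any bounded sequence $(\mu_n)_{n\in I}$, to produce $M>0$ so that for every $N\geq 1$ and every $\veps>0$ there is $P\in\CC[X]$ with $|P(\lambda_n)-\mu_n|\leq\veps$ for $n\in I\cap\{1,\dots,N\}$ and $\sup_{n\in I}|P(\lambda_n)|\leq M$.

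First I would fix $(\mu_n)_{n\in I}$, set $M=\sup_{n\in I}|\mu_n|+1$, and, given $N$ and $\veps\in(0,1)$, consider the finite set $E=\{\lambda_n:\ n\in I\cap\{1,\dots,N\}\}\subset\TT$, which is closed and of Lebesgue measure zero. Defining $g$ on $E$ by $g(\lambda_n)=\mu_n$, the Rudin--Carleson interpolation theorem (for example, see \cite{Carleson} and \cite{rudin56}) provides a function $F$ in the disc algebra, continuous on $\overline\DD$ and holomorphic on $\DD$, with $F_{|E}=g$ and $\sup_{z\in\overline\DD}|F(z)|=\sup_{w\in E}|g(w)|\leq M-1$. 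Since polynomials are dense in the disc algebra for the uniform norm, I would then choose $P\in\CC[X]$ with $\sup_{z\in\overline\DD}|P(z)-F(z)|<\veps$.

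For $n\in I\cap\{1,\dots,N\}$ this gives $|P(\lambda_n)-\mu_n|=|P(\lambda_n)-F(\lambda_n)|<\veps$, while for every $n\in I$, using $\lambda_n\in\TT\subset\overline\DD$,
$$|P(\lambda_n)|\leq |F(\lambda_n)|+\veps\leq (M-1)+1=M.$$
Hence the hypotheses of Theorem~\ref{thm:diagonalbi} are met and $D$ has the double commutant property. There is no genuine obstacle here: the only points calling for a little care are that the diagonal entries of $D$ may coincide, so that $(\lambda_n)_{n\in I}$ must enumerate the \emph{distinct} eigenvalues as required by Theorem~\ref{thm:diagonalbi}, and that one must keep the bound $M$ fixed while passing from the interpolant $F$ to its polynomial approximation $P$.
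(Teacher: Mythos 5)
Your proof is correct and is exactly the argument the paper has in mind: since $D$ is an isometric diagonal operator, its distinct eigenvalues lie on $\TT$, and the interpolation hypothesis of Theorem~\ref{thm:diagonalbi} is verified via Rudin--Carleson followed by uniform polynomial approximation in the disc algebra, just as Proposition~\ref{prop:diagonal}(iii) was deduced from Theorem~\ref{thm:diagonal}. The paper states this in one line; you have merely supplied the details, and they are accurate, including the careful handling of repeated diagonal entries (indexing only the distinct eigenvalues) and the need to keep the uniform bound $M$ after replacing the disc-algebra interpolant by a polynomial.
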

\begin{proof}
 This follows from Theorem \ref{thm:diagonalbi} and the Rudin-Carleson interpolation theorem.
\end{proof}

\begin{corollary}
 Let $\varphi(s)=s+c_1$ with $\A(c_1)\geq 0$. Then $C_\varphi$ has the double commutant property.
\end{corollary}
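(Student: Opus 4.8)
The plan is to recognise that $C_\varphi$ is a diagonal operator and then to feed it into the two diagonal-operator criteria already established. First I would note that $\varphi(s)=s+c_1\in\GH$ (it has characteristic $1$, with $\psi\equiv c_1$), so $C_\varphi$ is bounded on $\mathcal H^2$, and for every $n\geq 1$ one has $C_\varphi(n^{-s})=n^{-s-c_1}=n^{-c_1}n^{-s}$. Thus $C_\varphi$ is diagonal with respect to the orthonormal basis $(n^{-s})_{n\geq 1}$ of $\mathcal H^2$, with diagonal coefficients $\lambda_n=n^{-c_1}$. The argument then splits according to whether $\A(c_1)=0$ or $\A(c_1)>0$.

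If $\A(c_1)=0$, write $c_1=i\tau$; then $|\lambda_n|=|n^{-i\tau}|=1$ for all $n$, so $\|C_\varphi f\|=\|f\|$ for every $f\in\mathcal H^2$, i.e.\ $C_\varphi$ is a diagonal isometry. Corollary \ref{cor:diagonalbi} then gives the double commutant property directly. (In this range the eigenspaces of $C_\varphi$ need not be one-dimensional, cf.\ Remark \ref{rem:multiplicity}; this is precisely why the multiplicity-agnostic formulation of Theorem \ref{thm:diagonalbi} and Corollary \ref{cor:diagonalbi} is the right tool.)

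If $\A(c_1)>0$, the numbers $\lambda_n=n^{-c_1}$ are pairwise distinct: for $n\neq m$ one has $\A\big(c_1\log(n/m)\big)=\A(c_1)\log(n/m)\neq 0$, so $\lambda_n/\lambda_m\neq 1$; moreover $|\lambda_n|=n^{-\A(c_1)}\to 0$. Hence $K=\{\lambda_n:\ n\geq 1\}\cup\{0\}$ is a compact set with connected complement. Given a bounded sequence $(\mu_n)$, an integer $N\geq 1$ and $\veps\in(0,1]$, define a continuous function $g$ on $K$ by $g(\lambda_n)=\mu_n$ for $n\leq N$, $g(\lambda_n)=0$ for $n>N$, and $g(0)=0$; Mergelyan's theorem, applied exactly as in the proof of Proposition \ref{prop:diagonal}(ii), produces $P\in\CC[X]$ with $|P(\lambda_n)-\mu_n|\leq\veps$ for $n\leq N$ and $|P(\lambda_n)|\leq\veps$ for $n>N$, so in particular $\sup_{n\geq 1}|P(\lambda_n)|\leq M:=\|\mu\|_\infty+1$. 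Thus the hypotheses of Theorem \ref{thm:diagonalbi} (with $I=\NN$) are verified, and $C_\varphi$ has the double commutant property. There is no real obstacle: once one sees that $C_\varphi$ is diagonal, everything reduces to the case distinction on $\A(c_1)$ and an elementary injectivity check, the substantive work having already been done in Theorem \ref{thm:diagonalbi}, Corollary \ref{cor:diagonalbi} and Proposition \ref{prop:diagonal}.
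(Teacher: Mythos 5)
Your argument is correct and follows essentially the paper's route: view $C_\varphi$ as a diagonal operator and invoke the interpolation criteria for diagonal operators. The only cosmetic difference is that for $\A(c_1)>0$ the paper simply appeals to the fact that $C_\varphi$ has a minimal commutant (the corollary following Proposition \ref{prop:diagonal}, itself proved by Mergelyan), which trivially implies the double commutant property, whereas you re-verify the Mergelyan interpolation hypotheses of Theorem \ref{thm:diagonalbi} directly — same substance, slightly more work.
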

\begin{proof}
 We just have to consider the case $\varphi(s)=s+i\tau,$ $\tau\in\mathbb R,$ which is settled by Corollary \ref{cor:diagonalbi}.
\end{proof}

\subsection{Symbols with large characteristic}

The aim of this subsection is to prove the following result.
\begin{theorem}\label{thm:doublecommutantlarge}
Let $\varphi(s)=c_0 s+c_1$ with $c_0\geq 2$ and $\A(c_1)\geq0$.
 Then $C_\varphi$ has the double commutant property.
\end{theorem}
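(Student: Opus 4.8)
The plan is to make $C_\varphi$ completely explicit on the orthonormal basis $(n^{-s})_{n\ge 1}$ of $\mathcal H^2$ and then reduce to a statement about direct sums of unilateral weighted shifts. Since $\varphi(s)=c_0s+c_1$, we have $C_\varphi(n^{-s})=n^{-\varphi(s)}=n^{-c_1}(n^{c_0})^{-s}$, so writing $e_n:=n^{-s}$ and $w_n:=n^{-c_1}$ (with $|w_n|=n^{-\A(c_1)}\le 1$ because $\A(c_1)\ge 0$) we get $C_\varphi e_n=w_n\,e_{n^{c_0}}$, with the images $e_{n^{c_0}}$ pairwise orthonormal. First I would partition $\NN$ into the orbits of the map $n\mapsto n^{c_0}$: the fixed point $\{1\}$, and, for each $c_0$-th-power-free integer $k\ge 2$, the infinite orbit $O_k=\{k^{c_0^{\,j}}:j\ge 0\}$; there are infinitely many such $k$. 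The subspaces $\CC e_1$ and $H_k:=\overline{\textrm{span}}\{e_m:m\in O_k\}$ are reducing for $C_\varphi$ (one checks $C_\varphi^*e_m=\overline{w_{m^{1/c_0}}}\,e_{m^{1/c_0}}$ if $m$ is a perfect $c_0$-th power and $C_\varphi^*e_m=0$ otherwise), so $C_\varphi$ is unitarily equivalent to $I_1\oplus\bigoplus_k S_k$, where $S_k$ is the unilateral weighted shift with weight sequence $\big((k^{c_0^{\,j}})^{-c_1}\big)_{j\ge 0}$, all of whose terms have modulus $\le 1$.

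Next I would split off the one-dimensional summand. If $\A(c_1)=0$ every weight is unimodular, so each $S_k$ is unitarily equivalent (via a diagonal unitary) to the unweighted unilateral shift $U$, and hence $C_\varphi\cong I_1\oplus U^{(\aleph_0)}$. If $\A(c_1)>0$ the weights $|(k^{c_0^{\,j}})^{-c_1}|=k^{-c_0^{\,j}\A(c_1)}$ are decreasing in $j$, so
$$\|S_k^{\,n}\|=\prod_{j=0}^{n-1}k^{-c_0^{\,j}\A(c_1)}=k^{-\A(c_1)(c_0^{\,n}-1)/(c_0-1)},$$
each $S_k$ is quasinilpotent, and $\|C_\varphi^{\,n}-P\|=\sup_{k\ge 2}\|S_k^{\,n}\|=2^{-\A(c_1)(c_0^{\,n}-1)/(c_0-1)}\to 0$, where $P$ denotes the orthogonal projection onto $\CC e_1$; in particular $P\in\overline{\alg(C_\varphi)}^{\|\cdot\|}\subset\overline{\alg(C_\varphi)}^\sigma$. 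Write $N:=\bigoplus_k S_k$. In both cases $1\notin\sigma(N)$ (resp. $1\notin\sigma_p(N)\cup\sigma_p(N^*)$ when $N=U^{(\aleph_0)}$), so a routine $2\times2$ block computation gives $\{C_\varphi\}'=\CC\oplus\{N\}'$, $\{C_\varphi\}''=\CC\oplus\{N\}''$, and (using $P\in\overline{\alg(C_\varphi)}^\sigma$, which when $\A(c_1)=0$ comes from $U^{\,n}\to 0$ in WOT) $\overline{\alg(C_\varphi)}^\sigma=\CC P\oplus\overline{\alg(N)}^\sigma$. Thus it suffices to prove that $N$ has the double commutant property. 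When $\A(c_1)=0$ this is the classical fact about $U^{(\aleph_0)}$: its commutant is the algebra of analytic operator-valued Toeplitz operators, its bicommutant is $\{T_\phi\otimes I:\phi\in H^\infty\}$, and using that $T_z^{\,n}\to0$ weakly one realizes every $T_\phi\otimes I$ as a WOT-limit of uniformly bounded polynomials in $U^{(\aleph_0)}$.

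The substantial case is $\A(c_1)>0$, where I would invoke the general criterion for a direct sum of unilateral weighted shifts to have the double commutant property that is proved later in this section; the explicit weights make the verification of its hypotheses immediate. The mechanism is as follows. The commutant $\{N\}'$ contains the full ``block-diagonal scalar'' algebra $\big\{\bigoplus_k\lambda_kI_{H_k}:(\lambda_k)\in\ell^\infty\big\}$, and commuting with this already forces $\{N\}''$ to consist of block-diagonal operators $\bigoplus_k B_k$ with $B_k\in\{S_k\}'$ (and, in addition, compatible with all the cross-intertwiners $S_\ell\to S_k$, which are present because the $S_k$, though pairwise non-similar, are ``connected'' by shifts by sufficiently many slots thanks to the super-exponential decay $\|S_k^{\,n}\|=k^{-\A(c_1)(c_0^{\,n}-1)/(c_0-1)}$). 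One then shows every such $\bigoplus_k B_k$ is a WOT-limit of polynomials $p_\alpha(N)=\bigoplus_k p_\alpha(S_k)$, the fast decay of $\|S_k^{\,n}\|$ being exactly what allows the required simultaneous (uniformly bounded) polynomial approximation on all the $S_k$ at once. Combining this with the reduction above yields that $C_\varphi$ has the double commutant property.

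I expect the main obstacle to be precisely this last step — establishing (or extracting from the general weighted-shift theorem) that $\bigoplus_k S_k$ has the double commutant property. Unlike the diagonal situation of Theorem \ref{thm:diagonalbi}, the commutant here is genuinely non-abelian and carries a dense web of intertwiners among the summands, so the heart of the matter is to show that this web does not enlarge the bicommutant beyond what the weak closure of $\alg(N)$ already reaches; the joint polynomial approximation with uniform norm control is the technical crux. By contrast, the identification $C_\varphi\cong I_1\oplus\bigoplus_k S_k$ via the orbit decomposition and the splitting-off of the scalar summand are routine.
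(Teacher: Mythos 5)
Your proposal follows essentially the same route as the paper's proof: split off $\CC\cdot 1$, identify $C_\varphi$ on $\mathcal H^2_0$ with a direct sum $\bigoplus_{m} S_{w^{(m)}}$ of weighted shifts indexed by the orbits of $n\mapsto n^{c_0}$, show via the intertwiner analysis that the bicommutant consists exactly of block-diagonal operators with a \emph{common} formal symbol across all blocks, and conclude by a uniformly bounded polynomial (Ces\`aro) approximation in the spirit of Shields--Wallen. The paper disposes of the isometric case $\A(c_1)=0$ at once via Turner's theorem on isometries, and the uniform norm control in the approximation step comes from the Shields--Wallen Ces\`aro estimate rather than from the super-exponential decay of $\|S_k^n\|$ as such (that decay, or rather the monotone comparability of the weight products, enters through the boundedness of the cross-intertwiners forcing a single formal symbol), but these are cosmetic differences.
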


We first observe that if $\varphi(s)=c_0s+i\tau,$ with $\tau\in\mathbb R,$ then $C_\varphi$ is a non-unitary isometry. Therefore it has the double commutant property
by \cite[Theorem 3.3]{Turner}.
Hence, in the following, we will always consider a symbol $\varphi(s)=c_0s +c_1$ with $\A(c_1)>0$. We first need a general result on stability of the double commutant property.

\begin{lemma}\label{lem:dcsum}
 Let $X$ be a Banach space, let $T\in\mathcal L(X)$ with $\|T\|<1$ and $T$ has the double commutant property.
 Then $1\oplus T$ acting on $\CC\oplus X$ has the double commutant property.
\end{lemma}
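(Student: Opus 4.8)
The plan is to describe the commutant and double commutant of $S:=1\oplus T$ on $\CC\oplus X$ via the $2\times2$ block-matrix picture of operators on a direct sum, reduce the double commutant property to an approximation statement, and then settle that statement using $\|T\|<1$. Writing a general $A\in\mathcal L(\CC\oplus X)$ as $A(\lambda,x)=(a\lambda+Bx,\ \lambda C+Dx)$ with $a\in\CC$, $B\in X^*$, $C\in X$, $D\in\mathcal L(X)$, one checks that $SA=AS$ iff $B=BT$, $TC=C$ and $TD=DT$. Since $\|T\|<1$ gives $1\notin\sigma(T)$, the operator $I-T$ is invertible, so $B=0$ and $C=0$, whence
$$\{S\}'=\{\,aI_\CC\oplus D:\ a\in\CC,\ D\in\{T\}'\,\}.$$

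Running the same block computation for the commutant of this family — testing against $0\oplus I_X\in\{S\}'$ to kill the off-diagonal entries, and against the $I_\CC\oplus D$, $D\in\{T\}'$, for the lower-right entry — yields $\{S\}''=\{\,eI_\CC\oplus H:\ e\in\CC,\ H\in\{T\}''\,\}$, and since $T$ has the double commutant property this is $\{\,eI_\CC\oplus H:\ e\in\CC,\ H\in\overline{\alg(T)}^\sigma\,\}$. The inclusion $\overline{\alg(S)}^\sigma\subset\{S\}''$ is the general fact recalled just before the statement (the commutant is weakly closed and contains $\alg(S)$), so it remains to show that every $eI_\CC\oplus H$ with $H\in\overline{\alg(T)}^\sigma$ belongs to $\overline{\alg(S)}^\sigma$.

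To this end, I would fix such an element together with a basic weak neighbourhood of it, given by vectors $\xi_j=(\lambda_j,x_j)\in\CC\oplus X$, functionals $\eta_j=(\mu_j,\psi_j)\in(\CC\oplus X)^*$, $j=1,\dots,k$, and $\varepsilon>0$, and pick $q\in\CC[z]$ with $|\langle\psi_j,(q(T)-H)x_j\rangle|<\varepsilon/2$ for all $j$. The key device is then to repair the value at $1$ without disturbing $q(T)$ much: set $p(z)=q(z)+(e-q(1))z^m$, so that $p(1)=e$ and $p(S)=eI_\CC\oplus\bigl(q(T)+(e-q(1))T^m\bigr)$. Then for each $j$,
$$\bigl\langle\eta_j,\bigl(p(S)-(eI_\CC\oplus H)\bigr)\xi_j\bigr\rangle=\langle\psi_j,(q(T)-H)x_j\rangle+(e-q(1))\langle\psi_j,T^mx_j\rangle,$$
and since $\|T^m\|\le\|T\|^m\to0$ while $q$, hence $e-q(1)$, and the finitely many $\psi_j,x_j$ are fixed, a large enough $m$ makes the second summand $<\varepsilon/2$ for every $j$; thus $eI_\CC\oplus H\in\overline{\alg(S)}^\sigma$, giving $\{S\}''\subset\overline{\alg(S)}^\sigma$ and hence equality. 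The one genuinely delicate step is this last approximation: one must separate the eigenvalue $1$ of the $\CC$-summand from $\sigma(T)$, and the device $p(z)=q(z)+(e-q(1))z^m$ works exactly because $T^m\to0$ in norm, which is precisely where $\|T\|<1$ (and not merely $1\notin\sigma(T)$) is used; the remainder is routine block-matrix bookkeeping.
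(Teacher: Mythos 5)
Your proof is correct, and its first two steps — computing $\{1\oplus T\}'$ and then $\{1\oplus T\}''$ via the $2\times 2$ block decomposition, using $\|T\|<1$ to kill the off-diagonal entries — coincide structurally with the paper's argument (the paper kills the $X^*$-entry with a norm estimate, you do it by observing $I-T$ is invertible; both work and use the same hypothesis). Where the two diverge is the final step: having identified $\{1\oplus T\}''=\CC\oplus\{T\}''=\CC\oplus\overline{\alg(T)}^\sigma$, the paper simply invokes Proposition 2.4 of \cite{llsr19} to conclude that $\overline{\alg(1\oplus T)}^\sigma=\CC\oplus\overline{\alg(T)}^\sigma$, whereas you prove this directly via the polynomial correction $p(z)=q(z)+(e-q(1))z^m$ and the fact that $\|T^m\|\to 0$. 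Your version has the advantage of being self-contained and of making clear exactly where the hypothesis $\|T\|<1$ (as opposed to merely $1\notin\sigma(T)$) enters a second time: not just to obtain $\{1\oplus T\}'=\CC\oplus\{T\}'$, but also to decouple the scalar coordinate $p(1)=e$ from the $X$-coordinate in the approximation without perturbing $q(T)$ in the relevant weak neighbourhood. The paper's version is shorter at the cost of an external reference.
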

\begin{proof}
 We first show that $\{1\oplus T\}'=\{\lambda\oplus B:\ B\in\{T\}'\}$. Let $S\in\{1\oplus T\}'$ and write it
 $$S=\begin{pmatrix}
      \lambda & A\\u&B
     \end{pmatrix}
     $$
so that
$$S(1\oplus T)=\begin{pmatrix}
                \lambda & AT\\  u & BT
               \end{pmatrix}\textrm{ and }
(1\oplus T)S=\begin{pmatrix}
              \lambda &A \\ Tu&TB
             \end{pmatrix}.
$$
If $A$ is not equal to $0$, one may find $x\in X$ with $\|x\|=1$ such that $\|Ax\|>\|A\| \cdot \|T\|$ which contradicts
$$\|Ax\|=\|ATx\|\leq \|A\|\cdot \|T\|.$$
Since $\|T\|<1$, the equality $Tu=u$ also yields $u=0$.

On the other hand, any $S\in\{1\oplus T\}''$ should also write $\lambda \oplus B$ because $\{1\oplus T\}''\subset \{1\oplus T\}'$.
It is also straightforward to check that $B$ belongs to $\{T\}''$.
Therefore, $\{1\oplus T\}''=\CC\oplus \{T\}''$.
Finally, Proposition 2.4 of \cite{llsr19} ensures that
$$\overline{\textrm{alg}(1\oplus T)}^{\sigma}=\CC\oplus \overline{\textrm{alg}(T)}^\sigma=\CC\oplus \{T\}''$$
since $T$ has the double commutant property.
\end{proof}

Coming back to composition operators on $\mathcal H^2$ with symbol $\varphi(s)=c_0 s+c_1$ with $\A(c_1)>0$,
one may decompose $\mathcal H^2=\CC\cdot 1\oplus \mathcal H^2_0$ where $\mathcal H^2_0=\overline{\textrm{span}}(n^{-s}:\ n\geq 2)$
and $\CC\cdot 1$ and $\mathcal H_0^2$ are preserved by $C_\varphi$. Moreover, $\|{C_{\varphi}}_{|\mathcal H^2_0}\|<1$.
Therefore, it is enough to prove that ${C_{\varphi}}_{|\mathcal H^2_0}$
has the double commutant property. In the remaining part of this subsection, we shall only discuss this operator
and $C_\varphi$ will always stand for ${C_{\varphi}}_{|\mathcal H^2_0}$. The key point in our argument is that
this operator can be seen as a direct sum of forward shifts.

Let $w$ be a weight sequence (namely a bounded sequence of nonzero complex numbers) and let $H$ be a Hilbert space with orthonormal
basis $(e_k)_{k\geq 0}$. The forward shift $S_w$ is defined on $H$ by $S_w(e_k)=w_ke_{k+1}$.

Let now $I=\{m\geq 2:\ m\neq n^{c_0^k},\ n\geq 2,\ k>0\}$ and for $m\in I,$ let
$H_m=\textrm{span}(m^{-c_0^k s}:\ k\geq 0)$. Then $\mathcal H^2_0=\bigoplus_{m\in I}H_m$.
Moreover, since $C_\varphi(m^{-c_0^k s})=m^{-c_0^k c_1}m^{-c_0^{k+1}s}$, $H_m$ is a reducing subspace of $C_\varphi$
and $C_\varphi=\bigoplus_{m\in I} S_{w^{(m)}}$ where for $k\geq 0,$ $w_k^{(m)}=m^{-c_0^k c_1}.$

Since we are looking for the commutant of $C_\varphi,$ we shall need to study operators intertwining weighted shifts.
For $w,w'$ two weight sequences and $a=(a_n)\in\CC^\NN,$ we consider the (possibly unbounded) operator
$S_{w,w',a}$ defined on $H$ with orthonormal basis $(e_n)_{n\in\NN}$ by
$$\langle S_{w,w',a}e_j,e_i\rangle=
\left\{
\begin{array}{ll}
 0&\textrm{ provided }j>i\\
 \displaystyle a_{i-j}\frac{w_0\cdots w_{i-1}}{w'_0\cdots w'_{j-1}}&\textrm{ provided }j\leq i.
\end{array}\right.
$$
The following lemma is reminiscent from the work of Shields and Wallen \cite{SW}.
\begin{lemma}\label{lem:intertwiningshifts}
 Let $w,w'$ be two weight sequences and let $T\in\mathcal B(H)$ be such that $TS_{w'}=S_w T$. Then
 there exists $a\in\mathbb C^\NN$ such that $T=S_{w,w',a}$.
\end{lemma}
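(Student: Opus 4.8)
The idea is that the basis vector $e_0$ plays the role of a cyclic generator: every $e_j$ is obtained from $e_0$ by applying a power of $S_{w'}$, and $T$ commutes with those powers, so $T$ is entirely determined by the single vector $Te_0$. First I would record the elementary identities, valid for all $j\ge 0$ and all $d\ge 0$ (empty products read as $1$),
\[
(S_{w'})^j e_0 = w'_0 w'_1\cdots w'_{j-1}\, e_j,\qquad (S_w)^j e_d = w_d w_{d+1}\cdots w_{d+j-1}\, e_{d+j}.
\]
Since the weights are nonzero, the first identity rearranges to $e_j = (w'_0\cdots w'_{j-1})^{-1}(S_{w'})^j e_0$.

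Next, from $TS_{w'}=S_wT$ an immediate induction on $j$ yields $T(S_{w'})^j = (S_w)^j T$ for every $j\ge 0$. Combining this with the previous display, for every $j\ge 0$,
\[
T e_j \;=\; \frac{1}{w'_0\cdots w'_{j-1}}\, T (S_{w'})^j e_0 \;=\; \frac{1}{w'_0\cdots w'_{j-1}}\,(S_w)^j\,(T e_0).
\]
Writing $T e_0 = \sum_{d\ge 0} b_d e_d$, which is a bona fide element of $H$ because $T\in\mathcal B(H)$, and applying $(S_w)^j$ term by term (legitimate since $S_w$ is a bounded operator) and reindexing with $i=d+j$ gives
\[
T e_j \;=\; \sum_{i\ge j} b_{i-j}\,\frac{w_{i-j} w_{i-j+1}\cdots w_{i-1}}{w'_0\cdots w'_{j-1}}\, e_i ,
\]
so in particular $\langle T e_j,e_i\rangle = 0$ whenever $i<j$.

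It remains to match this with the prescribed matrix of $S_{w,w',a}$. Since every $w_k$ is nonzero, I would set $a_0=b_0$ and $a_d = b_d/(w_0 w_1\cdots w_{d-1})$ for $d\ge 1$; then $b_{i-j}\,w_{i-j}\cdots w_{i-1} = a_{i-j}\,w_0 w_1\cdots w_{i-1}$, so the last display reads $\langle T e_j,e_i\rangle = a_{i-j}\,\dfrac{w_0\cdots w_{i-1}}{w'_0\cdots w'_{j-1}}$ for $j\le i$, which is exactly the defining relation of $S_{w,w',a}$. Hence $T=S_{w,w',a}$. There is no serious obstacle: the argument is just the iteration of the intertwining identity together with a reindexing of sums, and the only point deserving a word of care is that boundedness of $T$ makes $Te_0$ a genuine element of $H$, so that the term-by-term manipulations are valid.
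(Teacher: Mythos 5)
Your proof is correct, and it is essentially the same argument as the paper's. The paper records the intertwining relation entrywise as a first-order recurrence $t_{i+1,j+1}=(w_i/w'_j)\,t_{i,j}$ (with $t_{i,j}=\langle Te_j,e_i\rangle$) and solves it by induction, whereas you iterate the identity to $T(S_{w'})^j=(S_w)^jT$, apply it to the cyclic vector $e_0$, expand $Te_0$ in the basis, and push forward by the bounded operator $(S_w)^j$ term by term. This is the same bookkeeping carried out one step earlier at the operator level, and both correctly reduce the problem to the single column $Te_0$ and then normalize to read off the sequence $a$.
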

\begin{proof}
For $i,j\in\mathbb N$, let $t_{i,j}=\langle Te_j,e_i\rangle$ so that, for all $j\in\NN,$ $Te_j=\sum_{i\in\NN}t_{i,j}e_i.$
We thus have
\begin{equation*}
  TS_{w'}e_j=T(w'_j e_{j+1})=\sum_{i=0}^{+\infty}w'_j t_{i,j+1}e_i
\end{equation*}
whereas
$$S_w Te_j=\sum_{i=0}^{+\infty}w_i t_{i,j}e_{i+1}.$$
This yields $t_{0,j}=0$ for all $j\geq 1$ and for all $i,j\geq 0,$
$$t_{i+1,j+1}=\frac{w_i}{w'_j}t_{i,j}.$$
By an immediate induction, $t_{i,j}=0$ for $j>i$ and
$$t_{i,j}=\frac{w_{i-j}\cdots w_{i-1}}{w'_0\cdots w'_{j-1}}t_{i-j,0}$$
provided $j\leq i$. We get the result if we set, for $n\in\NN,$
$$a_n=\frac{1}{w_0\cdots w_{n-1}}t_{n,0}.$$
\end{proof}

We come back to the decomposition of $\mathcal H^2_0$ as $\bigoplus_{m\in I}H_m$.
We denote by $P_m$ the orthogonal projection onto $H_m$ and, for $T\in\mathcal B(\mathcal H^2_0)$, we set
$T_{m,n}=P_m TP_n$, $m,n\in I$. Lemma \ref{lem:intertwiningshifts} allows us to compute the commutant of $C_\varphi$.

\begin{lemma}\label{lem:commutant}
 Let $T\in\mathcal B(\mathcal  H^2_0)$.
 Then $T\in\{C_\varphi\}'$ if and only if
 for each $(m,n)\in I^2,$ there exists a sequence $a^{(m,n)}\in\CC^\NN$ such that,
 $$T_{m,n}=S_{w^{(m)},w^{(n)},a^{(m,n)}}.$$
\end{lemma}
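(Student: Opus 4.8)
The plan is to use that $C_\varphi$ is block-diagonal with respect to the orthogonal decomposition $\mathcal H^2_0=\bigoplus_{m\in I}H_m$: commuting with $C_\varphi$ will turn out to be equivalent to each block $T_{m,n}$ of $T$ intertwining two forward shifts, and then Lemma \ref{lem:intertwiningshifts} finishes the job. First I would fix, for each $m\in I$, the identification of $H_m$ with a common model space $H$ carrying an orthonormal basis $(e_k)_{k\geq 0}$, via $e_k\mapsto m^{-c_0^k s}$; under this identification the restriction of $C_\varphi$ to the reducing subspace $H_m$ becomes the forward shift $S_{w^{(m)}}$ with $w^{(m)}_k=m^{-c_0^k c_1}$, as already noted before the statement.

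Next, since each $H_n$ reduces $C_\varphi$, the orthogonal projections $P_n$ commute with $C_\varphi$, so that for every $(m,n)\in I^2$
\begin{align*}
P_m C_\varphi T P_n&=(P_m C_\varphi P_m)\,T_{m,n}=S_{w^{(m)}}\,T_{m,n},\\
P_m T C_\varphi P_n&=T_{m,n}\,(P_n C_\varphi P_n)=T_{m,n}\,S_{w^{(n)}},
\end{align*}
where we read $T_{m,n}$ as an operator $H_n\to H_m$, i.e. an operator on $H$. Hence $C_\varphi T-TC_\varphi=0$ if and only if all of its blocks $P_m(C_\varphi T-TC_\varphi)P_n=S_{w^{(m)}}T_{m,n}-T_{m,n}S_{w^{(n)}}$ vanish, that is, $T\in\{C_\varphi\}'$ if and only if $S_{w^{(m)}}T_{m,n}=T_{m,n}S_{w^{(n)}}$ for every $(m,n)\in I^2$. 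Each $T_{m,n}$ is bounded, being a composition of bounded operators, so Lemma \ref{lem:intertwiningshifts} applies and gives that this intertwining relation holds if and only if $T_{m,n}=S_{w^{(m)},w^{(n)},a^{(m,n)}}$ for some sequence $a^{(m,n)}\in\CC^\NN$; for the ``if'' direction one only needs that the recurrences $t_{0,j}=0$, $t_{i+1,j+1}=\frac{w_i}{w'_j}t_{i,j}$ appearing in the proof of Lemma \ref{lem:intertwiningshifts} are exactly the coefficientwise form of $S_wT=TS_{w'}$, so any bounded $S_{w,w',a}$ does intertwine. Combining both directions yields the claim.

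The argument is essentially bookkeeping, and I do not expect a genuine obstacle. The only two points deserving a little care are the identification of each $H_m$ with the single model space $H$, which is what allows Lemma \ref{lem:intertwiningshifts} (phrased on one Hilbert space) to be applied to $T_{m,n}$ for different indices $m,n$, and the observation that in the ``if'' direction boundedness of $T$ is already part of the hypothesis $T\in\mathcal B(\mathcal H^2_0)$, so that one may legitimately speak of its blocks and recover $C_\varphi T=TC_\varphi$ from the blockwise identities.
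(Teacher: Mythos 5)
Your proof is correct and follows essentially the same route as the paper: reduce to the blockwise identities $T_{m,n}S_{w^{(n)}}=S_{w^{(m)}}T_{m,n}$ by exploiting that each $H_n$ reduces $C_\varphi$, then invoke Lemma \ref{lem:intertwiningshifts}. You spell out two points the paper treats implicitly (the identification of each $H_m$ with the fixed model space $H$, and the converse direction that any bounded $S_{w^{(m)},w^{(n)},a}$ intertwines the shifts), but these are bookkeeping details, not a different argument.
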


\begin{proof}
We start with some $S\in\{C_\varphi\}''$ and we first show that $S$ is diagonal in $\bigoplus_{m\in I}H_m.$
 Let $T\in\mathcal B(\mathcal H^2_0)$. Then $T\in\{C_\varphi\}'$ if and only if, for all $(m,n)\in I^2,$
 $$P_m T C_\varphi P_n=P_m C_\varphi T P_n.$$
 Since each $H_k$ is a reducing subspace of $C_\varphi,$ this is also equivalent to
 $$P_m T P_n P_n C_\varphi P_n=P_m C_\varphi P_m P_m T P_n$$
 namely
 $$T_{m,n}S_{w^{(n)}}=S_{w^{(m)}} T_{m,n}.$$
 The result now follows immediately from Lemma \ref{lem:intertwiningshifts}.
\end{proof}

With this lemma at hand, we are able to compute the double commutant of $C_\varphi$.
\begin{lemma}\label{lem:doublecommutant}
 Let $S\in\mathcal B(\mathcal H^2_0)$. Then $S\in\{C_\varphi\}''$ if and only if there exists $a\in\CC^\NN$ such that
 $S=\bigoplus_{m\in I}S_{w^{(m)},w^{(m)},a}$.
\end{lemma}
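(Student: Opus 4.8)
The plan is to identify $\{C_\varphi\}''$ explicitly, by combining Lemma \ref{lem:commutant} with an algebraic identity for the operators $S_{w,w',a}$. The first step is to record the multiplicativity rule: for weight sequences $w,w',w''$ and sequences $a,b\in\CC^\NN$ one has $S_{w,w',a}S_{w',w'',b}=S_{w,w'',a*b}$, where $a*b$ is the Cauchy convolution $(a*b)_p=\sum_{q=0}^{p}a_qb_{p-q}$. This follows at once from the matrix entries in the definition of $S_{w,w',a}$, after cancelling the intermediate weight products $w'_0\cdots w'_{j-1}$. Along with it I would note two trivial facts: $S_{w,w',a}=S_{w,w',b}$ forces $a=b$ (read off the entries with $j=0$, the weights being nonzero), and the identity operator of $H_m$ equals $S_{w^{(m)},w^{(m)},\delta_0}$ with $\delta_0=(1,0,0,\dots)$; in particular each orthogonal projection $P_l$ onto $H_l$, having all its blocks of the form required by Lemma \ref{lem:commutant} and being bounded, lies in $\{C_\varphi\}'$.

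For the ``if'' direction, let $S=\bigoplus_{m\in I}S_{w^{(m)},w^{(m)},a}$, which is bounded by hypothesis, and let $T\in\{C_\varphi\}'$; by Lemma \ref{lem:commutant}, $T_{m,n}=S_{w^{(m)},w^{(n)},a^{(m,n)}}$ for suitable sequences $a^{(m,n)}$. Since $S$ is block-diagonal, $(ST)_{m,n}=S_{m,m}T_{m,n}$ and $(TS)_{m,n}=T_{m,n}S_{n,n}$, and the multiplicativity rule rewrites these as $S_{w^{(m)},w^{(n)},a*a^{(m,n)}}$ and $S_{w^{(m)},w^{(n)},a^{(m,n)}*a}$ respectively. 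As convolution is commutative, $ST=TS$, hence $S\in\{C_\varphi\}''$.

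For the ``only if'' direction, take $S\in\{C_\varphi\}''$. Commuting with every $P_l$ makes $S$ block-diagonal, $S=\bigoplus_m S_{m,m}$; and since $\{C_\varphi\}''\subset\{C_\varphi\}'$, Lemma \ref{lem:commutant} gives $S_{m,m}=S_{w^{(m)},w^{(m)},a^{(m)}}$ for some $a^{(m)}\in\CC^\NN$. It remains to show the $a^{(m)}$ all coincide. For $n<m$ in $I$, consider the operator $R_{m,n}$ whose single nonzero block is $(R_{m,n})_{m,n}=S_{w^{(m)},w^{(n)},\delta_0}$; concretely $R_{m,n}$ kills every $H_l$ with $l\neq n$ and sends the $k$-th basis vector $n^{-c_0^k s}$ of $H_n$ to $(n/m)^{c_1(c_0^k-1)/(c_0-1)}\,m^{-c_0^k s}$. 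The crucial point is that $R_{m,n}$ is bounded: its norm equals $\sup_{k\geq 0}(n/m)^{\A(c_1)(c_0^k-1)/(c_0-1)}$, and because $\A(c_1)>0$, $0<n/m<1$, and the exponent vanishes at $k=0$ and increases with $k$, this supremum is $1$. Hence $R_{m,n}\in\{C_\varphi\}'$ by Lemma \ref{lem:commutant}, so $SR_{m,n}=R_{m,n}S$; taking the $(m,n)$-block and applying the multiplicativity rule gives $S_{w^{(m)},w^{(n)},a^{(m)}}=S_{w^{(m)},w^{(n)},a^{(n)}}$, whence $a^{(m)}=a^{(n)}$. Running over all pairs of distinct indices of $I$, ordered by size, yields a common sequence $a$ with $S=\bigoplus_{m}S_{w^{(m)},w^{(m)},a}$.

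The \emph{main obstacle} is precisely the boundedness of the intertwiners $R_{m,n}$: one has to use $\A(c_1)>0$ and the precise form $w^{(m)}_k=m^{-c_0^kc_1}$ of the weights to see that the intertwiner from $H_n$ to $H_m$ is bounded exactly when $n\le m$ — which is why the indices must be compared by size before pairing them up, and why this does not directly exhibit a full ``graph'' of off-diagonal operators but only a one-sided family, still enough to force all the $a^{(m)}$ equal. Everything else reduces to bookkeeping with the convolution identity and with block-matrix arithmetic.
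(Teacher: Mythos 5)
Your proof is correct and follows essentially the same strategy as the paper: first force $S$ to be block-diagonal (you use the projections $P_l$, the paper uses a $T$ with $b=0$, $c=\delta_0$ in its diagonal blocks, which is the same thing), then intertwine $H_n\hookrightarrow H_m$ for $n<m$ by the bounded diagonal operator $S_{w^{(m)},w^{(n)},\delta_0}$ to equate the coefficient sequences. The explicit convolution identity $S_{w,w',a}S_{w',w'',b}=S_{w,w'',a*b}$ is a nice organizing device that the paper uses only implicitly (by comparing matrix columns), and it makes both directions of the equivalence essentially immediate.
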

\begin{proof}
 Let $m,n\in I$ with $m<n$. We choose $T\in\{C_\varphi\}'$ such that $T_{k,l}=0$ provided $k\neq l$, $T_{k,k}=0$
 provided $k\notin \{m,n\}$ and $T_{m,m}=S_{w^{(m)},w^{(m)},b}$, $T_{n,n}=S_{w^{(n)},w^{(n)},c}$
 with $b,c\in\CC^\NN$ such that these operators are bounded (this holds true for instance if $b$ and $c$ have finite support).
 Since $TS=ST$, arguing as in the proof of Lemma \ref{lem:commutant}, it is easy to show that $S_{m,n}$ and $S_{n,m}$
 have to satisfy
 \begin{align*}
  S_{n,m}S_{w^{(m)},w^{(m)},b}&=S_{w^{(n)},w^{(n)},c}S_{n,m}\\
  S_{m,n}S_{w^{(n)},w^{(n)},c}&=S_{w^{(m)},w^{(m)},b}S_{m,n}.
 \end{align*}
 Choosing $b=0$ and $c=\delta_0=(1,0,\dots)$ so that $S_{w,w,\delta_0}=\textrm{Id}$, we get $S_{m,n}=S_{n,m}=0$.

 Choose now $T\in\{C_\varphi\}'$ such that $T_{k,l}=0$ for all $k,l\in I$ except for $T_{n,m}=S_{w^{(n)},w^{(m)},\delta_0}$.
 This last operator is a diagonal operator with diagonal entries equal to
 $$\frac{w_0^{(n)}}{w_0^{(m)}}\times\cdots\times\frac{w_{k-1}^{(n)}}{w_{k-1}^{m}}.$$
Since $|w_{k}^{(n)}|\leq |w_k^{(m)}|$ for all $k\in\NN$, these diagonal entries are bounded and thus
 $S_{w^{(n)},w^{(m)},\delta_0}$ is bounded. Since $S$ commutes with $C_\varphi$, we already know that there exist $a,d\in\CC^\NN$ such that
 $S_{m,m}=S_{w^{(m)},w^{(m)},a}$ and $S_{n,n}=S_{w^{(n)},w^{(n)},d}$. Now, since $S$ commutes with $T$, we also have
 \begin{equation}\label{eq:doublecommutant}
S_{w^{(n)},w^{(n)},d}S_{w^{(n)},w^{(m)},\delta_0}=S_{w^{(n)},w^{(m)},\delta_0}S_{w^{(m)},w^{(m)},a}.
 \end{equation}
 Observe that the first column of $S_{w^{(n)},w^{(n)},d}S_{w^{(n)},w^{(m)},\delta_0}$ is
 $$\begin{pmatrix}
    d_0\\ d_1 w_0^{(n)} \\ d_2 w_0^{(n)}w_1^{(n)}\\ \cdots
   \end{pmatrix}$$
   whereas the first column of $S_{w^{(n)},w^{(m)},\delta_0}S_{w^{(m)},w^{(m)},a}$ is
 $$\begin{pmatrix}
    a_0\\ a_1 w_0^{(n)} \\ a_2 w_0^{(n)}w_1^{(n)}\\ \cdots
   \end{pmatrix}.$$
Hence, $a=d$ and $S=\bigoplus_{m\in I}S_{w^{(m)},w^{(m)},a}$. The converse implication is immediate.
\end{proof}

We are now ready to prove Theorem \ref{thm:doublecommutantlarge}.
\begin{proof}[Proof of Theorem \ref{thm:doublecommutantlarge}]
 The proof follows the argument of \cite[Theorem 1]{SW}. Let $S\in\{C_\varphi\}''$ and $a\in\CC^\NN$ be such that $S=\bigoplus_{m\in I} S_{w^{(m)},w^{(m)},a}.$
 Observe that each vector $e_k$ of the canonical basis of $H_m$,
 namely $e_k=m^{-c_0^k s}$,
 $$S_{w^{(m)},w^{(m)},a}e_k=\sum_{i=0}^{+\infty}a_i S_{w^{(m)}}^i e_k.$$

 Let, for $l\geq 1,$ $Q_l(X)=\sum_{i=0}^l a_i X^i$ and
$$P_l=\frac{Q_0+\cdots+Q_l}{l+1}.$$
As explained in \cite{SW}, for each $m\in I,$
$$\|P_l(S_{w^{(m)}})\|\leq \|S_{w^{(m)}}\|$$
so that
$$\|P_l(C_\varphi)\|=\sup_{m\in I} \|P_l(S_{w^{(m)}})\|\leq \sup_{m\in I}\|S_{w^{(m)}}\|\leq \|C_\varphi\|.$$
Moreover, for each $m\in I$ and each $k\in\NN,$
\begin{align*}
 \|P_l(C_\varphi)(m^{-c_0^k s})-S(m^{-c_0^k s})\|&=\|P_l(S_{w^{(m)}}(m^{-c_0^k s}))-S(m^{-c_0^k s})\|\\
 &\xrightarrow{l\to+\infty}0
\end{align*}
since $Q_l(m^{-c_0^k s})$ goes to $S(m^{-c_0^k s})$.
Thus $(P_l(C_\varphi)(f))$ converges to $S(f)$ on a dense set.
Since $(\|P_l(C_\varphi)\|)$ is bounded,
we deduce that $(P_l(C_\varphi))$ converges to $S$ in the strong operator topology.
Hence $C_\varphi$ has the double commutant property.
\end{proof}

\begin{remark}
 In Section \ref{sec:c02}, we have shown that, for any $c\geq 1,$ setting $\varphi(s)=cs+(c-1)c_1/(c_0-1)$, 
 $C_{\varphi_c}$ belongs to $\{C_\varphi\}'.$ Moreover it is easy to check that for all $c,d\geq 1,$
 $C_{\varphi_c}$ and $C_{\varphi_d}$ commute. Since $C_\varphi$ has the double commutant property
 and $C_{\varphi_c}\notin \overline{\mathrm{alg}(C_\varphi)}^\sigma$ for some $c\geq 1,$ we know that $\{C_\varphi\}'$ contains
 operators which are not equal to some $C_{\varphi_c}$.
\end{remark}

\subsection{Application to sums of weighted shifts}

The method used in the previous subsection allows us to characterize when a direct sum of weighted shifts has the double commutant property. Let us fix $I\subset\NN$, $I\neq\varnothing$ and let us consider, for each $n\in\mathbb N$, $w^{(n)}$ a weight sequence and $H_n$ a separable infinite dimensional Banach space. If we assume that
$\sum_{n\in I}\|w^{(n)}\|_\infty<+\infty,$ one may define the direct sum $W=\bigoplus_{n\in I}S_{w^{(n)}}$ which is a bounded operator acting on $H=\bigoplus_{n\in I}H_n$.

We define an equivalence relation on $I$ by, for $m,n\in I,$ $m\sim n$ if and only if there exists $l\in\mathbb N^*,$ $m_0,\dots,m_l\in I$ such that $m=m_0,$ $n=m_l$ and,
for all $i\in\{1,\dots,l\}$,
$$\left(\frac{w_0^{(m_i)}\cdots w_k^{(m_i)}}{w_0^{(m_{i-1})}\cdots w_k^{(m_{i-1})}}\right)_k$$
is either bounded or bounded away from $0$.

\begin{theorem}
Under the above assumptions, $W$ has the double commutant property if and only if there is a single equivalence class for $\sim$.
\end{theorem}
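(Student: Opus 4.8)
The plan is to follow the template of the proof of Theorem \ref{thm:doublecommutantlarge}: describe $\{W\}'$, then $\{W\}''$, then $\overline{\mathrm{alg}(W)}^\sigma$, all realized as block operators with respect to the decomposition $H=\bigoplus_{m\in I}H_m$, and finally read off the double commutant property as a connectivity statement for $\sim$. Throughout I write $T_{m,n}=P_mTP_n:H_n\to H_m$ for the blocks of an operator $T$, where $P_m$ is the projection onto $H_m$.

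First I would compute $\{W\}'$. Since each $H_m$ reduces $W$, one has $P_m\in\{W\}'$, and exactly as in Lemma \ref{lem:commutant} (which only uses Lemma \ref{lem:intertwiningshifts}, not the particular weights) a bounded $T$ lies in $\{W\}'$ if and only if for every $(m,n)$ there is $a^{(m,n)}$ with $T_{m,n}=S_{w^{(m)},w^{(n)},a^{(m,n)}}$. In particular $P_m\in\{W\}'$ forces every $S\in\{W\}''$ to be block diagonal, and then by Lemma \ref{lem:intertwiningshifts} $S=\bigoplus_m S_{w^{(m)},w^{(m)},a^{(m)}}$. The decisive algebraic observation is: if $B:H_n\to H_m$ is a \emph{nonzero} bounded operator with $BS_{w^{(n)}}=S_{w^{(m)}}B$ (so $B=S_{w^{(m)},w^{(n)},a}$ with $a\neq 0$), then for $S\in\{W\}''$ the block relation $S_{m,m}B=BS_{n,n}$ holds, and computing first columns as in Lemma \ref{lem:doublecommutant} this amounts to $a*(a^{(m)}-a^{(n)})=0$ in the power-series ring $\CC[[X]]$; since $\CC[[X]]$ has no zero divisors, $a^{(m)}=a^{(n)}$. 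Conversely any block-diagonal operator whose symbol is constant on the classes of the ``linking'' relation does commute with every $T\in\{W\}'$ (same first-column computation, together with the fact that an operator of the form $S_{w^{(m)},w^{(n)},\cdot}$ is determined by its first column since the weights are nonzero). Hence $\{W\}''$ is exactly the set of block-diagonal operators $\bigoplus_m S_{w^{(m)},w^{(m)},a^{(m)}}$ (defining a bounded operator) with $a^{(m)}=a^{(n)}$ whenever $H_m$ and $H_n$ are joined by a nonzero bounded intertwiner.

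It remains to identify this ``linking'' relation with $\sim$, and then to compute $\overline{\mathrm{alg}(W)}^\sigma$. For the latter: reading off matrix entries of $\sigma$-limits of polynomials in $W$ shows $\overline{\mathrm{alg}(W)}^\sigma$ is contained in the block-diagonal operators with a \emph{single} symbol $a$, and the converse inclusion is the Shields--Wallen Cesàro-mean argument from the proof of Theorem \ref{thm:doublecommutantlarge}: for the means $P_l$ of $a$ one has $\|P_l(W)\|=\sup_m\|P_l(S_{w^{(m)}})\|\le\sup_m\|S_{w^{(m)},w^{(m)},a}\|=\big\|\bigoplus_m S_{w^{(m)},w^{(m)},a}\big\|$ and $P_l(W)\to\bigoplus_m S_{w^{(m)},w^{(m)},a}$ strongly. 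As for the identification: if $\big(\tfrac{w_0^{(n)}\cdots w_k^{(n)}}{w_0^{(m)}\cdots w_k^{(m)}}\big)_k$ is bounded then $S_{w^{(m)},w^{(n)},\delta_0}$ is a bounded diagonal intertwiner, and if it is bounded away from $0$ then $S_{w^{(n)},w^{(m)},\delta_0}$ is; composing along a $\sim$-chain gives a nonzero bounded intertwiner (nonzero because the first column of a composition is a convolution and $\CC[[X]]$ is a domain). Granting the converse (see below), $\{W\}''$ consists of the block-diagonal operators whose symbol is constant on $\sim$-classes, and then: if $\sim$ has one class, ``constant on classes'' means ``constant'', so $\{W\}''=\overline{\mathrm{alg}(W)}^\sigma$; if $\sim$ has at least two classes, choose a proper nonempty union $J$ of classes and let $Q$ be the projection onto $\bigoplus_{m\in J}H_m$, which is block diagonal with symbol $\delta_0$ on the blocks of $J$ and $0$ elsewhere, hence lies in $\{W\}''$ but not in $\overline{\mathrm{alg}(W)}^\sigma$, so $W$ fails the double commutant property.

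The main obstacle is the converse half of the identification: that a \emph{bounded} nonzero $S_{w^{(m)},w^{(n)},a}$ forces $m\sim n$. If $a_0\neq 0$ the column estimate $\|S_{w^{(m)},w^{(n)},a}e_j\|\ge|a_0|\,|\rho_j|$, with $\rho_j=\tfrac{w_0^{(m)}\cdots w_{j-1}^{(m)}}{w_0^{(n)}\cdots w_{j-1}^{(n)}}$, already bounds $(\rho_j)$ and gives $m\sim n$ directly. In general, letting $p$ be the least index with $a_p\neq 0$ and writing $S_{w^{(m)},w^{(n)},a}=S_{w^{(m)}}^p\,S_{w^{(m)},w^{(n)},\tilde a}$ with $\tilde a_0=a_p\neq 0$, one has the column bound $|a_p|\,|w^{(m)}_j\cdots w^{(m)}_{j+p-1}|\,|\rho_j|\le\|S_{w^{(m)},w^{(n)},a}\|$ and the matching row bound (from $\|B^*\|=\|B\|$) $|a_p|\,|w^{(n)}_{i-p}\cdots w^{(n)}_{i-1}|\,|\rho_i|\le\|S_{w^{(m)},w^{(n)},a}\|$; combining these along the recursion $\rho_{j+p}=\rho_j\cdot\tfrac{w^{(m)}_j\cdots w^{(m)}_{j+p-1}}{w^{(n)}_j\cdots w^{(n)}_{j+p-1}}$ should, under the summability hypothesis $\sum_{n\in I}\|w^{(n)}\|_\infty<+\infty$ (which enters precisely here, to rule out wild oscillation of the relevant product sequences), pin down $(\rho_j)$ or its reciprocal as bounded and yield $m\sim n$. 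Making this estimate airtight, and chasing the exact boundedness constraints on the $a^{(m,n)}$, is the only genuinely technical point of the proof.
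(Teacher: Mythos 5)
Your proposal follows the paper's structure closely: decompose $H=\bigoplus_m H_m$, parametrize $\{W\}'$ via Lemma~\ref{lem:commutant}, parametrize $\{W\}''$ as block-diagonal operators whose symbols $a^{(m)}$ agree across linked pairs, and compare against $\overline{\mathrm{alg}(W)}^\sigma$ via the Shields--Wallen Ces\`aro means. Your convolution argument in $\CC[[X]]$ for deducing $a^{(m)}=a^{(n)}$ from \emph{any} nonzero bounded intertwiner is cleaner and more robust than the paper's repetition of the $\delta_0$ first-column computation, and you are right that the chain argument only needs stepwise equalities rather than a single composite intertwiner, which sidesteps the direction-mismatch issue along a chain.

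The point you single out as the only genuinely technical one is indeed a real gap, and neither your suggested fix nor the paper's argument closes it. The hypothesis $\sum_n\|w^{(n)}\|_\infty<\infty$ controls the sizes of the individual blocks of $W$, not the oscillation of partial products within a single $w^{(n)}$; for finite $I$ it is vacuous. The paper's own displayed estimate in the ``at least two equivalence classes'' paragraph has the inequality the wrong way: after cancelling the common factor $|a_p|\,|w_0^{(m)}\cdots w_{p+k-1}^{(m)}|$ it reduces to $|w_k^{(n)}\cdots w_{p+k-1}^{(n)}|\cdot\|w^{(n)}\|_\infty^p\geq 1$, which fails whenever $w^{(n)}$ is not bounded away from zero (it holds for the paper's closing example only because there $|w_k|\in\{1/2,2\}$). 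One can in fact build two weight sequences $w^{(1)},w^{(2)}$ with $\rho_k=\prod_{l<k}w_l^{(1)}/w_l^{(2)}$ oscillating between $0$ and $\infty$ (so $1\not\sim 2$) while the intertwiner $S_{w^{(1)},w^{(2)},\delta_1}$, whose weight at step $j$ is $\rho_j w_j^{(1)}$, stays bounded by making $w_j^{(1)}$ small precisely when $\rho_j$ is large and conversely; the forced equality $a^{(1)}=a^{(2)}$ then gives $\{W\}''=\overline{\mathrm{alg}(W)}^\sigma$, against the ``only if'' direction of the statement. A correct formulation seems to require either assuming the weights bounded away from zero, or replacing $\sim$ by the a priori finer relation ``there exists a nonzero bounded operator intertwining $S_{w^{(m)}}$ and $S_{w^{(n)}}$.''
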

\begin{proof}
Lemma \ref{lem:commutant} is still valid under these assumptions: $T\in\{W\}'$ if and only
if for each $(m,n)\in I^2$, there exists a sequence $a^{(m,n)}\in\CC^\NN$ such that, for each $(m,n)\in I^2,$
$$T_{m,n}=S_{w^{(m)},w^{(n)},a^{(m,n)}}.$$
Looking at the proof of Theorem \ref{thm:doublecommutantlarge}, we get that any $S\in\{W\}''$
may be written $S=\bigoplus_{m\in I}S_{w^{(m)},w^{(m)},a^{(m)}}$, $a^{(m)}\in\CC^\NN$ for each $m\in I$ and that, for $m,n\in I,$ $m\neq n$, if the sequence
$$\left(\frac{w_0^{(m)}\cdots w_k^{(m)}}{w_0^{(n)}\cdots w_k^{(n)}}\right)_k$$
is either bounded or bounded away from $0$, then $a^{(m)}=a^{(n)}$ (the proof of Theorem \ref{thm:doublecommutantlarge} works immediately if this sequence is bounded, if it is bounded away from $0$, then we consider $T\in\{W\}'$ with $T_{k,l}=0$ except $T_{m,n}=S_{w^{(m)},w^{(n)},\delta_0}$ and we repeat the proof). Hence, if there is only one equivalence class for $\sim$, we get that $a^{(m)}=a^{(n)}$ for all $m,n\in I$ and we conclude as in the proof of Theorem \ref{thm:doublecommutantlarge} that $W$ has the double commutant property.

Suppose on the contrary that there are at least two equivalence classes. Let $(I_j)_{j\in J}$ be the different equivalent classses with $\textrm{card}(J)\geq 2$. The key point is to observe that the commutant of $W$ is smaller than in the previous case. Indeed, for any $T\in\{W\}'$, if $m\in I_j$ and $n\in I_{j'}$ for $j\neq j',$ then $T_{m,n}=0$ since for no $a\neq 0$, the operator $S_{w^{(m)},w^{(n)},a}$ is bounded: if $a_p\neq 0,$ then
\begin{align*}
\left|\langle S_{w^{(m)},w^{(n)},a}e_{p+k},e_k\rangle\right|&=\left|a_p \frac{w_0^{(m)}\cdots w_{p+k-1}^{(m)}}{w_0^{(n)}\cdots w_{k-1}^{(n)}}\right|\\
&\geq |a_p|\cdot \left|\frac{w_0^{(m)}\cdots w_{p+k-1}^{(m)}}{w_0^{(n)}\cdots w_{p+k-1}^{(n)}}\right|\times \frac 1{\|w^{(n)}\|_\infty^p}
\end{align*}
is unbounded.

The commutant being smaller, the double commutant becomes larger. To observe this, let us fix $j_0\in J$ and we set $a^{(m)}=\delta_0$ provided $m\in I_{j_0}$, $a^{(m)}=2\delta_0$ provided $m\in I\backslash I_{j_0}$. Then the operator
$S=\bigoplus_{m\in I}S_{w^{(m)},w^{(m)},a^{(m)}}$ belongs to $\{W\}''$ since
we have no equation like \eqref{eq:doublecommutant} to imply that $a^{(m)}$ should be equal to $a^{(n)}$ if $m\in I_{j_0}$ and $n\in I\backslash I_{j_0}$. 
We claim that this operator does not belong to $\overline{\textrm{alg}(W)}^\sigma$. 
Indeed, consider $(e_{m,n})_{m\in I,n\in \NN}$ the orthonormal basis associated to $H=\bigoplus_{m\in I}H_m.$ For any $P\in\mathbb C[X]$, $P(X)=\sum_{k=0}^d a_k X^k,$ we have
$$\langle P(W)(e_{m,0}),e_{m,0}\rangle=a_0\textrm{ for all }m\in I$$
whereas
$$\langle S(e_{m,0}),e_{m,0}\rangle=
\left\{
\begin{array}{ll}
1&\textrm{ if }m\in I_{j_0}\\
2&\textrm{ otherwise.}
\end{array}\right.$$
\end{proof}
\begin{example}
Let us define $w$ and $w'$ by $w_0=1$,
$$w_k=\left\{
\begin{array}{ll}
2&\textrm{ if }k\in\bigcup_{p\in\NN}[2^{2p},2^{2p+1})\\
1/2&\textrm{ if }k\in\bigcup_{p\in\NN}[2^{2p+1},2^{2p+2}),
\end{array}\right.,$$
and $w'_k=\frac{1}{w_k}$ for all $k\geq 0.$
Then the sequence $(\frac{w_0\cdots w_k}{w'_0\cdots w'_k})$ is neither bounded nor bounded away from $0$. Let $w^{(n)}=w$ if $n$ is odd and $w^{(n)}=w'$ is $n$ is even. Then $W=\bigoplus_{n\in \NN}S_{w^{(n)}}$ fails to have the double commutant property.
\end{example}

\medskip

\noindent\textbf{Acknowledgment.} 
X. Yao was supported by National Science Foundation of China (Nos. 12171373, 11701434).


\begin{thebibliography}{99}

\bibitem{ABC} W. Arendt, B. C\'elari\`es, I. Chalendar,
\newblock{\em In Koenigs' footsteps: Diagonalization of composition operators,}
\newblock J. of Funct. Anal., \textbf{278} (2020), 108313.

\bibitem{ABCC} W. Arendt, E. Bernard, B. C\'elari\`es, I. Chalendar,
\newblock {\em  Denjoy-Wolff theory and spectral properties of weighted composition operators on $Hol(\mathbb{D})$,}
\newblock Illinois J. Math. \textbf{66} (2022), no. 4, 463-489.

\bibitem{bai} M. Bailleul,
\newblock {\em Composition operators on weighted Bergman spaces of Dirichlet series,}
\newblock J. Math. Anal. Appl. \textbf{426} (2015) 340-263.

\bibitem{baib}
M. Bailleul and O. Brevig,
\newblock {\em Composition operators on Bohr-Bergman spaces of Dirichlet series,}
\newblock Ann. Acad. Sci. Fenn. Math. \textbf{41} (2016) 129-142.

\bibitem{BP79}
I. Baker and Ch. Pommerenke,
\newblock {\em On the iteration of analytic functions in a half-plane II,}
\newblock J. London Math. Soc. \textbf{20} (1979) 255--258

\bibitem{b1} F. Bayart,
\newblock {\em Hardy spaces of Dirichlet series and their composition operators,}
\newblock Monatsh. Math. \textbf{136} (2002) 203-236.

\bibitem{b2} F. Bayart,
\newblock {\em Compact composition operators on a Hilbert space of Dirichlet series,}
\newblock Illinois J. Math. \textbf{47} (2003) 725-743.

\bibitem{b2024} F. Bayart,
\newblock {\em Counting functions for Dirichlet series and compactness of composition operators,}
\newblock Bull. London Math. Soc. \textbf{56} (2024) 188-197.

\bibitem{bb} F. Bayart and O. Brevig,
\newblock {\em Compact composition operators with non-linear symbols on the $H^{2}$ space of Dirichlet series,}
\newblock Pacific J. Math. \textbf{291} (2017) 81-120.

\bibitem{bwy} F. Bayart, M. Wang and X. Yao,
\newblock {\em Linear combinations of composition operators with linear symbols on a Hilbert space of Dirichlet series,}
\newblock J. Operator Theory \textbf{86} (2021) 231-251.

\bibitem{Bon18} J. Bonet, %Solves,
\newblock {\em The Fr\'echet Schwartz algebra of uniformly convergent Dirichlet series,}
\newblock Proc. Edinb. Math. Soc. \textbf{61} (2018), no.4, 933–942.


\bibitem{Bon20} J. Bonet, %Solves,
\newblock {\em A note about the spectrum of composition operators induced by a rotation,}
\newblock RACSAM \textbf{114} (2020), 63.

\bibitem{Bou14}
P. Bourdon,
\newblock {\em Invertible weighted composition operators,}
\newblock Proc. Amer. Math. Soc. \textbf{142} (2014), 289-299.

\bibitem{bs}
P. Bourdon, J. Shapiro,
\newblock {\em Cyclic phenomena for composition operators},
\newblock Mem. Amer. Math. Soc. \textbf{596} (1997).

%\bibitem{bcd} F. Bracci, M.D. Contreras, and S. D\'{\i}az-Madrigal,
%\newblock {\em Continuous Semigroups of Holomorphic Self-maps of the Unit Disc,} Springer, Berlin, 2020.

%\bibitem{br} O. Brevig,
%\newblock {\em Sharp norm estimates for composition operators and Hilbert-type inequalities,}
%\newblock Bull. Lond. Math. Soc. \textbf{49} (2017), 965-978.

\bibitem{bp} O. Brevig and K. Perfekt,
\newblock {\em Norms of composition operators on the $H^{2}$ space of Dirichlet series,}
\newblock J. Funct. Anal. \textbf{278} (2020), no. 2, 108320, 33 pp.

\bibitem{bp20} O. Brevig and K. Perfekt,
\newblock {\em A mean counting function for Dirichlet series and compact composition operators,}
\newblock Adv. Math. \textbf{385} (2021), 107775, 48 pp.

\bibitem{Carleson} {L. Carleson,
\newblock {\em Representations of continuous functions,}
\newblock Math. Z. \textbf{66} (1957) 447-451.}

\bibitem{cr}
K. Clancey, D. Rogers,
\newblock {\em Cyclic Vectors and Seminormal Operators,}
\newblock Indiana Univ. Math. J. \textbf{27} (1978), no. 4, 689-696.


\bibitem{cl98}
B. Cload,
\newblock{\em Generating the commutation of a composition operator},
\newblock Contemp. Math. \textbf{213} (1998), 11-15.

\bibitem{cgl}
M.D. Contreras, C. G\'{o}mez-Cabello, L. Rodr\'{\i}guez-Piazza,
\newblock{\em Semigroups of composition operators on Hardy spaces of Dirichlet series,}
\newblock J. Funct. Anal. \textbf{285} (2023), no.9, 110089, 36 pp.

\bibitem{cm}
C. Cowen and B. MacCluer,
\newblock {\em Composition operators on spaces of analytic functions,}
\newblock CRC Press, Boca Raton, 1995.

\bibitem{DGMS}
A. Defant, D. Garcia, M. Maestre, P. Sevilla-Peris,
\newblock {\em Dirichlet series and holomorphic functions in high dimensions,}
\newblock Cambridge University Press,
\newblock New mathematical monographs 37.

\bibitem{fi}
C. Finet, H. Queff\'{e}lec, A. Volberg,
\newblock{\em Compactness of composition operators on a Hilbert space of Dirichlet series,}
\newblock J. Funct. Anal. \textbf{211} (2004) 271-287.

\bibitem{gh}
J. Gordon and H. Hedenmalm,
\newblock{\em The composition operators on the space of Dirichlet series with square summable coefficients,}
\newblock Michigan Math. J. \textbf{46} (1999) 313-329.

\bibitem{hls} H. Hedenmalm, P. Lindqvist and K. Seip,
\newblock{\em A Hilbert space of Dirichlet series and systems of dilated functions in $L^{2}(0,1)$,}
\newblock Duke Math. J. \textbf{86} (1997) 1-36.

\bibitem{llsr18}
M. Lacruz, F. Leon-Saavendra, S. Petrovic and L. Rodriguez-Piazza,
\newblock{\em Composition operators with a minimal commutant},
\newblock Adv. Math. \textbf{328} (2018) 890-927.

\bibitem{llsr19}
M. Lacruz, F. Leon-Saavendra, S. Petrovic and L. Rodriguez-Piazza,
\newblock{\em The double commutant property for composition operators},
\newblock Collect. Math. \textbf{70} (2019) 501--532.

\bibitem{MarSeu}
J. Marsin and S. M. Seubert,
\newblock{\em Cyclic vectors of diagonal operators on the space of entire functions},
\newblock J. Math. Anal. Appl. \textbf{320} (2006) 599-610.

\bibitem{Mattner}
M. Mattner,
\newblock {\em Complex differentiation under the integral},
\newblock Nieuw. Arch. Wiskd. 2 (2001) 32-35.

\bibitem{Pom79}
Ch. Pommerenke,
\newblock {\em On the iteration of analytic functions in a half plane I},
\newblock J. London Math. Soc. \textbf{219} (1979) 439-447.

\bibitem{Qu80}
H. Queffélec
\newblock{\em Propriétés presque sûres et quasi-sûres des séries de Dirichlet et des produits D’Euler,}
\newblock Canadian J. of Math. \textbf{32} (1980) 531-558.

%\bibitem{qq} H. Queff\'{e}lec and M. Queff\'{e}lec,
%\newblock{\em Diophantine approximation and Dirichlet series,}
%\newblock Harish-Chandra Research Institute Lecture Notes, vol. 2, Hindustan Book Agency, New Delhi, 2013.


\bibitem{qs} H. Queff\'{e}lec and K. Seip,
\newblock{\em Approximation numbers of composition operators on the $H^{2}$ space of Dirichlet series,}
\newblock J. Funct. Anal. \textbf{268} (2015) 1612-1648.

%\bibitem{rudin} W. Rudin,
%\newblock{\em Functional analysis,}
%\newblock McGraw-Hill, New York, 1973.

\bibitem{rudin56} {W. Rudin,
\newblock{\em Boundary values of continuous analytic functions,}
\newblock Proc. Amer. Math. Soc. \textbf{7} (1956) 808-811.}

\bibitem{rudin} {W. Rudin,
\newblock{\em Real and complex analysis,}
\newblock McGraw-Hill, New York, 1970.}

\bibitem{Seu} S. Seubert,
\newblock {\em Cyclic vectors on shift coinvariant subspaces,}
\newblock Rocky Mountain J. Math. \textrm{24} (1994), no. 2, 719-727.


\bibitem{sh} J. Shapiro,
\newblock{\em Composition operators and classical function theory,}
\newblock Springer, New York, 1993.

\bibitem{SW} A. L. Shields and L. J. Wallen,
\newblock{\em The commutant of certain Hilbert spaces operators,}
\newblock Indiana Univ. Math. J. \textrm{20} (1971) 777-788.

\bibitem{Takesaki} M. Takesaki,
\newblock{\em Theory of Operator Algebras I,}
\newblock Springer, 2001.

\bibitem{Turner} T. R. Turner,
\newblock{\em Double commutant of isometries,}
\newblock T\^{o}hoku Math. Jour. \textrm{24} (1972) 547-549.

%\bibitem{wy} M. Wang and X. Yao,
%\newblock{\em Invariant subspaces of composition operators on  a Hilbert space of Dirichlet series,}
%\newblock Ann. Funct. Anal. \textbf{6} (2015) 179-190.

\bibitem{Val} G. Valiron,
\newblock{\em Fonctions analytiques,}
\newblock Presses Universitaires de France (1954).

\bibitem{Wo02} Worner, T.,
\newblock{\em Commutants of certain composition operators,}
\newblock Acta. Sci. Math. (Szeged) \textbf{68} (2002), 413-432.

\bibitem{yao} X. Yao,
\newblock{\em Dynamics of composition operators on Hilbert spaces of Dirichlet series,}
\newblock Bull. Iranian Math. Soc. \textbf{48} (2022), no. 4, 1281-1291.

\end{thebibliography}
\end{document}